\numberwithin{equation}{section}
\numberwithin{figure}{section}
\DeclareFontFamily{U}{mathb}{\hyphenchar\font45}
\DeclareFontShape{U}{mathb}{m}{n}{
      <5> <6> <7> <8> <9> <10> gen * mathb
      <10.95> mathb10 <12> <14.4> <17.28> <20.74> <24.88> mathb12
      }{}
\DeclareSymbolFont{mathb}{U}{mathb}{m}{n}
\DeclareMathSymbol{\precneq}{3}{mathb}{"AC}
\tikzset{
  font={\fontsize{10pt}{12}\selectfont}}
  \tikzset{>=latex}
\DeclareMathAlphabet{\pazocal}{OMS}{zplm}{m}{n}
\newcommand{\leqnomode}{\tagsleft@true\let\veqno\@@leqno}
\newcommand{\reqnomode}{\tagsleft@false\let\veqno\@@eqno}
\newtheorem {theorem}    {Theorem}[section]
\newtheorem {problem}    {Problem}
\theoremstyle{definition}
\newtheorem {lemma}      [theorem]    {Lemma}
\newtheorem {corollary}  [theorem]    {Corollary}
\newtheorem {proposition}[theorem]    {Proposition}
\theoremstyle{definition}
\newtheorem{definition}[theorem]{Definition}
\theoremstyle{definition}
\newtheorem{remark}[theorem]{Remark}
\newcommand{\defeq}{\vcentcolon=}
\newcommand{\gen}[1]{\langle #1 \rangle}
\def\TM{{\rm TM}}
\def\a{\alpha}                
\def\b{\beta}
\def\eps{\varepsilon}
\def\a{\alpha}
\def\b{\beta}
\def\Q{\mathbb{Q}}     
\def\N{\mathbb{N}}     
\def\lab{{\text{Lab}}}
\DeclareMathOperator{\CL}{CL}
\newcommand{\tm}{\mathrm{tm}}
\newcommand{\REACH}{\mathsf{REACH}}
\renewcommand{\int}{\mathrm{int}}
\def\b{\beta}
\def\vertexradius{.1}
\def\vertex(#1){\fill (#1) circle (\vertexradius)}
\begin{document}

\title{\bf Conjugator Length in Finitely Presented groups}
\maketitle
\begin{center}

Conan Gillis, Francis Wagner

\end{center}

\bigskip

\begin{center}

\textbf{Abstract}

\end{center}

\begin{addmargin}[2em]{2em}

The conjugator length function of a finitely generated group is the function $f$ so that $f(n)$ is the minimal upper bound on the length of a word realizing the conjugacy of two words of length at most $n$.  We study herein the spectrum of functions which can be realized as the conjugator length function of a finitely presented group, showing that it contains every function that can be realized as the Dehn function of a finitely presented group.  In particular, given a real number $\alpha\geq2$ which is computable in double-exponential time, we show there exists a finitely presented group whose conjugator length function is asymptotically equivalent to $n^\alpha$.  This yields a substantial refinement to results of Bridson and Riley.  We attain this result through the computational model of $S$-machines, achieving the more general result that any sufficiently large function which can be realized as the time function of an $S$-machine can also be realized as the conjugator length function of a finitely presented group.  Finally, we use the constructed groups to explore the relationship between the conjugator length function, the Dehn function, and the annular Dehn function in finitely presented groups.

\end{addmargin}

\bigskip

\bigskip


\section{Introduction}
Much of the development of combinatorial and geometric group theory has been driven by three famous questions due to Max Dehn which can be posed for any finitely generated group $G=\langle X\rangle$ and finitely presented group $H=\gen{Y\mid\pazocal{R}}$:
\begin{itemize}
\item \textbf{Word Problem:} Given a word $w\in (X\cup X^{-1})^\ast$, does $w$ represent the identity element of $G$?
\item \textbf{Conjugacy Problem:} Given two words $u,v\in (X\cup X^{-1})^\ast$, do $u$ and $v$ represent conjugate elements of $G$?
\item \textbf{Isomorphism Problem:} Given a finitely presentated group $H'=\langle Y'\mid \pazocal{R}'\rangle$, is $H'$ isomorphic to $H$?
\end{itemize}

These questions are easily translated into computational decision problems, whereby they provide fruitful ground for interaction between, on the one hand, computability and complexity theory, and on the other, group theory. For the first two of the above questions, however, there has also been a persistent focus on their ``intrinsic" difficulty as reflected in the geometry of $G$, rather than in an abstract model of computation. 

With respect to the Word Problem of a finitely presented group, this is measured via the \textit{Dehn function}, the smallest isoperimetric function of a Cayley complex for the group.  Prominent work of Brady-Bridson-Forester-Shankar \cite{BradyBridson, BBFS}, Sapir-Birget-Rips-Ol'shanskii \cite{BORS,SBR}, and Ol'shanskii \cite{O18} details the set of functions which \textit{can} be realized as the Dehn function of a finitely presented group (see \cite{Brady2007} for a survey of these and related results).  

Recently, research has focused on completing the analogous picture for conjugator length functions, a measure of the complexity the Conjugacy Problem for the group.

\begin{definition} \label{def-CL} Let $G$ be a finitely generated group with finite generating set $X$. Define the function $c_{G,X}:\left((X\cup X^{-1})^\ast\right)\times \left((X\cup X^{-1})^\ast\right)\to \mathbb{N}\cup\{-\infty\}$ by taking $c_{G,X}(u,v)$ to be: 

\begin{itemize}

\item the length of the shortest $\gamma\in (X\cup X^{-1})^\ast$ such that $\gamma u\gamma^{-1}=v$ in $G$; or 

\item $-\infty$ if no such $\gamma$ exists 

\end{itemize}

Then, noting that $c_{G,X}(u,u)=0$ for all $u\in(X\cup X^{-1})^\ast$, define the \textit{conjugator length function} $\CL_{G,X}:\mathbb{N}\to \mathbb{N}$ of $G$ with respect to $X$ to be the function given by $$\CL_{G,X}(n)=\max_{u,v}\{c_{G,X}(u,v)\mid |u|+|v|\leq n\}.$$
\end{definition}

To free the above definition from the particular finite generating set considered, we adopt the following standard equivalence (and preorder) for functions on the natural numbers:

\begin{definition}
Given $f,g:\mathbb{N}\to\mathbb{N}$, it is taken that $f\preceq  g$ if there exists a constant $C\in\N$ such that $f(n)\leq Cg(Cn+C)+Cn+C$ for all $n$.  Naturally, we then say $f\sim g$ if both $f(n)\preceq g(n)$ and $g(n)\preceq f(n)$.\end{definition}

It is then easy to see that $\CL_{G,X}\sim\CL_{G,Y}$ for any pair of finite generating sets $X$ and $Y$ of $G$.  As such, the \textit{conjugator length function of $G$}, $\CL_G$, to be the equivalence class of $\CL_{G,X}$ for any finite generating set $X$.

Bridson and Riley show that, for every positive integer $m$, there exists a finitely presented group $G_m$ with $\CL_{G_m}(n)\sim n^m$ \cite{BridsonRileyFil,BridsonRileyNilp}. More recently, the same pair of authors show that the set of $\a$ for which $n^\alpha$ is equivalent to the conjugator length function of a finitely presented group forms a dense subset of $\mathbb{Q}\cap [2,\infty)$ \cite{BridsonRileySnowflake}, and that for any $k$ there exists a finitely presented group whose conjugator length function grows like functions in the $k$-th level of the Grzegorczyk hierarchy of primitive recursive functions \cite{BridsonRileyFastGrowing}. For a survey of results on conjugator length functions, we refer the reader to \cite{BRS}.

We refine and unify these results, showing that virtually any `reasonably computable' function which is at least quadratic can be realized as the conjugator length function of a finitely presented group.  In particular, the set of $\a$ for which $n^\a$ is equivalent to the conjugator length function of a finitely presented group \textit{contains} $\Q\cap[2,\infty)$, and in fact contains every number in $[2,\infty)$ which is computable in double-exponential time.

We achieve this by means of a technical result concerning $S$-machines.

\begin{theorem} \label{main-theorem}

For any recognizing $S$-machine $\textbf{S}$, there exists a finitely presented group $G_\textbf{S}$ such that $\TM_\textbf{S}(n)\preceq \CL_{G_\textbf{S}}(n)\preceq n^2+\TM_\textbf{S}(n)$, where $\TM_\textbf{S}$ is the time function of $\textbf{S}$.

\end{theorem}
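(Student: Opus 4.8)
The plan is to realize the computation of the $S$-machine $\mathbf{S}$ inside a finitely presented group via a Miller-style (or Olʹshanskii--Sapir-style) construction, in such a way that detecting whether a word is accepted by $\mathbf{S}$ becomes detecting conjugacy of two specific words, and so that the length of the shortest conjugator is controlled by the running time of $\mathbf{S}$ on the input. Concretely, I would build a group $G_{\mathbf{S}}$ from several interacting pieces: (i) a ``hub''-type relator encoding the accept configuration of $\mathbf{S}$, (ii) generators and relations mimicking the rules of $\mathbf{S}$ (one stable letter per rule, conjugating tape sectors as the rule dictates), and (iii) an auxiliary ``conjugacy-trap'' subgroup that forces any conjugator between the two distinguished words $u_w$ and $v_w$ associated to an input word $w$ to spell out (a group-element encoding of) an accepting computation of $\mathbf{S}$ on $w$. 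The two words $u_w,v_w$ should be chosen of length linear in $|w|$ so that $\TM_{\mathbf{S}}(n)$ transfers to a lower bound on $\CL_{G_{\mathbf{S}}}(n)$; conversely one shows any pair of conjugate words of length $\le n$ admits a conjugator of length $\lesssim n^2 + \TM_{\mathbf{S}}(n)$, the $n^2$ accounting for the ``trivial'' cost of shuffling the inputs into canonical form before the machine simulation takes over.

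The key steps, in order, would be: first, set up the exact group presentation, carefully listing the generating set (machine-part letters, tape letters, rule letters, trap letters) and all relators, and fix the distinguished input-encoding words $u_w, v_w$. Second, prove the \emph{soundness/completeness of the simulation}: a word $w$ is accepted by $\mathbf{S}$ if and only if $u_w$ and $v_w$ are conjugate in $G_{\mathbf{S}}$, and more quantitatively, the shortest conjugator has length comparable to the shortest accepting computation (plus a polynomial overhead). This is where one needs the van Kampen / annular diagram machinery: given an annular diagram over $G_{\mathbf{S}}$ with boundary labels $u_w$ and $v_w$, one analyzes its structure — bands of rule letters, bands of tape letters, the interaction with the hub — to extract an accepting computation whose length bounds the diagram's ``width'', hence the conjugator length. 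Third, establish the two inequalities of the theorem separately: the lower bound $\TM_{\mathbf{S}}(n) \preceq \CL_{G_{\mathbf{S}}}(n)$ by exhibiting, for each $n$, an input $w$ with $|u_w|+|v_w| \le n$ whose shortest accepting computation has length $\gtrsim \TM_{\mathbf{S}}(n)$ and invoking the completeness direction; and the upper bound $\CL_{G_{\mathbf{S}}}(n) \preceq n^2 + \TM_{\mathbf{S}}(n)$ by taking an arbitrary conjugate pair $a,b$ with $|a|+|b|\le n$, showing that either conjugacy is ``cheap'' (witnessed by a short conjugator for structural reasons, costing $O(n^2)$) or it factors through the machine simulation, where the soundness direction gives a conjugator of length $O(\TM_{\mathbf{S}}(n) + n^2)$.

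I expect the main obstacle to be the geometric analysis underlying the simulation's faithfulness — specifically, ruling out ``shortcuts'' whereby two words are conjugate via a path that does \emph{not} correspond to a genuine $\mathbf{S}$-computation, and simultaneously bounding the conjugator length from above by the computation time rather than something much larger. In the Dehn-function analogues (Sapir--Birget--Rips, Olʹshanskii) the analogous difficulty is the quadratic blow-up coming from the area of a computational disk; here, because we are in the \emph{annular} setting and must control a \emph{length} (the conjugator) rather than an \emph{area}, one needs sharp control on the ``radial'' extent of annular diagrams. The standard tool is a careful ``bands and cells'' dissection of the annular diagram together with a ``no trim annuli'' / ``no return of the machine'' type argument ensuring that rule-letter bands cross tape-letter bands in the pattern dictated by a legal run of $\mathbf{S}$; establishing this cleanly, and getting the polynomial overhead down to exactly $n^2$, is the technical heart of the argument. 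A secondary subtlety is designing the conjugacy trap so that it does not inadvertently enlarge the Dehn function or introduce spurious conjugacies — this typically requires the trap subgroup to embed malnormally (or at least with controlled distortion) so that any conjugator is ``pinched'' through the simulation region.
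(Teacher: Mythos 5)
There is a genuine gap, and it sits exactly where the paper spends most of its effort: the upper bound $\CL_{G_\textbf{S}}(n)\preceq n^2+\TM_\textbf{S}(n)$. Your plan assumes that any conjugacy which ``factors through the machine simulation'' is witnessed by an accepting computation, so that $\TM_\textbf{S}$ controls the conjugator. But in a group built from the rules of $\textbf{S}$, an arbitrary pair of conjugate words of length $\leq n$ corresponds, via an annular diagram cut along a radial $q$-band, to a trapezium whose base is an arbitrary \emph{circular} (possibly unreduced, ``defective'') base and whose top and bottom are arbitrary admissible words --- not input configurations, and not even configurations. The time function of $\textbf{S}$ says nothing about the length of a shortest computation between two such admissible words: a raw recognizing machine can have short admissible words joined only by enormously long computations, which would blow up the conjugator length past any bound of the form $n^2+\TM_\textbf{S}(n)$. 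This is why the paper does not use $\textbf{S}$ directly but first builds the enhanced cyclic machine $\textbf{E}_\textbf{S}$ (historical sectors recording the history, padding by perpetually locked sectors, and the primitive $\textbf{LR}/\textbf{RL}$ submachines), whose whole purpose is \Cref{universal complexity}: for \emph{every} pair in $\REACH^{uni}_{\textbf{E}_\textbf{S}}$ there is a computation of length at most $14\TM_\textbf{S}(12n+12N)+12n^2+28n+24$ measured on a short subword of the base. Your ``conjugacy-trap subgroup'' with malnormality is not a substitute for this; no concrete mechanism is given that would convert control of accepting computations of input configurations into control of all circular-base computations, and even then one must separately handle defective bases and the spiralling $\theta$-bands (the paper's \Cref{history as powers} argument), which are where the unavoidable $n^2$ actually comes from (\Cref{rmk-quadratic}).

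A secondary divergence is your hub relator. The paper's construction is deliberately hubless: acceptance of $\a$ is encoded by the fact that the input configuration $W_\a$ and the accept configuration $W_{ac}$ are \emph{conjugate} in $M(\textbf{E}_\textbf{S})$, which falls out of the cyclic structure (the empty $R_sP_0$-sector makes the two side $q$-bands of an accepting trapezium identically labelled, so it closes up into an annulus). Killing the accept configuration with a hub, as in the Dehn-function constructions of Sapir--Birget--Rips and Ol'shanskii, turns acceptance into a triviality (word-problem) statement and makes the annular-diagram analysis substantially harder; the paper explicitly flags hub-containing groups as an open direction (\Cref{CL-Dehn-pairs}) rather than the route taken here. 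So while your lower-bound outline (exhibit a conjugate pair encoding the input, count rule-letter annuli crossed by any conjugator) matches the paper's in spirit, the proposal as written is missing the central idea --- re-engineering the machine so that \emph{all} reachability between circular-base admissible words, not just acceptance of inputs, is time-bounded --- and without it the claimed upper bound does not follow.
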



Applying the emulation theorems of \cite{SBR} and \cite{CW}, \Cref{main-theorem} quickly recovers the main theorem of \cite{BridsonRileyFastGrowing}.  Indeed, by constructing a suitable $S$-machine, \Cref{main-theorem} implies the spectrum of conjugator length functions of finitely presented groups contains that of Dehn functions:

\begin{theorem} \label{Dehn-to-CL}

For any finitely presented group $G$, there exists a finitely presented group $H$ such that $\CL_H\sim\text{Dehn}_G$.

\end{theorem}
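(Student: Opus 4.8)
The plan is to deduce \Cref{Dehn-to-CL} from \Cref{main-theorem} by producing, for a given finitely presented group $G$, a recognizing $S$-machine $\textbf{S}$ whose time function $\TM_\textbf{S}$ is asymptotically equivalent to $\text{Dehn}_G$, and then invoking \Cref{main-theorem} together with the fact that every Dehn function of a finitely presented group is at least quadratic (so the $n^2$ term in the upper bound of \Cref{main-theorem} is absorbed). First I would recall from \cite{SBR} (and the refinement in \cite{CW}) the emulation theorem: for any finitely presented group $G$ there is an $S$-machine $\textbf{S}$ simulating the word problem of $G$ whose time function is equivalent to $\text{Dehn}_G$ — more precisely, $\textbf{S}$ accepts exactly (a suitable encoding of) the words representing the identity in $G$, and the minimal number of steps to accept a positive instance $w$ is, up to $\sim$, the area of a van Kampen diagram for $w$, i.e. $\TM_\textbf{S}(n)\sim\text{Dehn}_G(n)$. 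The one technical point is to check that the $S$-machine furnished by these emulation theorems is \emph{recognizing} in the precise sense required by the hypothesis of \Cref{main-theorem}; if the cited construction does not literally produce a recognizing $S$-machine, I would apply the standard modification (adding a cleanup/return phase so that accepted configurations have a unique normal form and rejected ones loop or halt non-acceptingly) which changes the time function only up to $\sim$.

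With such an $\textbf{S}$ in hand, \Cref{main-theorem} yields a finitely presented group $H = G_\textbf{S}$ with
\[
\TM_\textbf{S}(n)\preceq \CL_H(n)\preceq n^2+\TM_\textbf{S}(n).
\]
Substituting $\TM_\textbf{S}\sim\text{Dehn}_G$ gives $\text{Dehn}_G(n)\preceq \CL_H(n)\preceq n^2+\text{Dehn}_G(n)$. Since the Dehn function of any finitely presented group is either bounded (the hyperbolic/linear case, which under $\sim$ we may treat as $\preceq n^2$) or grows at least quadratically, in every case $n^2\preceq n^2 + \text{Dehn}_G(n)\preceq\text{Dehn}_G(n)$ when $\text{Dehn}_G$ is superquadratic, and when $\text{Dehn}_G\preceq n^2$ we get $\CL_H(n)\sim n^2\sim\text{Dehn}_G$ only in the quadratic case. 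To handle the genuinely subquadratic (i.e.\ linear, hyperbolic) possibility uniformly, I would instead take $G$ and replace it if necessary by a group with the same Dehn function that is known to have Dehn function $\succeq n^2$, or simply observe that for hyperbolic $G$ the conclusion $\CL_H\sim\text{Dehn}_G$ can be arranged separately (e.g.\ $H$ hyperbolic), so the interesting content is the case $\text{Dehn}_G\succeq n^2$, where the upper bound $n^2+\text{Dehn}_G\sim\text{Dehn}_G$ collapses and we obtain $\CL_H\sim\text{Dehn}_G$ directly.

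The main obstacle I anticipate is not the asymptotic bookkeeping but the verification that the emulation of a finitely presented group's word problem by an $S$-machine can be taken to satisfy the \emph{recognizing} property while preserving the equivalence $\TM_\textbf{S}\sim\text{Dehn}_G$ — in particular controlling the time function from below, since \Cref{main-theorem} needs $\TM_\textbf{S}\preceq\CL_H$ and this is only useful if $\TM_\textbf{S}$ is genuinely as large as $\text{Dehn}_G$, which requires that the $S$-machine not accept positive instances ``too quickly'' via shortcuts unrelated to van Kampen area. This is exactly the content of the lower-bound half of the emulation theorems of \cite{SBR,CW}, so I would cite it carefully rather than reprove it, but stating the precise hypotheses under which it applies — e.g.\ that one works with the $S$-machine recognizing the language $\{w : w =_G 1\}$ over the group's generators with the word-length filtration — is the step that needs the most care. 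The remainder is a short deduction, which I would present as: construct $\textbf{S}$, note $\TM_\textbf{S}\sim\text{Dehn}_G$, apply \Cref{main-theorem}, and simplify using $\text{Dehn}_G\succeq n^2$.
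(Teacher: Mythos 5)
Your overall reduction to \Cref{main-theorem}, and your treatment of the subquadratic case via the isoperimetric gap (taking $H$ with linear conjugator length when $G$ is hyperbolic), are in line with the paper. The genuine gap is the step you propose to settle by citation: you assume the emulation theorems of \cite{SBR} and \cite{CW} furnish a recognizing $S$-machine $\textbf{S}$ with $\TM_\textbf{S}\sim\text{Dehn}_G$. Those theorems give only a \emph{polynomial} emulation of a nondeterministic Turing machine by an $S$-machine; polynomial overhead is harmless for Grzegorczyk-level statements (which is how the paper uses them to recover Bridson--Riley), but it destroys the equivalence $\sim$ used here, which tolerates only linear distortion of argument and value. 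What the proof of Theorem 1.1 of \cite{SBR} actually provides is a nondeterministic \emph{Turing} machine $\pazocal{T}$ with time function $\sim\text{Dehn}_G$; the passage from $\pazocal{T}$ to an $S$-machine with no more than linear slowdown, together with a matching lower bound on the $S$-machine's time function, is not a quotable result and is precisely the core of the paper's proof. Your own worry about the machine accepting ``too quickly'' is the right worry, but it cannot be discharged by citing the ``lower-bound half'' of \cite{SBR,CW}, because no such statement exists at the level of $S$-machines.

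Concretely, the paper builds a bespoke $S$-machine with standard base $Q_0Q_1Q_2$ directly from a positivized, symmetrized presentation of $G$: rules $\sigma_y$ copy letters between the two working sectors, $\rho_r$ inserts a relator letter by letter, $\tau_{1,y},\tau_{2,y}$ trade a letter for its formal inverse, and $\omega$ accepts. Because $S$-machines operate on group words, free reduction costs nothing, so each elementary operation of $\pazocal{T}$ (move, substitution, reduction, accept) is emulated in at most five steps per Turing step (\Cref{lem-emulation}), giving $\TM_\textbf{S}\preceq\TM_\pazocal{T}\sim\text{Dehn}_G$; conversely, factoring the history of an arbitrary accepting computation (\Cref{lem-factorization}) exhibits the input as a product of at most $\|H\|$ conjugates of relators, giving $\TM_\textbf{S}\succeq\text{Dehn}_{\tilde{\pazocal{P}}}$. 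Without this construction and its two-sided time estimate, your argument rests on a citation that does not deliver the required strength, so the proposal as written is incomplete at its central step.
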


Combining this statement with the various results relating to the spectrum of Dehn functions then immediately implies the following statement:

\begin{corollary} \label{main-corollary}

For any $\a\geq2$ which is computable in time $O(2^{2^m})$, there exists a finitely presented group $G_\a$ such that $\CL_{G_\a}(n)\sim n^\alpha$.

\end{corollary}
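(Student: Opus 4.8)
The plan is to derive \Cref{main-corollary} purely as a formal consequence of \Cref{Dehn-to-CL} together with the known description of the isoperimetric spectrum of finitely presented groups. First I would invoke the theorem of Brady--Bridson--Forester--Shankar (in the refined form due to Sapir--Birget--Rips--Ol'shanskii and Ol'shanskii, as cited in the introduction): for every real number $\a\geq2$ that is computable in time $O(2^{2^m})$, there is a finitely presented group $P_\a$ whose Dehn function satisfies $\text{Dehn}_{P_\a}(n)\sim n^\a$. This is precisely the input needed; the double-exponential time bound on the computability of $\a$ is exactly the hypothesis appearing there, which is why it reappears in the statement of the corollary.

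Next I would apply \Cref{Dehn-to-CL} to the group $G=P_\a$: this produces a finitely presented group $H=H_\a$ with $\CL_{H_\a}\sim\text{Dehn}_{P_\a}$. Composing the two asymptotic equivalences and using transitivity of $\sim$ gives $\CL_{H_\a}(n)\sim n^\a$. Setting $G_\a\defeq H_\a$ then yields the corollary. There is essentially nothing further to check: the relation $\sim$ is an equivalence relation by construction, so chaining $\CL_{H_\a}\sim\text{Dehn}_{P_\a}\sim n^\a$ is immediate, and finite presentability of $G_\a$ is guaranteed by the conclusion of \Cref{Dehn-to-CL}.

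The only genuinely substantive point — and hence the ``main obstacle,'' though it is not part of the proof of the corollary itself but rather a matter of correctly quoting the literature — is ensuring that the isoperimetric spectrum result really does cover \emph{every} $\a\in[2,\infty)$ computable in double-exponential time, rather than merely a dense or countable subset. I would be careful here to cite the version that realizes arbitrary such exponents (the ``snowflake'' and subsequent refinements give $n^\a$ for all admissible $\a$, not just rationals), and to note that $\a\geq2$ is needed both because Dehn functions of non-hyperbolic finitely presented groups are at least quadratic and because \Cref{main-theorem} only controls $\CL$ up to an additive $n^2$ term. Given that, the corollary follows in two lines.
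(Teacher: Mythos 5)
Your proposal is correct and is essentially the paper's own argument: the corollary is obtained by applying \Cref{Dehn-to-CL} to a finitely presented group whose Dehn function is $\sim n^\a$, which exists for every $\a\geq2$ computable in time $O(2^{2^m})$ by the known results on the isoperimetric spectrum (\cite{SBR}, \cite{O18}). Your only slip is attributional — the realization of all such exponents down to $\a\geq2$ is due to Sapir--Birget--Rips and Ol'shanskii rather than the snowflake constructions — but this does not affect the proof.
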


For example, for any real algebraic $\a\geq2$ there exists a finitely presented groups with conjugator length function equivalent to $n^\a$.  Moreover, there exist finitely presented groups with conjugator length functions equivalent to $n^\pi$, $n^e$, etc.  More generally, Ol'shanskii's constructions in \cite{O18} imply an even wider array of functions that may be realized:

\begin{corollary} \label{log-corollary}

For any real numbers $\a\geq 2$ and $\b_1,\ldots,\b_\ell$, all of which are computable in time $O(2^{2^m})$ and such that $\a+|\b_1|+|\b_2|+\cdots+|\b_i|=2$ implies $\b_{i+1}\geq0$ for $i<\ell$, there exists a finitely presented group $G_{\a,\b_1,\ldots,\b_\ell}$ whose conjugator length function is equivalent to $n^\a(\log n)^{\b_1}(\log\log n)^{\b_2}\cdots(\log^{(\ell)}n)^{\beta_\ell}$, where $\log^{(\ell)}n$ is the $\ell$-times iterated logarithm function. 

\end{corollary}

Furthermore, Theorem \ref{Dehn-to-CL} gives a strengthening of \cite[Theorem 2]{BridsonRileyFastGrowing}, as well as the first example of a conjugator length function equivalent to a recursive, but not primitive-recursive, function:

\begin{corollary}
    Let $A_k(n)$ be the $k$-th Ackermann function. Then there exists a group $G_k$ whose conjugator length function is equivalent to $A_k(n)$. Moreover, there exists a group whose conjugator length function is equivalent to the non-primitive recursive Ackermann function $A(n)=A_n(n)$.
\end{corollary}

There is currently no known finitely generated group whose conjugator length function is subquadratic and superlinear, and our results do not produce one. There is some potential for our approach to shed light on this question, however for reasons we will discuss below, the methods of this paper are insufficient. In particular, we have been unable to remove the $n^2$ term from the upper bound in the statement of Theorem \ref{main-theorem}, and it seems likely that an entirely new approach is required in order to do so. 
As such, the following question remains open:

\begin{problem} \label{gap-problem}

Does there exist a finitely generated group $G$ such that $n\prec \CL_G(n)\prec n^2$?

\end{problem}

Vandeputte \cite{vandeputte2026residualfinitenesspropertieshalls} has answered \Cref{gap-problem} in the affirmative, however no explicit description of a conjugator length function in this range is yet known. Moreover, the following refinement remains open:

\vspace{5pt}

\textbf{Problem 1'.}  \textit{Does there exist a finitely} presented \textit{group $G$ such that $n\prec \CL_G(n)\prec n^2$?}

\vspace{5pt}

This question resembles an important one related to Dehn functions, which is resolved by the `isoperimetric gap' that no finitely presented group has Dehn function strictly between linear and quadratic \cite{Gromov} \cite{Bowditch} \cite{O91}. The difficulty in finding a group resolving \Cref{gap-problem}' may thus seem to suggest some deeper connection between the two invariants at hand. However, a group $G$ with $\text{Dehn}_G(n)$ computable and $\CL_G$ non-computable is given in, e.g., \cite{Miller71} \cite{OS19}. We expand substantially on this fact, establishing that there is no computable upper bound for a finitely presented group's conjugator length function in terms of its Dehn function, even when both are computable:


\begin{theorem}\label{noCLBound}
For any recognizing $S$-machine $\textbf{S}$ such that $\TM_\textbf{S}(n)\succeq n^2$, there exists a finitely presented group $H_\textbf{S}$ such that $\TM_\textbf{S}\sim \CL_{H_\textbf{S}}$ and $\text{Dehn}_{H_\textbf{S}}(n)\sim n^3$.
\end{theorem}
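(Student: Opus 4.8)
The plan is to build $H_{\mathbf S}$ out of the group $G_{\mathbf S}$ produced by \Cref{main-theorem} by taking a free product (or amalgam over a suitable cyclic subgroup) with an auxiliary finitely presented group $Q$ chosen specifically to \emph{pin the Dehn function down to cubic} while leaving the conjugator length function essentially unchanged. First I would examine the construction underlying \Cref{main-theorem} more closely: the group $G_{\mathbf S}$ has $\CL_{G_{\mathbf S}}(n)\preceq n^2+\TM_{\mathbf S}(n)$, which under the hypothesis $\TM_{\mathbf S}\succeq n^2$ simplifies to $\CL_{G_{\mathbf S}}\sim\TM_{\mathbf S}$; so the conjugator length is already what we want. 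The issue is that the Dehn function of $G_{\mathbf S}$ is a priori much larger (indeed at least $\TM_{\mathbf S}$, which can be wildly superpolynomial), since controlling the area of van Kampen diagrams is harder than controlling the length of conjugators. The whole point of \Cref{noCLBound} is that these two invariants can be forced apart, so the construction must somehow \emph{lower} the Dehn function without touching $\CL$.

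The key mechanism I would use is a ``Dehn-function-collapsing'' trick: embed $G_{\mathbf S}$ into an ambient finitely presented group $H_{\mathbf S}$ in which every relator of $G_{\mathbf S}$ admits a short van Kampen diagram by routing it through extra machinery — a standard device here is to add, for each defining relator $r$ of the $S$-machine-derived presentation, a stable letter conjugating a fixed short word to $r$, in the style of a Higman-type embedding but arranged so the added relations have cubic, not larger, isoperimetric cost. One then has to verify two things simultaneously. (i) \textbf{Dehn upper bound:} every word in $H_{\mathbf S}$ that is trivial bounds a diagram of area $O(n^3)$; this is the analysis of how the added stable letters interact, and it should reduce to a Magnus-type / $t$-conjugator normal form argument plus the known cubic bounds for the auxiliary pieces. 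One also needs a matching lower bound $\text{Dehn}_{H_{\mathbf S}}(n)\succeq n^3$, which is cheap: it suffices that $H_{\mathbf S}$ contain a subgroup (e.g.\ a free-by-cyclic or $\mathbb Z^2$-by-$\mathbb Z$ piece) already known to have cubic Dehn function, so I would simply arrange one of the auxiliary factors to be such a group. (ii) \textbf{Conjugator length is preserved:} the new stable letters must not create short conjugators between words that were ``far'' in $G_{\mathbf S}$, nor long ones between words that were conjugate, i.e.\ $\CL_{H_{\mathbf S}}\sim\CL_{G_{\mathbf S}}\sim\TM_{\mathbf S}$. The lower bound $\CL_{H_{\mathbf S}}\succeq\TM_{\mathbf S}$ follows if $G_{\mathbf S}$ embeds as a \emph{conjugacy-distorted-controlled} subgroup — ideally a retract, or a factor in a free/amalgamated product, so that conjugacy in $H_{\mathbf S}$ of elements of $G_{\mathbf S}$ is detected inside $G_{\mathbf S}$ (this is automatic for free products and for amalgams over malnormal subgroups). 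The upper bound $\CL_{H_{\mathbf S}}\preceq n^2+\TM_{\mathbf S}$ requires rerunning the conjugator-length estimate of \Cref{main-theorem} in the enlarged presentation and checking that the extra generators contribute only a polynomial amount.

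In carrying this out I would proceed in the following order. First, isolate from the proof of \Cref{main-theorem} the precise presentation of $G_{\mathbf S}$ and the structure of its van Kampen diagrams, identifying which subfamily of relators is responsible for the blow-up in area. Second, define $H_{\mathbf S}$ by adjoining controlled conjugating machinery (or forming the appropriate amalgam with a cubic-Dehn auxiliary group), taking care that the amalgamating subgroup is malnormal in both factors — this malnormality is what transports the conjugacy analysis back into $G_{\mathbf S}$ and simultaneously is a standard hypothesis under which Dehn functions of amalgams are controlled (at most a polynomial jump over the max of the factors'). Third, prove $\text{Dehn}_{H_{\mathbf S}}(n)\preceq n^3$ by a normal-form/diagram-surgery argument, peeling off the auxiliary pieces and invoking the cubic bound there while showing the $G_{\mathbf S}$-part of any reduced diagram is itself forced to be small. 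Fourth, prove $\text{Dehn}_{H_{\mathbf S}}(n)\succeq n^3$ by exhibiting the known cubic-Dehn subgroup. Fifth, prove $\CL_{H_{\mathbf S}}\sim\TM_{\mathbf S}$ via the malnormality-based reduction for the lower bound and a reestimation for the upper bound.

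\textbf{Main obstacle.} The crux — and the step I expect to consume most of the work — is the Dehn upper bound $\text{Dehn}_{H_{\mathbf S}}(n)\preceq n^3$ \emph{while} keeping $\CL_{H_{\mathbf S}}$ large. These two requirements pull in opposite directions: making relators cheap to fill tends to make previously-distant elements conjugate, collapsing $\CL$; conversely, keeping $\CL$ large means retaining the $S$-machine dynamics, which is exactly what inflates the area. Threading this needle — finding an embedding/amalgam in which van Kampen diagrams are forced to be ``thin'' (cubic area) yet conjugating annular diagrams are still forced to be ``long'' (length $\TM_{\mathbf S}$) — is the real content of \Cref{noCLBound}, and is presumably where the paper's specific choice of auxiliary group and the fine combinatorics of its interaction with $G_{\mathbf S}$ become indispensable.
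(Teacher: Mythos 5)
Your starting premise is where the argument goes wrong: you assume that $\text{Dehn}_{G_{\mathbf S}}$ is ``a priori much larger (indeed at least $\TM_{\mathbf S}$)'' and that the main task is to \emph{lower} it via a Higman-style collapsing trick with stable letters. In fact the paper's group $M(\textbf{E}_\textbf{S})$ is \emph{hubless} (there is no relation killing the accept configuration), so the machine's time function is reflected only in \emph{conjugacy} of configurations, not in triviality of words; \Cref{Dehn bound} shows by a straightforward band-counting argument (counting $\theta$-, $q$- and $a$-bands and their pairwise intersections) that $\text{Dehn}_{M(\mathbf{S})}(n)\preceq n^3$ for \emph{any} cyclic recognizing $S$-machine, regardless of how large $\TM_{\mathbf S}$ is. So the tension you identify as the ``main obstacle'' — making relators cheap to fill while keeping conjugators long — does not arise at all; the only remaining issue is to force the Dehn function up to exactly $n^3$, not down. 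Your proposed machinery (adjoining, for each relator, a stable letter conjugating it to a short word) is both unnecessary and dangerous: such stable letters create new conjugacies and new annular diagrams that would have to be re-analyzed from scratch, and there is no reason the delicate $\CL$ lower bound of \Cref{main-theorem} (which rests on $\theta$-annuli forcing an accepting computation) would survive.

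The paper's actual construction is much simpler and closer to the second half of your sketch: take $H_{\mathbf S}=M(\textbf{E}_\textbf{S})\times H$ where $H$ is the integral Heisenberg group, with $\text{Dehn}_H(n)\sim n^3$ and $\CL_H(n)\sim n^2$. Brick's combination theorem for direct products gives $\text{Dehn}_{M(\textbf{E}_\textbf{S})}+\text{Dehn}_H\preceq\text{Dehn}_{H_{\mathbf S}}\preceq n^2+\text{Dehn}_{M(\textbf{E}_\textbf{S})}+\text{Dehn}_H$, which together with \Cref{Dehn bound} pins $\text{Dehn}_{H_{\mathbf S}}(n)\sim n^3$; an analogous combination statement for conjugator length gives $\CL_{H_{\mathbf S}}\sim\CL_{M(\textbf{E}_\textbf{S})}+\CL_H\sim\TM_{\mathbf S}+n^2\sim\TM_{\mathbf S}$, using the hypothesis $\TM_{\mathbf S}(n)\succeq n^2$. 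Note also that a direct product — not a free product or malnormal amalgam as you suggest — is the right gluing here, since for direct products both invariants combine essentially additively, whereas your amalgam route would require exactly the kind of extra malnormality and diagram-surgery analysis the paper avoids. If you revise, replace the collapsing step with the observation from \Cref{Dehn bound} and then run your ``cubic auxiliary factor plus combination theorems'' step, which is essentially the paper's proof.
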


Moreover, this result makes progress on the following question of Bridson-Riley-Sale \cite{BRS}:

\begin{problem} \label{CL-Dehn-pairs}

For what pairs $(f,g)$ of functions on the natural numbers do there exist finitely presented groups $G_{f,g}$ such that $\text{Dehn}_{G_{f,g}}\sim f$ and $\CL_{G_{f,g}}\sim g$?

\end{problem}

With this terminology, \Cref{noCLBound} can be interpreted as saying that the pair $(n^3,g(n))$ can be realized by a finitely presented group for just about any `reasonably computable' $g(n)\succeq n^2$.  


Finally, we use \Cref{noCLBound} to study the \textit{annular Dehn function} introduced by Brick and Corson in \cite{BrickCorson}.  As the name suggests, interpreting in the natural way the Dehn function of a finitely presented group $G$ through van Kampen diagrams over one of its finite presentations, the annular Dehn function $\text{Ann}_G:\N\to\N$ is the analogous function for annular (Schupp) diagrams.  As such, annular Dehn functions are proposed as a means of studying the group's Conjugacy Problem.

\begin{theorem} \label{annular-Dehn}

For any $S$-machine $\textbf{S}$ with $\TM_\textbf{S}(n)\succeq n^2$, the finitely presented group $G_\textbf{S}$ given in \Cref{main-theorem} satisfies $\TM_\textbf{S}^2\preceq\text{Ann}_{G_\textbf{S}}\preceq\TM_\textbf{S}^3$.

\end{theorem}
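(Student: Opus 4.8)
\textbf{Proof proposal for \Cref{annular-Dehn}.}

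The plan is to leverage the already-constructed group $G_\textbf{S}$ from \Cref{main-theorem}, whose conjugator length function satisfies $\TM_\textbf{S}(n)\preceq\CL_{G_\textbf{S}}(n)\preceq n^2+\TM_\textbf{S}(n)\sim\TM_\textbf{S}(n)$ (using the hypothesis $\TM_\textbf{S}(n)\succeq n^2$). An annular (Schupp) diagram over a finite presentation of $G_\textbf{S}$ with boundary cycles $\sigma_1,\sigma_2$ is, after cutting along an arc joining the two boundary components, a van Kampen diagram; conversely, identifying two boundary arcs of a van Kampen diagram produces an annular diagram. Thus bounds on $\text{Ann}_{G_\textbf{S}}$ come from controlling the length of the cutting arc together with the area of the resulting planar diagram.

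For the upper bound $\text{Ann}_{G_\textbf{S}}(n)\preceq\TM_\textbf{S}^3(n)$: given words $u,v$ of total length at most $n$ representing conjugate elements, by definition of $\CL_{G_\textbf{S}}$ there is a conjugator $\gamma$ of length at most $\CL_{G_\textbf{S}}(n)\preceq\TM_\textbf{S}(n)$. The word $\gamma u\gamma^{-1}v^{-1}$ has length $O(\TM_\textbf{S}(n))$ and represents the identity, so it bounds a van Kampen diagram of area at most $\text{Dehn}_{G_\textbf{S}}(O(\TM_\textbf{S}(n)))$. I would then invoke the Dehn function estimate for $G_\textbf{S}$ — which, following the methods used for \Cref{noCLBound} (where $\text{Dehn}_{H_\textbf{S}}(n)\sim n^3$) and the standard fact that $S$-machine groups have cubic Dehn function relative to their time function, i.e. $\text{Dehn}_{G_\textbf{S}}(n)\preceq n^2 + \TM_\textbf{S}^{?}$... — more carefully, the relevant bound is $\text{Dehn}_{G_\textbf{S}}(m)\preceq m\cdot\TM_\textbf{S}(m)$ or similar; plugging in $m\sim\TM_\textbf{S}(n)$ and folding the diagram into an annular one (the cutting/identifying arc along $\gamma$ contributes at most $|\gamma|\preceq\TM_\textbf{S}(n)$ extra cells) yields area $\preceq\TM_\textbf{S}(n)\cdot\TM_\textbf{S}(\TM_\textbf{S}(n))$, which must be controlled to land at $\TM_\textbf{S}^3$. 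This is the step where the precise form of the Dehn function of $G_\textbf{S}$ must be pinned down.

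For the lower bound $\TM_\textbf{S}^2(n)\preceq\text{Ann}_{G_\textbf{S}}(n)$: I would exhibit, for each $n$, a pair of conjugate words $u_n,v_n$ of length $O(n)$ whose shortest conjugator has length $\succeq\TM_\textbf{S}(n)$ (these exist by the lower-bound half of \Cref{main-theorem}), and then argue that every annular diagram with these boundary words has area $\succeq\TM_\textbf{S}^2(n)$. The mechanism is that any such diagram, cut open, gives a conjugator curve of length $\succeq\TM_\textbf{S}(n)$; propagating this curve across the diagram — because the $S$-machine relations force the "middle" of the annulus to record an essentially full computation of $\textbf{S}$, and the hub/transition cells cannot shortcut it — forces $\succeq\TM_\textbf{S}(n)$ many nested conjugator-length layers, each of cost $\succeq\TM_\textbf{S}(n)$. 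Concretely I expect to use a band/trapezium argument on the annular diagram in the style of Sapir–Birget–Rips and Ol'shanskii: the absence of short conjugators implies the presence of $\Omega(\TM_\textbf{S}(n))$ parallel $\theta$-bands (or $q$-bands) each of length $\Omega(\TM_\textbf{S}(n))$, giving the quadratic count.

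The main obstacle will be the lower bound, specifically ruling out "cheap" annular diagrams that exploit the topology of the annulus to avoid simulating the full computation — that is, showing that wrapping a band around the hole cannot substitute for the linear-in-$\TM_\textbf{S}(n)$ cost of an honest conjugator. This requires a careful analysis of how $\theta$-bands and $q$-bands can close up into annuli in a diagram over the $S$-machine group, ensuring any annular band is either inessential (removable, reducing area) or forces the expected computational history; the combinatorics of essential annuli in Schupp diagrams over these groups, and the bookkeeping to convert "no short conjugator" into "many long parallel bands," is where the real work lies. The upper bound, by contrast, should follow routinely once the Dehn function of $G_\textbf{S}$ is recorded, since it is a matter of gluing a known van Kampen diagram along a known-length conjugating arc.
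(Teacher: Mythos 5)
Your overall architecture matches the paper's (upper bound via ``annular area $\preceq$ Dehn function evaluated at boundary length plus conjugator length''; lower bound via a band count forcing quadratic area), but both halves have genuine gaps. For the upper bound, the missing ingredient is precisely the one you flag: the paper proves (\Cref{Dehn bound}) that the hubless group $M(\textbf{E}_\textbf{S})$ has $\text{Dehn}(n)\preceq n^3$ \emph{independently of} $\TM_\textbf{S}$, by counting: at most $\|\partial\Delta\|^2/4$ $(\theta,q)$-cells since maximal $\theta$- and $q$-bands pairwise cross at most once, hence $O(\|\partial\Delta\|^2)$ ends for $a$-bands, hence $O(\|\partial\Delta\|^3)$ $(\theta,a)$-cells. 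With that, $\text{Ann}(n)\preceq\text{Dehn}\bigl(n+\CL(n)\bigr)\preceq\bigl(\TM_\textbf{S}(n)\bigr)^3$ (the paper cites Brick--Corson for the first inequality). Your guessed forms $\text{Dehn}(m)\preceq m\cdot\TM_\textbf{S}(m)$ or anything producing $\TM_\textbf{S}(\TM_\textbf{S}(n))$ would not land at $\TM_\textbf{S}^3$, so without the cubic bound your upper bound does not close. (Also note there are no hub cells in these groups, so no ``hub/transition'' analysis is needed or available.)

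For the lower bound, your implication ``no short conjugator $\Rightarrow$ $\Omega(\TM)$ parallel bands each of length $\Omega(\TM)$'' is not established and is not how the paper argues; nothing that general is proved. The paper instead takes the specific conjugate pair $u=W_\a$ (an input configuration realizing $\tm(I_\textbf{S}(\a))=\TM_\textbf{S}(n)$) and $v=W_{ac}$ (the accept configuration). Any reduced annular diagram with these boundary labels has all $\theta$-bands as non-contractible annuli and all $q$-bands radial, so cutting along a $q$-band yields a trapezium, which by \Cref{trapezia are computations} encodes a reduced computation of $\textbf{E}_\textbf{S}$ accepting $W_\a$; its step history must begin $(1)(2)(3)(4)(5)$, and the step-$(3)$ subcomputation carries a history word $H$ with $\|H\|\geq\tm(I_\textbf{S}(\a))$ written in a historical $Q_{i,\ell}Q_{i,r}$-sector. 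The quadratic area then comes from that historical subtrapezium: the $a$-band starting at the $j$-th letter of the copy of $H$ must cross $j-1$ of the $\theta$-bands before ending on a $(\theta,q)$-cell, so the number of $(\theta,a)$-cells is at least $\sum_j 2j\geq\|H\|^2\geq\tm(I_\textbf{S}(\a))^2$. This use of the enhanced machine's historical sectors is the concrete mechanism your sketch is missing; without it (or some substitute tied to the specific relators), ``cheap'' annular diagrams are not ruled out by conjugator-length considerations alone.
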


While this statement does not give the equivalence class of the annular Dehn function if $\TM_\textbf{S}$ is bounded above by a polynomial, the statement exhibits a broad new scope of functions that can be realized as the annular Dehn function of a finitely presented group. Indeed, Theorem \ref{annular-Dehn} has the following immediate Corollary:

\begin{corollary}
  For any function $f$ such that $f^3\sim f$, there exists a finitely presented group $G_f$ whose annular Dehn function is equivalent to $f$.
\end{corollary}

We point out that, for any $\a>0$, $2^{n^\a}$ satisfies the above relation -- this comports nicely with a recent result of Bridson and Riley \cite{BridsonRileySnowflake}, who showed that for all $\a$ in a dense subset of $[2,\infty)$, $n^\a$ is the annular Dehn function of some finitely presented group. We leave it to further work to elucidate the full set of possible annular Dehn functions. Indeed, it seems likely that an analogue of \Cref{noCLBound} in which the Dehn function is quadratic would provide a major step toward doing so, since the upper bound given in \Cref{annular-Dehn} is given by the composition of the Dehn function and the conjugator length function, and so a quadratic Dehn function would give an upper bound that matches the lower bound in \Cref{annular-Dehn}.

\medskip


\subsection{Discussion of contents} \

Our methods run through \textit{$S$-machines}, the tool introduced by Sapir in the 1990s for addressing questions related to the Dehn function \cite{SBR}.  See \Cref{sec-S-machines} for precise definitions.

There are many equivalent interpretations of $S$-machines (see \cite{S06}, \cite{CW}), but the two vital to this article are as follows: (1) An $S$-machine can be viewed as a rewriting system resembling a multi-tape, non-deterministic Turing machine that works with group words, and (2) An $S$-machine can be viewed as a multiple HNN-extension of a free group.  With these interpretations, it was shown that $S$-machines form a `robust' computational model in that any non-deterministic Turing machine can be polynomially emulated by an $S$-machine \cite{SBR}, while these machines also provide a relational framework which is convenient for study through meticulous and well-established group-theoretic techniques.  Accordingly, $S$-machines facilitate the construction of finitely presented groups satisfying various desirable properties.

The key to addressing many of the group-theoretic arguments related to the groups associated to an $S$-machine is the study of van Kampen diagrams over their presentations.  Several intricate techniques have been built up for studying these diagrams (see, for example, \cite{O18}, \cite{W}, and \cite{WEmb}), and there has been a few applications to the study of Schupp diagrams over the relevant presentations \cite{OSconj}, \cite{OS19}, \cite{WMal}.

After introducing the basic definitions and some elementary constructions involving $S$-machines in \Cref{sec-S-machines}, we present our main construction in \Cref{sec-enhanced}: Starting with an arbitrary recognizing $S$-machine $\textbf{S}$, we alter its computational makeup in particular specified ways to build the associated `enhanced' cyclic $S$-machine $\textbf{E}_\textbf{S}$.  The construction of this enhanced machine and verifying its computational makeup accounts for much of our work.  Its most important properties given by Lemmas \ref{E0 language} and \ref{universal complexity}: 

\begin{enumerate}

\item The enhanced machine accepts the same language as $\textbf{S}$

\item Its time function is equivalent to $\TM_\textbf{S}$ 

\item For any pair $W_1,W_2$ of admissible words of $\textbf{E}_\textbf{S}$ with `circular base', if there exists a computation between $W_1$ and $W_2$, then there exists one whose length is bounded by a function of the $a$-length of admissible subwords of $W_1,W_2$ with small base.

\end{enumerate}

In \Cref{sec-groups}, we translate the machinery into the setting of finitely presented groups.  After defining the relevant group in \Cref{sec-associated-groups}, we investigate the basic properties of diagrams over its presentation in \Cref{sec-bands-annuli} and how the group mimics the machinery in \Cref{sec-trapezia}.  In \Cref{subsec-modified-length}, we introduce a modified length function on the words over the generators which is convenient for the arguments that follow while also being $\Theta$-equivalent to the typical word length norm.

Note that the groups associated to $S$-machines considered herein are `hubless', not containing a relation killing the accept configuration.  The hub relation has been essential for constructing groups with exotic Dehn functions.  As such, the investigation of the conjugator length function of associated groups with hubs seems a promising path toward answering \Cref{CL-Dehn-pairs}.

We then use Schupp diagrams over the relevant presentation to prove \Cref{main-theorem}: (1) and (2) effectively give the lower bound, while (3) implies the upper bound.  Note that \Cref{rmk-quadratic} accounts for the difficulty in removing the quadratic term in the upper bound described in the statement of \Cref{main-theorem}.

Finally, we use \Cref{main-theorem} to complete proofs of \Cref{Dehn-to-CL}, \Cref{noCLBound}, and \Cref{annular-Dehn}.  These results mostly come quickly, following from a computational argument mirroring one in \cite{SBR}, simple combinatorial arguments on diagrams, elementary combination properties for the conjugator length function, and properties of the annular Dehn function proved in \cite{BrickCorson}.

\bigskip

\textbf{Acknowledgements:} The authors would like to thank Tim Riley for his support and several useful discussions related to these results.


\section{\texorpdfstring{$S$}--machines} \label{sec-S-machines}

In this section, we give a formal definition of the computational model of $S$-machines.  This definition is laid out in much the same way as it is in \cite{WEmb}.

\subsection{Rewriting systems} \

There are many equivalent ways of defining $S$-machines.  Following the conventions of \cite{BORS}, \cite{O18}, \cite{OS01}, \cite{OSconj}, \cite{OS06}, \cite{OS19}, \cite{SBR}, \cite{WEmb}, \cite{W}, and others, the model is presented here as a rewriting system for group words.

A \textit{hardware} is a pair of finite sets $(Y,Q)$ with a fixed partition $Q=\sqcup_{i=0}^NQ_i$ and $Y=\sqcup_{i=1}^NY_i$.  The subsets $Q_i$ and $Y_i$ are called the \textit{parts} of $Q$ and $Y$, respectively.  Moreover, any letter of $Q_i\sqcup Q_i^{-1}$ is called a \textit{state letter} or \textit{$q$-letter}, while any letter of $Y_i\sqcup Y_i^{-1}$ is called a \textit{tape letter} or \textit{$a$-letter}.

An \textit{admissible word} $W$ over $(Y,Q)$ is a reduced word over $(Y\cup Q)^{\pm1}$ of the form $q_0^{\eps_0}w_1q_1^{\eps_1}\dots w_sq_s^{\eps_s}$ such that for each $i\in\{1,\dots,s\}$, there exists an integer $j(i)\in\{1,\dots,N\}$ such that the subword $q_{i-1}^{\eps_{i-1}}w_iq_i^{\eps_i}$ is either:

\begin{enumerate}

\item an element of $(Q_{j(i)-1}F(Y_{j(i)})Q_{j(i)})^{\pm1}$,

\item of the form $qwq^{-1}$ where $q\in Q_{j(i)-1}$ and $w\in F(Y_{j(i)})$, or

\item is of the form $q^{-1}wq$ where $q\in Q_{j(i)}$ and $w\in F(Y_{j(i)})$.

\end{enumerate}

In this case, the \textit{base} of $W$ is the (possibly unreduced) word $Q_{j(0)}^{\eps_0}Q_{j(1)}^{\eps_1}\dots Q_{j(s)}^{\eps_s}$, while the subword $q_{i-1}^{\eps_{i-1}}w_iq_i^{\eps_i}$ is called the \textit{$Q_{j(i)-1}^{\eps_{i-1}}Q_{j(i)}^{\eps_i}$-sector} of $W$.  Note that an admissible word may have many different subwords which are sectors of the same name.  

The \textit{standard base} of $(Y,Q)$ is the base $Q_0Q_1\dots Q_N$, while a \textit{configuration} is an admissible word whose base is standard.  

%
%
%
%
%


The \textit{$a$-length} of the admissible word $W$ is the number $|W|_a$ of $a$-letters comprising $W$.  The \textit{$q$-length} $|W|_q$ is defined similarly.

An \textit{$S$-rule} on the hardware $(Y,Q)$ is a rewriting rule $\theta$ with the following associated information:

\begin{itemize}

\item A subset $Q(\theta)=\sqcup_{i=0}^N Q_i(\theta)$ of $Q$ such that each $Q_i(\theta)$ is a singleton $\{q_i\}$ consisting of a letter from $Q_i$.

\item A subset $Q'(\theta)=\sqcup_{i=0}^N Q_i'(\theta)$ of $Q$ such that each $Q_i'(\theta)$ is a singleton $\{q_i'\}$ consisting of a letter from $Q_i$.

\item A subset $Y(\theta)=\sqcup_{i=1}^N Y_i(\theta)$ of $Y$ such that each $Y_i(\theta)$ is a subset of $Y_i$.

\item A set of words $\{\a_{i,\theta},\omega_{i,\theta}\in F(Y_i(\theta)):i=1,\dots,N\}$

\end{itemize}

An admissible word $W$ is \textit{$\theta$-admissible} if every letter comprising it is a letter of $Q(\theta)^{\pm1}$ or $Y(\theta)^{\pm1}$.  In this case, the result of applying $\theta$ to $W$ is the admissible word $W\cdot\theta$ obtained by simultaneously:

\begin{itemize}

\item Replacing any occurrence of the $q_i^{\pm1}$ with the subword $(\omega_{i,\theta}q_i'\a_{i+1,\theta})^{\pm1}$, where for completeness $\omega_{0,\theta}$ and $\a_{N,\theta}$ are empty.

\item Making any necessary free reductions so that the word is again reduced.

\item Removing any prefix or suffix consisting of tape letters so that the word is again admissible.

\end{itemize}

It is common to represent $\theta$ in this case with the notation:
$$\theta=[q_0\to q_0'\a_{1,\theta}, \ q_1\to \omega_{1,\theta}q_1'\a_{2,\theta}, \ \dots, \ q_{N-1}\to \omega_{N-1,\theta}q_{N-1}'\a_{N,\theta}, \ q_N\to \omega_{N,\theta}q_N']$$
If $Y_i(\theta)=\emptyset$, then $\theta$ is said to \textit{lock the $Q_{i-1}Q_i$-sector}.  In this case, $\a_{i,\theta}$ and $\omega_{i,\theta}$ are both necessarily empty, while the notation $q_{i-1}\xrightarrow{\ell}\omega_{i-1,\theta}q_i'$ is used in the representation of the rule.

Note that this notation does not quite capture the full makeup of $\theta$, as it omits the \textit{domain} $Y(\theta)$.  In most cases, however, each set $Y_i(\theta)$ is either taken to be $Y_i$ or $\emptyset$, so that the locking notation suffices.  As such, this is taken as an implicit assumption unless explicitly stated otherwise.

The \textit{inverse $S$-rule} of $\theta$ is then the $S$-rule $\theta^{-1}$ on $(Y,Q)$ given by taking:

\begin{itemize}

\item $Q(\theta^{-1})=Q'(\theta)$ and $Q'(\theta^{-1})=Q(\theta)$,

\item $Y(\theta^{-1})=Y(\theta)$, and

\item $\omega_{\theta^{-1},i}=\omega_{\theta,i}^{-1}$ and $\a_{\theta^{-1},i}=\a_{\theta,i}^{-1}$.

\end{itemize}

This terminology is justified by noting that an admissible word $W$ is $\theta$-admissible if and only if $W\cdot\theta$ is $\theta^{-1}$-admissible, with $(W\cdot\theta)\cdot\theta^{-1}\equiv W$.

An \textit{$S$-machine} is then a rewriting system $\textbf{S}$ with a fixed hardware $(Y,Q)$ and a \textit{software} consisting of a finite symmetric set of $S$-rules $\Theta(\textbf{S})$ over $(Y,Q)$.  It is convenient to partition the hardware $\Theta(\textbf{S})=\Theta^+(\textbf{S})\sqcup\Theta^-(\textbf{S})$ such that $\theta\in\Theta^+(\textbf{S})$ if and only if $\theta^{-1}\in\Theta^-(\textbf{S})$; in this case, $\theta$ is called a \textit{positive rule} of $\textbf{S}$, while its inverse is called a \textit{negative rule}.

A \textit{computation} of $\textbf{S}$ is a finite sequence $\pazocal{C}:W_0\to\dots\to W_t$ of admissible words with fixed rules $\theta_1,\dots,\theta_t\in \Theta(\textbf{S})$ such that $W_{i-1}\cdot\theta_i\equiv W_i$.  In this case, $t$ is called the \textit{length} of $\pazocal{C}$ and the word $\theta_1\cdots\theta_t$ the \textit{history} of $\pazocal{C}$.  The computation $\pazocal{C}$ is said to be \textit{reduced} if its history is a reduced word in $F(\Theta^+(\textbf{S}))$; note that any computation can be made reduced without altering its initial or terminal admissible words simply by deleting any pairs of consecutive mutually inverse rules.

Given an $S$-machine $\textbf{S}$ with hardware $(Y,Q)$, it is common to fix in every part $Q_i$ of $Q$ a \textit{start} and an \textit{end} state letter.  A configuration of $\textbf{S}$ is called a \textit{start} or \textit{end} configuration if all the state letters that comprise it are start or end state letters, respectively.  

The $S$-machine $\textbf{S}$ is said to be \textit{recognizing} if there are fixed sectors $Q_{i-1}Q_i$ which are deemed to be \textit{input sectors}.  A start configuration $W$ of the recognizing $S$-machine $\textbf{S}$ is called an \textit{input configuration} if all of its non-input sectors have empty tape word.   In this case, the word obtained from $W$ by deleting all of its state letters is called its \textit{input}.  In contrast, the \textit{accept configuration} is the end configuration for which every sector has empty tape word.

A configuration $W$ of the recognizing $S$-machine $\textbf{S}$ is said to be \textit{accepted} if there exists an \textit{accepting computation} which begins with the configuration $W$ and terminates with the accept configuration.  If $W$ is an input configuration, then its input is also said to be accepted.

For a configuration $W$ accepted by $\textbf{S}$, define $\tm(W)$ to be the minimal length of a computation accepting $W$.  The \textit{time function} of $\textbf{S}$ is then the non-decreasing function $\TM_{\textbf{S}}:\N\to\N$ given by: $$\TM_{\textbf{S}}(n)=\max\{\tm(W): W\text{ is an accepted input configuration of } \textbf{S}, \ |W|_a\leq n\}$$
The \textit{generalized time functions} of $\textbf{S}$ is defined in much the same way, but accounts for any accepted configuration rather than just input configurations.

If two recognizing $S$-machines have the same language of accepted words and $\Theta$-equivalent time functions, then they are said to be \textit{equivalent}.

The following statement simplifies how one approaches the rules of a recognizing $S$-machine:

\begin{lemma} [Lemma 2.1 of \cite{O18}] \label{simplify rules}
 
Every recognizing $S$-machine $\textbf{S}$ is equivalent to a recognizing $S$-machine in which $|\a_{\theta,i}|_a,|\omega_{\theta,i}|_a\leq1$ for every rule $\theta$.

\end{lemma}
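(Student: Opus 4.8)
The plan is to replace every rule of $\textbf{S}$ by a short chain of auxiliary $S$-rules, each inserting at most one $a$-letter on either side of each state letter, arranged so that the chain is \emph{rigid}: once started, it must be executed to the end. Fix the hardware $(Y,Q)$ and software $\Theta(\textbf{S})$ of $\textbf{S}$, set $K=\max\{|\alpha_{i,\theta}|_a,|\omega_{i,\theta}|_a:\theta\in\Theta(\textbf{S}),\ 1\leq i\leq N\}$ (we may take $K\geq 1$), and for every $\theta\in\Theta^+(\textbf{S})$, part $Q_i$, and $1\leq k\leq K-1$ adjoin to $Q_i$ a new state letter $q_i^\theta[k]$, writing $q_i^\theta[0]$ for the letter of $Q_i(\theta)$ and $q_i^\theta[K]$ for the letter of $Q_i'(\theta)$. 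The new letters are declared to be neither start nor end letters; the parts of $Q$ and $Y$, the input sectors, and the start/end letters are otherwise unchanged, so the modified machine will again be recognizing. Replace each $\theta\in\Theta^+(\textbf{S})$ by the chain $\theta[1],\dots,\theta[K]$ (and $\theta^{-1}$ by $\theta[K]^{-1},\dots,\theta[1]^{-1}$), where
$$\theta[k]=\bigl[\,\dots,\ q_i^\theta[k-1]\to\omega_{i,\theta}^{\langle k\rangle}\,q_i^\theta[k]\,\alpha_{i+1,\theta}^{\langle k\rangle},\ \dots\,\bigr],$$
$\omega_{i,\theta}^{\langle k\rangle}$ denoting the $k$-th letter of $\omega_{i,\theta}$ read from the left and $\alpha_{i+1,\theta}^{\langle k\rangle}$ the $k$-th letter of $\alpha_{i+1,\theta}$ read from the right, both taken empty when $k$ exceeds the relevant $a$-length, with any sector locked by $\theta$ kept locked by each $\theta[k]$ and all other domains full. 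Telescoping the phases and using the confluence of free reduction (and that $\omega_{i,\theta},\alpha_{i+1,\theta}$ are reduced) shows that applying $\theta[1],\dots,\theta[K]$ in turn to a $\theta$-admissible word $W$ yields exactly $W\cdot\theta$: the left insertions accumulate as $\omega_{i,\theta}^{\langle1\rangle}\cdots\omega_{i,\theta}^{\langle K\rangle}=\omega_{i,\theta}$, while the right insertions, being attached nearest the state letter, accumulate in reverse as $\alpha_{i+1,\theta}^{\langle K\rangle}\cdots\alpha_{i+1,\theta}^{\langle1\rangle}=\alpha_{i+1,\theta}$. Call the resulting recognizing $S$-machine $\textbf{S}'$; its software is finite and symmetric, and every insertion word occurring in it has $a$-length at most $1$.

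It remains to check $\textbf{S}'$ is equivalent to $\textbf{S}$. The key point is that an intermediate letter $q_i^\theta[k]$ with $1\leq k\leq K-1$ occurs among the prescribed state letters of only the rules $\theta[k]$, $\theta[k+1]$ and their inverses; hence an admissible word whose state letters all have the form $q_i^\theta[k]$ (for a fixed $\theta$ and fixed such $k$) is $\psi$-admissible only for $\psi\in\{\theta[k+1],\theta[k]^{-1}\}$. Since every $S$-rule prescribes a state letter in every part, this means that once a \emph{reduced} computation of $\textbf{S}'$ performs $\theta[1]$ it is compelled to perform $\theta[2],\dots,\theta[K]$ before anything else; thus every reduced computation of $\textbf{S}'$ between configurations all of whose state letters are ``old'' — in particular between an input configuration and the accept configuration — is a concatenation of complete chains, from which one reads off an accepting computation of $\textbf{S}$ with the same endpoints. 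Conversely, any computation of $\textbf{S}$ lifts to $\textbf{S}'$ by expanding each step into its chain. Hence $\textbf{S}$ and $\textbf{S}'$ accept the same inputs. For the time functions, note first that $|W|_a$ is unchanged for every configuration $W$; a minimal (hence reduced) accepting computation of an input configuration $W$ in $\textbf{S}'$ has length $K$ times that of the corresponding accepting computation of $W$ in $\textbf{S}$, so $\tm_{\textbf{S}}(W)\leq\tm_{\textbf{S}'}(W)$, while lifting a minimal accepting computation of $W$ in $\textbf{S}$ gives $\tm_{\textbf{S}'}(W)\leq K\,\tm_{\textbf{S}}(W)$. Therefore $\TM_{\textbf{S}}\leq\TM_{\textbf{S}'}\leq K\,\TM_{\textbf{S}}$, the time functions are $\Theta$-equivalent, and $\textbf{S}$, $\textbf{S}'$ are equivalent.

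The main obstacle is not the idea but the meticulous verification that the phased insertions reproduce $W\cdot\theta$ in all cases: one must check that the free reductions and the deletion of boundary tape letters prescribed in the definition of rule application commute with inserting one $a$-letter at a time, and separately handle the signs $\eps_i=\pm1$ of the state-letter occurrences, the degenerate roles of $q_0$ (no $\omega$) and $q_N$ (no $\alpha$), sectors locked by $\theta$, and the passage to inverse rules. The rigidity argument, though simple, should also be stated with enough care to exclude reduced computations of $\textbf{S}'$ that enter a chain, wander off, and return — impossible precisely because, as noted, after $\theta[k]$ is applied the only applicable rules are $\theta[k+1]$ and $\theta[k]^{-1}$, so there is nowhere to wander.
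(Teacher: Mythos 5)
The paper does not prove this lemma at all---it is imported as Lemma 2.1 of \cite{O18}---and your argument is essentially the standard proof from that source: subdivide each rule $\theta$ into a chain $\theta[1],\dots,\theta[K]$ passing through fresh intermediate state letters, each inserting at most one $a$-letter on each side, with the chain made rigid because the intermediate letters occur only in the two consecutive chain rules, so reduced computations between configurations in the old state letters decompose into complete chains and the language and time function are preserved up to the factor $K$. Your construction and equivalence argument are correct; the one detail to tighten is that each $\theta[k]$ should inherit the original domains $Y_i(\theta)$ rather than being given full domains in the unlocked sectors, which agrees with what you wrote only under the paper's standing convention that every $Y_i(\theta)$ is either $Y_i$ or $\emptyset$.
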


As a result of Lemma \ref{simplify rules} and the assumption that every $a$-letter of $\a_{\theta,i}$ and $\omega_{\theta,i}$ is in the domain of $\theta$, it may be assumed that each part of every rule of an $S$-machine is of the form $q_i\to bq_i'a$ where $\|a\|,\|b\|\leq1$ (note that the corresponding part of $\theta^{-1}$ is then interpreted as $q_i'\to b^{-1}q_ia^{-1}$).

In fact, the analogous statement of \cite{O18} implies that one can enforce the stronger condition that $\|a\|+\|b\|\leq1$.  However, it will be convenient to allow $\|a\|=\|b\|=1$ in the definitions of the rules of the $S$-machines constructed in future sections, so we use the weaker statement of \Cref{simplify rules} for consistency.


\medskip


\subsection{Elementary properties} \

This section serves to recall several features of $S$-machines which follow from the definition adopted above and will serve useful in the analysis of the machines constructed in the ensuing sections.


\begin{lemma} \label{locked sectors}

If the rule $\theta$ locks the $Q_iQ_{i+1}$-sector, i.e it has a part $q_i\xrightarrow{\ell}aq_i'$ for some $q_i,q_i'\in Q_i$, then the base of any $\theta$-admissible word has no subword of the form $Q_iQ_i^{-1}$ or $Q_{i+1}^{-1}Q_{i+1}$.

\end{lemma}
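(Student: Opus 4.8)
The plan is a short proof by contradiction that leans entirely on the three admissible shapes a sector of an admissible word may take. Suppose $W$ is a $\theta$-admissible word and assume, for contradiction, that its base contains a subword $Q_iQ_i^{-1}$; the case of a subword $Q_{i+1}^{-1}Q_{i+1}$ is symmetric and I will dispatch it afterward. Two consecutive base letters ``$Q_i$ then $Q_i^{-1}$'' arise from two consecutive $q$-letters of $W$ with only $a$-letters between them, i.e. from a single sector of $W$. The first step is to pin down which of the three permitted sector forms can have flanking base letters ``$Q_i$ then $Q_i^{-1}$'', the same part with mixed sign: a form-(1) sector contributes base letters from two \emph{distinct} parts ($Q_{j-1}Q_j$, or its inverse $Q_j^{-1}Q_{j-1}^{-1}$), and a form-(3) sector $q^{-1}wq$ contributes ``$Q_j^{-1}$ then $Q_j$'', i.e. negative then positive. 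Only a form-(2) sector $qwq^{-1}$ with $q\in Q_{j-1}$ and $w\in F(Y_j)$ produces ``$Q_{j-1}$ then $Q_{j-1}^{-1}$''; matching indices gives $j-1=i$, so the offending sector is $qwq^{-1}$ with $q\in Q_i$ and $w\in F(Y_{i+1})$.

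Next I would use that $W$ is a reduced word: a subword of a reduced word is reduced, and $qq^{-1}$ is not reduced, so $w$ cannot be empty. Hence $w$ contains a letter $a^{\pm1}$ with $a\in Y_{i+1}$, and this letter occurs in $W$. Now invoke $\theta$-admissibility: every letter comprising $W$ lies in $Q(\theta)^{\pm1}\cup Y(\theta)^{\pm1}$, so in particular $a^{\pm1}\in Y(\theta)^{\pm1}=\bigl(\bigsqcup_{k=1}^N Y_k(\theta)\bigr)^{\pm1}$; since the parts $Y_1,\dots,Y_N$ are disjoint and $Y_k(\theta)\subseteq Y_k$, from $a\in Y_{i+1}$ we get $a\in Y_{i+1}(\theta)$, so $Y_{i+1}(\theta)\neq\emptyset$. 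But the hypothesis that $\theta$ locks the $Q_iQ_{i+1}$-sector is precisely the statement $Y_{i+1}(\theta)=\emptyset$, a contradiction. For the remaining case, a base subword $Q_{i+1}^{-1}Q_{i+1}$ forces a form-(3) sector $q^{-1}wq$ with $q\in Q_{i+1}$ and, matching the index in ``$q\in Q_j$, $w\in F(Y_j)$'', again $w\in F(Y_{i+1})$, nonempty by reducedness, yielding the same contradiction.

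I do not expect a genuine obstacle here; the one point that demands care is the index bookkeeping in the first step — confirming that the tape word of the offending sector lies in $F(Y_{i+1})$ and not $F(Y_i)$ — which I would cross-check against the convention recalled earlier that locking the $Q_iQ_{i+1}$-sector corresponds to the rule part written $q_i\xrightarrow{\ell}\omega_{i,\theta}q_i'$ and forces $\alpha_{i+1,\theta},\omega_{i+1,\theta}$ to be empty because $Y_{i+1}(\theta)=\emptyset$.
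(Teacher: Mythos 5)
Your proof is correct and is exactly the argument the paper intends: the lemma appears in the ``Elementary properties'' subsection with no written proof, being treated as an immediate consequence of the definitions. Your verification — only a form-(2) sector can produce the base subword $Q_iQ_i^{-1}$ and only a form-(3) sector can produce $Q_{i+1}^{-1}Q_{i+1}$, in either case with tape word in $F(Y_{i+1})$ that is nonempty because admissible words are reduced, contradicting $Y_{i+1}(\theta)=\emptyset$ via the definition of $\theta$-admissibility — is precisely that omitted elementary argument, with the index bookkeeping done correctly.
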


Through the rest of this manuscript, we will often use copies of words over disjoint alphabets. To be precise, let $A$ and $B$ be disjoint alphabets and $\varphi:A\to B$ be an injection.  Then for any word $a_1^{\eps_1}\dots a_k^{\eps_k}$ with $a_i\in A$ and $\eps_i\in\{\pm1\}$, its \textit{copy} over the alphabet $B$ formed by $\varphi$ is the word $\varphi(a_1)^{\eps_1}\dots\varphi(a_k)^{\eps_k}$. Typically, the injection defining the copy will be contextually clear.

Alternatively, a copy of an alphabet $A$ is a disjoint alphabet $A'$ which is in one-to-one correspondence with $A$. For a word over $A$, its copy over $A'$ is defined by the bijection defining the correspondence between the alphabets.


\begin{lemma} [Lemma 2.7 of \cite{O18}] \label{multiply one letter}

Let $X_i$ be a subset of $Y_i\cup Y_i^{-1}$ with $X_i\cap X_i^{-1}=\emptyset$.  Let $\Theta_i^+$ be a set of positive rules in correspondence with $X_i$ such that each rule multiplies the $Q_{i-1}Q_i$-sector by the corresponding letter on the left (respectively on the right).  Let $\pazocal{C}:W_0\to\dots\to W_t$ be a reduced computation with base $Q_{i-1}Q_i$ and history $H\in F(\Theta_i^+)$.  Denote the tape word of $W_j$ as $u_j$ for each $0\leq j\leq t$.  Then:


\begin{enumerate} [label=({\alph*})]

\item $H$ is the natural copy of the reduced form of the word $u_tu_0^{-1}$ read from right to left (respectively the word $u_0^{-1}u_t$ read left to right). In particular, if $u_0\equiv u_t$, then the computation is empty

\item $\|H\|\leq\|u_0\|+\|u_t\|$

\item if $\|u_{j-1}\|<\|u_j\|$ for some $1\leq j\leq t-1$, then $\|u_j\|<\|u_{j+1}\|$

\item $\|u_j\|\leq\max(\|u_0\|,\|u_t\|)$

\end{enumerate}

\end{lemma}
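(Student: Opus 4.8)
The plan is to analyze a reduced computation $\pazocal{C}:W_0\to\dots\to W_t$ whose base is the single sector $Q_{i-1}Q_i$ and whose rules all lie in $\Theta_i^+$, so each rule just multiplies the tape word $u_j$ on one side (say the left, the right case being symmetric) by a fixed letter $x\in X_i$ attached to a rule $\theta_x\in\Theta_i^+$, and then reduces. First I would set up the basic step analysis: if $\theta_x$ is the rule corresponding to $x\in X_i$, then applying $\theta_x$ (or $\theta_x^{-1}$) to an admissible word with tape word $u$ produces the reduced form of $xu$ (resp. $x^{-1}u$). Thus, reading the history $H=\theta_1\cdots\theta_t$ and letting $x_j^{\delta_j}$ be the letter (with sign) prepended at step $j$, we get $u_j = (x_j^{\delta_j} u_{j-1})$ freely reduced, so that $u_t$ is the reduced form of $x_t^{\delta_t}\cdots x_1^{\delta_1} u_0$. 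For part (a), the key point is that since $\pazocal{C}$ is reduced, its history $H$ is a reduced word in $F(\Theta_i^+)$; under the natural bijection $\Theta_i^+\leftrightarrow X_i$ this means $x_t^{\delta_t}\cdots x_1^{\delta_1}$ is already freely reduced. Then $u_t u_0^{-1} = x_t^{\delta_t}\cdots x_1^{\delta_1}$ in $F(Y_i)$; I would verify there is no cancellation between the $x$-product and $u_0$ by an argument on the length sequence (see below) or by directly observing that any such cancellation would, traced back, force a cancellation inside the $x$-product, contradicting reducedness of $H$. Reading $x_t^{\delta_t}\cdots x_1^{\delta_1}$ from right to left recovers $H$ as claimed; and if $u_0\equiv u_t$ then this reduced word is empty, forcing $t=0$.

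For the length statements (b), (c), (d), the main engine is the behavior of the sequence $\|u_0\|,\|u_1\|,\dots,\|u_t\|$: each step changes the length by exactly $\pm 1$ (prepending a letter either does not cancel, $+1$, or cancels the first letter of $u_{j-1}$, $-1$; it cannot cancel more since only one letter is prepended). So the sequence is a $\pm 1$ walk. The crucial monotonicity claim (c) — once the walk goes up it keeps going up, i.e. $\|u_{j-1}\|<\|u_j\|$ for $1\le j\le t-1$ implies $\|u_j\|<\|u_{j+1}\|$ — I would prove by tracking which letter sits at the front of $u_j$. If step $j$ was an increase, the prepended letter $x_j^{\delta_j}$ survives and is the first letter of $u_j$; if step $j+1$ were a decrease, it would cancel that letter, meaning $x_{j+1}^{\delta_{j+1}} = (x_j^{\delta_j})^{-1}$, which makes $\theta_{j+1}=\theta_j^{-1}$ and contradicts the reducedness of the history. (This is why the hypothesis $1\le j\le t-1$ rather than $j=t$: the last step has no successor.) Hence the length walk is "valley-shaped": it decreases (weakly — actually strictly while decreasing) then increases, with a single minimum, so its maximum is attained at an endpoint, giving (d): $\|u_j\|\le\max(\|u_0\|,\|u_t\|)$. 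Finally (b): since the walk has at most one "turn", the number of steps $t$ is at most (length of descent) $+$ (length of ascent) $\le \|u_0\| + \|u_t\|$ (more carefully, $t = (\|u_0\|-\|u_m\|)+(\|u_t\|-\|u_m\|)$ where $u_m$ is the minimum, which is $\le \|u_0\|+\|u_t\|$), and $\|H\|=t$.

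The step I expect to be the main obstacle — though it is more bookkeeping than depth — is pinning down precisely how a single rule of $\Theta_i^+$ acts on the tape word, since the general $S$-rule formalism allows prepending $\omega_{i,\theta}$ on the left of the state letter $q_i$ and appending $\alpha_{i+1,\theta}$ on its right, plus free reductions and the admissibility trimming of leading/trailing tape letters. I would need the hypothesis "multiplies the sector by the corresponding letter on the left" to mean, concretely, that for the single-sector base $Q_{i-1}Q_i$ exactly one of these insertions is the single letter $x$ and the relevant neighboring insertion is empty, so that the net effect on $u$ is $u\mapsto xu$ freely reduced — and in particular that the admissibility trimming does not remove anything in the interior of a computation with this base (the tape word sits strictly between the two state letters). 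Once that normal-form-of-a-single-step claim is nailed down, parts (a)–(d) all follow from the elementary $\pm 1$-walk and reduced-history arguments sketched above, essentially as in the proof of the cited Lemma 2.7 of \cite{O18}.
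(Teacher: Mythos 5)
Your proof is correct and follows essentially the standard argument for this statement, which the paper itself does not reprove but imports as Lemma 2.7 of \cite{O18}: a single rule changes $\|u_j\|$ by exactly $\pm1$, reducedness of the history together with $X_i\cap X_i^{-1}=\emptyset$ rules out a decrease immediately after an increase (giving (c), hence the valley-shaped length walk and (b), (d)), and the free-group identity $u_tu_0^{-1}=x_t^{\delta_t}\cdots x_1^{\delta_1}$ gives (a). One cosmetic remark: in (a) you do not need to check cancellation between the $x$-product and $u_0$ at all — since the $x$-product is a reduced word equal to $u_tu_0^{-1}$ in $F(Y_i)$, it is automatically the reduced form of that element.
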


Note that in the setting of \Cref{multiply one letter}, for any two words $w_1,w_2\in F(Y_i)$ comprised entirely of letters from $X_i\cup X_i^{-1}$, there exists a computation of the same form as that described in \Cref{multiply one letter} with initial and terminal tape words $w_1$ and $w_2$, respectively.

%
%
%
%
%
%
%

\begin{lemma} [Lemma 2.8 of \cite{O18}] \label{multiply two letters}

Let $X_\ell$ and $X_r$ be disjoint subsets of $Y_i$ which are copies of some set $X$.  Let $\Theta_i^+$ be a set of positive (or negative) rules in correspondence with $X$ such that each rule multiplies the $Q_{i-1}Q_i$-sector on the left by the corresponding letter's copy in $X_\ell$ and on the right by the copy in $X_r$.  Let $\pazocal{C}:W_0\to\dots\to W_t$ be a reduced computation with base $Q_{i-1}Q_i$ and history $H\in F(\Theta_i^+)$.  Denote the tape word of $W_j$ as $u_j$ for each $0\leq j\leq t$.  Then:


\begin{enumerate}[label=({\alph*})]

\item if $\|u_{j-1}\|<\|u_j\|$ for some $0\leq j\leq t-1$, then $\|u_j\|<\|u_{j+1}\|$

\item $\|u_j\|\leq\max(\|u_0\|,\|u_t\|)$ for each $j$

\item $\|H\|\leq\frac{1}{2}(\|u_0\|+\|u_t\|)$.

\end{enumerate}

\end{lemma}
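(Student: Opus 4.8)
The plan is to pin down exactly what one step $u_{j-1}\to u_j$ does to the tape word, and bootstrap all three parts from that. Write $H=\theta_1\cdots\theta_t$ with $\theta_k=\sigma_k^{\eta_k}$, $\eta_k\in\{\pm1\}$, where $\sigma_k\in\Theta_i^+$ is the rule corresponding to $x_k\in X$, and let $x_{k,\ell}\in X_\ell$, $x_{k,r}\in X_r$ be the two copies of $x_k$. Since the base is $Q_{i-1}Q_i$, each $W_j$ equals $q_{i-1}u_jq_i$ for fixed $q_{i-1}\in Q_{i-1}$, $q_i\in Q_i$, the tape word $u_j$ is reduced, and the definition of rule application gives $u_j\equiv\mathrm{red}\bigl(x_{j,\ell}^{\eta_j}\,u_{j-1}\,x_{j,r}^{\eta_j}\bigr)$ (this uses only the hypothesis on how the rules act, independently of whether $\Theta_i^+$ consists of positive or negative rules). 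A single prepended (resp. appended) letter cancels at most one letter of $u_{j-1}$, so $\|u_j\|-\|u_{j-1}\|\in\{-2,0,2\}$, with value $+2$ precisely when the prepended and appended letters both survive; in that case $u_j\equiv x_{j,\ell}^{\eta_j}u_{j-1}x_{j,r}^{\eta_j}$ with no cancellation, so $u_j$ starts with $x_{j,\ell}^{\eta_j}$ and ends with $x_{j,r}^{\eta_j}$.

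For (a), take $1\le j\le t-1$ with $\|u_{j-1}\|<\|u_j\|$; then $u_j$ starts with $x_{j,\ell}^{\eta_j}$ and ends with $x_{j,r}^{\eta_j}$. In forming $u_{j+1}\equiv\mathrm{red}(x_{j+1,\ell}^{\eta_{j+1}}u_jx_{j+1,r}^{\eta_{j+1}})$, the prepended letter cancels only if $x_{j+1,\ell}^{\eta_{j+1}}=x_{j,\ell}^{-\eta_j}$, i.e. $x_{j+1}=x_j$ and $\eta_{j+1}=-\eta_j$, i.e. $\theta_{j+1}=\theta_j^{-1}$, which is impossible since $\pazocal{C}$ is reduced; likewise for the appended letter. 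So both survive and $\|u_{j+1}\|=\|u_j\|+2>\|u_j\|$. Part (b) follows formally: by (a) the set of indices where the length strictly increases is a terminal segment $\{m+1,\dots,t\}$ of $\{1,\dots,t\}$, hence $\|u_0\|\ge\cdots\ge\|u_m\|$ and $\|u_m\|\le\cdots\le\|u_t\|$, so $\|u_j\|\le\max(\|u_0\|,\|u_t\|)$ for every $j$.

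For (c), iterate the single-step formula to get $u_t\equiv\mathrm{red}(A\,u_0\,C)$ with $A=x_{t,\ell}^{\eta_t}\cdots x_{1,\ell}^{\eta_1}$ and $C=x_{1,r}^{\eta_1}\cdots x_{t,r}^{\eta_t}$. Since $\pazocal{C}$ is reduced, $x_1^{\eta_1}\cdots x_t^{\eta_t}$ is reduced in $F(X)$, hence so are its copies/reversals $A$ and $C$, each of length $t$; moreover $X_\ell$ and $X_r$ are disjoint (and disjoint from the rest). Therefore, in freely reducing $A\,u_0\,C$, no $A$-letter cancels a $C$-letter, no two $A$-letters cancel (this would exhibit a nonempty subword of the reduced word $A$ equal to $1$ in the free group), and likewise no two $C$-letters cancel; so every cancelled pair uses at least one letter of $u_0$. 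As each letter of $u_0$ lies in at most one cancelled pair, the number $c$ of cancelled pairs is $\le\|u_0\|$, whence $\|u_t\|=\|A\|+\|u_0\|+\|C\|-2c=2t+\|u_0\|-2c\ge 2t-\|u_0\|$, i.e. $\|H\|=t\le\frac{1}{2}(\|u_0\|+\|u_t\|)$.

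The only genuinely delicate point is this cancellation count in (c): one must be sure the left and right multiplications cannot conspire to create extra cancellations, which is exactly where the disjointness of the alphabets $X_\ell,X_r$ (not merely of the indexing of the rules) is used, together with the reducedness of $A$ and $C$. Degenerate cases ($t=0$, $u_0$ empty, or $u_0$ built entirely from $X_\ell^{\pm1}\cup X_r^{\pm1}$-letters) deserve a quick sanity check but are absorbed uniformly, since the counting never uses which alphabet the letters of $u_0$ come from. (This is Lemma 2.8 of \cite{O18}; the above streamlines the argument given there.)
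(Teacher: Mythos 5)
Your proof is correct. The paper itself gives no argument for this statement (it is quoted as Lemma 2.8 of \cite{O18}), and your route—pinning down the single-step effect $u_j\equiv\mathrm{red}(x_{j,\ell}^{\eta_j}u_{j-1}x_{j,r}^{\eta_j})$, using reducedness of the history to show lengths keep increasing once they increase, and counting cancellations in $\mathrm{red}(A\,u_0\,C)$ via disjointness of $X_\ell$ and $X_r$—is essentially the standard argument from that source; the only quibble is that the index range in (a) should be read as $1\le j\le t-1$, which is what you in fact used.
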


%
%
%
%
%
%

\begin{lemma} [Lemma 3.6 of \cite{OS12}] \label{unreduced base}

Suppose $\pazocal{C}:W_0\to\dots\to W_t$ is a reduced computation of an $S$-machine with base $Q_iQ_i^{-1}$ (respectively $Q_i^{-1}Q_i$). For $0\leq j\leq t$, let $u_j$ be the tape word of $W_j$. Suppose each rule of $\pazocal{C}$ multiplies the $Q_iQ_{i+1}$-sector (respectively the $Q_{i-1}Q_i$-sector) by a letter from the left (respectively from the right), with different rules corresponding to different letters. Then there exists a factorization $H\equiv H_1H_2^\ell \bar{H}_2H_3$ such that:

\begin{enumerate}

\item $\ell\in\N$

\item $\bar{H_2}$ is a proper prefix of $H_2$.

\item $|W_i|_a=|W_{i-1}|_a-2$ for all $1\leq i\leq\|H_1\|$

\item $|W_i|_a=|W_{i-1}|_a=\|H_2\|$ for all $\|H_1\|+1\leq i\leq t-\|H_3\|$

\item $|W_i|_a=|W_{i-1}|_a+2$ for all $t-\|H_3\|+1\leq i\leq t$

\end{enumerate}


\end{lemma}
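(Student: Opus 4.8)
The plan is to reduce everything to an explicit description of how a single rule acts on the tape word of a $Q_iQ_i^{-1}$-sector, and I will treat only this base, the base $Q_i^{-1}Q_i$ case being entirely symmetric (interchange ``left''/``right'' and invert the relevant letters). Write $W_j\equiv q_ju_jq_j^{-1}$ with $q_j\in Q_i$ and $u_j\in F(Y_{i+1})$; since admissible words are reduced, each $u_j$ is nonempty, so $|W_j|_a=\|u_j\|\geq1$. By hypothesis each rule $\theta_{j+1}$ carries an associated letter $b_{j+1}$ over $Y_{i+1}$ by which it multiplies the relevant sector on the left, distinct rules carry distinct letters, and $\theta^{-1}$ carries $b_\theta^{-1}$; in particular $\theta_{j+1}=\theta_j^{-1}$ if and only if $b_{j+1}=b_j^{-1}$. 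Applying $\theta_{j+1}$ to $q_ju_jq_j^{-1}$ and then deleting the tape prefix/suffix and freely reducing produces $q_{j+1}u_{j+1}q_{j+1}^{-1}$ with $u_{j+1}$ the reduced form of $b_{j+1}u_jb_{j+1}^{-1}$. Hence $\|u_{j+1}\|-\|u_j\|\in\{-2,0,2\}$, and the step increases the $a$-length by $2$ precisely when $u_j$ neither begins with $b_{j+1}^{-1}$ nor ends with $b_{j+1}$.

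Next I would establish the ``valley'' shape of $j\mapsto|W_j|_a$. The crucial point is: if step $j$ increases the $a$-length, so does step $j+1$. Indeed then $u_j\equiv b_ju_{j-1}b_j^{-1}$ begins with $b_j$ and ends with $b_j^{-1}$, so step $j+1$ can fail to increase the $a$-length only if $b_{j+1}=b_j^{-1}$, which forces $\theta_{j+1}=\theta_j^{-1}$ and contradicts that $\pazocal{C}$ is reduced. Applying this both to $\pazocal{C}$ and to the reversed (again reduced) computation $W_t\to\dots\to W_0$ shows there are integers $0\leq k'\leq k\leq t$ such that steps $1,\dots,k'$ each decrease $|W_\bullet|_a$ by $2$, steps $k'+1,\dots,k$ leave it constant, and steps $k+1,\dots,t$ each increase it by $2$. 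Setting $H_1=\theta_1\cdots\theta_{k'}$ and $H_3=\theta_{k+1}\cdots\theta_t$ gives conclusions (3), (5), and the equality $|W_i|_a=|W_{i-1}|_a$ in (4).

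It then remains to analyze the constant block, steps $k'+1,\dots,k$, on which $\|u_j\|$ equals some fixed $m\geq1$. On each such step exactly one of the two possible cancellations occurs, so $u_{j+1}$ is a cyclic rotation of $u_j$: a left rotation (first letter moved to the end) when $u_j$ begins with $b_{j+1}^{-1}$, a right rotation when $u_j$ ends with $b_{j+1}$ (both fixing $u_j$ when $m=1$). The rotation direction cannot change within the block: a left rotation at step $j$ followed by a right rotation at step $j+1$, or vice versa, again forces $b_{j+1}=b_j^{-1}$ and hence $\theta_{j+1}=\theta_j^{-1}$, which is impossible. Writing $u_{k'}\equiv x_1\cdots x_m$ and assuming, say, all rotations are to the left, the $\ell$-th step of the block multiplies on the left by $x_\ell^{-1}$, so $\theta_{k'+\ell}$ is the unique rule carrying the letter $x_\ell^{-1}$; since $u_{k'+m}\equiv u_{k'}$, the block history is periodic with period $m$. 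Taking $H_2$ to be the length-$m$ word determined by $u_{k'}$ in this way, the block history is $H_2^\ell\bar H_2$ with $\ell=\lfloor(k-k')/m\rfloor$ and $\bar H_2$ the prefix of $H_2$ of length $(k-k')\bmod m$, a proper prefix; and $|W_i|_a=m=\|H_2\|$ on the block. This yields (1), (2), and the remaining equality in (4). (If the block is empty one takes $\ell=0$, $\bar H_2$ empty, and $H_2$ any single rule of $\pazocal{C}$, the case of an empty computation being trivial.)

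I expect the conceptual content — that neither the growth direction nor the rotation direction can reverse — to be the easy part, each instance reducing to the observation that the offending cancellation pattern would create a subword $\theta_j\theta_j^{-1}$ of the history. The bulk of the work, and the main place for care, is the constant block: verifying the precise cyclic-rotation description of each constant step (in particular that the reduced form of $b_{j+1}u_jb_{j+1}^{-1}$ behaves as claimed at short tape words, notably $m=1$), and checking that $H_2$ may be taken to be a genuine length-$m$ block with $\bar H_2$ a proper prefix even when the constant block runs for fewer than $m$ steps. None of this is deep, but it is where the proof will spend most of its length.
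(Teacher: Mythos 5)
Your proof is correct. Note that the paper offers no argument of its own for this statement: it is imported as Lemma 3.6 of \cite{OS12} (in a reformulated statement), with the remark that the proof given there applies verbatim, so your write-up serves as a self-contained substitute for that citation rather than an alternative to a proof in this paper. The route you take is exactly the standard one behind the cited lemma: each step replaces the tape word $u$ by the reduced form of $b u b^{-1}$, so $|W_j|_a$ changes by $\pm 2$ or $0$; reducedness of the history together with the injective rule-to-letter correspondence rules out a growth step followed by a non-growth step (and, by reversing the computation, a shrink step preceded by one), giving the valley shape and hence $H_1,H_3$; and on the plateau each step is a one-letter cyclic rotation whose direction cannot flip, which forces the plateau history to be $H_2^\ell\bar H_2$ with $\|H_2\|=m=|W_i|_a$ -- and your treatment of the delicate points (tape word of length one, plateau shorter than $m$, empty plateau or empty computation) is accurate.
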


Note that the statement of \Cref{unreduced base} above takes a different form from its analogue in \cite{OS12}, representing the distinct purpose it is used for here.  However, it follows from an identical proof to that presented in \cite{OS12}.

Further, in the setting of \Cref{unreduced base} the subcomputation with history $H_2^\ell$ cyclically permutes the tape word $\ell$ complete times, while the subcomputation with history $\bar{H}_2$ cyclically permutes just a proper prefix or suffix of the word.  In particular, letting $h_i=\|H_i\|$, we have $W_{h_1}\equiv W_{h_1+kh_2}$ for all $0\leq k\leq \ell$.

The next statement, new to this manuscript, is a consequence of \Cref{unreduced base} and will be vital to the arguments of \Cref{sec-defective}.

\begin{lemma} \label{unreduced base quadratic}

Let $\textbf{S}$ be an $S$-machine satisfying \Cref{simplify rules} and suppose every rule of $\textbf{S}$ multiplies the $Q_iQ_{i+1}$ sector (respectively the $Q_{i-1}Q_i$-sector) by a letter from the left (respectively from the right), with different rules corresponding to different letters.  Let $\pazocal{C}:W_0\to\dots\to W_t$ be a nonempty reduced computation whose base $B$ contains a subword of the form $Q_iQ_i^{-1}$ (respectively $Q_i^{-1}Q_i)$.  Suppose $\pazocal{C}$ has minimal length among computations between $W_0$ and $W_t$.  Then there exists a two-letter subword $UV$ of $B$ such that:

\begin{enumerate}

\item There exists a rule in the history of $\pazocal{C}$ which multiplies the $UV$-sector by a letter on the left or on the right

\item Letting $\pazocal{C}':W_0'\to\dots\to W_t'$ the restriction of $\pazocal{C}$ to the base $UV$, we have $t\leq 12n^2+2n$ for $n=\max(|W_0'|_a,|W_t'|_a)$.

\end{enumerate}

\end{lemma}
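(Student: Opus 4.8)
The plan is a case analysis on how $B$ sits around one of its subwords equal to $Q_iQ_i^{-1}$ (the $Q_i^{-1}Q_i$ case is symmetric): in each case I restrict $\pazocal{C}$ to a suitable two‑letter sub‑base and invoke \Cref{multiply one letter} or \Cref{unreduced base} together with the minimality of $\pazocal{C}$.

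If $B$ contains a subword $UV$ equal to a $Q_iQ_{i+1}$‑sector or the inverse of one, take it as $UV$: every rule of $\textbf{S}$ (hence of the history $H$) multiplies this sector by one letter on the left, respectively on the right, so $(1)$ holds, and the restriction $\pazocal{C}'$ of $\pazocal{C}$ to $UV$ is a reduced computation whose history $H$ lies in the free group on these rules; \Cref{multiply one letter}(b) then yields $t=\|H\|\leq|W_0'|_a+|W_t'|_a\leq 2n$, well inside $12n^2+2n$. Otherwise, fix an occurrence of $Q_iQ_i^{-1}$ in $B$ and let $\pazocal{C}'$ be the restriction of $\pazocal{C}$ to it, with $n=\max(|W_0'|_a,|W_t'|_a)$ (so $n\geq1$, a $Q_iQ_i^{-1}$‑sector having nonempty tape). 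Applying a rule conjugates the tape word there by the single letter $\a_{\theta,i+1}$, so $(1)$ holds since $\pazocal{C}$ is nonempty; and $\pazocal{C}'$ satisfies the hypotheses of \Cref{unreduced base}, giving $H\equiv H_1H_2^\ell\bar H_2H_3$. Since $|W_j'|_a$ falls by $2$ along $H_1$ to the flat value $\|H_2\|$ and rises by $2$ along $H_3$ from it, $\|H_1\|,\|H_3\|\leq n/2$, $\|H_2\|\leq n$, and $\|\bar H_2\|<\|H_2\|\leq n$, so
\[
t=\|H_1\|+\ell\|H_2\|+\|\bar H_2\|+\|H_3\|\leq\ell n+2n .
\]
If $\|H_2\|=0$ or $\ell\leq12n$ this is already $\leq12n^2+2n$, and we are done with $UV=Q_iQ_i^{-1}$.

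It remains to treat $\|H_2\|\geq1$ and $\ell>12n$, where I would use minimality to move $UV$. With $h_j=\|H_j\|$ and $V_k=W_{h_1+kh_2}$, the remark after \Cref{unreduced base} shows all $V_k$ agree on the $Q_iQ_i^{-1}$‑sector, and minimality makes them pairwise distinct (a positive‑length subcomputation between two equal $V_k$ would be excisable). Iterating the fixed block $H_2$ acts on each conjugation‑type sector of $B$ by conjugation by a fixed element of length $\leq h_2$, on each multiplication‑type sector by fixed words of length $\leq h_2$ on each side, and on the finitely many state letters by a fixed bijection. I would then show: either (a) some sector $S$ is never stabilized by iterating $H_2$, so its $a$‑length grows linearly in $k$ — whence, allowing for the $O(n)$ drift over $H_1,\bar H_2,H_3$, one gets $n':=\max(|W_0|_a^{S},|W_t|_a^{S})\geq n$ with $\ell\leq 2n'$, so $t\leq\ell n+2n\leq 2(n')^2+2n'$ and we take $UV=S$; or (b) every sector stabilizes within $k_0\leq\max_S|W_{h_1}|_a^{S}=O(n^\star)$ iterations (writing $n^\star=\max_S\max(|W_0|_a^{S},|W_t|_a^{S})$), after which the $V_k$ differ only in state letters and so must repeat within a bounded number of further steps, forcing $\ell=O(n^\star)$ and hence $t\leq\ell n+2n=O((n^\star)^2)$, so we take $UV$ to be a sector achieving $n^\star$. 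Either way $(1)$ holds and $t\leq12n_{UV}^2+2n_{UV}$, the slack in the constant $12$ and the cutoff $\ell\leq12n$ absorbing the drift terms and the bounded machine‑constants.

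The main obstacle I anticipate is precisely this last step: proving the dichotomy (a)/(b) — in particular that the ``non‑grower'' situation cannot persist for more than $O(n^\star)$ iterations of $H_2$ — and carrying along all the lower‑order quantities (sector‑length drift over $H_1,\bar H_2,H_3$, the cancellation depths in the linear‑growth estimate, and the number of state letters) precisely enough that the final bound still fits inside $12n^2+2n$.
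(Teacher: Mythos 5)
Your treatment of the easy cases agrees with the paper's: restrict to the $Q_iQ_i^{-1}$ occurrence, invoke \Cref{unreduced base} to get $H\equiv H_1H_2^\ell\bar H_2H_3$, and dispose of small $\ell$ (your single cutoff $\ell\leq 12n$ with $t\leq \ell n+2n$ is fine). But the case $\ell>12n$ is where the lemma's content lies, and there your argument has a genuine gap. Your dichotomy (a)/(b) is neither established nor really coherent. For (b): in a free group, if the map $w\mapsto u_Swv_S$ induced by the block $H_2$ does not fix a sector's tape word at the start, no later iterate can be fixed (if $u_S(u_S^kw_Sv_S^k)v_S=u_S^kw_Sv_S^k$, cancelling gives $u_S^kw_Sv_S^k=u_S^{k-1}w_Sv_S^{k-1}$ and so on down to $u_Sw_Sv_S=w_S$); so ``stabilization after $k_0$ steps'' never happens, and if all sectors were fixed from the outset the words $V_k$ would coincide, which you yourself ruled out by minimality. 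Moreover, the sector you pick in (b) (``one achieving $n^\star$'') need not satisfy condition (1). What the proof actually needs, and what you only assert as ``its $a$-length grows linearly in $k$,'' is a quantitative lower bound for $\|u^\ell w v^\ell\|$ when $uwv\neq w$: because of cancellation the length can shrink for many iterations before growing (e.g.\ $w\equiv u^{-m}z$), so linear growth per step is false and one needs an estimate of the form $\|u^\ell wv^\ell\|\geq \ell-\|w\|-2h_2$. The paper gets exactly this from Lemma 8.1 of \cite{OSconj}, applied to the sector $B^{(2)}$ on which $W_{h_1}$ and $W_{h_1+\ell h_2}$ differ (whose existence follows from minimality, as in your distinctness observation), and then converts it, via the drift bounds $|W_0^{(2)}|_a\geq\|w\|-2h_1$, $|W_t^{(2)}|_a\geq\|w'\|-2\bar h_2-2h_3$ and the cutoffs $\ell\geq 12h_2$, $\ell\geq 12(h_1+h_3)$, into $n\geq \ell/6$ and hence $t\leq 6n^2$ for that sector. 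Your claims $\ell\leq 2n'$ and $n'\geq n$ in case (a) are consequences of this estimate, not substitutes for it; without it (or an equivalent cancellation argument) the crucial case remains unproved, as you yourself anticipate in your closing paragraph.
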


\begin{proof}

Let $B^{(1)}$ be the two-letter subword of $B$ of the form $Q_iQ_i^{-1}$ (respectively $Q_i^{-1}Q_i$) given by the hypothesis.  Note that $B^{(1)}$ satisfies condition (1) of the statement.

Let $\pazocal{C}^{(1)}:W_0^{(1)}\to\dots\to W_t^{(1)}$ be the restriction of $\pazocal{C}$ to $B^{(1)}$.  By hypothesis, $\pazocal{C}^{(1)}$ satisfies the hypotheses of \Cref{unreduced base}, so that there exists a factorization $H\equiv H_1H_2^\ell \bar{H}_2 H_3$ of the history of $\pazocal{C}$ satisfying the conclusion of that statement.

Let $h_j=\|H_j\|$ and $\bar{h}_2=\|\bar{H}_2\|$.  Note that condition (2) of \Cref{unreduced base} yields $\bar{h}_2\leq h_2$.  Moreover, condition (3) yields $|W_0^{(1)}|_a=|W_{h_1}^{(1)}|_a+2h_1$, while condition (5) yields $|W_t^{(1)}|_a=|W_{t-h_3}^{(1)}|_a+2h_3$.  By construction, $W_{h_1}^{(1)}\equiv W_{h_1+\ell h_2}^{(1)}$, with $|W_{h_1}^{(1)}|_a=h_2$.

Suppose $\ell\leq1$.  Then $|W_0^{(1)}|_a+|W_t^{(1)}|_a=2(h_1+h_2+h_3)\geq 2h_1+h_2+\bar{h}_2+2h_3\geq t$, so that $t\leq2\max(|W_0^{(1)}|_a,|W_t^{(1)}|_a)$.  Hence, the statement is satisfied by setting $UV=B^{(1)}$.  


As a result, it suffices to assume that $\ell\geq2$.  In particular, $W_{h_1}$ and $W_{h_1+h_2}$ must be $\theta$-admissible for the first letter of $H_2$, and so have the same state letters.  Iterating, $W_{h_1}$ and $W_{h_1+\ell h_2}$ also have the same state letters.

Note that if $W_{h_1}\equiv W_{h_1+\ell h_2}$, then $H_1\bar{H}_2 H_3$ is the history of a reduced computation between $W_0$ and $W_t$.  So, the minimality hypothesis would imply $\ell=0$, contradicting our assumption above.  

Hence, it suffices to assume that there exists a two-letter subword $B^{(2)}$ such that for the restriction $\pazocal{C}^{(2)}:W_0^{(2)}\to\dots\to W_t^{(2)}$ of $\pazocal{C}$ to $B^{(2)}$, $W_{h_1}^{(2)}$ and $W_{h_1+\ell h_2}^{(2)}$ have different tape words.  As such, $B^{(2)}$ satisfies condition (1) of the statement.

Let $w$ be the tape word of $W_{h_1}^{(2)}$.  By \Cref{simplify rules}, there exist tape words $u,v$ with $\|u\|,\|v\|\leq h_2$ such that the tape word of $W_{h_1+h_2}^{(2)}$ is freely equal to $uwv$.  As such, the tape word $w'$ of $W_{h_1+\ell h_2}^{(2)}$ is freely equal to $u^\ell w v^\ell$.  

It then follows from Lemma 8.1 of \cite{OSconj} that $\|w'\|\geq\ell-\|w\|-2h_2$, so that $\|w\|+\|w'\|\geq\ell-2h_2$.

Note that by \Cref{simplify rules}, $|W_0^{(2)}|_a\geq\|w\|-2h_1$ and $|W_t^{(2)}|_a\geq\|w'\|-2\bar{h}_2-2h_3$.

\textbf{1.} Suppose $\ell\leq 12h_2$.  Then 
\begin{align*}
\|H\|&=h_1+\ell h_2+\bar{h}_2+h_3\leq h_1+(\ell+1)h_2+h_3\leq h_1+12h_2^2+h_2+h_3
\end{align*}
Since $|W_0^{(1)}|_a+|W_t^{(1)}|_a\geq2(h_1+h_2+h_3)$ implies $h_1+h_2+h_3\leq\max(|W_0^{(1)}|_a,|W_t^{(1)}|_a)$, it then follows that $\|H\|\leq 12n^2+n$ for $n=\max(|W_0^{(1)}|_a,|W_t^{(1)}|_a)$.  Hence, the statement is again satisfied by setting $UV=B^{(1)}$.

As a result, it suffices to assume $\ell\geq12h_2$, so that $\|H\|\leq h_1+\frac{1}{12}\ell(\ell+1)+h_3$.

\textbf{2.} Suppose $\ell\leq12(h_1+h_3)$.

Then $\|H\|\leq (h_1+h_3)(\ell+2)\leq 12(h_1+h_3)^2+2(h_1+h_3)$.  But $|W_0^{(1)}|_a+|W_t^{(1)}|_a\geq2(h_1+h_3)$, so that the statement is once more satisfied by setting $UV=B^{(1)}$.

\textbf{3.} Hence, it suffices to assume $\ell\geq12(h_1+h_3)$, so that $\|H\|\leq\frac{1}{12}\ell(\ell+2)\leq\frac{1}{6}\ell^2$ since $\ell\geq2$.  Note then that 
\begin{align*}
|W_0^{(2)}|_a+|W_t^{(2)}|_a&\geq\|w\|+\|w'\|-2(h_1+\bar{h}_2+h_3)\geq\ell-2(h_1+2h_2+\bar{h}_2+h_3) \\
&\geq \ell-2(h_1+3h_2+h_3)\geq\ell-2\ell/3=\ell/3
\end{align*}
So, setting $n=\max(|W_0^{(2)}|_a,|W_t^{(2)}|_a)$, we have $n\geq \ell/6$, so that $\|H\|\leq\frac{1}{6}\ell^2\leq 6n^2$.  Thus, the statement is satisfied by setting $UV=B^{(2)}$.

\end{proof}

\medskip


\subsection{Primitive machines} \

One of the most useful tools in shaping the computational makeup of an $S$-machine is the implementation of \textit{primitive machines}.  These are simple $S$-machines that `run' the state letters past a certain tape word and back, locking the sector in between.  In practice, this type of machine is used as a submachine to force the base of an admissible word to be reduced in order to carry out specific types of computations.

The first primitive machine is the machine customarily named $\textbf{LR}(Y)$ for some alphabet $Y$.  The standard base of the machine is $PQR$, with $P=\{p_1,p_2\}$, $Q=\{q_1,q_2\}$, and $R=\{r_1,r_2\}$.  The state letters with subscript $1$ are the start state letters, while those with subscript $2$ are the end state letters.  The tape alphabets $Y_1$ and $Y_2$ of the machine are two copies of $Y$.  For simplicity, the letter of $Y_i$ corresponding to the letter $y\in Y$ is denoted $y_i$.

The positive rules of $\textbf{LR}(Y)$ are defined as follows:

\begin{itemize}

\item For every $y\in Y$, a rule $\tau_1(y)=[p_1\to p_1, \ q_1\to y_1^{-1}q_1y_2, \ r_1\to r_1]$

\item The \textit{connecting rule} $\zeta=[p_1\xrightarrow{\ell} p_2, \ q_1\to q_2, \ r_1\to r_2]$

\item For every $y\in Y$, a rule $\tau_2(y)=[p_2\to p_2, \ q_2\to y_1q_2y_2^{-1}, \ r_2\to r_2]$

\end{itemize}

For any configuration $W$ of $\textbf{LR}(Y)$, one can associate a reduced word over $Y\cup Y^{-1}$ by deleting the state letters and taking the natural copies of the tape letters.  Note that the application of any rule does not alter this associated word.  The use of this or a similar observation is called a \textit{projection argument}.

One can interpret the function of the rules as follows: The rules $\tau_1(y)^{\pm1}$ are used to delete the letters from the $PQ$-sector and move them to the $QR$-sector; this continues until the $PQ$-sector is empty, at which point the connecting rule may be applied; from there, the rules $\tau_2(y)^{\pm1}$ are used to move the letters back from the $QR$-sector to the $PQ$-sector.  This informal description motivate the naming of the machine, as one can view the state letter of $Q$ as `moving left toward $P$ and then right toward $R$'.

These notions are made precise by the following statement, which can be understood simply through projection arguments:

\begin{lemma}[Lemma 3.1 of \cite{O18}] \label{primitive computations}

Let $\pazocal{C}:W_0\to\cdots\to W_t$ be a reduced computation of $\textbf{LR}(Y)$ in the standard base. Then:

\begin{enumerate}[label=({\arabic*})]

\item if $|W_{i-1}|_a<|W_i|_a$ for some $1\leq i\leq t-1$, then $|W_i|_a<|W_{i+1}|_a$

\item $|W_i|_a\leq\max(|W_0|_a,|W_t|_a)$ for each $i$

\item Suppose $W_0\equiv p_1~u~q_1r_1$ and $W_t\equiv p_2~v~q_2r_2$ for some $u,v\in F(Y_1)$.  Then $u\equiv v$, $|W_i|_a=\|u\|\defeq\ell$ for each $i$, $t=2\ell+1$, and the $PQ$-sector is locked in the rule $W_\ell\to W_{\ell+1}$. Moreover, letting $\bar{u}$ be the word obtained by reading $u$ from right to left, the history $H$ of $\pazocal{C}$ is a copy of $\bar{u}\zeta u$.

\item if $W_0\equiv p_j~u~q_jr_j$ (resp $W_0\equiv p_jq_j~u~r_j$) and $W_t\equiv p_j~v~q_jr_j$ (resp $W_t\equiv p_jq_j~v~r_j$) for some $u,v$ and $j\in\{1,2\}$, then $u\equiv v$ and the computation is empty (i.e $t=0$)


\item if $W_0$ is of the form $p_j~u~q_jr_j$ or $p_jq_j~v~r_j$ for $j\in\{1,2\}$, then $|W_i|_a\geq|W_0|_a$ for every $i$.

\end{enumerate}

\end{lemma}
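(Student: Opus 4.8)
The plan is to prove all five items of \Cref{primitive computations} by a projection argument, treating the three alphabets $Y_1$, $Y_2$, and $Y$ uniformly. First I would record the two basic projections. The \emph{$PQ$-projection} sends a configuration $W=p_j^{\pm1}w_1q_j^{\pm1}w_2r_j^{\pm1}$ to the reduced form over $Y_1\cup Y_1^{-1}$ of the tape word $w_1$ in the $PQ$-sector (using the natural bijection $y_1\mapsto y$ to land in $F(Y)$), and the \emph{$QR$-projection} does the same with $w_2$ over $Y_2$. Inspecting the rules: $\tau_1(y)^{\pm1}$ multiplies the $PQ$-sector on the \emph{left} by $y_1^{\mp1}$ and the $QR$-sector on the \emph{right} by $y_2^{\pm1}$; $\tau_2(y)^{\pm1}$ multiplies the $PQ$-sector on the left by $y_1^{\pm1}$ and the $QR$-sector on the right by $y_2^{\mp1}$; the connecting rule $\zeta^{\pm1}$ alters no tape letters and merely toggles subscripts. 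Consequently, along any computation, the total tape word read across both sectors (with the bijections $Y_1\to Y$, $Y_2\to Y$) is a fixed reduced word of $F(Y)$ up to the obvious cancellations — this is the invariant underlying every claim.

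With this in hand the items follow in order. For (3): the hypotheses $W_0\equiv p_1 u q_1 r_1$ and $W_t\equiv p_2 v q_2 r_2$ force the history to pass through $\zeta$; since the $QR$-sector starts empty, the rules $\tau_1(y)^{\pm1}$ before $\zeta$ must (by the no-inverse-pair reducedness of $\pazocal{C}$ and the fact that $\tau_1(y)$ consumes from the $PQ$-sector and deposits in the $QR$-sector) empty the $PQ$-sector one letter at a time in reverse order — this is exactly the argument of \Cref{multiply one letter}(a) applied to the $PQ$-sector with the rules of $\Theta^+$ in bijection with $Y_1^{-1}$, giving that the pre-$\zeta$ history is the copy of $\bar u$ and that $|W_i|_a$ strictly decreases from $\ell$ to $0$. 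At $W_\ell$ the $PQ$-sector is empty, so $\zeta$ (which locks $PQ$) applies; after $\zeta$ the symmetric analysis with $\tau_2(y)^{\pm1}$ refills $PQ$ from $QR$, and the invariant forces the refilled word to be $u$ again and the post-$\zeta$ history to be the copy of $u$; hence $v\equiv u$, $t=2\ell+1$, and $|W_i|_a=\ell$ throughout. (Note the ``$|W_i|_a=\|u\|$ for each $i$'' in the statement should be read as the weaker $|W_i|_a\le\ell$ with equality only at the endpoints — the decreasing/increasing behaviour is what (1) and (2) will record; I would phrase this carefully.) For (4): if $W_0$ and $W_t$ have the same subscript $j$ and the same sector populated, then $\zeta^{\pm1}$ cannot occur (it would change the subscript and could not be undone without an inverse pair), so the whole history lies in $F(\{\tau_j(y):y\in Y\})$; by \Cref{multiply one letter}(a) applied to the $PQ$-sector (or $QR$-sector) the equality of initial and terminal tape words forces the history to be empty. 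For (5): with $W_0$ of the stated form, again no $\zeta$ can fire before the relevant sector is emptied, and \Cref{multiply one letter}(c),(d) applied sectorwise give $|W_i|_a\ge|W_0|_a$. For (1) and (2): any reduced computation decomposes at its occurrences of $\zeta^{\pm1}$ into maximal $\zeta$-free blocks; within each block \Cref{multiply one letter}(c),(d) (applied to whichever sector is being lengthened) give the ``once you go up you keep going up'' property and the bound $|W_i|_a\le\max$ of the block's endpoints, and the endpoints of adjacent blocks agree, so the global statements (1) and (2) follow by concatenation.

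The main obstacle is bookkeeping rather than ideas: one must be scrupulous that a \emph{reduced} computation cannot contain a $\tau$-rule immediately followed by its inverse, nor a $\zeta$ immediately followed by $\zeta^{-1}$, and more subtly that once the computation commits to a subscript block it stays there until the populated sector is exhausted — equivalently, that $\zeta^{\pm1}$ is only $\theta$-admissible when the $PQ$-sector is empty, which is exactly the locking clause ``$p_1\xrightarrow{\ell}p_2$'' together with \Cref{locked sectors}. Once these admissibility constraints are nailed down, each clause reduces to a citation of the appropriate part of \Cref{multiply one letter}, and I would present the proof as: (i) set up the two projections and the conservation invariant; (ii) observe via \Cref{locked sectors} that $\zeta^{\pm1}$ requires an empty $PQ$-sector; (iii) deduce (4) and (5) directly; (iv) deduce (1),(2) by block decomposition; (v) deduce (3) by combining the two symmetric halves around the unique $\zeta$.
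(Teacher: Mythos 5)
Your skeleton --- the projection invariant, the observation that $\zeta^{\pm1}$ is admissible only when the $PQ$-sector is empty (which, incidentally, is immediate from the definition of a locked sector rather than from \Cref{locked sectors}), at most one occurrence of $\zeta^{\pm1}$ in a reduced history, and sectorwise appeals to \Cref{multiply one letter} --- is exactly the projection argument the paper has in mind (it gives no proof of this lemma, citing Lemma 3.1 of \cite{O18}). However, your treatment of item (3) contains a genuine error. You assert that before the connecting rule ``$|W_i|_a$ strictly decreases from $\ell$ to $0$'', and in your parenthetical you claim that the clause ``$|W_i|_a=\|u\|$ for each $i$'' is too strong and should be weakened to $|W_i|_a\le\ell$ with equality only at the endpoints. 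This is wrong: $|W_i|_a$ counts \emph{all} $a$-letters of the admissible word, in both sectors, and each step of the computation in question deletes one letter from the $PQ$-sector while depositing its copy in the $QR$-sector, so $|W_i|_a$ is constantly equal to $\ell$; it is only the $PQ$-sector's tape word that shrinks to the empty word. Your own invariant already rules out your reading: the two tape words of $W_i$ project onto a word whose reduced form is the copy of $u$, so $|W_i|_a\ge\ell$ for every $i$, and combining this with item (2) together with $\|v\|=\|u\|$ is precisely how the equality in (3) is obtained. Your reading also contradicts item (5) applied to $W_0\equiv p_1\,u\,q_1r_1$, which you claim to prove in the same breath. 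So the statement needs no rephrasing; as written, your argument for (3) establishes the combinatorial clauses (unique $\zeta$, pre-$\zeta$ history a copy of $\bar u$, post-$\zeta$ history a copy of $u$, hence $u\equiv v$ and $t=2\ell+1$) but fails to deliver, and indeed denies, the clause $|W_i|_a=\ell$.

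Two smaller points of bookkeeping should also be made explicit. For (1) and (2), citing \Cref{multiply one letter}(c),(d) ``for whichever sector is being lengthened'' is not quite enough, since those parts control each sector's tape word, not the total: note instead that within a $\zeta$-free block every rule changes each sector by exactly one letter, so an increase of $|W|_a$ means both sectors grow, whence part (c) applies to both and the growth persists; and an increasing step cannot be immediately followed by $\zeta^{\pm1}$, because after such a step the $PQ$-sector ends in the freshly inserted letter and is nonempty, so the (locking) connecting rule is not admissible. For the uniqueness of $\zeta^{\pm1}$, needed in (3) and (4), the reason is not that a change of subscript ``could not be undone without an inverse pair'' but that between consecutive occurrences $\zeta\cdots\zeta^{-1}$ the $PQ$-sector starts and ends empty, so \Cref{multiply one letter}(a) forces the intervening block to be empty, contradicting reducedness. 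With these repairs the proposed proof goes through along the intended lines.
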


It is a useful observation that for any $u\in F(Y)$, there exists a computation of $\textbf{LR}(Y)$ of the form detailed in \Cref{primitive computations}(3).  

The following statement has no explicit analogue in previous settings, but follows quickly in much the same way as \Cref{primitive computations}.

\begin{lemma} \label{primitive time}

Let $\pazocal{C}:W_0\to\dots\to W_t$ be a reduced computation of $\textbf{LR}(Y)$ in the standard base.  Then $t\leq2\max(|W_0|_a,|W_t|_a)+1$.

\end{lemma}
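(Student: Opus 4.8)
The strategy is to reduce the general case to the situation already handled by \Cref{primitive computations}(3), using parts (1) and (2) of \Cref{primitive computations} to control the ``shape'' of the $a$-length sequence $|W_0|_a,\dots,|W_t|_a$. The key structural fact from (1)--(2) is that this sequence is \emph{unimodal} in a strong sense: once it strictly increases it can never come back down, so it consists of a (weakly) decreasing initial part followed by a (weakly) increasing terminal part, with the maximum of the whole sequence attained at one of the endpoints. I would first record this consequence explicitly.

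\smallskip

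The main step is to bound how long the computation can ``sit'' near its minimum. Let $m$ be an index where $|W_m|_a$ is minimal. On the decreasing part $W_0\to\cdots\to W_m$, I claim each step decreases the $a$-length by at most some bounded amount — indeed by \Cref{simplify rules} each rule of $\textbf{LR}(Y)$ changes each sector's tape word by multiplication by a letter of length $\leq 1$ on each side, and $\textbf{LR}(Y)$ has standard base of length $3$ (three sectors), so $\big||W_i|_a-|W_{i-1}|_a\big|\leq 4$ for every $i$; actually one checks from the explicit rules $\tau_1(y),\tau_2(y),\zeta$ that the change is by at most $2$ in absolute value. But a crude bound is not enough on its own to get a \emph{linear} bound on $t$: the concern is a long ``plateau'' where $|W_i|_a$ stays constant. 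Here is where I need to work harder. On any maximal run of rules during which the $a$-length is constant, the $PQ$-sector and $QR$-sector tape words have fixed lengths; using the projection argument (the associated word over $Y\cup Y^{-1}$ obtained by deleting state letters is invariant under every rule) together with the fact that the sum of the two sector lengths is fixed, such a plateau run is essentially a computation of the type governed by \Cref{primitive computations}(3) or (4) on a sub-base, forcing it to have length at most $2\ell+1$ where $\ell$ is the (constant) $a$-length on that plateau, hence at most $2|W_m|_a+1\leq 2\max(|W_0|_a,|W_t|_a)+1$.

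\smallskip

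Assembling: write $t = t_{\mathrm{dec}} + t_{\mathrm{plat}} + t_{\mathrm{inc}}$ for the lengths of the strictly-decreasing, constant, and strictly-increasing portions around the minimum. On the strictly decreasing portion each step drops $|W_i|_a$ by at least $1$ (and at most $2$), so $t_{\mathrm{dec}}\leq |W_0|_a - |W_m|_a \leq |W_0|_a$; symmetrically $t_{\mathrm{inc}}\leq |W_t|_a - |W_m|_a\leq |W_t|_a$; and $t_{\mathrm{plat}}\leq 2|W_m|_a+1\leq 2\max(|W_0|_a,|W_t|_a)+1$. Since $|W_m|_a\leq\min(|W_0|_a,|W_t|_a)$ this already gives a bound of roughly $4\max(|W_0|_a,|W_t|_a)+1$; a slightly more careful accounting — absorbing the plateau into the decrease/increase by noting that on the plateau one sector empties out, matching the structure of part (3) — tightens it to exactly $2\max(|W_0|_a,|W_t|_a)+1$. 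The main obstacle is precisely this plateau analysis: one must argue that a constant-$a$-length stretch cannot be arbitrarily long, and the cleanest way is to observe that such a stretch, restricted to the $PQR$ base with the state letter of $Q$ oscillating, is exactly a reduced $\textbf{LR}(Y)$ computation of the form in \Cref{primitive computations}(3)/(4), whose length is controlled by its (constant) $a$-length; the projection argument guarantees the two sector tape words are determined and hence the run terminates in linearly many steps.
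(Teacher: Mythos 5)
Your plateau analysis is where the argument breaks, and it breaks in several places. First, constancy of $|W_i|_a$ does not fix the individual sector lengths: a rule $\tau_1(y)$ multiplies the $PQ$-sector on the right by $y_1^{-1}$ and the $QR$-sector on the left by $y_2$, so it routinely shortens one sector while lengthening the other (this is exactly the ``transfer'' move of the machine), keeping the total $a$-length constant while both sector lengths change. Consequently a plateau is not a computation of the kind treated in \Cref{primitive computations}(3) or (4); those parts also require endpoints of the special shapes $p_1\,u\,q_1r_1$, $p_2\,v\,q_2r_2$, etc., which a generic plateau does not have, so the asserted bound $2\ell+1$ on a plateau is unjustified as stated. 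Second, your assembly $t=t_{\mathrm{dec}}+t_{\mathrm{plat}}+t_{\mathrm{inc}}$ presumes a single plateau sitting at the minimum, but \Cref{primitive computations}(1)--(2) only forbid a strict increase from being followed by a non-increase; they do not exclude a decrease--plateau--decrease--plateau pattern, and bounding each plateau by twice its (varying) level would a priori allow a total that is quadratic in $\max(|W_0|_a,|W_t|_a)$. Third, the final step --- tightening $4\max+1$ to $2\max+1$ because ``on the plateau one sector empties out'' --- is asserted rather than proved, and it is false in general: a sector is forced to be empty only when the connecting rule is applied, and a plateau need not contain the connecting rule.

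The missing ideas are precisely the two facts the paper's proof runs on. By \Cref{primitive computations}(4), a reduced computation of $\textbf{LR}(Y)$ in the standard base contains at most one occurrence of the connecting rule, so the history is either connecting-rule-free or factors as $H_1\zeta^{\pm1}H_2$ with both factors connecting-rule-free. On a connecting-rule-free factor every rule multiplies a fixed sector by a single letter, with distinct rules corresponding to distinct letters, so \Cref{multiply one letter} bounds the length of that factor linearly by the $a$-lengths of its endpoint words; since the word adjacent to the connecting rule has empty $PQ$- (or $QR$-) sector and, by \Cref{primitive computations}(5), $a$-length at most $|W_0|_a$ (resp.\ $|W_t|_a$), one gets $x-1\le|W_0|_a$ and $t-x\le|W_t|_a$, hence $t\le|W_0|_a+|W_t|_a+1\le2\max(|W_0|_a,|W_t|_a)+1$. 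Your sketch invokes neither the uniqueness of the connecting rule nor \Cref{multiply one letter}, and without them the linear bound does not follow.
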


\begin{proof}

If the history $H$ of $\pazocal{C}$ contains no connecting rule, then we may apply \Cref{multiply one letter} to one of the two sectors, yielding $t\leq\max(|W_0|_a,|W_t|_a)$.

Hence, we may assume $H\equiv H_1\zeta^{\pm1}H_2$.  Let $\pazocal{C}_2:W_x\to\dots\to W_t$ be the subcomputation with history $H_2$.

As $W_x$ is $\zeta^{\mp1}$-admissible, it follows from \Cref{primitive computations}(4) that $H_2$ contains no occurrence of a connecting rule.  As above, this implies $t-x\leq\max(|W_x|_a,|W_t|_a)$.  But \Cref{primitive computations}(5) also implies $|W_x|_a\leq|W_t|_a$, so that $t-x\leq|W_t|_a$.

By the analogous argument applied to the subcomputation with history $H_1$, we have $x-1\leq|W_0|_a$.  Thus the statement follows.

\end{proof}

The next statement helps understand computations of the machine with an unreduced base:

\begin{lemma} [Lemma 3.4 of \cite{OS19}] \label{primitive unreduced} Suppose $W_0\to\dots\to W_t$ is a reduced computation of $\textbf{LR}(Y)$ with base $PQQ^{-1}P^{-1}$ (or $R^{-1}Q^{-1}QR$) such that $W_0\equiv p_jq_j~u~q_j^{-1}p_j^{-1}$ (or $W_0\equiv r_j^{-1}q_j^{-1}~v~q_jr_j$) for $j\in\{1,2\}$ and some word $u$ (or $v$). Then $|W_0|_a\leq\dots\leq|W_t|_a$ and all state letters of $W_t$ have the index $j$.  In particular, if $W_t\equiv p_jq_j~u'~q_j^{-1}p_j^{-1}$ (or $W_t\equiv r_j^{-1}q_j^{-1}~v'~q_jr_j$), then $t=0$.

\end{lemma}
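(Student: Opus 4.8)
The plan is to work over the base $PQQ^{-1}P^{-1}$; the base $R^{-1}Q^{-1}QR$ is handled by the left--right mirror of $\textbf{LR}(Y)$, which interchanges $P$ with $R$ and $\tau_1$ with $\tau_2$. Write $W_0\equiv p_jq_j\,u\,q_j^{-1}p_j^{-1}$ with $u$ a reduced word over $Y_2$; since $W_0$ is reduced and admissible, $u\neq\emptyset$. First I would record the shape of an arbitrary term of the computation: checking the three rule types $\tau_1(y)^{\pm1}$, $\tau_2(y)^{\pm1}$, $\zeta^{\pm1}$ shows that applying a rule of $\textbf{LR}(Y)$ to a word of the form $p_e\,w\,q_e\,v\,q_e^{-1}\,w^{-1}\,p_e^{-1}$ (with $w\in F(Y_1)$, $v\in F(Y_2)$) again produces a word of this form, so by induction $W_i\equiv p_{e_i}\,w_i\,q_{e_i}\,v_i\,q_{e_i}^{-1}\,w_i^{-1}\,p_{e_i}^{-1}$ for every $i$. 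A projection argument for $\textbf{LR}(Y)$ (identifying the $Y_1$- and $Y_2$-letters with their common image in $Y$) moreover shows that the element $\widetilde{w_i}\,\widetilde{v_i}\,\widetilde{w_i}^{-1}$ of $F(Y)$ is left unchanged by every rule, hence equals $\widetilde u$ throughout; in particular $v_i$ is a conjugate of the nontrivial element $u$, so $v_i\neq\emptyset$. Consequently $|W_i|_a=2\|w_i\|+\|v_i\|$.

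The core of the argument is the following observation about the words $w_0=\emptyset,w_1,w_2,\dots$. Each $\tau$-step right-multiplies $w_i$ by a single letter of $Y_1\cup Y_1^{-1}$, and for each fixed index value distinct rules multiply by distinct letters, with $\theta^{-1}$ multiplying by the inverse of the letter of $\theta$. As $\pazocal{C}$ is reduced (no step immediately reversed) and $\tau$-steps preserve the index, the $w_i$ thus trace a non-backtracking walk on the Cayley tree of $F(Y_1)$ starting at the identity; a short induction (whose inductive step invokes the two preceding terms) then shows $\|w_i\|=\|w_{i-1}\|+1$ at every $\tau$-step, because if step $i-1$ appended a letter that letter is the last letter of $w_{i-1}$, and non-backtracking forbids step $i$ from cancelling it. Two consequences follow. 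First, $w_i=\emptyset$ holds only before the first $\tau$-step; since the connecting rule locks the $PQ$-sector it is applicable only when $w_i=\emptyset$, so it can occur only as the very first step of $\pazocal{C}$ (and in the formulation of $\textbf{LR}(Y)$ in which $\zeta$ also locks the $QR$-sector, not at all, because $v_i\neq\emptyset$); hence the state-letter index of $\pazocal{C}$ is constant, equal to $j$ (in the variant written above there is the single exception of a possible $\zeta^{\pm1}$ first step, which is the one point requiring extra care). Second, along $\tau$-steps $2\|w_i\|$ increases by $2$ while $\|v_i\|$, obtained from $v_{i-1}$ by conjugation by one $Y_2$-letter, decreases by at most $2$; a $\zeta^{\pm1}$-step changes neither, since the outer sectors remain empty. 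Therefore $|W_{i-1}|_a\leq|W_i|_a$ for all $i$, that is $|W_0|_a\leq\dots\leq|W_t|_a$.

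For the ``in particular'' clause, assume $W_t\equiv p_jq_j\,u'\,q_j^{-1}p_j^{-1}$, so that $W_t$ has index $j$ and empty outer sectors, i.e.\ $w_t=\emptyset$. A $\zeta^{\pm1}$-step---necessarily the first step of $\pazocal{C}$---would relabel every subsequent state letter with the index $3-j\neq j$; hence no $\zeta^{\pm1}$-step occurs, every step is a $\tau$-step, and by the strict-growth fact $\|w_t\|=t$. Since $w_t=\emptyset$, this forces $t=0$.

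I expect the main obstacle to be the non-backtracking-walk observation of the second paragraph: this is the only place where the hypothesis that $\pazocal{C}$ be \emph{reduced} is genuinely used, and it requires carefully matching the rules of $\textbf{LR}(Y)$ with their effect on the outer sectors and pinning down exactly when the connecting rule can fire. The other ingredients (the shape invariant, the projection invariant, and the choice of potential $2\|w_i\|+\|v_i\|$) are routine verifications once these are in place.
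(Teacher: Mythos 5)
The paper offers no proof of this lemma \--- it is imported verbatim from \cite{OS19} \--- so there is nothing in-paper to compare against; judged on its own terms, your argument is sound and essentially complete. The three ingredients (the symmetric shape $W_i\equiv p_{e_i}w_iq_{e_i}v_iq_{e_i}^{-1}w_i^{-1}p_{e_i}^{-1}$, the projection invariant forcing $v_i\neq\emptyset$, and the observation that $w_i$ traces a non-backtracking walk in $F(Y_1)$, so that $\|w_i\|$ grows by exactly one at each $\tau$-step while $\|v_i\|$ drops by at most two and $\zeta^{\pm1}$ is applicable only when $w_i=\emptyset$) do yield $|W_0|_a\leq\dots\leq|W_t|_a$ and the rigidity conclusion $t=0$, and the induction you sketch for strict growth is exactly what is needed to rule out a connecting rule anywhere except possibly as the first letter of the history.

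The caveat you flag is a genuine discrepancy in the statement as transcribed, not a gap in your reasoning. With the connecting rule as defined in this paper, $\zeta$ locks only the $PQ$-sector, whereas the middle $QQ^{-1}$-sector of the base $PQQ^{-1}P^{-1}$ carries $Y_2$-letters, which lie in the domain of $\zeta$; hence $W_0\equiv p_jq_j\,u\,q_j^{-1}p_j^{-1}$ is $\zeta^{\pm1}$-admissible, and the one-step reduced computation $W_0\to p_{j'}q_{j'}\,u\,q_{j'}^{-1}p_{j'}^{-1}$ with $j'\neq j$ already violates the clause ``all state letters of $W_t$ have the index $j$.'' That clause holds literally only under the convention that the connecting rule also locks the $QR$-sector (your parenthetical), or in the weakened form ``a connecting rule can occur only as the first letter of the history, so all state letters of $W_t$ share a common index.'' Note also that the second base $R^{-1}Q^{-1}QR$ is immune even with the present $\zeta$, since its middle sector carries $Y_1$, which $\zeta$ does lock \--- so reducing that case to a ``mirror'' of the first slightly obscures the asymmetry, and it is cleaner to run your argument directly there. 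In any event, the two conclusions of the lemma that this paper actually invokes later (the monotonicity of $|W_i|_a$ and the ``in particular'' rigidity $t=0$) are established in full by your proof.
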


It is useful to consider also a machine analogous to $\textbf{LR}(Y)$ which instead `runs' the middle state letter right and then left.  This machine, denoted $\textbf{RL}(Y)$, is also considered a primitive machine.

To be precise, identifying the hardware of this machine with that of $\textbf{LR}(Y)$, the positive rules of $\textbf{RL}(Y)$ are:

\begin{itemize}

\item For every $y\in Y$, a rule $\tau_1(y)=[p_1\to p_1, \ q_1\to y_1q_1y_2^{-1}, \ r_1\to r_1]$

\item The \textit{connecting rule} $\xi=[p_1\to p_2, \ q_1\xrightarrow{\ell} q_2, \ r_1\to r_2]$

\item For every $y\in Y$, a rule $\tau_2(y)=[p_2\to p_2, \ q_f\to y_1^{-1}q_2y_2, \ r_2\to r_2]$

\end{itemize}

There are obvious analogues of Lemmas \ref{primitive computations} and \ref{primitive unreduced} in the setting of $\textbf{RL}(Y)$, which can be verified in much the same ways.

\medskip
	

%
%
%

\medskip
	

\section{Enhanced machine} \label{sec-enhanced}

Our goal in this section is to construct an $S$-machine whose computational makeup is sufficient for the proof of \Cref{main-theorem}.  Per the hypotheses of the statement, this construction begins with an arbitrary recognizing $S$-machine $\textbf{S}$ with designated time function $\TM_\textbf{S}$.  We then alter this machine in carefully tailored ways, producing some auxiliary machines and finally the desired `enhanced machine' $\textbf{E}_\textbf{S}$.

%

\subsection{Historical sectors} \

The first auxiliary machine $\textbf{S}_h$ in our construction adds several new `historical' sectors which keep track of the commands applied throughout a computation.  This is done in much the same way as in \cite{O18}, \cite{OS19}, and \cite{WCubic}.

Let $Q_0Q_1\dots Q_s$ be the standard base and $\Phi^+$ be the positive rules of the given machine $\textbf{S}$.  The hardware of the machine $\textbf{S}_h$ is then given as follows:

\begin{itemize}

\item The standard base is $Q_{0,\ell}Q_{0,r}Q_{1,\ell}Q_{1,r}\dots Q_{s,\ell}Q_{s,r}$, where each of $Q_{i,\ell}$ and $Q_{i,r}$ is a copy of the part $Q_i$ of the hardware of $\textbf{S}$.

\item The tape alphabet of the $Q_{i-1,r}Q_{i,\ell}$-sector is that of the $Q_{i-1}Q_i$-sector of $\textbf{S}$.

\item The tape alphabet of each $Q_{i,\ell}Q_{i,r}$-sector is two disjoint copies of $\Phi^+$, called the \textit{left} and the \textit{right historical alphabets} of the sector.

\end{itemize}

Any sector whose tape alphabet consists of two copies of $\Phi^+$ {\frenchspacing (e.g. the $Q_{i,\ell}Q_{i,r}$-sectors)} is called a \textit{historical sector}.  Every other sector corresponding to a two-letter subword of the standard base is called a \textit{working sector}.  Note that the working sectors correspond to sectors $\textbf{S}$.  The input sectors of $\textbf{S}_h$ are then taken to be the working sectors corresponding to the input sectors of $\textbf{S}$.

The positive rules of $\textbf{S}_h$ are in correspondence with those of $\textbf{S}$.  For any $\theta\in\Phi^+$, the corresponding positive rule $\theta_h$ operates as follows:

\begin{itemize}

\item The domain of $\theta_h$ in the working sectors is the same as the domain of $\theta$ in the corresponding sector of the standard base.

\item The domain of $\theta_h$ in each historical sector is the entire tape alphabet.

\item For each $i$, $Q_{i,\ell}(\theta_h)$ and $Q_{i,r}(\theta_h)$ are the natural copies of $Q_i(\theta)$; analogously, $Q_{i,\ell}'(\theta_h)$ and $Q_{i,r}'(\theta_h)$ are the natural copies of $Q_i'(\theta)$.
 
\item The operation of $\theta_h$ in each working sector is analogous to the operation of $\theta$ in the corresponding sector of the standard base.

\item In each $Q_{i,\ell}Q_{i,r}$-sector, $\theta_h$ multiplies on the left by the copy of $\theta^{-1}$ over the left historical alphabet and on the right by the copy of $\theta$ over the right historical alphabet.

\end{itemize}

A benefit of adding historical sectors is in providing a linear estimate for the lengths of reduced computations in terms of the $a$-lengths of its initial and terminal admissible words.

\begin{lemma}[Lemma 3.9 of \cite{O18}] \label{one alphabet historical words}

Let $W_0\to\dots\to W_t$ be a reduced computation of $\textbf{S}_h$ with base $Q_{i,\ell}Q_{i,r}$ and history $H$.  Suppose the $a$-letters of $W_0$ are all from the left (respectively right) historical alphabet.  Then $\|H\|\leq|W_t|_a$ and $|W_0|_a\leq|W_t|_a$.  Moreover, if $t\geq1$, then the tape word of $W_t$ contains letters from the right (respectively left) historical alphabet.

\end{lemma}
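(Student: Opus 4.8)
The plan is to run a projection argument onto each of the two tape alphabets of the historical sector $Q_{i,\ell}Q_{i,r}$. Write $A_\ell$ and $A_r$ for its left and right historical alphabets, each a disjoint copy of $\Phi^+$; for $\theta\in\Phi^+$ let $\theta_\ell\in A_\ell$ and $\theta_r\in A_r$ denote the corresponding letters, and recall $F(A_\ell\sqcup A_r)=F(A_\ell)\ast F(A_r)$. Let $u_j$ be the tape word of $W_j$, so that $u_0,\dots,u_t$ are reduced words over $A_\ell\sqcup A_r$ with $|W_j|_a=\|u_j\|$. The first thing I would record, directly from the definition of the rules of $\textbf{S}_h$, is that each step of $\pazocal{C}$ acts on the tape word by ``$u_j$ is the free reduction of $c_j\,u_{j-1}\,d_j$'', where if the $j$-th rule of $\pazocal{C}$ is $(\theta_h)^{\eps_j}$ with $\theta\in\Phi^+$ and $\eps_j=\pm1$, then $c_j=(\theta_\ell)^{-\eps_j}\in A_\ell^{\pm1}$ and $d_j=(\theta_r)^{\eps_j}\in A_r^{\pm1}$; the point is that the tape-letter prefix/suffix deletion performed during a rule application only discards the portions destined for the adjacent working sectors, which are absent from the one-sector word. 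Iterating, $u_t$ is the free reduction of $(c_t\cdots c_1)\,u_0\,(d_1\cdots d_t)$ inside $F(A_\ell)\ast F(A_r)$.

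Next I would show that the two traces $c_t\cdots c_1$ and $d_1\cdots d_t$ are themselves reduced words, of length $t$, over $A_\ell$ and $A_r$ respectively. Since each $c_j$ (resp.\ $d_j$) is a single letter, it suffices to rule out consecutive cancellations: a relation $c_jc_{j+1}=1$ or $d_jd_{j+1}=1$ would force, by the displayed formulas for $c_j,d_j$ together with the injectivity of $\theta\mapsto\theta_\ell$ and $\theta\mapsto\theta_r$, the $j$-th and $(j+1)$-st rules of $\pazocal{C}$ to be mutually inverse, contradicting that $\pazocal{C}$ is reduced. Now I would invoke the hypothesis that every $a$-letter of $W_0$ lies in $A_\ell$, i.e.\ $u_0\in F(A_\ell)$: then $(c_t\cdots c_1)u_0$ is a word over $A_\ell$, reducing to some $w\in F(A_\ell)$, whereas $d_1\cdots d_t\in F(A_r)$ is already reduced, so since $A_\ell$ and $A_r$ are disjoint there is no cancellation at the junction and $u_t\equiv w\,(d_1\cdots d_t)$ with $\|u_t\|=\|w\|+t$. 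The three conclusions then fall out at once: $\|H\|=t\leq\|u_t\|=|W_t|_a$; when $t\geq1$ the suffix $d_1\cdots d_t$ is a nonempty word over $A_r$, so $u_t$ contains letters of the right historical alphabet; and from $u_0=(c_t\cdots c_1)^{-1}w$ we get $|W_0|_a=\|u_0\|\leq t+\|w\|=\|u_t\|=|W_t|_a$. The ``respectively'' statement follows verbatim after interchanging $A_\ell$ and $A_r$.

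I expect the only genuine subtlety to be the reducedness of the two traces: this is exactly the place where it matters that the historical alphabets are faithful copies of $\Phi^+$, so that a reduced history cannot create a cancelling pair in the projection. Everything else is elementary bookkeeping in the free product $F(A_\ell)\ast F(A_r)$, using only that the tape words occurring here all have the two-syllable form $w\,v$ with $w\in F(A_\ell)$ and $v\in F(A_r)$. One should also verify carefully, though it is immediate from the definitions, that applying a rule of $\textbf{S}_h$ to a word in the two-letter base $Q_{i,\ell}Q_{i,r}$ does act on the tape word exactly by the claimed left and right multiplications, with no extra tape letters created or lost.
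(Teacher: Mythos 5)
Your argument is correct: the evolution of the historical tape word under a single rule is exactly left/right multiplication by one letter of $A_\ell$ and one of $A_r$, reducedness of $H$ together with the faithfulness of the copies $\theta\mapsto\theta_\ell,\theta_r$ makes both traces reduced, and since $u_0\in F(A_\ell)$ the reduced suffix $d_1\cdots d_t\in F(A_r)$ survives intact, giving all three conclusions. The paper itself offers no proof (it quotes Lemma 3.9 of \cite{O18}), and your projection/trace argument is essentially the standard one used there, so nothing further is needed.
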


The next statement then follows in just the same way:

\begin{lemma}[Lemma 5.14 of \cite{WEmb}] \label{one alphabet historical words unreduced}

Let $W_0\to\dots\to W_t$ be a reduced computation of $\textbf{S}_h$ with base $Q_{i,\ell}Q_{i,\ell}^{-1}$ (respectively $Q_{i,r}^{-1}Q_{i,r}$) and history $H$.  Suppose the $a$-letters of $W_0$ are all from the corresponding right (respectively left) historical alphabet.  Then $|W_0|_a=|W_t|_a-2\|H\|$.  Moreover, if $t\geq1$, then the tape word of $W_t$ contains letters from both the left and right historical alphabets.

\end{lemma}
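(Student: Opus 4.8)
The plan is to mimic the proof of \Cref{one alphabet historical words unreduced} (Lemma 5.14 of \cite{WEmb}), using \Cref{one alphabet historical words} as the single-sector analogue, but reorganized via \Cref{unreduced base}. The base $Q_{i,\ell}Q_{i,\ell}^{-1}$ is a historical sector glued to its own inverse, so the hypotheses of \Cref{unreduced base} apply: every rule $\theta_h$ of $\textbf{S}_h$ multiplies each historical sector on the left by the copy of $\theta^{-1}$ over the left historical alphabet and on the right by the copy of $\theta$ over the right historical alphabet, and distinct positive rules $\theta$ use distinct letters of the right historical alphabet. So in the base $Q_{i,\ell}Q_{i,\ell}^{-1}$ each rule multiplies the $Q_{i,\ell}Q_{i,r}$-sector by one letter from the right — with different rules giving different letters — which is exactly the situation of \Cref{unreduced base} (the ``multiply by a letter from the left/right, different rules for different letters'' hypothesis holds on the nose here). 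First I would invoke \Cref{unreduced base} to get a factorization $H\equiv H_1H_2^\ell\bar H_2H_3$ of the history with the stated length behavior of $|W_j|_a$.

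Next I would run a projection/combinatorial argument on the tape word to pin down the exact relationship. Write the tape word of $W_j$ as $u_j\bar v_j$, where $u_j$ is the portion in the first copy $Q_{i,\ell}Q_{i,\ell}$-block (reading left to right) and $\bar v_j$ the portion in the mirrored block; the admissibility constraint forces the word to be reduced, and the assumption that the $a$-letters of $W_0$ all lie in the right historical alphabet is the key input. The decrease-then-stabilize-then-increase profile of \Cref{unreduced base} says: on the $H_1$-part the $a$-length drops by $2$ each step (free cancellation between the left-inserted $\theta^{-1}$ on the mirrored side and the right-inserted $\theta$ on the direct side, or vice versa, forced by reducedness and by the right-historical-alphabet hypothesis); on the $H_2^\ell\bar H_2$-part it is constant (cyclic permutation of a fixed word, by the remark following \Cref{unreduced base}); and on the $H_3$-part it increases by $2$ each step. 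I would then argue that the stable middle part must in fact be empty or trivial — here is where \Cref{one alphabet historical words} gets used: restricting the middle subcomputation to a single $Q_{i,\ell}Q_{i,r}$-sector and using that the $a$-letters stay in one alphabet during the cancellation phase forces the relevant tape words to be empty once the $a$-length has bottomed out, so $H_2^\ell\bar H_2$ contributes nothing and $\|H\|=h_1+h_3$ with $|W_0|_a = 2h_1$ transitioning to $|W_t|_a = |W_0|_a - 2h_1 + 2h_3$; combined with the bottoming-out being at $a$-length $0$ this collapses to $|W_0|_a = |W_t|_a - 2\|H\|$. Finally, for the ``moreover'' clause: if $t\geq 1$ then at least one $\theta_h$ with $\theta\neq 1$ was applied, and the first such rule inserts a letter of the right historical alphabet on the direct side and a letter of the left historical alphabet on the mirrored side which, by the length profile, survives to $W_t$ — so $W_t$ contains letters of both historical alphabets.

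The main obstacle will be the bookkeeping in the cancellation phase: one must verify that during the $H_1$-steps the only way the $a$-length can decrease by exactly $2$ per step (as \Cref{unreduced base} guarantees) is the cancellation of the freshly inserted mutually-inverse pair straddling the ``fold'' of the base $Q_{i,\ell}Q_{i,\ell}^{-1}$, and that this cancellation is only possible because the surviving $a$-letters all come from the right historical alphabet (so they cannot themselves cancel against the $\theta^{-1}$'s being inserted on the left). This is the same delicate reduced-word analysis as in \cite{WEmb} and \cite{OS12}, and I would phrase it as: since the proof of \Cref{unreduced base} is identical to that in \cite{OS12} and \Cref{one alphabet historical words} is quoted from \cite{O18}, the present statement ``follows in just the same way'' — i.e., by combining these two with the observation that the right-historical-alphabet hypothesis rules out any cancellation except at the fold. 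Everything else (the length arithmetic, the ``moreover'' clause) is then routine.
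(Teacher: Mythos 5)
Your proposal has the cancellation mechanism backwards, and this breaks both halves of the argument. For the base $Q_{i,\ell}Q_{i,\ell}^{-1}$ the only tape letters a rule $\theta_h$ inserts into the (folded) sector are the left-historical copies attached to its $Q_{i,\ell}$-part: applying $\theta_h$ to $q\,w\,q^{-1}$ yields $q'\,\theta_\ell^{-1}w\,\theta_\ell\,q'^{-1}$, and the right historical alphabet never enters, since $Q_{i,r}$ does not occur in this base (you assert the opposite, that the relevant insertions are ``from the right''). Because every $a$-letter of $W_0$ lies in the right historical alphabet, the inserted left-alphabet letters cannot cancel against $w$, and consecutive inserted letters cannot cancel against each other since the history is reduced and distinct rules insert distinct letters. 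So no cancellation is possible at any step: $|W_j|_a=|W_{j-1}|_a+2$ for every $j$, which is exactly the claimed equality; and for $t\geq1$ the word $W_t$ contains the inserted left-alphabet letters together with the untouched word $w$, which is nonempty (an admissible word with base $Q_{i,\ell}Q_{i,\ell}^{-1}$ must be reduced, so its tape word cannot be empty) and lies in the right alphabet, giving the ``moreover'' clause. Your justification of that clause — that the first rule inserts a right-alphabet letter ``on the direct side'' which survives — describes something that does not happen in this base; the right-alphabet letters of $W_t$ come from $W_0$.

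There is also an arithmetic gap at the end of your argument. Even granting the profile $H\equiv H_1H_2^{\ell}\bar H_2H_3$ from \Cref{unreduced base} with a trivial middle, you would get $|W_t|_a=|W_0|_a-2h_1+2h_3$ and $\|H\|=h_1+h_3$, hence $|W_t|_a-2\|H\|=|W_0|_a-4h_1$; the stated equality therefore forces $h_1=0$, and ``bottoming out at $a$-length $0$'' does not rescue a nonempty $H_1$ (indeed it is impossible, for the reducedness reason above). So the lemma requires you to rule out any decreasing or stable phase entirely — which is precisely what the no-cancellation observation does in one line — rather than permit one and claim the totals balance. This direct projection argument is the intended proof and the reason the paper states that the lemma ``follows in just the same way'' as \Cref{one alphabet historical words}; the detour through \Cref{unreduced base} is unnecessary, and as executed it does not close.
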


%
%

%
%

\medskip


\subsection{Adding locked sectors} \

The next auxiliary machine, denoted $\textbf{S}_h'$, is obtained from $\textbf{S}_h$ by simply adding some perpetually locked sectors around the historical sectors.  This `padding' has no meaningful impact on the computational makeup, but is useful for the constructions in the next section.

In particular, the standard base of $\textbf{S}_h'$ is taken to be
$$\left(P_0Q_{0,\ell}Q_{0,r}R_0\right)\left(P_1Q_{1,\ell}Q_{1,r}R_1\right)\dots\left(P_sQ_{s,\ell}Q_{s,r}R_s\right)$$
The hardware of the $Q_{i,\ell}Q_{i,r}$-sectors arises in just the same way as the same named sector of $\textbf{S}_h$.  Moreover, the parts $P_i$ and $R_i$ are in correspondence with the part $Q_i$ of the hardware of $\textbf{S}$ (analogous to the makeup of the parts $Q_{i,\ell}$ and $Q_{i,r}$).

The tape alphabet of the $P_iQ_{i,\ell}$- and $Q_{i,r}R_i$-sectors consist of left and right historical alphabets, each of which is identified with $\Phi^+$.  Accordingly, the list of possible historical sectors grows to include these sectors and any of their unreduced analogues.

Finally, the tape alphabet of the $R_{i-1}P_i$-sector is identified with that of the $Q_{i-1}Q_i$-sector of $\textbf{S}$ (or the $Q_{i-1,r}Q_{i,\ell}$-sector of $\textbf{S}_h$).

The positive rules of $\textbf{S}_h'$ are in bijection with the rules of $\textbf{S}_h$ (and so with the positive rules of $\textbf{S}$), with each rule locking the $P_iQ_{i,\ell}$- and $Q_{i,r}R_i$-sectors and operating on the rest of the standard base in the obvious way.  As such, there are analogues of Lemmas \ref{one alphabet historical words} and \ref{one alphabet historical words unreduced} in this setting.

The length of the standard base of $\textbf{S}_h'$ is henceforth denoted by $N$.  

\medskip


\subsection{Composition of machines} \

The next machine in our construction, denoted $\textbf{E}_\textbf{S}^0$ and called the \textit{standard enhanced machine}, is the \textit{composition} of $\textbf{S}_h'$ with four simple $S$-machines.

In general, suppose the recognizing $S$-machines $\textbf{M}_1,\dots,\textbf{M}_k$ have the same standard base and identical tape alphabets.  Then the composition $\textbf{M}$ of these machines can be understood as the `concatenation' of $k$ `submachines', each of which is a recognizing $S$-machine identified with $\textbf{M}_i$.  Naturally, $\textbf{M}$ is understood to be a recognizing $S$-machine by taking:

\begin{itemize}

\item The start state letters to be those corresponding to the start letters of $\textbf{M}_1$,

\item The end state letters to be those corresponding to the end letters of $\textbf{M}_k$, and

\item The input sectors to be the same as those of $\textbf{M}_1$.

\end{itemize}

A computation of the machine $\textbf{M}$ generally progresses as a concatenation of computations of these submachines, so that the computational makeup of $\textbf{M}$ is informed by that of the machines $\textbf{M}_1,\dots,\textbf{M}_k$.

To achieve this concatenation, each part of the state letters of $\textbf{M}$ is the disjoint union of $k$ subsets, each of which corresponds to the analogous part of the submachine.  Beyond the software of these submachines, the positive rules of $\textbf{M}$ contain \textit{transition rules} $\sigma(i,i+1)$ which switch the states from the the end states of $\textbf{M}_i$ to the start states of $\textbf{M}_{i+1}$ (and perhaps have restricted domains).  For simplicity, denote $\sigma(i,i+1)^{-1}\equiv\sigma(i+1,i)$.

As such, the standard base and tape alphabets of $\textbf{E}_\textbf{S}^0$ is the same as that of $\textbf{S}_h'$, and the makeup of the machine can be fully understood by describing (i) the submachines $\textbf{E}_\textbf{S}^0(1),\dots,\textbf{E}_\textbf{S}^0(5)$, and (ii) the domains of the transition rules in each sector.  To this end:

\begin{itemize}

\item Each part of the state letters of the submachine $\textbf{E}_\textbf{S}^0(1)$ consists of a single letter, functioning as both the start and end state letter of the corresponding part.  The input sectors are the working sectors corresponding to the input sectors of $\textbf{S}$.  The positive rules of the machine are in one-to-one correspondence with $\Phi^+$, with the rule corresponding to $\theta\in\Phi^+$ multiplying each $Q_{i,\ell}Q_{i,r}$-sector on the right by the copy of $\theta$ in the left historical alphabet (the left alphabet being its domain in this sector) and locking all other non-input sectors.

\item The submachine $\textbf{E}_\textbf{S}^0(2)$ operates `in parallel' as the primitive machine $\textbf{LR}=\textbf{LR}(\Phi^+)$ on each of the subwords $Q_{i,\ell}Q_{i,r}R_i$ of the standard base, taking the corresponding left historical alphabets as the copies of $\Phi^+$ and locking all other non-input sectors.  To match the makeup of $\textbf{LR}$, each part of the state letters contains two letters, with the transition from the start to the end letters given by the copy of the connecting rule.

\item $\textbf{E}_\textbf{S}^0(3)$ is a copy of the recognizing machine $\textbf{S}_h'$.

\item The submachine $\textbf{E}_\textbf{S}^0(4)$ operates in parallel as the primitive machine $\textbf{RL}=\textbf{RL}(\Phi^+)$ on each of the subwords $P_iQ_{i,\ell}Q_{i,r}$ of the standard base, taking the corresponding right historical alphabets as the copies of $\Phi^+$ and locking all other sectors.  Again, each part of the state letters contains two letters, with the transition from the start to the end letters given by the copy of the connecting rule.

\item The submachine $\textbf{E}_\textbf{S}^0(5)$ functions in the same way as $\textbf{E}_\textbf{S}^0(1)$, except that the rule corresponding to $\theta\in\Phi^+$ multiplies each $Q_{i,\ell}Q_{i,r}$-sector on the left by the copy of $\theta^{-1}$ in the right alphabet (its domain in this sector) and locks all other (including input) sectors.

\item The transition rules $\sigma(12)$ and $\sigma(23)$ have identical domains: Their domain in each input sector is the entire tape alphabet, their domain in each $Q_{i,\ell}Q_{i,r}$-sector is the left historical alphabet, and their domain in any other sector is empty.

\item The transition rules $\sigma(34)$ and $\sigma(45)$ also have identical domains.  However, their domain in each $Q_{i,\ell}Q_{i,r}$-sector is the right historical alphabet, while that in all other sectors (including the input sectors) is empty.

\end{itemize}

As evidenced by the arguments in the next sections, the composition of $\textbf{S}_h'$ with these four machines functions to control the computational makeup of the machine: 

\begin{itemize}

\item The submachines $\textbf{E}_\textbf{S}^0(1)$ and $\textbf{E}_\textbf{S}^0(5)$, which function to add and delete history words, make the time function equivalent to $\TM_\textbf{S}$ (whereas the time function of $\textbf{S}_h'$ is linear).

\item The submachines $\textbf{E}_\textbf{S}^0(2)$ and $\textbf{E}_\textbf{S}^0(4)$, on the other hand, give tight control on how a reduced computation with unreduced base can proceed.

\end{itemize}

Note that by the assignment of the input sectors, there is a correspondence between the input configurations of $\textbf{S}$ and those of $\textbf{E}_\textbf{S}^0$ given by simply adding several locked sectors.  Given an input configuration $W$ of $\textbf{S}$, we denote the corresponding input configuration of $\textbf{E}_\textbf{S}^0$ by $I(W)$.  Note that the input of $I(W)$ is the same as that of $W$.

\medskip


\subsection{Computations of the standard enhanced machine} \

By construction, the history $H$ of a reduced computation of $\textbf{E}_\textbf{S}^0$ can be factored in such a way that each factor is either a transition rule or a maximal nonempty product of rules of one of the five defining submachines $\textbf{E}_\textbf{S}^0(j)$. The \textit{step history} of a reduced computation is then defined so as to capture the order of the types of these factors. To do this, denote the transition rule $\sigma(ij)$ by the pair $(ij)$ and a factor that is a product of rules in $\textbf{E}_\textbf{S}^0(i)$ simply by $(i)$.

For example, if $H\equiv H'H''H'''$ where $H'$ is a product of rules from $\textbf{E}_\textbf{S}^0(2)$, $H''\equiv\sigma(23)$, and $H'''$ is a product of rules from $\textbf{E}_\textbf{S}^0(3)$, then the step history of a computation with history $H$ is $(2)(23)(3)$. So, the step history of a computation is some concatenation of the `letters'
$$\{(1),(2),(3),(4),(5),(12),(23),(34),(45),(21),(32),(43),(54)\}$$ 
One can omit reference to a transition rule when its existence is clear from its necessity. For example, given a reduced computation with step history $(2)(23)(3)$, one can instead write the step history as $(2)(3)$, as the rule $\sigma(23)$ must occur in order for the subcomputation of $\textbf{E}_\textbf{S}^0(3)$ to be possible.


Certain subwords cannot appear in the step history of a reduced computation. For example, it is clearly impossible for the step history of a reduced computation to contain the subword $(1)(3)$. 

The next statements display the impossibility of some less obvious potential subwords.

\begin{lemma} \label{E primitive step history}

Let $\pazocal{C}:W_0\to\dots\to W_t$ be a reduced computation of $\textbf{E}_\textbf{S}^0$ with base $B$.

\begin{enumerate}[label=(\alph*)]

\item If $B$ contains a subword $B'$ of the form $(Q_{i,\ell}Q_{i,r}R_i)^{\pm1}$, of the form $Q_{i,\ell}Q_{i,r}Q_{i,r}^{-1}Q_{i,\ell}^{-1}$, or of the form $R_i^{-1}Q_{i,r}^{-1}Q_{i,r}R_i$, then the step history of $\pazocal{C}$ is not $(12)(2)(21)$ or $(32)(2)(23)$.

\item If $B$ contains a subword $B'$ of the form $(P_iQ_{i,\ell}Q_{i,r})^{\pm1}$, of the form $P_iQ_{i,\ell}Q_{i,\ell}^{-1}P_i^{-1}$, or of the form $Q_{i,r}^{-1}Q_{i,\ell}^{-1}Q_{i,\ell}Q_{i,r}$, then the step history of $\pazocal{C}$ is not $(34)(4)(43)$ or $(54)(4)(45)$.

\end{enumerate}

\end{lemma}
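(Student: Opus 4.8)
The plan is to prove both parts by a symmetry argument reducing everything to a single projection/length computation. Part (b) is obtained from part (a) by the obvious symmetry between the primitive submachines $\textbf{E}_\textbf{S}^0(2)=\textbf{LR}$ acting on blocks $Q_{i,\ell}Q_{i,r}R_i$ and $\textbf{E}_\textbf{S}^0(4)=\textbf{RL}$ acting on blocks $P_iQ_{i,\ell}Q_{i,r}$, exchanging left/right historical alphabets and the roles of $P_i$ and $R_i$; one also checks that the transition-rule domains are set up symmetrically ($\sigma(12),\sigma(23)$ act on left historical alphabets, $\sigma(34),\sigma(45)$ on right). So it suffices to handle (a).

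For (a), suppose for contradiction that $\pazocal{C}$ has step history $(12)(2)(21)$ or $(32)(2)(23)$; in either case the entire computation (after stripping the two transition rules at the ends) is a reduced computation of the primitive submachine $\textbf{E}_\textbf{S}^0(2)$, which runs $\textbf{LR}(\Phi^+)$ in parallel on each block $Q_{i,\ell}Q_{i,r}R_i$. Since a transition rule of type $(12)$ or $(32)$ precedes the $\textbf{LR}$-part and its inverse follows it, the first and last admissible words of the $\textbf{LR}$-part have all state letters with the start index of $\textbf{LR}$ (the transition into $\textbf{E}_\textbf{S}^0(2)$ lands on its start letters, and the transition back out must be applicable, forcing the state letters back to start letters). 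The key point is then: if $B'$ is a copy of the standard base $PQR$ of $\textbf{LR}$, then a reduced computation of $\textbf{LR}(\Phi^+)$ whose first and last words are both of the form $p_1\,u\,q_1r_1$ (equivalently both of the form $p_1q_1\,v\,r_1$) must be empty by \Cref{primitive computations}(4); likewise for the unreduced bases $PQQ^{-1}P^{-1}$ (resp. $R^{-1}Q^{-1}QR$) the computation is empty by \Cref{primitive unreduced}. Hence the restriction of $\pazocal{C}$ to $B'$ is empty.

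Now I would promote ``empty on $B'$'' to ``empty overall'', which gives the contradiction (an empty $\textbf{E}_\textbf{S}^0(2)$-subcomputation sandwiched between $\sigma(12)$ and $\sigma(21)$, or $\sigma(32)$ and $\sigma(23)$, is a non-reduced computation, contradicting that $\pazocal{C}$ is reduced). The mechanism is that $\textbf{E}_\textbf{S}^0(2)$ is a parallel copy of $\textbf{LR}$ on every block $Q_{i,\ell}Q_{i,r}R_i$, with all other sectors locked, so the history of the whole computation is determined, letter by letter, by its action on any single non-locked block; more precisely the rules $\tau_1(y)^{\pm1},\zeta^{\pm1},\tau_2(y)^{\pm1}$ each act identically on every block, so if the computation is trivial on one block $B'$ of standard (or the given unreduced) base type then its history is empty. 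This is where one must be slightly careful about which blocks actually occur: the hypothesis guarantees $B$ contains a subword $B'$ of one of the three listed forms, and for each of those forms the submachine $\textbf{E}_\textbf{S}^0(2)$ restricted to $B'$ has a well-understood set of rules, so the relevant ``empty on $B'$ $\Rightarrow$ history empty'' step is immediate. I expect the only real obstacle to be bookkeeping: verifying that the transition rules $\sigma(12),\sigma(21),\sigma(32),\sigma(23)$ genuinely force the $\textbf{LR}$-subcomputation to begin and end on start-index state letters (so that \Cref{primitive computations}(4) and \Cref{primitive unreduced} apply), and checking that their restricted domains do not secretly change $a$-letters on $B'$ in a way that breaks the projection argument — but this is routine from the explicit domain descriptions given above.
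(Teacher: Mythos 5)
Your proposal is correct and essentially reproduces the paper's own argument: restrict the middle $(2)$- (resp. $(4)$-) subcomputation to $B'$, identify it with a reduced computation of the primitive machine $\textbf{LR}$ (resp. $\textbf{RL}$), and get a contradiction from \Cref{primitive computations}(4) and \Cref{primitive unreduced}. Two cosmetic points: for the step histories $(32)(2)(23)$ and $(54)(4)(45)$ the bracketing transitions force the \emph{end}-index rather than the start-index state letters (harmless, since both cited lemmas allow $j\in\{1,2\}$ as long as the two ends share the index), and your closing ``promotion'' step is unnecessary, because the restriction to $B'$ has the same history, hence the same length, as the whole $(2)$-subcomputation, so its emptiness already contradicts the reducedness of $\pazocal{C}$.
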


\begin{proof}

In each case, assuming that the step history is of one of the forbidden forms, the restriction of the subcomputation $W_1\to\dots\to W_{t-1}$ to $B'$ can be identified with a reduced computation of a primitive machine.  But then \Cref{primitive computations}(4) and \Cref{primitive unreduced} produce contradictions.

\end{proof}

\begin{lemma} \label{E run step history}

Let $\pazocal{C}:W_0\to\dots\to W_t$ be a reduced computation of $\textbf{E}_\textbf{S}^0$ with base $B$.

\begin{enumerate}[label=(\alph*)]

\item If $B$ contains a subword $UV$ of the form $(Q_{i,\ell}Q_{i,r})^{\pm1}$ or of the form $Q_{i,r}^{-1}Q_{i,r}$, then the step history of $\pazocal{C}$ is not $(23)(3)(32)$.

\item If $B$ contains a subword $UV$ of the form $(Q_{i,\ell}Q_{i,r})^{\pm1}$ or of the form $Q_{i,\ell}Q_{i,\ell}^{-1}$, then the step history of $\pazocal{C}$ is not $(43)(3)(34)$.

\end{enumerate}

\end{lemma}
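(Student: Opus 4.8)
The plan is to argue exactly as in the proof of \Cref{E primitive step history}, but with the primitive submachine replaced by $\textbf{E}_\textbf{S}^0(3)$ (a copy of $\textbf{S}_h'$) and with \Cref{primitive computations}(4) and \Cref{primitive unreduced} replaced by the $\textbf{S}_h'$-analogues of \Cref{one alphabet historical words} and \Cref{one alphabet historical words unreduced}. The governing idea is that the transition rules flanking a $(3)$-block force the tape word of the $UV$-sector to be supported on a single historical alphabet both upon entering and upon leaving the block, whereas the cited historical-words lemmas assert that a nonempty reduced computation of $\textbf{S}_h'$ necessarily introduces letters of the \emph{other} historical alphabet into that sector.

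In detail, for (a) I would suppose toward a contradiction that the step history of $\pazocal{C}$ is $(23)(3)(32)$; then its history is $\sigma(23)\,H\,\sigma(32)$ with $H$ a nonempty reduced history of $\textbf{E}_\textbf{S}^0(3)$, and $\pazocal{C}$ takes the form $W_0\xrightarrow{\sigma(23)}W_1\to\cdots\to W_{t-1}\xrightarrow{\sigma(32)}W_t$, where $\pazocal{D}\colon W_1\to\cdots\to W_{t-1}$ is a nonempty reduced computation of $\textbf{E}_\textbf{S}^0(3)$. Both forms of $UV$ ($(Q_{i,\ell}Q_{i,r})^{\pm1}$ and $Q_{i,r}^{-1}Q_{i,r}$) have the tape alphabet of the $Q_{i,\ell}Q_{i,r}$-sector, on which the domain of $\sigma(23)$ is exactly the left historical alphabet; hence $\sigma(23)$-admissibility of $W_0$ forces the tape word of the $UV$-sector of $W_0$, and so of $W_1$ (transition rules do not alter tape words), to lie in the left historical alphabet. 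Symmetrically, $\sigma(32)=\sigma(23)^{-1}$ has the same domain, so $\sigma(32)$-admissibility of $W_{t-1}$ forces the tape word of the $UV$-sector of $W_{t-1}$ to lie in the left historical alphabet as well. Let $\pazocal{D}'\colon W_1'\to\cdots\to W_{t-1}'$ be the restriction of $\pazocal{D}$ to the base $UV$; this is a reduced computation of (a copy of) $\textbf{S}_h'$ whose first configuration has tape word over the left historical alphabet.

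The next step is to note that $\pazocal{D}'$ is nonempty: the first rule of $H$ already changes the $UV$-sector, since every rule of $\textbf{S}_h'$ multiplies a $Q_{i,\ell}Q_{i,r}$-sector on the left by a nonempty left-historical letter and on the right by a nonempty right-historical letter, while on an unreduced base $Q_{i,r}^{-1}Q_{i,r}$ it conjugates the tape word by a historical letter — and that tape word is necessarily nonempty, for otherwise the admissible word would contain the unreduced subword $q_{i,r}^{-1}q_{i,r}$. Applying the $\textbf{S}_h'$-analogue of \Cref{one alphabet historical words} when $UV=(Q_{i,\ell}Q_{i,r})^{\pm1}$, or of \Cref{one alphabet historical words unreduced} when $UV=Q_{i,r}^{-1}Q_{i,r}$, to the nonempty computation $\pazocal{D}'$ then yields that the tape word of $W_{t-1}'$ contains a letter of the right historical alphabet, contradicting the previous paragraph. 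This proves (a), and (b) follows by the symmetric argument, interchanging the left and right historical alphabets, the submachines $\textbf{E}_\textbf{S}^0(2)$ and $\textbf{E}_\textbf{S}^0(4)$, the pairs $\sigma(23),\sigma(32)$ and $\sigma(43),\sigma(34)$, and the bases $Q_{i,r}^{-1}Q_{i,r}$ and $Q_{i,\ell}Q_{i,\ell}^{-1}$.

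I expect the only delicate point to be bookkeeping: one must match each transition rule to the historical alphabet occurring in its domain, and match the unreduced base appearing in each part of the statement ($Q_{i,r}^{-1}Q_{i,r}$ in (a), $Q_{i,\ell}Q_{i,\ell}^{-1}$ in (b)) to the ``side'' for which \Cref{one alphabet historical words unreduced} is stated, so that the cited lemma is applied in the correct direction. All of the substantive content is already contained in \Cref{one alphabet historical words} and \Cref{one alphabet historical words unreduced}.
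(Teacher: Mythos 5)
Your argument is correct and takes essentially the same route as the paper's own proof: restrict to the $UV$-sector, use the domain of $\sigma(23)/\sigma(32)$ (resp.\ $\sigma(34)/\sigma(43)$) in the historical sectors to force the tape words at both ends of the $(3)$-subcomputation onto a single historical alphabet, and then contradict the $\textbf{S}_h'$-analogue of \Cref{one alphabet historical words} or \Cref{one alphabet historical words unreduced}. The only differences are cosmetic: the paper additionally disposes of the degenerate possibility that the $(3)$-block is empty (history $\sigma(23)\sigma(32)$, which is unreduced), while your paragraph arguing that the first rule actually changes the $UV$-sector is unnecessary, since the cited lemmas only require the restricted computation to have positive length (and, in the conjugation case, that argument would not be airtight for a general tape word anyway).
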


\begin{proof}

Suppose $UV$ is of one of the forms described in case (a) and the step history of $\pazocal{C}$ is $(23)(3)(32)$.  Let $\pazocal{C}':W_0'\to\dots\to W_t'$ be the restriction of $\pazocal{C}$ to the $UV$-sector.  Then since both $W_1$ and $W_{t-1}$ are $\sigma(32)$-admissible, the tape words of both $W_1'$ and $W_{t-1}'$ consist entirely of letters from the left historical tape alphabet.  If $t\geq3$, though, then the subcomputation $W_1'\to\dots\to W_{t-1}'$ contradicts either \Cref{one alphabet historical words} or \Cref{one alphabet historical words unreduced}.  But then $t=2$ and the history of $\pazocal{C}$ is the unreduced word $\sigma(23)\sigma(32)$.

The other case follows by the same argument.

\end{proof}

\begin{lemma} \label{inputs accepted}

Let $\pazocal{D}$ be a reduced computation of $\textbf{S}$ accepting the input configuration $W$.  Letting $H\in F(\Phi^+)$ be the history of $\pazocal{D}$, there exists a reduced computation of $\textbf{E}_\textbf{S}^0$ with step history $(1)(2)(3)(4)(5)$ and length $7\|H\|+6$ which accepts the input configuration $I(W)$.

\end{lemma}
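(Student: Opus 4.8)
The plan is to explicitly exhibit the accepting computation of $\textbf{E}_\textbf{S}^0$ as the concatenation of five explicitly described subcomputations, one for each submachine, with a transition rule between consecutive submachines, thereby producing a computation with step history $(1)(2)(3)(4)(5)$. Write $\|H\|=k$. In $I(W)$, every historical sector has empty tape word and every non-input working sector has empty tape word, while the input sectors carry the input of $W$.

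First I would run $\textbf{E}_\textbf{S}^0(1)$: applying the rules corresponding to the successive letters of $H$ (read left to right), each of which multiplies every $Q_{i,\ell}Q_{i,r}$-sector on the right by the corresponding letter of the left historical alphabet while locking the other non-input sectors. Since the input sectors are not locked, their contents remain the input throughout, and after $k$ steps every $Q_{i,\ell}Q_{i,r}$-sector holds the copy $\hat H$ of $H$ over the left historical alphabet. Apply $\sigma(12)$ (domain on the input sectors is the whole alphabet and on each $Q_{i,\ell}Q_{i,r}$-sector is the left historical alphabet, so the configuration is $\sigma(12)$-admissible). Next run $\textbf{E}_\textbf{S}^0(2)=\textbf{LR}(\Phi^+)$ in parallel on each block $Q_{i,\ell}Q_{i,r}R_i$; by the observation following \Cref{primitive computations}, for the word $\hat H$ there is a computation of the form described in \Cref{primitive computations}(3), of length $2k+1$, ending with $\hat H$ back in the $Q_{i,\ell}Q_{i,r}$-sector and all primitive state letters at index $2$ (and the $R_i$-sectors still empty, since they were locked in stage (1) and the $\textbf{LR}$ computation from the base configuration $p_1\,u\,q_1r_1$ keeps the $QR$-sector trivial at start and end). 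Apply $\sigma(23)$.

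Now run $\textbf{E}_\textbf{S}^0(3)$, a copy of $\textbf{S}_h'$: I run the rule of $\textbf{S}_h'$ corresponding to each successive letter of $H$. By the definition of $\textbf{S}_h'$ (inherited from $\textbf{S}_h$), these rules act on the working sectors exactly as the rules of $\textbf{S}$ act on $\textbf{S}$, so after $k$ steps the working sectors reach the accept configuration of $\textbf{S}$ (all empty); meanwhile each $Q_{i,\ell}Q_{i,r}$-sector gets multiplied on the left by the copy of $\theta^{-1}$ over the left historical alphabet and on the right by the copy of $\theta$ over the right historical alphabet, step by step — so starting from $\hat H$ (left alphabet) the left factor peels off $\hat H$ to empty and the right factor builds up the copy $\hat H'$ of $H$ over the right historical alphabet. (One checks the $P_iQ_{i,\ell}$- and $Q_{i,r}R_i$-sectors stay empty since they are permanently locked in $\textbf{S}_h'$.) Apply $\sigma(34)$ (its domain on each $Q_{i,\ell}Q_{i,r}$-sector is the right historical alphabet, which now matches). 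Then run $\textbf{E}_\textbf{S}^0(4)=\textbf{RL}(\Phi^+)$ in parallel on each block $P_iQ_{i,\ell}Q_{i,r}$ with the right historical alphabets: again by the analogue of \Cref{primitive computations}(3) for $\textbf{RL}$, there is a length-$(2k+1)$ computation returning $\hat H'$ to the $Q_{i,\ell}Q_{i,r}$-sector with all state letters at index $2$. Apply $\sigma(45)$. Finally run $\textbf{E}_\textbf{S}^0(5)$, whose rule for $\theta$ multiplies each $Q_{i,\ell}Q_{i,r}$-sector on the left by the copy of $\theta^{-1}$ in the right alphabet while locking all other sectors (including input); applying the rules of $H$ in reverse order clears $\hat H'$ to the empty word in $k$ steps, and locking the input sectors at this stage clears the input. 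The resulting configuration has every sector empty, i.e. it is the accept configuration of $\textbf{E}_\textbf{S}^0$.

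The total length is $k$ (stage 1) $+\,1$ ($\sigma(12)$) $+\,(2k+1)$ (stage 2) $+\,1$ ($\sigma(23)$) $+\,k$ (stage 3) $+\,1$ ($\sigma(34)$) $+\,(2k+1)$ (stage 4) $+\,1$ ($\sigma(45)$) $+\,k\;=\;7k+6$, and the step history is $(1)(2)(3)(4)(5)$ by construction. It remains only to check that the computation is reduced, i.e. that no two consecutive rules are mutually inverse: within each stage this is clear since $H$ is reduced in $F(\Phi^+)$ and the $\textbf{LR}/\textbf{RL}$ histories are the reduced words of \Cref{primitive computations}(3); across a transition it is clear since a transition rule $\sigma(i,i+1)$ is not inverse to any rule of $\textbf{E}_\textbf{S}^0(i)$ or $\textbf{E}_\textbf{S}^0(i+1)$. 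The main point requiring care — hence the only real obstacle — is verifying at each transition that the configuration reached lies in the domain of the next transition rule (the historical-alphabet bookkeeping: left alphabet carries $\hat H$ before stage 3, right alphabet carries $\hat H'$ after stage 3) and that the locked sectors that are supposed to be empty genuinely remain empty throughout; both follow directly from the domain specifications of the $\sigma$'s and the locking conventions listed in the construction of $\textbf{E}_\textbf{S}^0$.
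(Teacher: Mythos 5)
Your proposal is correct and follows essentially the same route as the paper's own proof: concatenate the five prescribed subcomputations joined by the four transition rules, with \Cref{primitive computations}(3) supplying the length-$(2\|H\|+1)$ primitive stages and the copies of $H$ supplying the length-$\|H\|$ stages $(1)$, $(3)$, $(5)$, for a total of $7\|H\|+6$. The one slip is in stage 5: since the $\textbf{E}_\textbf{S}^0(5)$-rule for $\theta$ multiplies each $Q_{i,\ell}Q_{i,r}$-sector on the \emph{left} by the right-alphabet copy of $\theta^{-1}$, the word $\hat H'$ is erased by applying the copies of the letters of $H$ in forward order (peeling from the left, exactly as in your own stage-3 bookkeeping, and consistent with \Cref{E standard accepted (4)}), not in reverse order; with that correction your argument coincides with the paper's.
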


\begin{proof}

By construction, there exists a reduced computation $\pazocal{C}_1:I(W)\equiv W_0\to\dots\to W_\ell$ of $\textbf{E}_\textbf{S}^0(1)$ which, in parallel, writes the copy of $H$ in the left historical alphabet of each $Q_{i,\ell}Q_{i,r}$-sector.  It then follows that $W_\ell$ is $\sigma(12)$-admissible and $\ell=\|H\|$.

Next, \Cref{primitive computations}(3) provides a reduced computation $\pazocal{C}_2:W_\ell\cdot\sigma(12)\equiv W_{\ell+1}\to\dots\to W_r$ of $\textbf{E}_\textbf{S}^0(2)$ which applies the `standard' computation of a primitive machine in parallel.  By construction, $W_r$ is $\sigma(23)$-admissible and $r-\ell-1=2\|H\|+1$.

The machine $\textbf{S}_h'$ then provides a computation $\pazocal{C}_3:W_r\cdot\sigma(23)\equiv W_{r+1}\to\dots\to W_x$ of $\textbf{E}_\textbf{S}^0(3)$ which operates as $\pazocal{D}$ in the working sectors.  As such, $W_x$ is $\sigma(34)$-admissible and $x-r-1=\|H\|$.

As above, \Cref{primitive computations}(3) provides a reduced computation $\pazocal{C}_4:W_x\cdot\sigma(34)\equiv W_{x+1}\to\dots\to W_y$ of $\textbf{E}_\textbf{S}^0(4)$ which applies the `standard' primitive computation in parallel, so that $W_y$ is $\sigma(45)$-admissible and $y-x-1=2\|H\|+1$.

Finally, there exists a reduced computation $\pazocal{C}_5:W_y\cdot\sigma(45)\equiv W_{y+1}\to\dots\to W_t$ of $\textbf{E}_\textbf{S}^0(5)$ which erases the copy of $H$ in the right historical alphabet from each $Q_{i,\ell}Q_{i,r}$-sector.  It then follows that $W_t$ is the accept configuration and $t-y-1=\|H\|$.

Combining these computations thus produces a reduced computation satisfying the statement.

\end{proof}

\begin{lemma} \label{E0 language}

The language of accepted inputs of $\textbf{E}_\textbf{S}^0$ is the same as that of $\textbf{S}$.  Moreover, the time function of $\textbf{E}_\textbf{S}^0$ is $\sim$-equivalent to $\TM_\textbf{S}$.

\end{lemma}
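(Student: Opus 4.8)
The plan is to prove the two assertions separately, with the language statement following quickly from \Cref{inputs accepted} together with a reverse implication, and the time-function equivalence requiring a more careful two-directional estimate.

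\emph{Language equality.} For the inclusion $L(\textbf{S})\subseteq L(\textbf{E}_\textbf{S}^0)$, I would simply invoke \Cref{inputs accepted}: if an input configuration $W$ is accepted by $\textbf{S}$ via a computation with history $H$, then $I(W)$ is accepted by $\textbf{E}_\textbf{S}^0$. For the reverse inclusion, suppose $I(W)$ is accepted by some reduced computation $\pazocal{C}$ of $\textbf{E}_\textbf{S}^0$ in the standard base. I would analyze the step history of $\pazocal{C}$. Since $\pazocal{C}$ starts at a configuration with all-start state letters (those of $\textbf{E}_\textbf{S}^0(1)$) and ends at the accept configuration (an end configuration of $\textbf{E}_\textbf{S}^0(5)$), and since transition rules can only be traversed in the forward direction without being immediately cancelled in a reduced computation, the step history must be a subword of $(1)(2)(3)(4)(5)$ that begins in block $(1)$ and ends in block $(5)$ — hence exactly $(1)(2)(3)(4)(5)$ (possibly with some blocks empty, but the transition rules force each to be nonempty or at least force passage through each stage). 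Restricting $\pazocal{C}$ to the working sectors, the subcomputation in stage $(3)$ projects to a computation of $\textbf{S}_h'$, hence of $\textbf{S}_h$, hence of $\textbf{S}$; the stages $(1),(2),(4),(5)$ lock all working sectors (for the primitive stages) or lock all non-input sectors and act only on historical alphabets, so they do not change the working-sector content beyond what is recorded. The key point is that the accept configuration of $\textbf{E}_\textbf{S}^0$ has empty tape word everywhere, which forces the stage-$(3)$ subcomputation to carry the working part of $I(W)$ to the working part of the accept configuration of $\textbf{S}_h'$; projecting, this is an accepting computation of $\textbf{S}$ on input $W$.

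\emph{Time function.} One direction is immediate from \Cref{inputs accepted}: for an accepted input configuration $W$ of $\textbf{S}$ with $|W|_a\leq n$, take a minimal accepting computation $\pazocal{D}$, so $\|H\|=\tm(W)\leq\TM_\textbf{S}(n)$, and \Cref{inputs accepted} produces an accepting computation of $I(W)$ of length $7\|H\|+6\leq 7\TM_\textbf{S}(n)+6$. Since $|I(W)|_a=|W|_a\leq n$, this gives the time function of $\textbf{E}_\textbf{S}^0$ at $n$ is at most $7\TM_\textbf{S}(n)+6$, so $\TM_{\textbf{E}_\textbf{S}^0}\preceq\TM_\textbf{S}$. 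For the other direction, take an accepted input configuration of $\textbf{E}_\textbf{S}^0$, which by the correspondence has the form $I(W)$ with $W$ an input configuration of $\textbf{S}$, $|W|_a\leq n$, and let $\pazocal{C}$ be a minimal accepting computation of $\textbf{E}_\textbf{S}^0$. By the step-history analysis above, $\pazocal{C}$ decomposes into the five stages; the stage-$(3)$ subcomputation projects to an accepting computation $\pazocal{D}$ of $\textbf{S}$ on $W$, so its length is at least $\tm(W)$, and hence at least as large as the length of the $\textbf{S}_h$-computation it came from — but I must be careful, because a short $\textbf{E}_\textbf{S}^0$-computation could in principle use a long history word written in stage $(1)$ and then have a short stage $(3)$. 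The resolution is that the history word written in the historical sectors during stage $(1)$ must, by the locking structure and by Lemmas~\ref{one alphabet historical words} and~\ref{primitive computations}(3), match the history of the stage-$(3)$ subcomputation (this is exactly the purpose of the $\textbf{LR}$ and $\textbf{RL}$ stages: they verify the recorded history against what stage $(3)$ performs), and stage $(3)$'s history length equals the number of $\textbf{S}$-steps it projects to. Thus $\|H_{(1)}\|=\|H_{(3)}\|$ and the latter is at least $\tm(W)$, giving total length at least $\tm(W)$, hence $\TM_\textbf{S}(n)\leq\TM_{\textbf{E}_\textbf{S}^0}(n)$, i.e. $\TM_\textbf{S}\preceq\TM_{\textbf{E}_\textbf{S}^0}$.

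The main obstacle is the second direction of the time-function bound: proving that an accepting computation of $\textbf{E}_\textbf{S}^0$ cannot be substantially shorter than the minimal $\textbf{S}$-computation by cheating with the historical bookkeeping. The honest way to handle this is to establish rigorously that in any reduced accepting computation the step history is $(1)(2)(3)(4)(5)$ (ruling out degenerate or repeated passages through stages by the transition-rule structure and by \Cref{E run step history} and \Cref{E primitive step history} applied to the historical sectors), and then to extract the identity between the stage-$(1)$ history word and the stage-$(3)$ history via a projection argument in a single $Q_{i,\ell}Q_{i,r}$ historical sector, using \Cref{one alphabet historical words} to control how the historical tape words grow and \Cref{primitive computations}(3) to see that the $\textbf{LR}$-stage precisely "reads back" that word. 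I expect this to be where essentially all the real work lies; the rest is routine bookkeeping with projection arguments.
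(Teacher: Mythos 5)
Your proposal is correct and follows essentially the same route as the paper: one inclusion and one direction of the time bound come from \Cref{inputs accepted}, and for the converse you force the step history of an arbitrary reduced accepting computation to begin $(1)(2)(3)(4)(5)$ via \Cref{E primitive step history} and \Cref{E run step history}, then project the stage-$(3)$ subcomputation to the working sectors to obtain an accepting computation of $\textbf{S}$ on $W$, which gives both the language equality and $\tm(I(W))\geq\tm(W)$. One remark: the obstacle you single out (a long stage-$(1)$ history compensating for a short stage-$(3)$) is a non-issue, since the total length is already bounded below by the length of the stage-$(3)$ subcomputation alone, whose working-sector projection accepts $W$; the matching of the stage-$(1)$ and stage-$(3)$ histories is true but not needed for the bound.
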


\begin{proof}

Let $W'$ be an accepted input configuration of $\textbf{E}_\textbf{S}^0$.  The correspondence between the input configurations of $\textbf{E}_\textbf{S}^0$ and those of $\textbf{S}$ implies there exists an input configuration $W$ of $\textbf{S}$ such that $W'\equiv I(W)$.

Now, let $\pazocal{C}:W'\equiv W_0\to\dots\to W_t$ be a reduced computation accepting $W'$.  By \Cref{E primitive step history} and \Cref{E run step history}, the step history of $\pazocal{C}$ must have prefix $(1)(2)(3)(4)(5)$.

Let $\pazocal{C}_3:W_x\to\dots\to W_y$ be the maximal subcomputation with step history $(3)$.  Let $H\in F(\Phi^+)$ be the word in the rules of $\textbf{S}$ such that the history of $\pazocal{C}_3$ is the natural copy of $H$ in the rules of $\textbf{E}_\textbf{S}^0(3)$.  Since no rule of $\textbf{E}_\textbf{S}^0(1)$ or of $\textbf{E}_\textbf{S}^0(2)$ alters the working sectors, $W_x$ has the same tape words as $W_0$ written in each such sector.  

As $W_x$ is $\sigma(32)$-admissible and $W_y$ is $\sigma(34)$-admissible, $\pazocal{C}_3$ can be identified with a reduced computation of $\textbf{S}_h'$ between a start and an end configuration.  Moreover, since the tape words of $W_x$ and $W_y$ in the $Q_{i,\ell}Q_{i,r}$-sectors are formed over the corresponding left and right historical alphabets, respectively, it follows that the tape word in each such sector of $W_x$ {\frenchspacing(resp. $W_y$)} is the natural copy of $H$ over the left {\frenchspacing(resp. right)} historical alphabet.

In particular, since the working sectors of $W_y$ must have empty tape words, $\pazocal{C}_3$ can be identified with a reduced computation of $\textbf{S}$ accepting the start configuration formed by the working sectors of $W_x$.  But this is the same as $W$, so that the input accepted by $\pazocal{C}$ is the same as that of $W$.  Combining this with \Cref{inputs accepted}, we then have that the language of accepted inputs of $\textbf{E}_\textbf{S}^0$ is the same as that of $\textbf{S}$.

Moreover, note that the length of $\pazocal{C}$ is at least $\|H\|\geq\tm(W)$.  As $\pazocal{C}$ is an arbitrary computation accepting $W'$, it follows that $\tm(W')\geq\tm(W)$.

Hence, the equivalence of the time functions of the two machines follows from \Cref{inputs accepted}.

\end{proof}

\begin{lemma} \label{E standard one-step}

Let $\pazocal{C}:W_0\to\dots\to W_t$ be a reduced computation of $\textbf{E}_\textbf{S}^0$ with step history $(j)$ for some $j\in\{1,\dots,5\}$.  Suppose the base of $\pazocal{C}$ is of the form $(P_iQ_{i,\ell}Q_{i,r}R_i)^{\pm1}$.

\begin{enumerate}[label=(\alph*)]

\item If $j\in\{1,5\}$, then $t\leq2\max(|W_0|_a,|W_t|_a)$.

\item If $j=3$, then $t\leq\max(|W_0|_a,|W_t|_a)$.  Moreover, if $W_0$ is $\sigma(32)$- or $\sigma(34)$-admissible, then $|W_0|_a\leq|W_t|_a$.

\item If $j\in\{2,4\}$, then $t\leq2\max(|W_0|_a,|W_t|_a)+1$.  Moreover, if $W_0$ is $\sigma(j,j-1)$- or $\sigma(j,j+1)$-admissible, then $|W_0|_a\leq|W_t|_a$.

\end{enumerate}

\end{lemma}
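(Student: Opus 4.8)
The plan is to analyze each of the five submachines $\textbf{E}_\textbf{S}^0(j)$ restricted to the base $(P_iQ_{i,\ell}Q_{i,r}R_i)^{\pm1}$ and invoke the corresponding length estimate already established for that submachine or its primitive prototype. Since the step history is $(j)$, the whole computation $\pazocal{C}$ lies inside a single submachine, so in each case we may replace $\pazocal{C}$ by the corresponding reduced computation of $\textbf{E}_\textbf{S}^0(j)$ with the stated base. First I would handle $j\in\{1,5\}$ (case (a)): in these submachines the only active sectors among $P_iQ_{i,\ell}$, $Q_{i,\ell}Q_{i,r}$, $Q_{i,r}R_i$, and the input sectors are the historical sector $Q_{i,\ell}Q_{i,r}$ (on which the rule multiplies on one side by a single letter from one fixed historical alphabet, different rules giving different letters) and possibly an input working sector; the $P_iQ_{i,\ell}$ and $Q_{i,r}R_i$ sectors are locked throughout $\textbf{S}_h'$ and hence in these submachines. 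Restricting to the historical two-letter sector $Q_{i,\ell}Q_{i,r}$, \Cref{multiply one letter}(b) gives $\|H\|\le|W_0|_a+|W_t|_a\le 2\max(|W_0|_a,|W_t|_a)$, and since $t=\|H\|$ this yields (a). (If the base is just $(Q_{i,\ell}Q_{i,r})^{\pm1}$ inside a longer base the same argument applies verbatim; if the base contains an unreduced historical subword like $Q_{i,\ell}Q_{i,\ell}^{-1}$, I would instead invoke \Cref{one alphabet historical words unreduced} — but for the base $(P_iQ_{i,\ell}Q_{i,r}R_i)^{\pm1}$ as stated the reduced-base estimate suffices.)

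For $j=3$ (case (b)), the submachine $\textbf{E}_\textbf{S}^0(3)$ is a copy of $\textbf{S}_h'$, and the base $(P_iQ_{i,\ell}Q_{i,r}R_i)^{\pm1}$ is reduced, so \Cref{one alphabet historical words} (the $\textbf{S}_h'$-analogue noted after its statement) applies to the $P_iQ_{i,\ell}$, $Q_{i,\ell}Q_{i,r}$, and $Q_{i,r}R_i$ historical sectors. I would argue that the whole history $H$ is driven by the rule sequence of $\textbf{S}_h$, whose length is bounded by $|W_t|_a$ (or by $|W_0|_a$ under the admissibility hypothesis forcing all historical $a$-letters into one alphabet), using that the working sectors of $\textbf{E}_\textbf{S}^0(3)$ have tape words that only shrink or stay bounded in the relevant direction. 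Concretely: if $W_0$ is $\sigma(32)$- or $\sigma(34)$-admissible, the $a$-letters of $W_0$ in each historical sector lie entirely in the left (resp.\ right) historical alphabet, so \Cref{one alphabet historical words} applied to, say, the $Q_{i,\ell}Q_{i,r}$-sector gives both $\|H\|\le|W_t|_a$ and $|W_0|_a\le|W_t|_a$ — but one must also bound the $a$-length contribution of the working/$R_{i-1}P_i$-type sectors; I'd note the standard base $(P_iQ_{i,\ell}Q_{i,r}R_i)^{\pm1}$ contains no working sector (the working sector $R_{i-1}P_i$ straddles two blocks), so the only sectors present are the three historical ones and hence $\|H\|\le\max$ of the terminal $a$-lengths suffices; combined with $t=\|H\|$ this gives (b).

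For $j\in\{2,4\}$ (case (c)): here $\textbf{E}_\textbf{S}^0(2)$ runs $\textbf{LR}(\Phi^+)$ in parallel on the subwords $Q_{i,\ell}Q_{i,r}R_i$ (and $\textbf{E}_\textbf{S}^0(4)$ runs $\textbf{RL}(\Phi^+)$ on $P_iQ_{i,\ell}Q_{i,r}$), so the restriction of $\pazocal{C}$ to $Q_{i,\ell}Q_{i,r}R_i$ (resp.\ $P_iQ_{i,\ell}Q_{i,r}$) is a reduced computation of a primitive machine in its standard base. \Cref{primitive time} then gives $t\le 2\max(|W_0|_a,|W_t|_a)+1$ directly (the $a$-length of the restriction to these three letters is all the $a$-length there is, since the other sector — $P_iQ_{i,\ell}$ resp.\ $Q_{i,r}R_i$ — is locked throughout). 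For the "moreover": if $W_0$ is $\sigma$-admissible on the side adjacent to step $(j)$, then $W_0$ restricted to the primitive base is of the form $p_k\,u\,q_kr_k$ or $p_kq_k\,u\,r_k$ for $k\in\{1,2\}$ (start or end configuration of the primitive machine), so \Cref{primitive computations}(5) (or its $\textbf{RL}$-analogue) gives $|W_0|_a\le|W_i|_a$ for all $i$, in particular $|W_0|_a\le|W_t|_a$. The main obstacle I anticipate is the bookkeeping of exactly which sectors of the base $(P_iQ_{i,\ell}Q_{i,r}R_i)^{\pm1}$ are locked in each submachine and translating "admissible for the appropriate transition rule" into the precise start/end-configuration hypotheses of Lemmas \ref{primitive computations} and \ref{one alphabet historical words}; once that dictionary is set up, each case is a one-line citation.
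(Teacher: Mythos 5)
Your cases (a) and (c) match the paper's proof: identify the computation with a two-letter-base computation satisfying \Cref{multiply one letter} (the $P_iQ_{i,\ell}$- and $Q_{i,r}R_i$-sectors being locked), respectively with a primitive-machine computation and cite \Cref{primitive time} and \Cref{primitive computations}(5). The ``moreover'' part of (b) is also handled as in the paper, via \Cref{one alphabet historical words}.

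The gap is in the unconditional bound of (b). Your justification --- that ``the whole history $H$ is driven by the rule sequence of $\textbf{S}_h$, whose length is bounded by $|W_t|_a$'' --- is false without the one-alphabet hypothesis: in the historical $Q_{i,\ell}Q_{i,r}$-sector each rule $\theta_h$ sends the tape word $u$ to $\theta_\ell^{-1}u\,\theta_r$, so a nested word $u_0\equiv \theta_\ell^{(1)}\cdots\theta_\ell^{(k)}\bigl(\theta_r^{(k)}\bigr)^{-1}\cdots\bigl(\theta_r^{(1)}\bigr)^{-1}$ of $a$-length $2k$ is erased by a reduced computation of length $k$, giving $\|H\|=k>|W_t|_a=0$. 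What the statement needs, and what the paper uses, is \Cref{multiply two letters}: since every rule of $\textbf{E}_\textbf{S}^0(3)$ multiplies the $Q_{i,\ell}Q_{i,r}$-sector on the left by the corresponding letter of the left historical alphabet and on the right by its copy in the right historical alphabet (with different rules corresponding to different letters), part (c) of that lemma gives $t=\|H\|\leq\tfrac12\bigl(|W_0|_a+|W_t|_a\bigr)\leq\max(|W_0|_a,|W_t|_a)$, with no admissibility hypothesis on $W_0$. (Your appeal to the behaviour of ``working sectors'' is vacuous here, as you yourself note later that the base $(P_iQ_{i,\ell}Q_{i,r}R_i)^{\pm1}$ contains none; likewise \Cref{one alphabet historical words} is only relevant to the $Q_{i,\ell}Q_{i,r}$-sector, the $P_iQ_{i,\ell}$- and $Q_{i,r}R_i$-sectors being locked by every rule of $\textbf{E}_\textbf{S}^0(3)$ and hence empty.) With that substitution the proof closes and coincides with the paper's.
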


\begin{proof}

Suppose $j\in\{1,5\}$.  Then every rule of $\textbf{E}_\textbf{S}^0(j)$ locks the $P_iQ_{i,\ell}$- and $Q_{i,r}R_i$-sectors.  As such, $\pazocal{C}$ can be identified with a reduced computation of a two-letter base satisfying the hypotheses of \Cref{multiply one letter}.  Hence, $t\leq2\max(|W_0|_a,|W_t|_a)$.

Similarly, if $j=3$, then $\pazocal{C}$ can be identified with a reduced computation of $\textbf{S}_h$ with base $Q_{i,\ell}Q_{i,r}$.  Such a computation satisfies the hypotheses of \Cref{multiply two letters}, so that again $t\leq\max(|W_0|_a,|W_t|_a)$.  Moreover, if $W_0$ is $\sigma(32)$- or $\sigma(34)$-admissible, then the computation satisfies the hypotheses of \Cref{one alphabet historical words}, so that $|W_0|_a\leq|W_t|_a$.

Finally, if $j\in\{2,4\}$, then $\pazocal{C}$ can be identified with a reduced computation of a primitive machine in the standard base, so that \Cref{primitive time} implies $t\leq2\max(|W_0|_a,|W_t|_a)+1$.  Moreover, if $W_0$ is $\sigma(j,j-1)$- or $\sigma(j,j+1)$-admissible, then \Cref{primitive computations}(5) implies $|W_0|_a\leq|W_t|_a$.

\end{proof}

\begin{lemma} \label{E time (12)}

Let $\pazocal{C}:W_0\to\dots\to W_t$ be a reduced computation of $\textbf{E}_\textbf{S}^0$ whose base is of the form $(P_iQ_{i,\ell}Q_{i,r}R_i)^{\pm1}$.  If the step history of $\pazocal{C}$ or its inverse is a subword of $(2)(3)(4)$, then $t\leq6\max(|W_0|_a,|W_t|_a)+5$.  Moreover, if $W_0$ is $\sigma(21)$- or $\sigma(45)$-admissible, then $|W_0|_a\leq|W_t|_a$.

\end{lemma}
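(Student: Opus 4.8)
The plan is to decompose $\pazocal{C}$ along its step history into at most three maximal subcomputations, each with a one-letter step history in $\{(2),(3),(4)\}$, apply \Cref{E standard one-step} to each piece, and chain the monotonicity conclusions across the connecting transition rules. The cases in which $\pazocal{C}$ is empty or consists only of transition rules are immediate, since transition rules on a base of the form $(P_iQ_{i,\ell}Q_{i,r}R_i)^{\pm1}$ multiply no sector by an $a$-letter (their nonempty domain lies in a single historical sector and they merely relabel state letters), so assume otherwise.

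For the length bound I would first reduce to the case that the step history of $\pazocal{C}$ itself, not merely its inverse, is a subword of $(2)(3)(4)$: replacing $\pazocal{C}$ by $\pazocal{C}^{-1}$ interchanges $W_0$ and $W_t$ and hence leaves both $\max(|W_0|_a,|W_t|_a)$ and the length $t$ unchanged. (The ``moreover'' clause is not symmetric and is handled separately below.) Since transition rules only join consecutive submachines and the letters $(1),(5)$ cannot appear, the step history is then one of $(2),(3),(4),(2)(3),(3)(4),(2)(3)(4)$; write $\pazocal{C}=\pazocal{C}_{j_1}\cdots\pazocal{C}_{j_m}$ with $1\le m\le 3$, $j_1<\dots<j_m$ and $j_{k+1}=j_k+1$, where $\pazocal{C}_{j_k}$ runs from $V_{k-1}$ to $V_k'$, the intervening transition rule carries $V_k'$ to $V_k$, and $V_0=W_0$, $V_m'=W_t$. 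By the remark above, $|V_k|_a=|V_k'|_a$ for $1\le k\le m-1$.

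Now for each $k\ge 2$ the configuration $V_{k-1}$ is $\sigma(j_k,j_k-1)$-admissible, so the monotonicity half of \Cref{E standard one-step}(b) (if $j_k=3$) or of \Cref{E standard one-step}(c) (if $j_k=4$) gives $|V_{k-1}|_a\le|V_k'|_a$; combined with $|V_k|_a=|V_k'|_a$ this produces the chain $|V_1'|_a\le\dots\le|V_m'|_a=|W_t|_a$, so every $|V_k'|_a$ is at most $n:=\max(|W_0|_a,|W_t|_a)$. Then \Cref{E standard one-step} bounds the length of $\pazocal{C}_{j_k}$ by $2\max(|V_{k-1}|_a,|V_k'|_a)+1$ when $j_k\in\{2,4\}$ and by $\max(|V_{k-1}|_a,|V_k'|_a)$ when $j_k=3$; since $|V_{k-1}|_a\le n$ (for $k=1$ because $V_0=W_0$, and for $k\ge 2$ by the chain) and $|V_k'|_a\le n$, each block has length at most $2n+1$, and a $(3)$-block at most $n$. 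Adding the at most $m-1\le 2$ transition rules, the worst case $m=3$ (step history $(2)(3)(4)$) gives $t\le(2n+1)+n+(2n+1)+2=5n+4\le 6n+5$, and every other case is smaller.

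For the ``moreover'' clause I would argue directly. If $W_0$ is $\sigma(21)$-admissible it carries the start state letters of $\textbf{E}_\textbf{S}^0(2)$; since $(1)$ cannot occur in the step history, the first block is $(2)$ and the step history is $(2)$, $(2)(3)$, or $(2)(3)(4)$. Then $V_0=W_0$ is $\sigma(21)=\sigma(j_1,j_1-1)$-admissible, so \Cref{E standard one-step}(c) gives $|W_0|_a\le|V_1'|_a$, and the chain above gives $|V_1'|_a\le|W_t|_a$. If instead $W_0$ is $\sigma(45)$-admissible it carries the end state letters of $\textbf{E}_\textbf{S}^0(4)$, so (as $(5)$ cannot occur) the first step is a rule of $\textbf{E}_\textbf{S}^0(4)$ and the step history of $\pazocal{C}$ is $(4)$, $(4)(3)$, or $(4)(3)(2)$; applying \Cref{E standard one-step}(c) to the first block, in which $W_0$ is $\sigma(45)=\sigma(4,5)$-admissible, gives $|W_0|_a\le|V_1'|_a$, and running the analogous monotonicity chain downward through the $(3)$-block (whose start is $\sigma(34)$-admissible) and the $(2)$-block (whose start is $\sigma(23)$-admissible) gives $|V_1'|_a\le|W_t|_a$. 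In both cases $|W_0|_a\le|W_t|_a$, as required. I expect the only real difficulty to be the bookkeeping: checking at each block boundary that the configuration is admissible for exactly the transition rule whose monotonicity half of \Cref{E standard one-step} is invoked, and keeping the non-symmetric ``moreover'' argument independent of the reversal trick used for the length bound.
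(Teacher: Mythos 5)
Your proposal is correct and takes essentially the same approach as the paper: decompose $\pazocal{C}$ along its step history into at most three blocks with one-letter step history, apply \Cref{E standard one-step} to each block, and use admissibility for the adjacent transition rules to chain the monotonicity conclusions across the junctions. The only differences are cosmetic — the paper propagates the intermediate $a$-length bounds backward (bounding them by $|W_0|_a$, giving $4|W_0|_a+2|W_t|_a+5$) where you propagate them forward toward $|W_t|_a$, and it dispatches the $\sigma(45)$ case as symmetric rather than writing it out as you do.
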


\begin{proof}

Suppose the step history of $\pazocal{C}$ is a subword of $(2)(3)(4)$; the other case is proved identically.  Let $W_0\to\dots\to W_x$ be the maximal initial subcomputation with step history $(j)$ for some $j\in\{2,3,4\}$.  By \Cref{E standard one-step}, in any case $x\leq2\max(|W_0|_a,|W_x|_a)+1$, while if $W_0$ is $\sigma(21)$-admissible (and so $j=2$) then (c) implies $|W_0|_a\leq|W_x|_a$.  

Hence, it suffices to assume $x<t$.  Note that since $W_x$ is $\sigma(j,j+1)$-admissible, \Cref{E standard one-step}(b) and (c) imply $|W_x|_a\leq|W_0|_a$.  In particular, if $W_0$ is $\sigma(21)$-admissible, then $|W_0|_a=|W_x|_a$.

Let $W_{x+1}\to\dots\to W_y$ be the maximal subcomputation with step history $(j+1)$.  Then as above \Cref{E standard one-step} implies $y-x-1\leq2\max(|W_x|_a,|W_y|_a)+1$ and $|W_x|_a\leq|W_y|_a$.  As a result, $y\leq4\max(|W_0|_a,|W_y|_a)+3$ and $|W_0|_a\leq|W_y|_a$ if $W_0$ is $\sigma(21)$-admissible.

Hence, it suffices to assume $y<t$, so that the step history of $\pazocal{C}$ is $(2)(3)(4)$.  Similar to above this implies $|W_y|_a\leq|W_0|_a$ with equality if $W_0$ is $\sigma(21)$-admissible.  

But then again the subcomputation $W_{y+1}\to\dots\to W_t$ of step history $(4)$ satisfies the hypotheses of \Cref{E standard one-step}(c), so that $t\leq6\max(|W_0|_a,|W_t|_a)+5$ and $|W_y|_a\leq|W_t|_a$.

\end{proof}

\begin{lemma} \label{E standard no (1)}

Let $\pazocal{C}:W_0\to\dots\to W_t$ be a reduced computation of $\textbf{E}_\textbf{S}^{0}$ whose base is of the form $(P_iQ_{i,\ell}Q_{i,r}R_i)^{\pm1}$.  If the step history of $\pazocal{C}$ does not contain the letters $(1)$, $(12)$, or $(21)$, then $t\leq14\max(|W_0|_a,|W_t|_a)+12$.

\end{lemma}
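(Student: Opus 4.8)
The plan is to first determine the possible step histories of $\pazocal{C}$ — they turn out to be unimodal walks on a short path — and then to bound separately the length contributed by each maximal single--submachine block in terms of the $a$-lengths at its endpoints, splicing the estimates together via the monotonicity (``Moreover'') clauses of Lemmas~\ref{E standard one-step} and~\ref{E time (12)}.

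First I would analyze the step history. Since it avoids $(1)$, $(12)$, and $(21)$, the only transition letters it can contain are $(23),(32),(34),(43),(45),(54)$, so the sequence of maximal single--submachine blocks of $\pazocal{C}$ traces a walk on the path graph with vertex set $\{2,3,4,5\}$ and edge set $\{23,34,45\}$. The base $(P_iQ_{i,\ell}Q_{i,r}R_i)^{\pm1}$ contains subwords of the forms $(Q_{i,\ell}Q_{i,r}R_i)^{\pm1}$, $(P_iQ_{i,\ell}Q_{i,r})^{\pm1}$, and $(Q_{i,\ell}Q_{i,r})^{\pm1}$, so Lemmas~\ref{E primitive step history} and~\ref{E run step history} apply and show that the step history of $\pazocal{C}$ contains none of $(3)(2)(3)$, $(2)(3)(2)$, $(4)(3)(4)$, $(3)(4)(3)$, $(5)(4)(5)$. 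These five patterns are precisely the immediate reversals a walk can make at the vertices $2$, $3$, $4$; hence the walk turns around at most once, and only at the vertex $5$. Therefore the step history is a subword of $(2)(3)(4)(5)$, a subword of the inverse of that word, or else it has the form: a subword of $(2)(3)(4)$ ending with the block $(4)$, then $\sigma(45)$, then a nonempty block of step history $(5)$, then $\sigma(54)$, then a subword of $(4)(3)(2)$ beginning with the block $(4)$.

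Next I would run the length estimate, writing $M=\max(|W_0|_a,|W_t|_a)$. If $\pazocal{C}$ has no block of step history $(5)$, then its step history or its inverse is a subword of $(2)(3)(4)$, so Lemma~\ref{E time (12)} gives $t\le 6M+5\le 14M+12$. Otherwise let $\pazocal{C}_5:W_p\to\dots\to W_q$ be the unique maximal block of step history $(5)$ and decompose $\pazocal{C}$ as $\pazocal{C}_{\mathrm{up}}$, then $\sigma(45)$, then $\pazocal{C}_5$, then $\sigma(54)$, then $\pazocal{C}_{\mathrm{dn}}$, where $\pazocal{C}_{\mathrm{up}}:W_0\to\dots\to W_m$ has step history a subword of $(2)(3)(4)$, $\pazocal{C}_{\mathrm{dn}}:W_n\to\dots\to W_t$ has step history a subword of $(4)(3)(2)$, and $\pazocal{C}_{\mathrm{up}}$, $\pazocal{C}_{\mathrm{dn}}$, and the trailing $\sigma(54)$ may be empty or absent. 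The configuration $W_m$ is $\sigma(45)$-admissible, so I would apply Lemma~\ref{E time (12)} to the \emph{reversed} subcomputation $W_m\to\dots\to W_0$: its step history is the inverse of a subword of $(2)(3)(4)$, so the lemma yields $|W_m|_a\le|W_0|_a$ and bounds the length of $\pazocal{C}_{\mathrm{up}}$ by $6\max(|W_0|_a,|W_m|_a)+5=6|W_0|_a+5$. Symmetrically, $W_n$ is $\sigma(45)$-admissible, so Lemma~\ref{E time (12)} gives $|W_n|_a\le|W_t|_a$ and bounds the length of $\pazocal{C}_{\mathrm{dn}}$ by $6|W_t|_a+5$. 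Since transition rules do not change $a$-length, $\pazocal{C}_5$ runs between configurations of $a$-lengths $|W_m|_a\le|W_0|_a$ and $|W_n|_a\le|W_t|_a$, so Lemma~\ref{E standard one-step}(a) bounds its length by $2\max(|W_0|_a,|W_t|_a)=2M$. Adding the lengths of the five pieces together with the at most two transition rules gives $t\le(6M+5)+1+2M+1+(6M+5)=14M+12$.

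The hard part will be the combinatorial classification in the second step — checking that the cited lemmas really do exclude every reversal at $2$, $3$, $4$, so that every admissible step history (for our particular base) is unimodal with a single peak block $(5)$. Beyond that, the one point that requires care is orientation: one must feed the \emph{reversed} ascending subcomputation into the ``Moreover'' clause of Lemma~\ref{E time (12)} — so that its initial configuration is the $\sigma(45)$-admissible word $W_m$ — in order to obtain the inequality $|W_m|_a\le|W_0|_a$ pointing in the direction that eliminates $|W_m|_a$ from the final bound, and likewise on the descending side.
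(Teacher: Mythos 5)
Your proof is correct and follows essentially the same route as the paper: Lemmas \ref{E primitive step history} and \ref{E run step history} force the step history to be a subword of $(2)(3)(4)(5)(4)(3)(2)$, and then \Cref{E time (12)} (applied with the orientation chosen so the $\sigma(45)$-admissible endpoint is initial, giving the monotonicity inequalities) handles the ascending and descending parts while \Cref{E standard one-step}(a) bounds the $(5)$ block, yielding $14\max(|W_0|_a,|W_t|_a)+12$. Your explicit treatment of the reversal orientation is exactly the step the paper performs implicitly.
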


\begin{proof}

By Lemmas \ref{E primitive step history} and \ref{E run step history}, the step history of $\pazocal{C}$ must be a subword of $(2)(3)(4)(5)(4)(3)(2)$.  Moreover, by \Cref{E time (12)} it suffices to assume that $\pazocal{C}$ contains a nonempty maximal subcomputation $W_x\to\dots\to W_y$ with step history $(5)$.

\Cref{E standard one-step}(a) then implies $y-x\leq2\max(|W_x|_a,|W_y|_a)$, so that we may assume $x>0$ or $y<t$.  In the former case, the subcomputation $W_0\to\dots\to W_{x-1}$ satisfies the hypotheses of \Cref{E time (12)}, so that $x\leq6|W_0|_a+6$ and $|W_x|_a\leq|W_0|_a$.  In the latter, the same argument applies to the subcomputation $W_{y+1}\to\dots\to W_t$, so that $t-y\leq6|W_t|_a+6$ and $|W_y|_a\leq|W_t|_a$.

Thus, $t\leq6|W_0|_a+6|W_t|_a+2\max(|W_0|_a,|W_t|_a)+12$, implying the statement.

\end{proof}

The next statement follows in just the same way:

\begin{lemma} \label{E standard no (5)}

Let $\pazocal{C}:W_0\to\dots\to W_t$ be a reduced computation of $\textbf{E}_\textbf{S}^{0}$ whose base is of the form $(P_iQ_{i,\ell}Q_{i,r}R_i)^{\pm1}$.  If the step history of $\pazocal{C}$ does not contain the letters $(5)$, $(54)$, or $(45)$, then $t\leq14\max(|W_0|_a,|W_t|_a)+12$.

\end{lemma}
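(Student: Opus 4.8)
The plan is to prove this by mirroring the proof of \Cref{E standard no (1)} under the symmetry that interchanges the submachines $\textbf{E}_\textbf{S}^0(1)$ and $\textbf{E}_\textbf{S}^0(5)$, and correspondingly the step-history letters $(12)\leftrightarrow(54)$, $(21)\leftrightarrow(45)$, $(23)\leftrightarrow(43)$, $(32)\leftrightarrow(34)$. Crucially, no mirrored analogue of \Cref{E time (12)} is needed, since that lemma already applies to any reduced computation of this form whose step history \emph{or its inverse} is a subword of $(2)(3)(4)$.

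First I would pin down the shape of the step history of $\pazocal{C}$. Since it contains none of $(5)$, $(54)$, $(45)$, the computation never enters $\textbf{E}_\textbf{S}^0(5)$, so its step history is a word over the submachines $1,2,3,4$ and the transition letters joining them. The base $(P_iQ_{i,\ell}Q_{i,r}R_i)^{\pm1}$ contains subwords of each form appearing in the hypotheses of \Cref{E primitive step history} and \Cref{E run step history} (namely $(Q_{i,\ell}Q_{i,r}R_i)^{\pm1}$, $(P_iQ_{i,\ell}Q_{i,r})^{\pm1}$, and $(Q_{i,\ell}Q_{i,r})^{\pm1}$), so those lemmas rule out the patterns $(12)(2)(21)$, $(32)(2)(23)$, $(34)(4)(43)$, $(23)(3)(32)$, and $(43)(3)(34)$. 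Exactly as in the proof of \Cref{E standard no (1)}, this forces the sequence of submachine indices visited by $\pazocal{C}$ to descend monotonically and then ascend monotonically, so that the step history is a subword of $(4)(3)(2)(1)(2)(3)(4)$; in particular $\pazocal{C}$ has at most one maximal subcomputation with step history $(1)$.

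Next I would dispose of the case in which the step history contains no occurrence of $(1)$: then it, or its inverse, is a subword of $(2)(3)(4)$, and \Cref{E time (12)} already gives $t\le 6\max(|W_0|_a,|W_t|_a)+5$, which is more than enough. So I may assume there is a nonempty maximal subcomputation $\pazocal{C}_1:W_x\to\dots\to W_y$ of step history $(1)$. By \Cref{E standard one-step}(a) applied with $j=1$ one gets $y-x\le 2\max(|W_x|_a,|W_y|_a)$; if $x=0$ and $y=t$ this already yields the claim, so I may assume $x>0$ or $y<t$. If $x>0$, the prefix $W_0\to\dots\to W_{x-1}$ has step history a subword of $(4)(3)(2)$, hence a computation whose inverse is a subword of $(2)(3)(4)$, and its final word $W_{x-1}$ is $\sigma(21)$-admissible; applying \Cref{E time (12)} to the \emph{reversed} computation $W_{x-1}\to\dots\to W_0$, whose initial word $W_{x-1}$ is $\sigma(21)$-admissible, yields $x-1\le 6\max(|W_{x-1}|_a,|W_0|_a)+5$ together with $|W_{x-1}|_a\le|W_0|_a$; since the transition rule $\sigma(21)$ multiplies no $a$-letters this gives $x\le 6|W_0|_a+6$ and $|W_x|_a\le|W_0|_a$. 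Symmetrically, if $y<t$ the suffix $W_{y+1}\to\dots\to W_t$ has step history a subword of $(2)(3)(4)$ with initial word $W_{y+1}$ being $\sigma(21)$-admissible, so \Cref{E time (12)} gives $t-y\le 6|W_t|_a+6$ and $|W_y|_a\le|W_t|_a$. Adding the three estimates, $t\le 6|W_0|_a+6|W_t|_a+2\max(|W_0|_a,|W_t|_a)+12\le 14\max(|W_0|_a,|W_t|_a)+12$, and in the degenerate cases $x=0$ or $y=t$ one simply omits one of the first two summands.

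The whole argument is parallel to that of \Cref{E standard no (1)} and introduces no new idea. The only place that needs care—and the only candidate for an obstacle—is the bookkeeping at the two junctions $W_{x-1}\to W_x$ and $W_y\to W_{y+1}$: one must apply \Cref{E time (12)} to the reversed prefix (and read the suffix forward) so that the $\sigma$-admissible word plays the role of the \emph{initial} word in that lemma's conditional clause, and one must invoke the fact that the transition rules $\sigma(12),\sigma(21)$ carry no $a$-letters, so that passing across a junction leaves the $a$-length unchanged. This is precisely the manipulation carried out in the proof of \Cref{E standard no (1)}, so I anticipate no genuine difficulty.
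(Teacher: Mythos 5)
Your argument is correct and is exactly the route the paper intends: the paper gives no separate proof of this lemma, stating only that it ``follows in just the same way'' as \Cref{E standard no (1)}, and your mirrored decomposition into a middle $(1)$-block bounded via \Cref{E standard one-step}(a) plus prefix/suffix bounded via \Cref{E time (12)} (applied to the reversed prefix so the $\sigma(21)$-admissible word is initial) is precisely that adaptation. The bookkeeping at the junctions, including that transition rules preserve $a$-length, matches the paper's treatment.
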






\begin{lemma} \label{E standard accepted (4)}

If $\pazocal{C}:W_0\to\dots\to W_t$ is a reduced computation of $\textbf{E}_\textbf{S}^0$ in the standard base with step history $(34)(4)(45)$, then $W_t$ is accepted by a reduced computation of $\textbf{E}_\textbf{S}^0(5)$.

\end{lemma}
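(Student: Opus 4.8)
The plan is to read off the precise form of $W_t$ from the step history and then write down an explicit accepting computation inside the submachine $\textbf{E}_\textbf{S}^0(5)$. Factor $\pazocal{C}$ as
$$W_0 \xrightarrow{\sigma(34)} W_1 \to \cdots \to W_{t-1} \xrightarrow{\sigma(45)} W_t,$$
so that $W_1\to\cdots\to W_{t-1}$ is a reduced computation of $\textbf{E}_\textbf{S}^0(4)$. Since $W_0$ is $\sigma(34)$-admissible and $W_{t-1}$ is $\sigma(45)$-admissible, and the domain of each of $\sigma(34)$ and $\sigma(45)$ is the right historical alphabet in every $Q_{i,\ell}Q_{i,r}$-sector and empty in every other sector, both $W_0$ and $W_{t-1}$ --- and hence, as the transition rules only relabel state letters, also $W_1$ and $W_t$ --- have empty tape word in every sector other than the $Q_{i,\ell}Q_{i,r}$-sectors, and their tape words in those sectors lie over the corresponding right historical alphabet. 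Write $h_i$ for the tape word of the $Q_{i,\ell}Q_{i,r}$-sector of $W_1$ and $h_i'$ for that of $W_{t-1}$; note that $W_1$ carries the start state letters of $\textbf{E}_\textbf{S}^0(4)$ and $W_{t-1}$ its end state letters, so (these differing) the subcomputation $W_1\to\cdots\to W_{t-1}$ is nonempty.

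Next I would fix $i$ and restrict $W_1\to\cdots\to W_{t-1}$ to the subword $P_iQ_{i,\ell}Q_{i,r}$ of the standard base. By construction the positive rules of $\textbf{E}_\textbf{S}^0(4)$ are in bijection with those of $\textbf{RL}=\textbf{RL}(\Phi^+)$, a rule of the former applying its counterpart simultaneously on every subword $P_jQ_{j,\ell}Q_{j,r}$; hence, identifying $(P_i,Q_{i,\ell},Q_{i,r})$ with $(P,Q,R)$ and the right historical alphabet of the $Q_{i,\ell}Q_{i,r}$-sector with the copy of $\Phi^+$, this restriction is a reduced computation of $\textbf{RL}$, which by the first paragraph runs from $p_1q_1h_ir_1$ to $p_2q_2h_i'r_2$. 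The analogue of \Cref{primitive computations}(3) for $\textbf{RL}$ then yields $h_i\equiv h_i'$ and, crucially, shows that the history $H$ of $W_1\to\cdots\to W_{t-1}$, regarded as an $\textbf{RL}$-history via the above bijection, is a fixed word independent of $i$ whose subword preceding the connecting rule reads off $h_i$. Consequently all of the $h_i$ are copies, over their respective right historical alphabets, of one fixed reduced word $H_0\in F(\Phi^+)$, and $W_t$ is the configuration carrying the (unique) state letters of $\textbf{E}_\textbf{S}^0(5)$ in which each $Q_{i,\ell}Q_{i,r}$-sector holds the copy of $H_0$ and every other sector is empty.

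To finish, write $H_0\equiv\phi_1^{\eps_1}\cdots\phi_m^{\eps_m}$ as a reduced word with $\phi_j\in\Phi^+$ and $\eps_j\in\{\pm1\}$, and for $\theta\in\Phi^+$ let $\rho(\theta)$ denote the rule of $\textbf{E}_\textbf{S}^0(5)$ multiplying each $Q_{i,\ell}Q_{i,r}$-sector on the left by the copy of $\theta^{-1}$ and locking every other sector. Starting from $W_t$, apply $\rho(\phi_1)^{\eps_1},\dots,\rho(\phi_m)^{\eps_m}$ in succession: at every stage all the $Q_{i,\ell}Q_{i,r}$-sectors hold the same word (a terminal segment of the copy of $H_0$), so one rule acts on all of them in parallel, its state letters match, and the remaining sectors stay empty, while the $j$-th rule strips off the prefix $\phi_j^{\eps_j}$. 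After $m$ steps every sector is empty, so the resulting configuration is the accept configuration of $\textbf{E}_\textbf{S}^0$. Finally, since $\phi_1^{\eps_1}\cdots\phi_m^{\eps_m}$ is reduced and the rules $\rho(\theta)$ are in bijection with $\Phi^+$, the history $\rho(\phi_1)^{\eps_1}\cdots\rho(\phi_m)^{\eps_m}$ is reduced, so this is a reduced computation of $\textbf{E}_\textbf{S}^0(5)$ accepting $W_t$.

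I expect the middle paragraph to be the main obstacle: one must combine the rigidity of $\textbf{RL}$-computations with the fact that a single history simultaneously drives all of the parallel copies of $\textbf{RL}$ in order to force the $h_i$ to be copies of one common word. The rest is routine bookkeeping about admissibility domains, plus the explicit erasing computation.
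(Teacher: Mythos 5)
Your proposal is correct and follows essentially the same route as the paper: restrict the step-history-$(4)$ subcomputation to each $P_iQ_{i,\ell}Q_{i,r}$, identify it with an $\textbf{RL}$-computation satisfying the analogue of Lemma \ref{primitive computations}(3), conclude that every $Q_{i,\ell}Q_{i,r}$-sector of $W_t$ carries a copy of one common word $H\in F(\Phi^+)$, and then erase it with the corresponding rules of $\textbf{E}_\textbf{S}^0(5)$. Your write-up merely makes explicit some admissibility bookkeeping and the erasing computation that the paper leaves implicit.
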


\begin{proof}

Let $\pazocal{C}_4:W_1\to\dots\to W_{t-1}$ be the maximal subcomputation with step history $(4)$ and $\pazocal{C}_{4,i}:W_{1,i}\to\dots\to W_{t-1,i}$ be its restriction to the subword $P_iQ_{i,\ell}Q_{i,r}$ of the standard base.  Then $\pazocal{C}_{4,i}$ can be identified with a reduced computation of $\textbf{RL}$ satisfying the hypotheses of \Cref{primitive computations}(3).  As such, the history of $\pazocal{C}_{4,i}$ is uniquely determined by the tape word of $W_{t-1,i}$ in the $Q_{i,\ell}Q_{i,r}$-sector, and hence the history of $\pazocal{C}$ is uniquely determined by the tape word of $W_t$ in this sector.

In particular, there exists $H\in F(\Phi^+)$ such that the tape word of $W_t$ in each $Q_{i,\ell}Q_{i,r}$-sector is the natural copy of $H$ over the corresponding right historical alphabet.  But then the natural copy of $H$ in the rules of $\textbf{E}_\textbf{S}^0(5)$ is the history of a reduced computation which accepts $W_t$.

\end{proof}

\begin{lemma} \label{E standard accepted (3)}

If $\pazocal{C}:W_0\to\dots\to W_t$ is a reduced computation of $\textbf{E}_\textbf{S}^0$ in the standard base with step history $(23)(3)(34)$, then $W_t$ is accepted by a reduced computation with step history $(4)(5)$.

\end{lemma}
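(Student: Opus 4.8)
The plan is to decompose $\pazocal{C}$ at its two transition rules, read off the terminal configuration $W_t$ precisely, and then append a ``standard'' accepting computation whose tail is handled by \Cref{E standard accepted (4)}.

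First I would let $\pazocal{C}_3:W_1\to\dots\to W_{t-1}$ be the maximal subcomputation of $\pazocal{C}$ with step history $(3)$, so that $W_1\equiv W_0\cdot\sigma(23)$ and $W_t\equiv W_{t-1}\cdot\sigma(34)$, and let $H\in F(\Phi^+)$ be the word whose natural copy in the rules of $\textbf{E}_\textbf{S}^0(3)$ is the history of $\pazocal{C}_3$. Since $W_0$ is $\sigma(23)$-admissible, for each $i$ the tape word of $W_0$, hence of $W_1$, in the $Q_{i,\ell}Q_{i,r}$-sector is the copy over the corresponding \emph{left} historical alphabet of some reduced word $U^{(i)}\in F(\Phi^+)$. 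Since $W_{t-1}$ is $\sigma(34)$-admissible, the tape word of $W_{t-1}$, hence of $W_t$, lies over the corresponding \emph{right} historical alphabet in each $Q_{i,\ell}Q_{i,r}$-sector and is empty in every other sector, while $W_t$ carries the start state letters of $\textbf{E}_\textbf{S}^0(4)$.

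Next I would carry out the bookkeeping on the historical sectors exactly as in the proof of \Cref{E0 language}: since every rule of $\textbf{E}_\textbf{S}^0(3)\cong\textbf{S}_h'$ multiplies the $Q_{i,\ell}Q_{i,r}$-sector on the left by the copy of $\theta^{-1}$ over the left alphabet and on the right by the copy of $\theta$ over the right alphabet, the tape word of $W_{t-1}$ in this sector is the reduced form of the product of the copy of $H^{-1}U^{(i)}$ over the left alphabet with the copy of $H$ over the right alphabet. Since this word must lie over the right alphabet only, $U^{(i)}=H$ for every $i$, and therefore the tape word of $W_t$ in each $Q_{i,\ell}Q_{i,r}$-sector is the natural copy of $H$ over the corresponding right historical alphabet. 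Moreover, since $W_1$ carries the start state letters of $\textbf{S}_h'$ and $W_{t-1}$ its end state letters, $\pazocal{C}_3$ is nonempty, so $H\neq 1$.

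It then remains to exhibit a reduced accepting computation of step history $(4)(5)$ starting at $W_t$. Because the $P_iQ_{i,\ell}$-sectors of $W_t$ are empty while its $Q_{i,\ell}Q_{i,r}$-sectors carry the copy of $H$ over the right alphabet, the analogue for $\textbf{RL}$ of \Cref{primitive computations}(3), applied in parallel over each subword $P_iQ_{i,\ell}Q_{i,r}$, produces a nonempty reduced computation $W_t\to\dots\to W'$ of $\textbf{E}_\textbf{S}^0(4)$ ending at a configuration $W'$ with the end state letters of $\textbf{E}_\textbf{S}^0(4)$, the same copy of $H$ in each $Q_{i,\ell}Q_{i,r}$-sector, and empty tape word elsewhere; in particular $W'$ is $\sigma(45)$-admissible. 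Prepending $\sigma(34)$ and appending $\sigma(45)$ yields a reduced computation $\pazocal{D}:W_{t-1}\to W_t\to\dots\to W'\to W'\cdot\sigma(45)$ with step history $(34)(4)(45)$ --- reduced because a transition rule cannot cancel against a rule of a submachine --- so \Cref{E standard accepted (4)} applies and shows that $W'\cdot\sigma(45)$ is accepted by a reduced computation of $\textbf{E}_\textbf{S}^0(5)$. Concatenating $W_t\to\dots\to W'$, the rule $\sigma(45)$, and this last computation then produces a reduced accepting computation of $\textbf{E}_\textbf{S}^0$ from $W_t$ with step history $(4)(45)(5)$, that is, $(4)(5)$. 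The only step calling for real care is the identification of the tape words of $W_t$ above --- namely the fact that the left-alphabet part is forced to vanish --- but this is verbatim the historical-sector computation already performed in the proof of \Cref{E0 language}, so I do not anticipate a genuine obstacle; the remainder is the ``standard accepting computation'' template of \Cref{inputs accepted}.
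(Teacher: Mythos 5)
Your argument is correct and follows the paper's own proof essentially verbatim: identify the tape word of $W_t$ in each $Q_{i,\ell}Q_{i,r}$-sector as the right-alphabet copy of the history $H$ of the $(3)$-step, run the standard $\textbf{RL}$ computation of \Cref{primitive computations}(3) in parallel to reach a $\sigma(45)$-admissible word, and then invoke \Cref{E standard accepted (4)} on the resulting $(34)(4)(45)$ computation. The only difference is that you spell out the cancellation forcing the left-alphabet part to vanish (so $U^{(i)}\equiv H$), which the paper leaves as ``by construction''; this is a harmless elaboration, not a different route.
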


\begin{proof}

Let $\pazocal{C}_3:W_1\to\dots\to W_{t-1}$ be the maximal subcomputation with step history $(3)$ and $\pazocal{C}_{3,i}:W_{1,i}\to\dots\to W_{t-1,i}$ be its restriction to the subword $Q_{i,\ell}Q_{i,r}$ of the standard base.  Since $W_1$ is $\sigma(32)$-admissible and $W_{t-1}$ is $\sigma(34)$-admissible, the tape word of $W_{1,i}$ is formed over the corresponding left historical alphabet and that of $W_{t-1,i}$ is formed over the corresponding right historical alphabet.

Let $H\in F(\Phi^+)$ be such that the history of $\pazocal{C}_3$ is the natural copy of $H$ over the rules of $\textbf{E}_\textbf{S}^0(3)$.  By construction, it follows that the tape word of $W_{1,i}$ is the natural copy of $H$ over the corresponding left historical alphabet and that of $W_{t-1,i}$ is the natural copy over the corresponding right historical alphabet.  In particular, the tape word of $W_t$ in each $Q_{i,\ell}Q_{i,r}$-sector is the natural copy of $H$ over the corresponding right historical alphabet.

Hence, the reduced computation of $\textbf{RL}$ corresponding to $H$ described by \Cref{primitive computations}(3) produces a reduced computation $\pazocal{D}:W_t\to\dots\to W_x$ with step history $(4)$ such that $W_x$ is $\sigma(45)$-admissible.  Thus, $W_{t-1}\to W_t\to\dots\to W_x\to W_x\cdot\sigma(45)$ is a reduced computation with step history $(34)(4)(45)$, and so the statement follows from \Cref{E standard accepted (4)}.

\end{proof}


Let $W$ be a configuration of $\textbf{E}_\textbf{S}^0$.  For each $i\in\{0,\dots,s\}$, let $W(i)$ be the admissible subword of $W$ which has base $P_iQ_{i,\ell}Q_{i,r}R_i$.  In this case, the \textit{working length} of $W$ is defined to be the number of tape letters from working sectors comprising $W$, {\frenchspacing i.e. $|W|_{wk}=|W|_a-\sum|W(i)|_a$,  }

Note that if $W$ is accepted, then since each rule of $\textbf{E}_\textbf{S}^0$ operates in parallel on the relevant sectors, $W(i)$ and $W(j)$ are copies of one other.  

\begin{lemma} \label{E standard (1)}

Let $W$ be an accepted configuration of $\textbf{E}_\textbf{S}^0$ whose state letters belong to the hardware of the submachine $\textbf{E}_\textbf{S}^0(1)$.  Then there exists a reduced computation accepting $W$ with step history $(1)(2)(3)(4)(5)$ and length at most $7\TM_\textbf{S}(|W|_{wk})+|W(i)|_a+6$ for all $i\in\{0,\dots,s\}$.

\end{lemma}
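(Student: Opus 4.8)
The plan is to read a normal form for $W$ off the locking data of $\textbf{E}_\textbf{S}^0(1)$ and then to rerun the five-stage construction from the proof of \Cref{inputs accepted}, charging the first stage for the fact that the historical sectors of $W$ already carry a word. First I would record the shape of $W$. Since $W$ is accepted, fix a reduced accepting computation $\pazocal{C}\colon W=W_0\to\dots\to W_t$; the accept configuration has state letters in $\textbf{E}_\textbf{S}^0(5)$ while those of $W$ lie in $\textbf{E}_\textbf{S}^0(1)$, so $t\geq 1$ and the first rule of $\pazocal{C}$ is either a rule of $\textbf{E}_\textbf{S}^0(1)$ or the transition rule $\sigma(12)$. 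Inspecting the domains of these rules, in either case the $P_iQ_{i,\ell}$- and $Q_{i,r}R_i$-sectors of $W$ are empty, each $Q_{i,\ell}Q_{i,r}$-sector is a word over the corresponding left historical alphabet, and every non-input working sector of $W$ is empty. By the parallelism of $\textbf{E}_\textbf{S}^0$ noted before the statement the words $W(i)$ are mutual copies; let $H_0\in F(\Phi^+)$ be the common word in the $Q_{i,\ell}Q_{i,r}$-sectors, so $|W(i)|_a=\|H_0\|$ for all $i$. Consequently the working sectors of $W$ constitute an input configuration $W^S$ of $\textbf{S}$ with $\|W^S\|_a=|W|_{wk}$.

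Next I would check that $W^S$ is accepted by $\textbf{S}$. The only rules of $\textbf{E}_\textbf{S}^0$ that alter a working sector are those of $\textbf{E}_\textbf{S}^0(3)$ — the submachines $\textbf{E}_\textbf{S}^0(1)$ and $\textbf{E}_\textbf{S}^0(2)$ leave the input working sectors untouched and lock the rest, while $\textbf{E}_\textbf{S}^0(4)$ and $\textbf{E}_\textbf{S}^0(5)$ lock all working sectors — and the working sectors of the accept configuration are empty. Hence restricting the maximal step-$(3)$ subcomputations of $\pazocal{C}$ to the working sectors and concatenating yields a computation of $\textbf{S}$ accepting $W^S$; so $W^S$ is accepted and $\tm(W^S)\leq\TM_\textbf{S}(|W|_{wk})$. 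Let $H\in F(\Phi^+)$ be the history of a shortest reduced computation of $\textbf{S}$ accepting $W^S$, so $\|H\|\leq\TM_\textbf{S}(|W|_{wk})$.

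I would then assemble the desired computation exactly as in the proof of \Cref{inputs accepted}, with the single change that the step-$(1)$ stage, instead of writing the copy of $H$ onto empty historical sectors, uses the realization from \Cref{multiply one letter} of the passage from the tape word $H_0$ to the tape word $H$, performed in parallel in each $Q_{i,\ell}Q_{i,r}$-sector; such a reduced subcomputation exists and has length $\|H_0^{-1}H\|\leq\|H_0\|+\|H\|$. After this stage the configuration is precisely the one reached after stage $(1)$ in \Cref{inputs accepted} — namely $I(W^S)$ with the copy of $H$ written over the left historical alphabets, and $\sigma(12)$-admissible — so stages $(2)$ through $(5)$ (the $\textbf{LR}$- and $\textbf{RL}$-runs supplied by \Cref{primitive computations}(3), the $\textbf{S}_h'$-run realizing $H$, whose net effect on each $Q_{i,\ell}Q_{i,r}$-sector is to replace $H$ over the left alphabet by $H$ over the right alphabet, and the erasure by $\textbf{E}_\textbf{S}^0(5)$) carry over verbatim and produce a reduced computation accepting $W$ with step history $(1)(2)(3)(4)(5)$. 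Summing lengths — $\|H_0^{-1}H\|$ for stage $(1)$, four transition rules, $2\|H\|+1$ for each of the two primitive runs, and $\|H\|$ for each of stages $(3)$ and $(5)$ — gives at most $\|H_0\|+7\|H\|+6\leq|W(i)|_a+7\TM_\textbf{S}(|W|_{wk})+6$.

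The main obstacle is the first step: pinning down the normal form of $W$ from the hypothesis on its state letters requires careful bookkeeping of which sectors each relevant rule locks together with the observation that only $\textbf{E}_\textbf{S}^0(1)$-rules and $\sigma(12)$ can begin a computation from $W$, and it is exactly this that forces $W^S$ to be an input — rather than a merely non-input start — configuration of $\textbf{S}$, which is what lets the bound be phrased in terms of $\TM_\textbf{S}$ itself rather than a generalized time function. By contrast, once the normal form is in hand the construction contributes nothing new, since after stage $(1)$ one is back at a configuration already analyzed in \Cref{inputs accepted}.
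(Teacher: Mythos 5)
Your proposal is correct and follows essentially the same route as the paper: both read off the normal form of $W$ from the domains of the $\textbf{E}_\textbf{S}^0(1)$-rules and $\sigma(12)$, identify the common history word $H_0$ of length $|W(i)|_a$ in the left historical alphabets together with the underlying accepted input configuration of $\textbf{S}$, and then run the $(1)(2)(3)(4)(5)$ construction of \Cref{inputs accepted}, with the step-$(1)$ stage absorbing the extra cost $\|H_0\|$. Your variants — extracting the step-$(3)$ subcomputations to certify acceptance of $W^S$ (rather than passing through the accepted input configuration $W'$ and \Cref{E0 language}) and realizing step $(1)$ as a single \Cref{multiply one letter} computation from $H_0$ to $H$ (rather than erasing $H_0$ and then invoking \Cref{inputs accepted}) — produce the same reduced computation and the same bound $\|H_0\|+7\|H\|+6$.
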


\begin{proof}

Note that every rule for which a configuration of $\textbf{E}_\textbf{S}^0$ may be admissible locks the $P_iQ_{i,\ell}$-sectors, the $Q_{i,r}R_i$-sectors, and the non-input working sectors.  So, as $W$ is an accepted configuration, its tape words in these sectors must be empty.

Moreover, every rule for which a configuration of $\textbf{E}_\textbf{S}^0$ may be admissible has domain restricted to the left historical alphabet in the $Q_{i,\ell}Q_{i,r}$-sectors.  As a result, the tape words of $W$ in these sectors must be words over the left historical alphabet.

Now, as $W$ is accepted, there exists $H\in F(\Phi^+)$ such that the tape word of $W$ in each $Q_{i,\ell}Q_{i,r}$-sector is the natural copy of $H$ over the corresponding left historical alphabet.  As such, there exists a reduced computation of $\textbf{E}_\textbf{S}^0$ with length $\|H\|$ which begins with $W$ and ends with an accepted input configuration $W'$.  \Cref{inputs accepted} then provides a reduced computation of $\textbf{E}_\textbf{S}^0$ accepting $W'$ with length at most $7\TM_\textbf{S}(|W'|_a)+6$.  

The statement thus follows by noting that $|W|_{wk}=|W'|_a$ and $\|H\|=|W(i)|_a$ for all $i$.

\end{proof}


\begin{lemma} \label{E generalized time}

For any accepted configuration $W$ of $\textbf{E}_\textbf{S}^0$, there exists an accepting computation of length at most $7\TM_\textbf{S}(12\|W\|)+14|W|_a+12$.  

\end{lemma}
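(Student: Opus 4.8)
The plan is to take an arbitrary accepted configuration $W$ of $\textbf{E}_\textbf{S}^0$ together with an arbitrary reduced accepting computation $\pazocal{C}:W=W_0\to\dots\to W_t$ and argue that $t$ can be taken to be at most $7\TM_\textbf{S}(12\|W\|)+14|W|_a+12$. The key observation is that the previous lemmas already bound the length of an accepting computation once the step history is controlled, so the task reduces to analyzing the possible step histories of an accepting computation and reducing to the case already handled in Lemma~\ref{E standard (1)} and Lemma~\ref{inputs accepted}. I would work with the restriction $W(i)$ to a subword of the standard base of the form $P_iQ_{i,\ell}Q_{i,r}R_i$; since every rule of $\textbf{E}_\textbf{S}^0$ operates in parallel on these, when $W$ is accepted all the $W(i)$ are copies of one another, so $|W(i)|_a$ is independent of $i$, and $|W|_a = |W|_{wk} + \sum_i |W(i)|_a$.

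First I would reduce to the case where $W$ has state letters in the hardware of $\textbf{E}_\textbf{S}^0(1)$. By Lemmas~\ref{E primitive step history} and~\ref{E run step history}, the step history of a reduced accepting computation of $\textbf{E}_\textbf{S}^0$ must be a subword of $(1)(2)(3)(4)(5)(4)(3)(2)(1)\dots$; more precisely, reading backward from the accept configuration, the tail is a subword of $(1)(2)(3)(4)(5)$ or its reverse-type patterns, and since the accept configuration lies in $\textbf{E}_\textbf{S}^0(5)$ (or $(1)$), the terminal portion is forced. The idea is: depending on whether the step history of $\pazocal{C}$ contains a $(1)$-letter (equivalently $(12)$ or $(21)$) or not, apply either Lemma~\ref{E standard no (1)} or Lemma~\ref{E standard no (5)} to bound $t$ linearly in $|W|_a$ — but this only works when no $\textbf{E}_\textbf{S}^0(1)$-computation occurs, because a $(1)$-step can write an arbitrarily long history word (that is exactly where $\TM_\textbf{S}$ enters). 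So the real case split is: if $\pazocal{C}$ contains no maximal $(1)$-subcomputation, then Lemma~\ref{E standard no (1)} (applied to each $W(i)$, or directly to the standard base) gives $t\leq 14|W|_a+12$ and we are done with room to spare. Otherwise, the computation must, after some prefix, reach a configuration $W'$ whose state letters lie in $\textbf{E}_\textbf{S}^0(1)$; I would then bound the prefix length using Lemma~\ref{E standard no (5)} (the prefix contains no $(5)$-step, since a $(5)$-step followed by a return to $(1)$ is impossible in a reduced computation), giving prefix length at most $14\max(|W|_a,|W'|_a)+12$, and then invoke Lemma~\ref{E standard (1)} to re-accept $W'$ with a computation of length at most $7\TM_\textbf{S}(|W'|_{wk})+|W'(i)|_a+6$.

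It then remains to combine these estimates and absorb everything into the claimed bound. Here the key points are: (i) the $a$-length is non-increasing along the relevant parts of an accepting computation at the transition points (by the monotonicity clauses in Lemmas~\ref{E standard one-step}, \ref{E time (12)}), so $|W'|_a\leq |W|_a$ wherever needed; (ii) $|W'|_{wk}\leq |W'|_a \leq |W|_a$, but we need the argument of $\TM_\textbf{S}$ to be controlled by $\|W\|$ — here I would use that $|W|_a\leq \|W\|$ and $|W|_q\leq \|W\| $, and that the number of sectors $N$ is a fixed constant, so $|W'|_{wk}$ is bounded by a constant multiple of $\|W\|$; the factor $12$ in $\TM_\textbf{S}(12\|W\|)$ is chosen to absorb this constant (and the monotonicity of $\TM_\textbf{S}$ lets us enlarge the argument freely); (iii) $|W'(i)|_a\leq |W'|_a\leq |W|_a$. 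Summing the prefix bound $14|W|_a+12$ with the Lemma~\ref{E standard (1)} bound and using $7\TM_\textbf{S}(\cdot)$ dominating the shared pieces yields $t\leq 7\TM_\textbf{S}(12\|W\|)+14|W|_a+12$ after coarsening constants. The main obstacle is step~(ii): carefully checking that the working length $|W'|_{wk}$ — which feeds into $\TM_\textbf{S}$ — really is bounded by $12\|W\|$ rather than something larger, and that the intervening primitive-machine computations ($\textbf{E}_\textbf{S}^0(2)$, $\textbf{E}_\textbf{S}^0(4)$) do not blow up the $a$-length before reaching $W'$; this is where the monotonicity statements of Lemmas~\ref{E standard one-step}--\ref{E standard no (5)} must be applied with care to the correct admissibility hypotheses. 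Everything else is bookkeeping with the already-established linear bounds.
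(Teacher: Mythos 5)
Your architecture is the same as the paper's: isolate the maximal $(1)$-subcomputation, re-accept the configuration $W'$ entering it via \Cref{E standard (1)} (which is where $\TM_\textbf{S}$ enters), and bound the rest by the linear estimates of \Cref{E standard no (1)}, \Cref{E standard no (5)}, \Cref{E time (12)}. But there is a genuine gap at exactly the point you flag as ``the main obstacle'', and it is not just bookkeeping. The monotonicity clauses you invoke are statements about computations whose base is $(P_iQ_{i,\ell}Q_{i,r}R_i)^{\pm1}$; they control only the restrictions $W(i)$, i.e.\ the historical part of the configuration, and say nothing about the working sectors. The prefix before the $(1)$-step has step history (a suffix of) $(3)(2)$, and during the $(3)$-portion the rules of $\textbf{E}_\textbf{S}^0(3)$ simulate $\textbf{S}_h'$ and genuinely alter the working sectors, so your claims $|W'|_a\leq|W|_a$ and $|W'|_{wk}\leq|W'|_a\leq|W|_a$ are unjustified (and false in general). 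Since $|W'|_{wk}$ is precisely the argument fed into $\TM_\textbf{S}$, the proposal as written does not yield the stated bound, and appealing to ``$N$ is a fixed constant'' does not repair it: the working length can grow at every step of the prefix, so you must bound both the prefix length and the growth per rule.

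The paper closes this with two ingredients you do not supply: since the configuration just before the transition into $(1)$ is $\sigma(21)$-admissible, \Cref{E time (12)} applied to the \emph{inverse} of the restricted prefix gives $|W_x(i)|_a\leq|W_0(i)|_a$ together with the length bound $x-1\leq 6|W_0(i)|_a+5$; and \Cref{simplify rules} bounds the change of working length per rule by $2s$, whence $|W_x|_{wk}\leq|W_0|_{wk}+2sx\leq 12\|W\|$. This sharper prefix bound is also what makes the constants close: your combination (prefix $\leq 14\max(\cdot)+12$ from \Cref{E standard no (5)} plus $7\TM_\textbf{S}(\cdot)+|W'(i)|_a+6$ from \Cref{E standard (1)}) overshoots $7\TM_\textbf{S}(12\|W\|)+14|W|_a+12$. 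Finally, your assertion that the prefix contains no $(5)$-step because ``a $(5)$-step followed by a return to $(1)$ is impossible'' does not follow from the bounce-forbidding lemmas alone (descending patterns such as $(54)(4)(43)$ are not excluded by \Cref{E primitive step history} or \Cref{E run step history}); the paper instead chooses an accepting computation with step history of minimal length and uses \Cref{E standard accepted (3)} and \Cref{E standard accepted (4)} to force the step history to be a suffix of $(3)(32)(2)(21)(1)(12)(2)(23)(3)(34)(4)(45)(5)$.
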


\begin{proof}

Let $\pazocal{C}:W\equiv W_0\to\dots\to W_t$ be an accepting computation whose step history is of minimal length and let $\pazocal{C}(i):W_0(i)\to\dots\to W_t(i)$ be its restriction to the subword $P_iQ_{i,\ell}Q_{i,r}R_i$ of the standard base.  By Lemmas \ref{E primitive step history}, \ref{E run step history}, and \ref{E standard accepted (3)}, the step history of $\pazocal{C}$ is then a suffix of $$(3)(32)(2)(21)(1)(12)(2)(23)(3)(34)(4)(45)(5)$$
Let $W_y\to\dots\to W_t$ be the maximal subcomputation whose step history does not contain the letter $(1)$ or $(12)$.  By \Cref{E standard no (1)}, it follows that $t-y\leq14\max(|W_y(i)|_a,|W_t(i)|_a)+12$.  But $W_t$ is the accept configuration, so that $|W_t|_a=0$, and hence $t-y\leq14|W_y(i)|_a+12$.  Hence, it suffices to assume $y>0$.



Now, let $W_x\to\dots\to W_{y-1}$ be the maximal subcomputation with step history $(1)$.  By \Cref{E standard (1)}, we may assume without loss of generality that $t-x\leq 7\TM_\textbf{S}(|W_x|_{wk})+|W_x(i)|_a+6$.  Hence, it suffices to assume $x>0$.

As a result, the step history of the initial subcomputation $W_0\to\dots\to W_{x-1}$ is a suffix of $(3)(2)$.  But then since $W_{x-1}$ is $\sigma(21)$-admissible, applying \Cref{E time (12)} to the inverse computation $W_{x-1}(i)\to\dots\to W_0(i)$ implies $|W_x(i)|_a\leq|W_0(i)|_a$ and $x-1\leq6|W_0(i)|_a+5$ for all $i\in\{0,\dots,s\}$.

By \Cref{simplify rules}, it follows that $|W_x|_{wk}\leq|W_0|_{wk}+2sx\leq|W_0|_{wk}+12\sum (|W_0(i)|_a+1)\leq12\|W\|$.  The statement thus follows.

\end{proof}

\medskip


\subsection{The enhanced machine} \

The main machine in our construction, the \textit{enhanced machine} $\textbf{E}_\textbf{S}$, is the `circular analogue' of the standard enhanced machine $\textbf{E}_\textbf{S}^0$.  The definition of this machine is given in much the same way as in \cite{O18}, \cite{WEmb}, \cite{W}, and many others.

In particular, the standard base of $\textbf{E}_\textbf{S}$ is the same as that of $\textbf{E}_\textbf{S}^0$, and each part of the state letters is identical to its counterpart in that setting.  However, a tape alphabet is also assigned to the space after the final letter of the standard base, corresponding to the tape alphabet of the $R_sP_0$-sector.  As such, it is permitted for an admissible word of $\textbf{E}_\textbf{S}$ to have base
$$P_sQ_{s,\ell}Q_{s,r}R_sP_0Q_{0,\ell}Q_{0,\ell}^{-1}P_0^{-1}R_s^{-1}Q_{s,r}^{-1}$$
{\frenchspacing i.e. to} essentially `wrap around' the standard base. An $S$-machine with this property is called a \textit{cyclic machine}, as one can think of the standard base as being written on a circle.  As will be seen in \Cref{sec-associated-groups}, this is a natural consideration given the structure of the associated groups.

In this setting, the tape alphabet assigned to the $R_sP_0$-sector is empty, while the tape alphabets of the other sectors are defined as in $\textbf{E}_\textbf{S}^0$.

The positive rules of $\textbf{E}_\textbf{S}$ correspond to those of $\textbf{E}_\textbf{S}^0$, operating on the copy the hardware of $\textbf{E}_\textbf{S}^0$ in the same way and, naturally, locking the new sector.  The input sectors are also the same as that of $\textbf{E}_\textbf{S}^0$. The corresponding definitions (for example, submachines, historical sectors, working sectors, {\frenchspacing etc.) then} extend in the obvious way, as do all statements pertaining to the machine.

Now, fix an arbitrary cyclic $S$-machine $\textbf{M}$. The base of an admissible word in the hardware of $\textbf{M}$ is said to be \textit{circular} if it starts and ends with the same base letter.   Specifically, an unreduced circular base is called \textit{defective}.  As in previous literature, a circular base is said to be \textit{revolving} if none of its proper subwords is circular, while a revolving defective base is called \textit{faulty}.

Suppose $W$ is an admissible word in the hardware of $\textbf{M}$ whose base $B\equiv xvx$ is circular. If $v$ has the form $v_1yv_2$ for some letter $y$, then the word $B'\equiv yv_2xv_1y$ is also a circular base of an admissible word $W'$ satisfying $|W'|_a=|W|_a$. In this case, $B'$ and $W'$ are called \textit{cyclic permutations} of $B$ and $W$, respectively.  Note that if $B$ is revolving, then $B'$ is also revolving. 

Given a reduced computation $\pazocal{C}:W_0\to\dots\to W_t$ with circular base $B$ and history $H$, for any cyclic permutation $B'$ of $B$ there exists a reduced computation $\pazocal{C}':W_0'\to\dots\to W_t'$ with base $B'$ and history $H$ and such that $W_i'$ is the corresponding cyclic permutation of $W_i$.  In this case, $\pazocal{C}'$ is also called a \textit{cyclic permutation} of $\pazocal{C}$.

Similar to the terminology introduced in \cite{CW}, define the \textit{universal reach relation} of $\textbf{M}$ to be the binary relation $\REACH^{uni}_\textbf{M}$ on the set of admissible words of $\textbf{M}$ with circular base given by $(W_1,W_2)\in \REACH^{uni}_\textbf{M}$ if and only if there exists a reduced computation between $W_1$ and $W_2$.

Given $(W_1,W_2)\in\REACH^{uni}_\textbf{M}$, a reduced computation between $W_1$ and $W_2$ is said to be \textit{minimal} if the length of its history is minimal amongst all computations realizing the relation.

\medskip


\subsection{Reduced circular bases} \

We restrict our attention in this section to elements $(W_1,W_2)\in\REACH^{uni}_{\textbf{E}_\textbf{S}}$ such that $W_1$ and $W_2$ have reduced circular bases, with the goal being to bound the length of a minimal computation between $W_1$ and $W_2$ in terms of the $a$-length of their revolving subwords.


To begin this analysis, note that for any admissible word $W$ whose base is reduced and circular, there exists a cyclic permutation $W'$ of $W$ whose base is $\left((B_{std})^kP_0\right)^\eps$, where

\begin{itemize}

\item $B_{std}\equiv P_0Q_{0,\ell}Q_{0,r}R_0\dots P_sQ_{s,\ell}Q_{s,r}R_s$ is the standard base of the machine,

\item $\eps\in\{\pm1\}$, and 

\item $k\in\N$  

\end{itemize}

Hence, passing to this cyclic permutation or its inverse, it suffices to assume that the base of both $W_1$ and $W_2$ is $B_k\equiv (B_{std})^kP_0$ for some $k\in\N$.

By the construction of the machine, note that for any admissible word $V$ of $\textbf{E}_\textbf{S}$ with base $B_k$, there exist configurations $V^{(1)},\dots,V^{(k)}$ of $\textbf{E}_\textbf{S}$ such that $V\equiv V^{(1)}\dots V^{(k)}p_0$ for some $p_0\in P_0$.

\begin{lemma} \label{B_k accepted time}

Let $(W_1,W_2)\in\REACH_{\textbf{E}_\textbf{S}}^{uni}$ such that each $W_i$ has base $B_k$.  Suppose there exist accepted configurations $W_1',W_2'$ of $\textbf{E}_\textbf{S}$ such that $W_i^{(j)}\equiv W_i'$ for $i=1,2$ and $j\in\{1,\dots,k\}$.  Then the length of a minimal computation between $W_1$ and $W_2$ is at most $14\TM_\textbf{S}(12n+12N)+28n+24$, where $n=\max(|W_1'|_a,|W_2'|_a)$.

\end{lemma}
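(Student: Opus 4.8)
The plan is to reduce the statement about the circular machine $\textbf{E}_\textbf{S}$ to the already-established generalized time bound for $\textbf{E}_\textbf{S}^0$, namely \Cref{E generalized time}, by exploiting the parallel structure of rules on the $k$ repeated copies of $B_{std}$. First I would record the hypothesis: by assumption all the $W_i^{(j)}$, $j=1,\dots,k$, are literally equal to the single accepted configuration $W_i'$ of $\textbf{E}_\textbf{S}$, and since the rules of $\textbf{E}_\textbf{S}^0$ and $\textbf{E}_\textbf{S}$ coincide on the standard base, $W_i'$ is an accepted configuration of $\textbf{E}_\textbf{S}^0$. The key observation is that a computation of $\textbf{E}_\textbf{S}$ on the base $B_k$ is determined, sector by sector, by the histories on each copy of $B_{std}$, but because every rule acts identically (in parallel) on all copies of a given sector, running the accepting computation for $W_i'$ on $\textbf{E}_\textbf{S}^0$ simultaneously on each of the $k$ blocks produces a valid computation of $\textbf{E}_\textbf{S}$ with base $B_k$. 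So from \Cref{E generalized time} applied to $W_1'$ (say, with $|W_1'|_a\le|W_2'|_a$, or just take the larger one as $W_i'$ realizing the max $n$), there is an accepting computation of $\textbf{E}_\textbf{S}^0$ from $W_i'$ of length at most $7\TM_\textbf{S}(12\|W_i'\|)+14|W_i'|_a+12$; replaying this in parallel gives a computation of $\textbf{E}_\textbf{S}$ from $W_i$ to the ``all-accept'' admissible word $A_k$ with base $B_k$ (the one all of whose configuration-blocks are the accept configuration) of the same length, since the length of a parallel computation equals the length of the computation on a single block.

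Next I would concatenate: run the parallel accepting computation from $W_1$ down to $A_k$, then run the reverse of the parallel accepting computation from $W_2$ to $A_k$ backwards, i.e. from $A_k$ up to $W_2$. Since reversing a reduced computation gives a reduced computation (inverting the history), this yields a computation between $W_1$ and $W_2$, hence $(W_1,W_2)\in\REACH^{uni}_{\textbf{E}_\textbf{S}}$ (which is given) and, more importantly, an explicit bound on the length of \emph{some} computation between them, hence on the minimal one. The total length is at most the sum of the two bounds: at most
$$\bigl(7\TM_\textbf{S}(12\|W_1'\|)+14|W_1'|_a+12\bigr)+\bigl(7\TM_\textbf{S}(12\|W_2'\|)+14|W_2'|_a+12\bigr).$$
Since $\TM_\textbf{S}$ is non-decreasing and $\|W_i'\|\le |W_i'|_a+N$ (the $q$-length of a configuration of $\textbf{E}_\textbf{S}$ is exactly $N$, the length of the standard base), each term $\TM_\textbf{S}(12\|W_i'\|)$ is at most $\TM_\textbf{S}(12n+12N)$, and each $|W_i'|_a\le n$. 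Summing gives $14\TM_\textbf{S}(12n+12N)+28n+24$, as claimed.

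The main thing to be careful about — and the only real obstacle — is the claim that the parallel replay of a computation of $\textbf{E}_\textbf{S}^0$ on the $k$ blocks is a legitimate (and reduced) computation of $\textbf{E}_\textbf{S}$ on base $B_k$, including that it terminates at a genuine admissible word $A_k$ with that base. This uses that every positive rule of $\textbf{E}_\textbf{S}$ is obtained from the corresponding rule of $\textbf{E}_\textbf{S}^0$ by acting identically on each copy of each sector and locking the new $R_sP_0$-sector, so a word is $\theta$-admissible iff each of its $k$ blocks is $\theta$-admissible as a configuration of $\textbf{E}_\textbf{S}^0$, and the result $W\cdot\theta$ is the concatenation of the $k$ results $W^{(j)}\cdot\theta$ (the extra $P_0$ at the end and the locked wrap-around sector cause no interaction between consecutive blocks); reducedness is inherited because the histories are identical on all blocks, so the history of the parallel computation is literally the history of the single-block computation, which is reduced. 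Once this ``parallelization lemma'' is in hand the rest is bookkeeping with the monotonicity of $\TM_\textbf{S}$ and the bound $\|W_i'\|\le n+N$.
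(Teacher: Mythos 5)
Your proposal is correct and follows essentially the same route as the paper: apply \Cref{E generalized time} to each accepted configuration $W_i'$, run the resulting accepting computation in parallel on all $k$ blocks (possible since the $R_sP_0$-sector is locked/empty) to reach the admissible word $(W_{ac})^k p_0^{(f)}$, and concatenate the first computation with the inverse of the second, bounding the minimal length by $\|H_1\|+\|H_2\|\leq 14\TM_\textbf{S}(12n+12N)+28n+24$. The only cosmetic difference is that you spell out the monotonicity estimate $\|W_i'\|\leq n+N$ and the parallelization argument in more detail than the paper does.
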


\begin{proof}


By \Cref{E generalized time}, for $i=1,2$ there exists a reduced computation $\pazocal{C}_i'$ accepting $W_i'$ of length at most $7\TM_\textbf{S}(12\|W_i'\|)+14|W_i'|_a+12\leq7\TM_\textbf{S}(12n+12N)+14n+12$.  Let $H_i$ be the history of $\pazocal{C}_i'$.

Assuming $W_1\neq W_2$, then since the tape alphabet of the $R_sP_0$-sector is empty, $\pazocal{C}_i'$ can then be extended to a reduced computation $\pazocal{C}_i$ between $W_i$ and the admissible word $(W_{ac})^k p_0^{(f)}$, where $W_{ac}$ is the accept configuration and $p_0^{(f)}$ is the end state letter in $P_0$.  Note that $H_i$ is the history of $\pazocal{C}_i$.

Hence, $H_1H_2^{-1}$ is freely equal to the history of a reduced computation between $W_1$ and $W_2$ with length at most $\|H_1\|+\|H_2\|\leq14\TM_\textbf{S}(12n+12N)+28n+24$.

\end{proof}

\begin{lemma} \label{B_k (4)}

Let $\pazocal{C}:V_0\to\dots\to V_t$ be a reduced computation of $\textbf{E}_\textbf{S}$ with base $B_k$ and step history $(34)(4)(45)$.  Then there exist accepted configurations $V_i'$ of $\textbf{E}_\textbf{S}$ such that $V_i^{(j)}\equiv V_i'$ for $i\in\{0,\dots,t\}$ and $j\in\{1,\dots,k\}$.

\end{lemma}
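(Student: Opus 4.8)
The plan is to exploit the fact that the base $B_k\equiv(B_{std})^kP_0$ consists of $k$ copies of the standard base on which every rule of $\textbf{E}_\textbf{S}$ acts in parallel exactly as the corresponding rule of $\textbf{E}_\textbf{S}^0$, and to use the rigidity of the primitive machine $\textbf{RL}=\textbf{RL}(\Phi^+)$ governing the $(4)$-portion of $\pazocal{C}$ to pin down what $V_0$ must look like.

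First I would record the shape of $V_0$ and $V_{t-1}$. Since $V_0$ is $\sigma(34)$-admissible and $\sigma(34)$ has domain equal to the right historical alphabet in each $Q_{i,\ell}Q_{i,r}$-sector and empty domain in every other sector, all $a$-letters of $V_0$ lie in the $Q_{i,\ell}Q_{i,r}$-sectors and belong to the corresponding right historical alphabets, while every other sector of $V_0$ (hence of $V_1\equiv V_0\cdot\sigma(34)$) is empty; by $\sigma(45)$-admissibility the same is true of $V_{t-1}$. Writing $V_0\equiv V_0^{(1)}\cdots V_0^{(k)}p_0$, each $V_0^{(j)}$ therefore has the same state letters (the end letters of $\textbf{E}_\textbf{S}^0(3)$) and is completely determined by the tape words it carries over the right historical alphabets of its $Q_{i,\ell}Q_{i,r}$-sectors.

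Next I would analyze the maximal $(4)$-subcomputation $\pazocal{C}_4:V_1\to\dots\to V_{t-1}$. Its restriction to any one of the $k$ occurrences in $B_k$ of a subword $P_iQ_{i,\ell}Q_{i,r}$ is a reduced computation of $\textbf{RL}$ whose first and last admissible words have empty $P_iQ_{i,\ell}$-sector, hence are of the form $p_1q_1ur_1$ and $p_2q_2vr_2$ in the notation of the $\textbf{RL}$-analogue of \Cref{primitive computations}(3). That analogue forces $u\equiv v$ and forces the restricted history to be the natural copy of a fixed word depending only on $u$ (the $\textbf{RL}$-counterpart of $\bar u\zeta u$); in particular $u$ is recovered from this restricted history. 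But every rule of $\textbf{E}_\textbf{S}^0(4)$ acts as the same rule of $\textbf{RL}$ on all the subwords $P_iQ_{i,\ell}Q_{i,r}$ at once, so the restricted history is one and the same word $H$ for every such occurrence and every $i$. Hence there is a single $H_0\in F(\Phi^+)$ so that the tape word of $V_0$ in each $Q_{i,\ell}Q_{i,r}$-sector is the natural copy of $H_0$ over the corresponding right historical alphabet; combined with the previous paragraph, $V_0^{(1)}\equiv\dots\equiv V_0^{(k)}$, a common configuration $V_0'$. Now, since the $R_sP_0$-sectors of $B_k$ are permanently locked (their tape alphabet being empty), $\pazocal{C}$ restricts to a computation $\pazocal{C}^{(j)}:V_0^{(j)}\to\dots\to V_t^{(j)}$ of $\textbf{E}_\textbf{S}^0$ whose history does not depend on $j$; as all the $V_0^{(j)}$ coincide, so do all the $V_i^{(j)}$ for each fixed $i$, and we set $V_i'\defeq V_i^{(j)}$. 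Finally $V_{t-1}$ carries the copy of $H_0$ in each $Q_{i,\ell}Q_{i,r}$-sector (the $\textbf{RL}$-computation returns the word to its $QR$-sector) and $\sigma(45)$ alters no tape word, so $V_t'$ has the copy of $H_0$ over the right historical alphabet in each $Q_{i,\ell}Q_{i,r}$-sector, empty tape words elsewhere, and the state letters of $\textbf{E}_\textbf{S}^0(5)$; then the natural copy of $H_0$ in the rules of $\textbf{E}_\textbf{S}^0(5)$ is the history of a reduced computation erasing these words on the left and thus carrying $V_t'$ to the accept configuration. Hence $V_t'$ is accepted, and since $V_i'\to\dots\to V_t'$ for each $i$, every $V_i'$ is accepted.

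The main obstacle is the middle step: arguing carefully that admissibility of $V_1$ and $V_{t-1}$ with respect to the transition rules forces the restricted $\textbf{RL}$-computation on every copy into the precise ``standard'' form of \Cref{primitive computations}(3), so that the single history $H$ determines a single word $H_0$ uniformly over all $k$ copies and all indices $i$. Once that uniformity is in hand, the propagation of the equalities $V_i^{(j)}\equiv V_i'$ along $\pazocal{C}$ and the verification that $V_t'$ is accepted are routine bookkeeping with the parallel/cyclic structure of $\textbf{E}_\textbf{S}$ and with the behaviour of $\textbf{E}_\textbf{S}^0(5)$.
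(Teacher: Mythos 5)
Your proof is correct and follows essentially the same route as the paper: restrict the $(4)$-subcomputation to each copy of $P_iQ_{i,\ell}Q_{i,r}$, identify it with a computation of $\textbf{RL}$ satisfying the analogue of \Cref{primitive computations}(3) so that the common history uniformly determines the tape words, and conclude that the $V_i^{(j)}$ are independent of $j$. The only cosmetic difference is that you re-derive the acceptance of $V_t'$ via the erasing computation of $\textbf{E}_\textbf{S}^0(5)$ inline, whereas the paper simply cites \Cref{E standard accepted (4)} for that step.
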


\begin{proof}

Let $\pazocal{C}':V_1\to\dots\to V_{t-1}$ be the maximal subcomputation with step history $(4)$.  As $V_1$ is $\sigma(43)$- and $V_{t-1}$ is $\sigma(45)$-admissible, the restriction of $\pazocal{C}'$ to any subword of $B_k$ of the form $P_iQ_{i,\ell}Q_{i,r}$ can be identified with a reduced computation of $\textbf{RL}$ that satisfies the hypotheses of \Cref{primitive computations}(3).  Hence, the tape word of $V_t$ in each $Q_{i,\ell}Q_{i,r}$-sector is uniquely determined by the history of $\pazocal{C}$, and so there exists a configuration $V_t'$ such that $V_t^{(j)}\equiv V_t'$ for all $j\in\{1,\dots,k\}$.  That $V_t'$ is an accepted configuration follows from \Cref{E standard accepted (4)}.

\end{proof}

\begin{lemma} \label{B_k (3)}

Let $\pazocal{C}:V_0\to\dots\to V_t$ be a reduced computation of $\textbf{E}_\textbf{S}$ with base $B_k$ and step history $(23)(3)(34)$.  Then there exist accepted configurations $V_i'$ of $\textbf{E}_\textbf{S}$ such that $V_i^{(j)}\equiv V_i'$ for $i\in\{0,\dots,t\}$ and $j\in\{1,\dots,k\}$.

\end{lemma}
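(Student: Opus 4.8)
The plan is to transfer the argument of \Cref{E standard accepted (3)} to the cyclic base $B_k$, in the same way that \Cref{B_k (4)} transfers \Cref{E standard accepted (4)}. For each $j\in\{1,\dots,k\}$ let $\pazocal{C}^{(j)}:V_0^{(j)}\to\dots\to V_t^{(j)}$ be the restriction of $\pazocal{C}$ to the $j$-th copy of $B_{std}$ inside $B_k$. Since every rule of $\textbf{E}_\textbf{S}$ locks the wrap-around $R_sP_0$-sector, each $\pazocal{C}^{(j)}$ is a reduced computation with standard base $B_{std}$, the same history as $\pazocal{C}$, and step history $(23)(3)(34)$, so it may be identified with a reduced computation of $\textbf{E}_\textbf{S}^0$ in the standard base. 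Applying \Cref{E standard accepted (3)} to each $\pazocal{C}^{(j)}$ then shows that every $V_t^{(j)}$ is an accepted configuration.

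It remains to prove that $V_t^{(1)}\equiv\dots\equiv V_t^{(k)}$. Let $\pazocal{C}_3:V_1\to\dots\to V_{t-1}$ be the maximal subcomputation of $\pazocal{C}$ with step history $(3)$ and let $H_0\in F(\Phi^+)$ be the word whose natural copy over the rules of $\textbf{E}_\textbf{S}(3)$ (the copy of $\textbf{S}_h'$ inside $\textbf{E}_\textbf{S}$) is the history of $\pazocal{C}_3$. Since $V_{t-1}$ is $\sigma(34)$-admissible and the domain of $\sigma(34)$ is empty in every sector of the standard base other than the $Q_{i',\ell}Q_{i',r}$-sectors, all $a$-letters of $V_{t-1}$ lie in such sectors and belong to the corresponding right historical alphabets; in particular every other sector of $V_{t-1}$, hence of $V_t$, has empty tape word. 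Since $V_1$ is $\sigma(32)$-admissible, the tape word of $V_1^{(j)}$ in each $Q_{i',\ell}Q_{i',r}$-sector is a word $w$ over the corresponding left historical alphabet; as every $(3)$-rule multiplies such a sector on the left by the copy of $\theta^{-1}$ over the left alphabet and on the right by the copy of $\theta$ over the right alphabet, and these alphabets are disjoint, applying $H_0$ turns $w$ into the reduced form of a word over the left alphabet followed by the copy of $H_0$ over the right alphabet. For this outcome to involve no letter of the left alphabet — which $\sigma(34)$-admissibility of $V_{t-1}$ requires — the word $w$ must be the copy of $H_0$ over the left alphabet, and then the tape word of $V_{t-1}^{(j)}$, hence of $V_t^{(j)}$, in each $Q_{i',\ell}Q_{i',r}$-sector is the copy of $H_0$ over the right alphabet. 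Since the state letters of $V_t$ are also determined by the history, $V_t^{(j)}$ depends only on the history and not on $j$; write $V_t'$ for this common accepted configuration.

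Finally, for any $i\in\{0,\dots,t\}$ the configuration $V_i^{(j)}$ is obtained from $V_t^{(j)}\equiv V_t'$ by applying the inverses of the last $t-i$ rules of $\pazocal{C}$, so $V_i^{(j)}$ is a configuration $V_i'$ independent of $j$; moreover $V_i'$ is accepted because $V_i'\to\dots\to V_t'$ ends at the accepted configuration $V_t'$. This completes the proof. The step I expect to be the main obstacle is the bookkeeping in the second paragraph: one must check carefully that the disjointness of the left and right historical alphabets together with $\sigma(34)$-admissibility of $V_{t-1}$ leaves no ``residue'' of the left-alphabet input, so that $V_t$ is genuinely pinned down; this is also where the locked padding sectors and the empty wrap-around sector must be handled exactly as in the non-cyclic case. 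Everything else is a routine adaptation of the proofs of \Cref{E standard accepted (3)} and \Cref{B_k (4)}.
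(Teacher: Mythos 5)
Your proposal is correct and follows essentially the same route as the paper: the tape word of each $Q_{i,\ell}Q_{i,r}$-sector of $V_{t-1}$ (hence $V_t$) is pinned down by the history via the disjoint left/right historical alphabets together with $\sigma(32)$- and $\sigma(34)$-admissibility, giving a common $V_t'$, whose acceptance comes from \Cref{E standard accepted (3)}. Your explicit handling of the configurations $V_i$ for $i<t$ and of the per-copy restrictions is just a spelled-out version of what the paper leaves implicit.
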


\begin{proof}

As in the proof of \Cref{E standard accepted (3)}, the tape word of $V_t$ in each $Q_{i,\ell}Q_{i,r}$-sector is uniquely determined by the history of $\pazocal{C}$.  As such, there exists a configuration $V_t'$ such that $V_t^{(j)}\equiv V_t'$ for all $j\in\{1,\dots,k\}$.  But \Cref{E standard accepted (3)} implies $V_t'$ is accepted.

\end{proof}

%
%

\begin{lemma} \label{E reduced circular}

Let $(W_1,W_2)\in\REACH_{\textbf{E}_\textbf{S}}^{uni}$ such that the base $B$ of $W_i$ is reduced and circular.  Then there exists a revolving subword $B'$ of $B$ such that the length of a minimal computation between $W_1$ and $W_2$ is at most $$14\TM_\textbf{S}(12n+12N)+28n+24$$ for $n=\max(|W_1'|_a,|W_2'|_a)$, where $W_i'$ is the admissible subword of $W_i$ with base $B'$.

\end{lemma}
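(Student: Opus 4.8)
The plan is to reduce to the situation handled by Lemma~\ref{B_k accepted time} whenever the minimal computation is ``long'', and otherwise to fall back on the linear-time estimates of Lemmas~\ref{E standard no (1)} and~\ref{E standard no (5)}. By the reduction preceding the statement --- passing to a cyclic permutation of $W_1,W_2$ or to their inverses, operations that affect neither revolving subwords nor $a$-lengths --- one may assume the common base $B$ is $B_k\equiv(B_{std})^kP_0$; the case $k=0$ is trivial, so I take $k\geq1$, write $W_i\equiv W_i^{(1)}\cdots W_i^{(k)}p_0$, fix the revolving subword $B'\equiv B_{std}P_0$, and set $n=\max(|W_1'|_a,|W_2'|_a)=\max(|W_1^{(1)}|_a,|W_2^{(1)}|_a)$, where $W_i'$ is the restriction of $W_i$ to $B'$. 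I then fix a minimal computation $\pazocal C\colon W_1=V_0\to\cdots\to V_t=W_2$ and bound $t$. Since $B_k$ contains sub-bases of every shape required by Lemmas~\ref{E primitive step history} and~\ref{E run step history}, the step history $\mathcal S$ of $\pazocal C$ --- indeed of any reduced computation with base $B_k$ --- contains none of $(1)(2)(1)$, $(3)(2)(3)$, $(2)(3)(2)$, $(4)(3)(4)$, $(3)(4)(3)$, $(5)(4)(5)$ as consecutive blocks. The argument splits on whether $\mathcal S$ contains both a $(1)$-block and a $(5)$-block.

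If it does not --- say $\mathcal S$ has no $(1)$-block; the case with no $(5)$-block is symmetric via Lemma~\ref{E standard no (5)} --- then $\mathcal S$ avoids $(1),(12),(21)$, and restricting $\pazocal C$ to the sub-base $P_0Q_{0,\ell}Q_{0,r}R_0\subseteq B'$ yields a reduced computation of $\textbf{E}_\textbf{S}^0$ of length $t$ to which Lemma~\ref{E standard no (1)} applies, giving $t\leq 14\max(|W_1(0)|_a,|W_2(0)|_a)+12\leq 14n+12$, which lies below the asserted bound since $\TM_\textbf{S}\geq0$.

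If $\mathcal S$ does contain both such blocks, the goal is to show that $W_1$ and $W_2$ are themselves powers of accepted configurations, after which Lemma~\ref{B_k accepted time} finishes. Reversing $\pazocal C$ if necessary, I may assume some $(1)$-block precedes some $(5)$-block; as consecutive blocks correspond to adjacent submachines, there is a $(3)$-block between them, and taking the last $(3)$-block occurring before that $(5)$-block, the forbidden subwords above force its two neighbouring blocks to be a $(2)$-block (on the side toward the $(1)$-block) and a $(4)$-block (on the side toward the $(5)$-block): if the latter neighbour had index $2$, passing from index $2$ to index $5$ would require another $(3)$-block before the $(5)$-block, and if the former had index $4$ the pattern $(4)(3)(4)$ would appear. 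Thus $\pazocal C$ has a subcomputation with step history $(23)(3)(34)$, so Lemma~\ref{B_k (3)} produces an index $m$ and an accepted configuration $V_m'$ with $V_m^{(j)}\equiv V_m'$ for all $j$. Since every rule of $\textbf{E}_\textbf{S}$ acts in parallel on the $k$ laps of $B_k$ (the linking $R_sP_0$-sectors being permanently locked) and a rule together with its inverse recovers an admissible word from its image, the property ``all $k$ laps coincide'' propagates along $\pazocal C$ from $i=m$ to every $i\in\{0,\dots,t\}$, producing configurations $W_1',W_2'$ with $W_i^{(j)}\equiv W_i'$ for all $j$ --- consistently with the notation above, since restricting $W_i$ to $B'$ only appends a state letter to $W_i'$. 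Moreover $W_1'$ and $W_2'$ are accepted: restricting $\pazocal C$ to the first lap gives reduced computations $W_i'\to\cdots\to V_m'$, and concatenating either with an accepting computation of $V_m'$ accepts $W_i'$. Hence Lemma~\ref{B_k accepted time} applies to $W_1,W_2$ and bounds the length of the minimal computation between them --- that is, $t$ --- by $14\TM_\textbf{S}(12n+12N)+28n+24$.

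The crux is this last case: extracting from an \emph{arbitrary} reduced computation a single word that is a power of an accepted configuration, and then promoting both the ``laps coincide'' and the ``accepted'' properties to the endpoints $W_1,W_2$ so that the already-established Lemma~\ref{B_k accepted time} can be invoked. The step-history bookkeeping furnished by Lemmas~\ref{E primitive step history} and~\ref{E run step history} is precisely what enables this extraction, and one must also check the (routine) point that restrictions of an $\textbf{E}_\textbf{S}$-computation to a sub-base and to a single lap really are reduced computations of $\textbf{E}_\textbf{S}^0$.
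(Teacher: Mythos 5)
Your argument is correct and is essentially the paper's proof with the logic rearranged: the paper reduces to the case that $W_1$ is not a power of an accepted configuration (handling the other case by \Cref{B_k accepted time}, as you do), then uses Lemmas \ref{B_k (4)} and \ref{B_k (3)} to forbid the subwords $(34)(4)(45)$, $(54)(4)(43)$, $(23)(3)(34)$, $(43)(3)(32)$ and Lemmas \ref{E primitive step history} and \ref{E run step history} to conclude that the step history avoids one of the two letter sets $\{(1),(12),(21)\}$ or $\{(5),(54),(45)\}$, after which \Cref{E standard no (1)} or \Cref{E standard no (5)} applies exactly as in your second case; your extraction of $(23)(3)(34)$ and the lap-propagation argument is just the contrapositive of this, made explicit. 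The one imprecision is in your first case: the absence of a $(1)$-block does not by itself exclude a stray $\sigma(12)^{\pm1}$ occurring as the first or last letter of the history, so the hypothesis of \Cref{E standard no (1)} ("does not contain the letters $(1)$, $(12)$, or $(21)$") is not literally verified; since such boundary transition rules do not alter tape words, discarding them costs at most $2$ in the length bound and the estimate survives, but the step should be stated at the level of letters (as the paper does) rather than blocks.
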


\begin{proof}

As $B$ is necessarily a cyclic permutation of $B_k^{\pm1}$ for some $k$, it suffices to assume that $B\equiv B_k$ and show that the statement is satisfied if we choose $B'$ to be any subword of the form $B_{std}P_0$.  Moreover, since the $R_sP_0$-sector has empty tape alphabet, it suffices to show the analogous statement when $B'$ is taken to be a subword of the form $B_{std}$.

First, note that \Cref{B_k accepted time} implies that it suffices to assume there exists no accepted configuration $V_1'$ such that $W_1^{(j)}\equiv V_1'$ for all $j\in\{1,\dots,k\}$.

Now let $\pazocal{C}:W_1\equiv V_0\to\dots\to V_t\equiv W_2$ be a reduced computation between $W_1$ and $W_2$.  Lemmas \ref{B_k (4)} and \ref{B_k (3)} then imply that the step history of $\pazocal{C}$ contains no subwords of the form $(34)(4)(45)$, $(54)(4)(43)$, $(23)(3)(34)$, or $(43)(3)(32)$.


But then Lemmas \ref{E primitive step history} and \ref{E run step history} imply that the step history of $\pazocal{C}$ either cannot contain:

\begin{itemize}

\item the letters $(1)$, $(12)$, and $(21)$; or 

\item the letters $(5)$, $(54)$, and $(45)$.  

\end{itemize}

In the former case, the restriction of $\pazocal{C}$ to any subword of the form $P_iQ_{i,\ell}Q_{i,r}R_i$ satisfies the hypotheses of \Cref{E standard no (1)}; in the latter, such a restriction satisfies the hypotheses of \Cref{E standard no (5)}.  In either case, we achieve a sufficient bound.

\end{proof}

\medskip


\subsection{Defective bases} \label{sec-defective} \

Now, our main goal is to achieve a bound analogous to that of \Cref{E reduced circular} for any related pair $(W_1,W_2)\in\REACH_{\textbf{E}_\textbf{S}}^{uni}$.  Given that this bound is established for pairs with reduced circular base, our focus now shifts to pairs with unreduced circular base, {\frenchspacing i.e. with} defective base.

To begin, the next pair of statements help to understand the possible step history of the relevant reduced computations.

\begin{lemma} \label{Defective (34)(4)(45)}

Let $\pazocal{C}:W_0\to\dots\to W_t$ be a reduced computation of $\textbf{E}_\textbf{S}$ with base $B$.  If the step history of $\pazocal{C}$ is $(34)(4)(45)$ or $(54)(4)(43)$, then $B$ must be reduced.

\end{lemma}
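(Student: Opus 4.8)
The plan is to argue by contradiction. If $B$ is not reduced, it contains a two-letter subword of the form $ZZ^{-1}$ or $Z^{-1}Z$, and I will rule out every such possibility using \Cref{locked sectors}, applied to two rules that must occur in $\pazocal{C}$: the transition rule at the start of the computation, and the connecting rule of the submachine $\textbf{E}_\textbf{S}^0(4)$.

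It suffices to treat the step history $(34)(4)(45)$: a computation with step history $(54)(4)(43)$ has the same base as its (reduced) inverse, whose step history is $(34)(4)(45)$. So assume the step history of $\pazocal{C}$ is $(34)(4)(45)$, so that $W_0$ is $\sigma(34)$-admissible. Since the domain of $\sigma(34)$ is the right historical alphabet in each $Q_{i,\ell}Q_{i,r}$-sector and is empty in every other sector, $\sigma(34)$ locks every sector of $\textbf{E}_\textbf{S}$ other than the $Q_{i,\ell}Q_{i,r}$-sectors. Applying \Cref{locked sectors} to each such locked sector shows that $B$ contains no subword of the form $P_iP_i^{-1}$, $P_i^{-1}P_i$, $Q_{i,\ell}^{-1}Q_{i,\ell}$, $Q_{i,r}Q_{i,r}^{-1}$, $R_iR_i^{-1}$, or $R_i^{-1}R_i$. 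Hence the only two-letter subwords of $B$ that could witness that $B$ is unreduced are $Q_{i,\ell}Q_{i,\ell}^{-1}$ and $Q_{i,r}^{-1}Q_{i,r}$.

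To eliminate these, let $\pazocal{C}':W_1\to\dots\to W_{t-1}$ be the maximal subcomputation with step history $(4)$, which is nonempty by hypothesis. Since $W_0$ is $\sigma(34)$-admissible, the state letters of $W_1$ all lie in the start copy of the hardware of $\textbf{E}_\textbf{S}^0(4)$, i.e. the index-$1$ letters of the parallel copies of $\textbf{RL}$; since $W_{t-1}$ is $\sigma(45)$-admissible, the state letters of $W_{t-1}$ all lie in the end copy, i.e. the index-$2$ letters. The only rule of $\textbf{E}_\textbf{S}^0(4)$ that alters these indices is the parallel copy of the connecting rule of $\textbf{RL}$, so that rule, or its inverse, is applied at some step of $\pazocal{C}'$. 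In $\textbf{E}_\textbf{S}^0(4)$ this connecting rule locks each $Q_{i,\ell}Q_{i,r}$-sector, so applying \Cref{locked sectors} to the admissible word of $\pazocal{C}'$ to which it is applied (a word with base $B$) shows that $B$ contains no subword $Q_{i,\ell}Q_{i,\ell}^{-1}$ or $Q_{i,r}^{-1}Q_{i,r}$. Together with the previous paragraph, this shows $B$ has no unreduced two-letter subword, so $B$ is reduced.

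The argument is short and uses no quantitative estimate; the part that needs care is the bookkeeping of which sectors each rule locks. One must verify that $\sigma(34)$ (respectively $\sigma(54)$) locks every sector of the standard base except the $Q_{i,\ell}Q_{i,r}$-sectors, so that \Cref{locked sectors} eliminates all potential unreduced pairs except $Q_{i,\ell}Q_{i,\ell}^{-1}$ and $Q_{i,r}^{-1}Q_{i,r}$, and that the connecting rule of $\textbf{E}_\textbf{S}^0(4)$ locks exactly the $Q_{i,\ell}Q_{i,r}$-sectors, so that \Cref{locked sectors} dispatches those two remaining cases. The structural fact that makes everything go through --- that the connecting rule genuinely appears inside $\pazocal{C}'$ --- is forced simply by the index discrepancy between $W_1$ and $W_{t-1}$.
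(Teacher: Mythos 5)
Your proof is correct and follows essentially the same route as the paper's: the paper likewise observes that the history must contain $\sigma(34)^{\pm1}$ together with the parallel copy of the connecting rule of $\textbf{RL}$, notes that every sector is locked by one of these two rules, and concludes via \Cref{locked sectors}. Your write-up merely makes explicit the sector-by-sector bookkeeping and the index-discrepancy argument forcing the connecting rule to occur, which the paper leaves implicit.
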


\begin{proof}

By hypothesis, the history of $\pazocal{C}$ must contain a rule $\sigma$ of the form $\sigma(34)^{\pm1}$ and a rule $\zeta$ operating in parallel as the connecting rule of a primitive machine.  Note that all sectors are locked by $\sigma$ or by $\zeta$.  As such, the statement follows from \Cref{locked sectors}.

\end{proof}

\begin{lemma} \label{Defective (23)(3)(34)}

Let $\pazocal{C}:W_0\to\dots\to W_t$ be a reduced computation of $\textbf{E}_\textbf{S}$ with base $B$.  If the step history of $\pazocal{C}$ is $(23)(3)(34)$ or $(43)(3)(32)$, then $B$ is reduced.

\end{lemma}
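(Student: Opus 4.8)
The plan is to mirror the proof of \Cref{Defective (34)(4)(45)}, which requires showing that every sector of the standard base is locked by one of the two rules whose presence is forced by the given step histories. By hypothesis, a reduced computation $\pazocal{C}$ with step history $(23)(3)(34)$ must have a history containing an occurrence of the transition rule $\sigma(23)^{\pm1}$ (forced by the $(2)\to(3)$ or $(3)\to(2)$ transition) and an occurrence of the transition rule $\sigma(34)^{\pm1}$; the case of step history $(43)(3)(32)$ is identical up to relabeling. So let $\sigma=\sigma(23)$ and $\sigma'=\sigma(34)$ (in the second case, $\sigma(43)$ and $\sigma(32)$).

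First I would recall from the construction of $\textbf{E}_\textbf{S}^0$ the domains of these transition rules: $\sigma(23)$ has domain in each input sector the entire tape alphabet, domain in each $Q_{i,\ell}Q_{i,r}$-sector the left historical alphabet, and empty domain in every other sector; in particular $\sigma(23)$ locks every $P_iQ_{i,\ell}$-sector, every $Q_{i,r}R_i$-sector, every $R_{i-1}P_i$-sector, and (in the cyclic machine) the $R_sP_0$-sector. Dually, $\sigma(34)$ has empty domain in each input sector and in each $P_iQ_{i,\ell}$-, $Q_{i,r}R_i$-, $R_{i-1}P_i$-, $R_sP_0$-sector, while its domain in each $Q_{i,\ell}Q_{i,r}$-sector is the right historical alphabet — so $\sigma(34)$ locks every input sector and every $Q_{i,\ell}Q_{i,r}$-sector. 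Combining: every two-letter subword of the standard base names a sector that is locked by $\sigma(23)$ or by $\sigma(34)$ (the input and $Q_{i,\ell}Q_{i,r}$-sectors by $\sigma(34)$, all the rest by $\sigma(23)$).

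Now I would invoke \Cref{locked sectors}: if $\sigma$ locks the $Q_jQ_{j+1}$-sector then the base of any $\sigma$-admissible word has no subword of the form $Q_jQ_j^{-1}$ or $Q_{j+1}^{-1}Q_{j+1}$. Since some $W_a$ along $\pazocal{C}$ is $\sigma(23)$-admissible and some $W_b$ is $\sigma(34)$-admissible, and all the $W_i$ have the same base $B$, we conclude that $B$ contains no subword $UU^{-1}$ for any base letter $U$: if $U$ is (a letter of) an input part or a $Q_{i,\ell}$/$Q_{i,r}$ part at the left or right boundary of a $Q_{i,\ell}Q_{i,r}$-sector, the cancellation is forbidden by $\sigma(34)$-admissibility of $W_b$, and in every other case it is forbidden by $\sigma(23)$-admissibility of $W_a$; one checks these cases exhaust all adjacent base-letter pairs of the (cyclic) standard base. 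Hence $B$ is reduced.

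I do not anticipate a genuine obstacle here — the argument is a bookkeeping exercise parallel to \Cref{Defective (34)(4)(45)}. The only mild care needed is to make sure the case analysis over which rule locks which sector really does cover every adjacent pair of base letters in the cyclic base $B_k$, including the ``wrap-around'' $R_sP_0$-sector and the boundary letters between the various sectors; but since $\sigma(23)$ and $\sigma(34)$ between them lock every sector of the standard base, \Cref{locked sectors} applies at every position and the conclusion follows. The second step history $(43)(3)(32)$ is handled verbatim by replacing $\sigma(23),\sigma(34)$ with $\sigma(43)=\sigma(34)^{-1},\sigma(32)=\sigma(23)^{-1}$, which have the same domains as their inverses.
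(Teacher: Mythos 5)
There is a genuine gap, and it occurs exactly at the point where this lemma differs from \Cref{Defective (34)(4)(45)}. Your case analysis asserts that $\sigma(34)$ ``locks every input sector and every $Q_{i,\ell}Q_{i,r}$-sector,'' immediately after correctly stating that its domain in each $Q_{i,\ell}Q_{i,r}$-sector is the right historical alphabet. A rule locks a sector only when its domain there is \emph{empty}; since the domain of $\sigma(34)$ (and of $\sigma(23)$, whose domain there is the left historical alphabet) in the $Q_{i,\ell}Q_{i,r}$-sectors is nonempty, neither transition rule locks these sectors. Consequently it is false that every sector is locked by $\sigma(23)$ or $\sigma(34)$, and \Cref{locked sectors} does \emph{not} rule out unreduced subwords of $B$ of the form $Q_{i,\ell}Q_{i,\ell}^{-1}$ or $Q_{i,r}^{-1}Q_{i,r}$, which are precisely the unreduced pairs bordering the unlocked $Q_{i,\ell}Q_{i,r}$-sectors. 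This is why the present lemma cannot be a pure ``bookkeeping'' copy of \Cref{Defective (34)(4)(45)}: there, the connecting rule of the primitive submachine supplies the missing locks, whereas here no rule in sight locks the historical sectors.

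The paper closes exactly this case by a separate argument. From \Cref{locked sectors} one only gets that an unreduced $B$ must contain a subword $UV$ of the form $Q_{i,\ell}Q_{i,\ell}^{-1}$ or $Q_{i,r}^{-1}Q_{i,r}$. Restricting $\pazocal{C}$ to $UV$, the word $W_1$ (being $\sigma(32)$-admissible) has tape word entirely over the left historical alphabet, while $W_{t-1}$ (being $\sigma(34)$-admissible) has tape word entirely over the right historical alphabet; since the $(3)$-block is nonempty and reduced, the subcomputation $W_1'\to\dots\to W_{t-1}'$ (or its inverse) satisfies the hypotheses of \Cref{one alphabet historical words unreduced}, whose conclusion — that the terminal tape word must contain letters from \emph{both} historical alphabets — yields the contradiction. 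To repair your proof you would need to add this step (or some equivalent use of the historical-sector mechanism); the locking argument alone cannot do it.
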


\begin{proof}

Assume that the step history is $(23)(3)(34)$ and suppose $B$ is unreduced.

As $\sigma(34)$ locks every sector of the standard base except for those of the form $Q_{i,\ell}Q_{i,r}$, it follows from \Cref{locked sectors} that $B$ must contain a two-letter subword $UV$ of the form $Q_{i,\ell}Q_{i,\ell}^{-1}$ or of the form $Q_{i,r}^{-1}Q_{i,r}$.  

Let $W_i'$ be the admissible subword of $W_i$ with base $UV$.  Since $W_1$ is $\sigma(34)$-admissible, the tape word of $W_1'$ consists entirely of letters over the left historical alphabet.  Similarly, the tape word of $W_{t-1}'$ consists entirely of letters over the right historical alphabet.

Since $\pazocal{C}$ is reduced, $t\geq3$, so that $W_1'\to\dots\to W_{t-1}'$ is a nonempty reduced computation.  But in either case for $UV$, this computation (or its inverse) satisfies the hypotheses of \Cref{one alphabet historical words unreduced} and contradicts the conclusion.

\end{proof}

\begin{lemma} \label{Defective PQQR}

Let $\pazocal{C}:W_0\to\dots\to W_t$ be a reduced computation of $\textbf{E}_\textbf{S}$ with defective base $B$.  Suppose $B$ contains a subword $B'$ of the form $(P_iQ_{i,\ell}Q_{i,r}R_i)^{\pm1}$.  Then letting $\pazocal{C}':W_0'\to\dots\to W_t'$ be the restriction of $\pazocal{C}$ to $B'$, we have $t\leq14\max(|W_0'|_a,|W_t'|_a)+12$.

\end{lemma}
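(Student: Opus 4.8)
The plan is to read off, from the two hypotheses, the same list of forbidden step-history subwords that appears in the proof of \Cref{E reduced circular}, and then to invoke either \Cref{E standard no (1)} or \Cref{E standard no (5)} for the restriction $\pazocal{C}'$.

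First I would exploit that $B$ is defective. Every subcomputation of $\pazocal{C}$ has base $B$, so if the step history of $\pazocal{C}$ contained one of $(34)(4)(45)$, $(54)(4)(43)$, $(23)(3)(34)$, or $(43)(3)(32)$ as a subword, then \Cref{Defective (34)(4)(45)} or \Cref{Defective (23)(3)(34)}, applied to the corresponding subcomputation, would force $B$ to be reduced, a contradiction. Next I would exploit that $B$ contains $B'$: in either sign case $B$ then contains subwords of the forms $(P_iQ_{i,\ell}Q_{i,r})^{\pm1}$, $(Q_{i,\ell}Q_{i,r}R_i)^{\pm1}$, and $(Q_{i,\ell}Q_{i,r})^{\pm1}$, so \Cref{E primitive step history} and \Cref{E run step history} rule out the subwords $(12)(2)(21)$, $(32)(2)(23)$, $(34)(4)(43)$, $(54)(4)(45)$, $(23)(3)(32)$, and $(43)(3)(34)$ from the step history of $\pazocal{C}$.

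The collection of forbidden step-history subwords thus obtained is exactly the one used in the proof of \Cref{E reduced circular}; in particular, every $(3)$-step and every $(4)$-step having a transition letter on each side is forbidden. By the combinatorial analysis of step histories carried out there, it follows that the step history of $\pazocal{C}$ either contains none of $(1),(12),(21)$, or contains none of $(5),(54),(45)$. Since $B'$ is of the form $(P_iQ_{i,\ell}Q_{i,r}R_i)^{\pm1}$ and hence does not wrap around the cyclic base, the restriction $\pazocal{C}'$ of $\pazocal{C}$ to $B'$ may be identified with a reduced computation of $\textbf{E}_\textbf{S}^0$ with base $(P_iQ_{i,\ell}Q_{i,r}R_i)^{\pm1}$ and the same step history as $\pazocal{C}$. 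In the first case $\pazocal{C}'$ satisfies the hypotheses of \Cref{E standard no (1)}, and in the second those of \Cref{E standard no (5)}; in either case $t\leq14\max(|W_0'|_a,|W_t'|_a)+12$, as required.

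I expect the only content here beyond bookkeeping to be the step-history combinatorics invoked in the third paragraph, and that is already established in \Cref{E reduced circular}, so it can simply be cited. The routine points to verify are that the subword hypotheses of \Cref{E primitive step history}, \Cref{E run step history}, \Cref{E standard no (1)}, and \Cref{E standard no (5)} hold for both signs of $B'$, and the passage from ``a subcomputation of $\pazocal{C}$ realizes a forbidden step history'' to a contradiction with the defectiveness of $B$.
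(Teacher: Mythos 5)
Your proof is correct and takes essentially the same route as the paper: the defectiveness of $B$ together with \Cref{Defective (34)(4)(45)} and \Cref{Defective (23)(3)(34)} forbids the step-history subwords $(34)(4)(45)$, $(54)(4)(43)$, $(23)(3)(34)$, $(43)(3)(32)$, which yields the dichotomy that the step history avoids either $(1),(12),(21)$ or $(5),(54),(45)$, and then \Cref{E standard no (1)} or \Cref{E standard no (5)} applied to the restriction to $B'$ gives the stated bound. Your extra appeal to \Cref{E primitive step history} and \Cref{E run step history} (imported from the proof of \Cref{E reduced circular}) is harmless but not needed, since the paper obtains the dichotomy from the two Defective lemmas alone.
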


\begin{proof}

Lemmas \ref{Defective (34)(4)(45)} and \ref{Defective (23)(3)(34)} imply that the step history of $\pazocal{C}$ either cannot contain either:

\begin{itemize}

\item the letters $(1)$, $(12)$, and $(21)$; or 

\item the letters $(5)$, $(54)$, and $(45)$.  

\end{itemize}

The statement then follows by \Cref{E standard no (1)} or \Cref{E standard no (5)}.

\end{proof}

As a result of \Cref{Defective PQQR}, it suffices to restrict our attention to defective bases that contain no subword of the form $(P_iQ_{i,\ell}Q_{i,r}R_i)^{\pm1}$.  Indeed, even if a cyclic permutation of a defective base contains such a subword, then \Cref{Defective PQQR} may be applied.  

To this end, a defective base said to be \textit{strongly defective} if none of its cyclic permutations contain a subword of the form $(P_iQ_{i,\ell}Q_{i,r}R_i)^{\pm1}$.

\begin{lemma} \label{strongly defective (34)}

Let $\pazocal{C}:W_0\to\dots\to W_t$ be a minimal computation with strongly defective base $B$.  
Suppose the history of $\pazocal{C}$ contains a letter of the form $\sigma(34)^{\pm1}$ which is neither the first nor the last letter.  Then:

\begin{enumerate}[label=(\alph*)]

\item There exists a subword $B'$ of a cyclic permutation of $B$ of the form $P_iQ_{i,\ell}Q_{i,\ell}^{-1}P_i^{-1}$

\item Letting $\pazocal{C}':W_0'\to\dots\to W_t'$ be the restriction of a cyclic permutation of $\pazocal{C}$ to the subword $B'$, then $t\leq2\max(|W_0'|_a,|W_t'|_a)+1$.

\end{enumerate}

\end{lemma}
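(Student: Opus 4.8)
The plan is to pin down the shape of the base $B$, then the step history of $\pazocal{C}$, and then read off (b) from the length estimates already available for primitive and historical sectors.

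\emph{Part (a).} Replacing $\pazocal{C}$ by its inverse computation if necessary (this changes neither the hypotheses nor the conclusions), we may assume the history of $\pazocal{C}$ has an interior occurrence of $\sigma(34)$ itself, say $W_{m-1}\cdot\sigma(34)\equiv W_m$ with $2\le m\le t-1$; then $W_{m-1}$, hence $B$, is $\sigma(34)$-admissible. As $\sigma(34)$ locks every sector of the standard base other than the $Q_{i,\ell}Q_{i,r}$-sectors, \Cref{locked sectors} forces the only two-letter subwords of $B$ of the form $xx^{-1}$ or $x^{-1}x$ to be $Q_{i,\ell}Q_{i,\ell}^{-1}$ and $Q_{i,r}^{-1}Q_{i,r}$. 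Reading the circular base $B$ as a closed walk along the cyclic standard base, the signs of its letters change only at these two kinds of subword -- from $+$ to $-$ at a $Q_{i,\ell}Q_{i,\ell}^{-1}$ and from $-$ to $+$ at a $Q_{i,r}^{-1}Q_{i,r}$; as $B$ is circular the two kinds of change occur equally often, and as $B$ is defective at least one occurs, so $B$ contains a subword $Q_{i,\ell}Q_{i,\ell}^{-1}$. Since $Q_{i,\ell}$ can be immediately preceded only by $P_i$ and $Q_{i,\ell}^{-1}$ immediately followed only by $P_i^{-1}$ (the alternative $Q_{i,\ell}^{-1}Q_{i,\ell}$ having just been excluded), this subword extends to $P_iQ_{i,\ell}Q_{i,\ell}^{-1}P_i^{-1}$ inside a cyclic permutation of $B$, which is (a).

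\emph{Part (b): the step history.} Pass to the cyclic permutation $\tilde{\pazocal{C}}$ of $\pazocal{C}$ whose base $\tilde B$ contains $B'\equiv P_iQ_{i,\ell}Q_{i,\ell}^{-1}P_i^{-1}$; as this changes neither the length, the step history, nor the $a$-lengths of restricted words, it suffices to bound the restriction of $\tilde{\pazocal{C}}$ to $B'$. Since $\pazocal{C}$ is reduced the rule just before $\sigma(34)$ is not $\sigma(43)$, and since $W_{m-1}$ is $\sigma(34)$-admissible its state letters lie in $\textbf{E}_\textbf{S}^0(3)$; hence that rule belongs to $\textbf{E}_\textbf{S}^0(3)$ and there is a nonempty $(3)$-block ending just before $\sigma(34)$. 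Were it preceded by a transition, the corresponding subcomputation would have step history $(23)(3)(34)$ or $(43)(3)(34)$, contradicting \Cref{Defective (23)(3)(34)} ($B$ unreduced) or \Cref{E run step history}(b) ($B\ni Q_{i,\ell}Q_{i,\ell}^{-1}$); so this $(3)$-block is initial. Symmetrically, the rule just after $\sigma(34)$ belongs to $\textbf{E}_\textbf{S}^0(4)$ and begins a nonempty $(4)$-block which, were it followed by a transition, would give step history $(34)(4)(43)$ or $(34)(4)(45)$, contradicting \Cref{E primitive step history}(b) ($B\ni P_iQ_{i,\ell}Q_{i,\ell}^{-1}P_i^{-1}$) or \Cref{Defective (34)(4)(45)}; so the $(4)$-block is terminal. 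Thus the step history of $\tilde{\pazocal{C}}$ is exactly $(3)(34)(4)$; writing $\tilde{\pazocal{C}}=\pazocal{C}_3\cdot\sigma(34)\cdot\pazocal{C}_4$ with histories $H_3,H_4$ gives $t=\|H_3\|+1+\|H_4\|$.

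\emph{Part (b): the estimates and the obstacle.} On $B'$ all three sectors -- $P_iQ_{i,\ell}$, the folded $Q_{i,\ell}Q_{i,\ell}^{-1}$ (governed by the $Q_{i,\ell}Q_{i,r}$-sector), and $Q_{i,\ell}^{-1}P_i^{-1}$ -- are historical. Since $\sigma(34)$ has empty domain on the $P_iQ_{i,\ell}$-sector and its copy, $W_m$ restricted to $B'$ has empty tape words in both $P_iQ_{i,\ell}$-sectors, and as the rules of $\textbf{E}_\textbf{S}^0(3)$ lock the $P_iQ_{i,\ell}$-sector those tape words stay empty throughout $\pazocal{C}_3$; moreover $\sigma(34)$-admissibility of $W_{m-1}$ forces the middle tape word over the right historical alphabet. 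Reversing the restriction of $\pazocal{C}_3$ to the middle sector $Q_{i,\ell}Q_{i,\ell}^{-1}$ gives a reduced $\textbf{S}_h'$-computation whose initial word has all $a$-letters over the right historical alphabet, so the $\textbf{S}_h'$-analogue of \Cref{one alphabet historical words unreduced} yields $\|H_3\|\le\tfrac12|W_0'|_a$. For $\pazocal{C}_4$: the rules of $\textbf{E}_\textbf{S}^0(4)$ are those of $\textbf{RL}$ on $P_iQ_{i,\ell}Q_{i,r}$, so the restriction of $\pazocal{C}_4$ to $B'$ is a reduced $\textbf{RL}$-computation with base $PQQ^{-1}P^{-1}$ whose initial word has empty $PQ$-sector and start-state letters; by the $\textbf{RL}$-analogue of \Cref{primitive unreduced} the connecting rule never occurs, so every rule of $\pazocal{C}_4$ on $B'$ has the form $\tau_1(\theta)^{\pm1}$, and since $\tilde{\pazocal{C}}$ is reduced and both $P_iQ_{i,\ell}$-tape words start empty, each such rule lengthens each of them by exactly one. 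Hence $|W_t'|_a\ge2\|H_4\|$, so $\|H_4\|\le\tfrac12|W_t'|_a$, and $t\le\tfrac12|W_0'|_a+1+\tfrac12|W_t'|_a\le\max(|W_0'|_a,|W_t'|_a)+1$, which proves (b). The principal obstacle is the step-history determination: one must rule out every way $\tilde{\pazocal{C}}$ could escape the block structure $(3)(34)(4)$, matching each potential transition against the impossibility Lemmas \ref{E primitive step history}, \ref{E run step history}, \ref{Defective (34)(4)(45)} and \ref{Defective (23)(3)(34)} while keeping careful track of which two-letter subwords $B$ is guaranteed to contain; a more routine but delicate point is correctly reading off how the rules of $\textbf{E}_\textbf{S}^0(3)$ and $\textbf{E}_\textbf{S}^0(4)$ act on the folded base $B'$, in particular that the two $P_iQ_{i,\ell}$-tape words are empty exactly at the instant $\sigma(34)$ is applied and then grow monotonically under $\pazocal{C}_4$.
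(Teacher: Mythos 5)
Your proof is correct and follows essentially the same route as the paper's: part (a) from $\sigma(34)$-admissibility together with \Cref{locked sectors}, and part (b) by decomposing the computation into a $(3)$-block, the rule $\sigma(34)$, and a $(4)$-block, then bounding the $(3)$-part via the unreduced historical-sector lemma (\Cref{one alphabet historical words unreduced}) and the $(4)$-part via \Cref{primitive unreduced} and the multiply-one-letter count. The only (harmless) difference is that you pin down the full step history $(3)(34)(4)$ up front using Lemmas \ref{Defective (23)(3)(34)}, \ref{E run step history}, \ref{E primitive step history}, and \ref{Defective (34)(4)(45)}, whereas the paper obtains $x=0$ and $z=t$ inside the length estimates; your constants are in fact slightly sharper.
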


\begin{proof}

Since $B$ is defective and $\sigma(34)$ locks every sector of the standard base other than those of the form $Q_{i,\ell}Q_{i,r}$, it follows that $B$ must contain a two-letter subword of the form $Q_{i,\ell}Q_{i,\ell}^{-1}$ (and another of the form $Q_{i,r}^{-1}Q_{i,r}$).  So, as the tape word of such a sector must be over the corresponding right tape alphabet, a word which is $\sigma(34)$-admissible cannot be $\sigma(32)$-admissible.

As a result, perhaps passing to the inverse computation it suffices to assume the existence of indices $0\leq x<y<z-1<t$ such that:

\begin{itemize}

\item $W_x\to\dots\to W_y$ is a maximal subcomputation with step history $(3)$,

\item the transition $W_y\to W_{y+1}$ is given by $\sigma(34)$, and

\item $W_{y+1}\to\dots\to W_z$ is a maximal subcomputation with step history $(4)$.

\end{itemize}

Now, since $B$ is defective and contains a subword of the form $Q_{i,\ell}Q_{i,\ell}^{-1}$, part (a) of the statement follows from \Cref{locked sectors}.  Let $\pazocal{C}':W_0'\to\dots\to W_t'$ be the restriction of a cyclic permutation of $\pazocal{C}$ to the subword $B'$.  

Since $W_{y+1}$ is $\sigma(43)$-admissible, the subcomputation $W_{y+1}'\to\dots\to W_z'$ can be identified with a reduced computation of a primitive machine satisfying the hypotheses of \Cref{primitive unreduced}.  As such, $W_z$ cannot be $\sigma(43)$-admissible, so that \Cref{Defective (34)(4)(45)} implies $z=t$.  Moreover, the restriction to the $P_iQ_{i,\ell}$-sector satisfies the hypotheses of \Cref{multiply one letter}, so that $t-y-1\leq|W_t'|_a$.

Further, the restriction of the inverse subcomputation $W_y'\to\dots\to W_x'$ to the $Q_{i,\ell}Q_{i,\ell}^{-1}$-sector satisfies the hypotheses of \Cref{one alphabet historical words unreduced}, so that $x=0$ and $|W_y'|_a=|W_x'|_a-2y$, {\frenchspacing i.e. $y\leq|W_0'|_a$}.


\end{proof}

\begin{lemma} \label{strongly defective (23)}

Let $\pazocal{C}:W_0\to\dots\to W_t$ be a minimal computation with strongly defective base $B$.  
Suppose the history of $\pazocal{C}$ contains a letter of the form $\sigma(23)^{\pm1}$ which is neither the first nor the last letter.  Then:

\begin{enumerate}

\item There exists a subword $B'$ of a cyclic permutation of $B$ of the form $R_i^{-1}Q_{i,r}^{-1}Q_{i,r}R_i$

\item Letting $\pazocal{C}':W_0'\to\dots\to W_t'$ be the restriction of a cyclic permutation of $\pazocal{C}$ to the subword $B'$, then $t\leq2\max(|W_0'|_a,|W_t'|_a)+1$.

\end{enumerate}

\end{lemma}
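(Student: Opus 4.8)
The plan is to mirror the proof of \Cref{strongly defective (34)}, interchanging the roles of $\sigma(23)$ and $\sigma(34)$ (and of $\sigma(32)$ and $\sigma(43)$), of the submachines $\textbf{E}_\textbf{S}^0(2)$ and $\textbf{E}_\textbf{S}^0(4)$, of the primitive machines $\textbf{LR}$ and $\textbf{RL}$, of the left and right historical alphabets, and of the base letters $P_i$ and $R_i$ (together with $Q_{i,\ell}$ and $Q_{i,r}$); since $\textbf{E}_\textbf{S}^0(2)$ precedes $\textbf{E}_\textbf{S}^0(3)$ whereas $\textbf{E}_\textbf{S}^0(4)$ follows it, the part played by $\sigma(23)$ here is the time-reverse of the part played by $\sigma(34)$ in \Cref{strongly defective (34)}. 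The one substantive new wrinkle is that $\sigma(23)$, unlike $\sigma(34)$, does \emph{not} lock the input sectors, so establishing part (a) must invoke the strongly defective hypothesis more essentially than the corresponding step of \Cref{strongly defective (34)}.

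For part (a): since $\pazocal{C}$ is reduced and a letter of the form $\sigma(23)^{\pm1}$ occurs in its history, $B$ is the base of a $\sigma(23)^{\pm1}$-admissible word. As $\sigma(23)$ locks every sector of the standard base except the $Q_{i,\ell}Q_{i,r}$-sectors and the input sectors, \Cref{locked sectors} shows the only two-letter folds $B$ can contain are of the form $Q_{i,\ell}Q_{i,\ell}^{-1}$, of the form $Q_{i,r}^{-1}Q_{i,r}$, or --- at an input sector $R_iP_{i+1}$ --- of the form $R_iR_i^{-1}$ or $P_{i+1}^{-1}P_{i+1}$. Reading the base letters adjacent to a fold of one of the last two forms, each successive letter is forced by validity of the base together with \Cref{locked sectors}, unless some forced letter begins a fold $Q_{j,r}^{-1}Q_{j,r}$; hence either $B$ contains such a fold, or $B$ contains one of the subwords $P_iQ_{i,\ell}Q_{i,r}R_i$ or $P_{i+1}Q_{i+1,\ell}Q_{i+1,r}R_{i+1}$, contradicting that $B$ is strongly defective. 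In the remaining case every fold of $B$ has the form $Q_{i,\ell}Q_{i,\ell}^{-1}$ or $Q_{i,r}^{-1}Q_{i,r}$ and none is at an input sector; then, starting from any fold $Q_{j,\ell}Q_{j,\ell}^{-1}$, the ensuing base letters all carry exponent $-1$, each having a uniquely forced successor (again by validity and \Cref{locked sectors}) until a fold $Q_{m,r}^{-1}Q_{m,r}$ is reached, which must happen since $B$ is finite and not reduced. In every case $B$, after a cyclic permutation, contains a fold $Q_{i,r}^{-1}Q_{i,r}$; the preceding and following letters are then forced to be $R_i^{-1}$ and $R_i$, producing the subword $B'\equiv R_i^{-1}Q_{i,r}^{-1}Q_{i,r}R_i$.

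For part (b): replacing $\pazocal{C}$ by its inverse if necessary, assume the relevant letter is $\sigma(23)$. Since $\pazocal{C}$ is reduced and $\textbf{E}_\textbf{S}^0(3)$, a copy of $\textbf{S}_h'$, has distinct start and end state letters, there are indices $0\leq x<y<z-1<t$ with $W_x\to\dots\to W_y$ a nonempty maximal subcomputation of step history $(2)$, $W_y\to W_{y+1}$ given by $\sigma(23)$, and $W_{y+1}\to\dots\to W_z$ a nonempty maximal subcomputation of step history $(3)$. Restrict a cyclic permutation of $\pazocal{C}$ to $B'$ and write $W_i'$ for the restricted configurations. As the configuration bounding the $(2)$-subcomputation at $W_y$ is $\sigma(23)$-admissible and so has empty tape in the $Q_{i,r}R_i$-sectors, that subcomputation identifies with a reduced computation of $\textbf{LR}(\Phi^+)$ in the base $R^{-1}Q^{-1}QR$ (its $P$-letter $Q_{i,\ell}$ being absent); since the fold $Q_{i,r}^{-1}Q_{i,r}$ carries a nonempty tape word over the left historical alphabet, no connecting rule occurs, so \Cref{multiply one letter} applied to the $Q_{i,r}R_i$-sector gives $y-x\leq|W_0'|_a$, while \Cref{primitive unreduced} forces $x=0$ (a transition $\sigma(12)$ or $\sigma(32)$ preceding $W_x$ would make $W_x'$ of uniform index $1$, against \Cref{primitive unreduced}, or of the form $r_2^{-1}q_2^{-1}v'q_2r_2$, against its final clause as $x<y$). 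Dually, the $(3)$-subcomputation restricted to the fold satisfies the hypotheses of \Cref{one alphabet historical words unreduced} with initial tape word over the left historical alphabet, so $t-y-1\leq|W_t'|_a$; and $z=t$, since a following transition $\sigma(32)$ or $\sigma(34)$ would produce step history $(23)(3)(32)$ or $(23)(3)(34)$, excluded by \Cref{E run step history} (as $B$ contains $Q_{i,r}^{-1}Q_{i,r}$) and by \Cref{Defective (23)(3)(34)} (as $B$ is defective) respectively. Adding the two bounds yields $t=(y-x)+1+(t-y-1)\leq|W_0'|_a+|W_t'|_a+1\leq2\max(|W_0'|_a,|W_t'|_a)+1$. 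The real work lies in part (a): the combinatorics of the defective base is more delicate than in \Cref{strongly defective (34)} precisely because $\sigma(23)$ leaves the input sectors unlocked.
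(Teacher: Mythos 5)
Your part (b) is essentially the paper's argument, but your part (a) has a genuine gap, and it is the crux of the lemma. You try to deduce the existence of a fold $Q_{i,r}^{-1}Q_{i,r}$ purely from the combinatorics of a $\sigma(23)$-admissible, strongly defective base: you claim that, walking away from an input-sector fold or from a fold $Q_{j,\ell}Q_{j,\ell}^{-1}$, ``each successive letter is uniquely forced \dots until a fold $Q_{m,r}^{-1}Q_{m,r}$ is reached.'' This is false, precisely because $\sigma(23)$ leaves the input sectors unlocked: after $Q_{j,\ell}^{-1}$ the next letter is forced to be $P_j^{-1}$, but the walk may then turn around at an input-sector fold $P_j^{-1}P_j$ instead of continuing toward an $R$-letter. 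Concretely, if $R_{j-1}P_j$ is an input sector, the circular base $B\equiv P_j^{-1}P_jQ_{j,\ell}Q_{j,\ell}^{-1}P_j^{-1}$ is unreduced, strongly defective (it contains no letter $Q_{j,r}^{\pm1}$ or $R_j^{\pm1}$ at all), and carries $\sigma(23)$-admissible words; it has no fold of the form $Q_{m,r}^{-1}Q_{m,r}$. So no argument using only $\sigma(23)$-admissibility, strong defectiveness and unreducedness can prove part (a). Indeed part (a) is false without the minimality hypothesis, which you never invoke: on the base above, a history of the form $\tau\,\sigma(23)\,\theta_3$ with $\tau$ a non-connecting rule of $\textbf{E}_\textbf{S}(2)$ (which acts trivially on words with this base) and $\theta_3$ a rule of $\textbf{E}_\textbf{S}(3)$ is a reduced computation with $\sigma(23)$ in the interior.

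The paper's route through part (a) is dynamical rather than combinatorial. The combinatorics only gives that $B$ contains some fold $Q_{i,\ell}Q_{i,\ell}^{-1}$ or $Q_{i,r}^{-1}Q_{i,r}$ (your opening reductions do establish this much, since a fold at an input sector forces either such a fold or a forbidden subword $P_iQ_{i,\ell}Q_{i,r}R_i$ nearby). Such a fold forbids, via \Cref{locked sectors}, any connecting rule of $\textbf{E}_\textbf{S}(2)$ in the history (the connecting rule locks the $Q_{i,\ell}Q_{i,r}$-sectors) --- note this is the correct reason, not the nonemptiness of the fold's tape word that you cite. Hence the maximal $(2)$-subcomputation $W_x\to\dots\to W_y$ adjacent to $\sigma(23)$ has constant state letters; it is nonempty, so minimality forces $W_x\not\equiv W_y$, so some tape word changes; but the non-connecting rules of $\textbf{E}_\textbf{S}(2)$ write letters only next to $Q_{i,r}^{\pm1}$-letters of the base. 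Therefore $B$ contains a letter $Q_{i,r}^{\pm1}$, and only then does local forcing (no $Q_{i,r}Q_{i,r}^{-1}$, no $R_i^{-1}R_i$ by \Cref{locked sectors}, and no $Q_{i,\ell}Q_{i,r}$ by strong defectiveness) produce the subword $R_i^{-1}Q_{i,r}^{-1}Q_{i,r}R_i$. With part (a) so obtained, your part (b) (bounding $y$ by $|W_0'|_a$ via \Cref{multiply one letter} and \Cref{primitive unreduced}, and $t-y-1$ by $|W_t'|_a$ via \Cref{one alphabet historical words unreduced}) matches the paper and goes through.
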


\begin{proof}

First, note that $\sigma(23)$ locks every sector of the standard base other than the input sectors and those of the form $Q_{i,\ell}Q_{i,r}$.  So, since $B$ is strongly defective, $B$ cannot contain a subword of the form $(Q_{i,\ell}Q_{i,r})^{\pm1}$.

Next, note that the standard base is constructed so that there exists a historical $Q_{i,\ell}Q_{i,r}$-sector between every pair of working sectors.  As such, it follows that $B$ must contain a subword $UV$ of the form $Q_{i,\ell}Q_{i,\ell}^{-1}$ or one of the form $Q_{i,r}^{-1}Q_{i,r}$.  Either way, a word with base $B$ which is $\sigma(32)$-admissible cannot be $\sigma(34)$-admissible.

Hence, as in the proof of \Cref{strongly defective (34)}, we may assume $0\leq x<y<z-1<t$ where:

\begin{itemize}

\item $W_x\to\dots\to W_y$ is a maximal subcomputation with step history $(2)$,

\item the transition $W_y\to W_{y+1}$ is given by $\sigma(23)$, and

\item $W_{y+1}\to\dots\to W_z$ is a maximal subcomputation with step history $(3)$.

\end{itemize}

Given the subword $UV$ of $B$, \Cref{locked sectors} implies the history of $\pazocal{C}$ cannot contain a connecting rule of $\textbf{E}_\textbf{S}(2)$.  In particular, $W_i$ must have the same state letters for all $x\leq i\leq y$.  

Moreover, as $\pazocal{C}$ is minimal and $y>x$, $W_x$ and $W_y$ cannot be equal.  As such, there must be a tape word that is altered in the subcomputation between these words, and so by the software of the submachine $B$ must contain a letter of the form $Q_{i,r}^{\pm1}$.  Part (a) of the statement thus follows.

Let $\pazocal{C}':W_0'\to\dots\to W_t'$ be the restriction of a cyclic permutation as in part (b) of the statement.  Then as in the proof of \Cref{strongly defective (34)}, it follows from Lemmas \ref{primitive unreduced} and \ref{one alphabet historical words unreduced} that $x=0$, $z=t$, $y\leq|W_0'|_a$, and $t-y-1\leq|W_t'|_a$.

\end{proof}

\begin{lemma} \label{strongly defective connecting (4)}

Let $\pazocal{C}:W_0\to\dots\to W_t$ be a minimal computation with strongly defective base $B$ and step history $(4)$.  Suppose the history of $\pazocal{C}$ contains a connecting rule that is neither the first nor the last letter.  Then:

\begin{enumerate}[label=(\alph*)]

\item There exists a subword $B'$ of a cyclic permutation of $B$ of the form $Q_{i,r}^{-1}Q_{i,\ell}^{-1}Q_{i,\ell}Q_{i,r}$

\item Letting $\pazocal{C}':W_0'\to\dots\to W_t'$ be the restriction of a cyclic permutation of $\pazocal{C}$ to the subword $B'$, then $t\leq2\max(|W_0'|_a,|W_t'|_a)+1$.

\end{enumerate}

\end{lemma}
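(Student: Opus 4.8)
The plan is to mimic the arguments of Lemmas \ref{strongly defective (34)} and \ref{strongly defective (23)}: use the locking behaviour of the parallel connecting rule $\xi$ of $\textbf{E}_\textbf{S}^0(4)$ together with \Cref{locked sectors} to constrain the two-letter subwords of $B$, which will produce $B'$ in part (a); and then bound $t$ in part (b) by splitting $\pazocal{C}$ at $\xi$ and applying \Cref{multiply one letter} to a single-sector restriction.

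For part (a), I would first record that $\xi$ locks the $Q_{i,\ell}Q_{i,r}$-sectors (since the connecting rule of $\textbf{RL}(\Phi^+)$ locks its $QR$-sector) and also the $Q_{i,r}R_i$- and $R_{i-1}P_i$-sectors (as does every rule of $\textbf{E}_\textbf{S}^0(4)$), while it operates only on the $P_iQ_{i,\ell}$-sectors. Since by hypothesis a rule of the form $\xi^{\pm1}$ occurs in the history of $\pazocal{C}$, the admissible word to which it is applied (a word with base $B$) is $\xi^{\pm1}$-admissible, so \Cref{locked sectors} forbids $B$ from containing any two-letter subword of the form $Q_{i,\ell}Q_{i,\ell}^{-1}$, $Q_{i,r}^{-1}Q_{i,r}$, $Q_{i,r}Q_{i,r}^{-1}$, $R_i^{-1}R_i$, $R_iR_i^{-1}$, or $P_i^{-1}P_i$. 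As $B$ is strongly defective, hence unreduced, the only unreduced two-letter subwords it can contain are of the form $P_iP_i^{-1}$ or $Q_{i,\ell}^{-1}Q_{i,\ell}$, so $B$ contains one of these for some $i$. In the first case I would trace leftward from the displayed $P_i$, each step being forced (by the above list together with the definition of an admissible word) until one reaches the segment $Q_{i-1,\ell}Q_{i-1,r}R_{i-1}P_i$; the letter of $B$ immediately preceding $Q_{i-1,\ell}$ is then either $Q_{i-1,\ell}^{-1}$ or $P_{i-1}$, and the latter is impossible since it would place $(P_{i-1}Q_{i-1,\ell}Q_{i-1,r}R_{i-1})$ as a subword of $B$, contradicting strong defectiveness, so $B$ contains $Q_{i-1,\ell}^{-1}Q_{i-1,\ell}$. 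In either case $B$ contains $Q_{j,\ell}^{-1}Q_{j,\ell}$ for some $j$, and reading one further letter in each direction (again forced by the forbidden list) gives the subword $Q_{j,r}^{-1}Q_{j,\ell}^{-1}Q_{j,\ell}Q_{j,r}$; passing to a cyclic permutation then puts this in the form required for $B'$.

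For part (b), I would pass to a cyclic permutation of $\pazocal{C}$ with $B'$ an honest subword of the base, and (since replacing $\pazocal{C}$ by its inverse changes neither $t$ nor $\max(|W_0'|_a,|W_t'|_a)$, and $B'$ is preserved because $(B')^{-1}\equiv B'$) invert $\pazocal{C}$ whenever convenient. The crux is to show that the history $H$ of $\pazocal{C}$ contains exactly one connecting rule. This follows from reducedness: between two consecutive connecting rules in $H$ the state-letter index is constant, so the two rules are $\xi$ and $\xi^{-1}$ in some order; restricting the subcomputation lying strictly between them to $B'$ (whose base is $R^{-1}Q^{-1}QR$ in $\textbf{RL}$-coordinates), its two endpoints have the shape $r^{-1}q^{-1}\,w\,q\,r$ with state letters of a common index (because a $\xi^{\pm1}$-admissible word has empty $Q_{k,\ell}Q_{k,r}$-sectors), so by the $\textbf{RL}$-analogue of \Cref{primitive unreduced} that subcomputation is empty, forcing the two connecting rules to be adjacent and mutually inverse, contradicting that $H$ is reduced. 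Writing then (after a final inversion if needed) $H\equiv H_1\xi H_2$ with $H_1$ a product of rules $\tau_1(y)^{\pm1}$ and $H_2$ a product of rules $\tau_2(y)^{\pm1}$, and putting $x=\|H_1\|$, we get $t=x+1+\|H_2\|$, and both $W_x$ (which is $\xi$-admissible) and $W_{x+1}=W_x\cdot\xi$ have empty $Q_{i,\ell}Q_{i,r}$-sector. Since each $\tau_1(y)^{\pm1}$ (respectively $\tau_2(y)^{\pm1}$) multiplies the $Q_{i,\ell}Q_{i,r}$-sector on the left by a single letter, with distinct rules corresponding to distinct letters, applying \Cref{multiply one letter}(b) to the restrictions of $\pazocal{C}'$ to this sector yields $\|H_1\|\leq|W_0'|_a$ and $\|H_2\|\leq|W_t'|_a$, whence $t\leq|W_0'|_a+|W_t'|_a+1\leq 2\max(|W_0'|_a,|W_t'|_a)+1$.

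I expect the main obstacle to be the bookkeeping in part (a): verifying that each step of the forced tracing of $B$ is genuinely forced, which requires enumerating, for each state-letter part, the two-letter subwords permitted by the definition of an admissible word and checking that the forbidden list — together with strong defectiveness at the single branch point at $Q_{k,\ell}$ — leaves exactly one possibility. The determination of which sectors $\xi$ locks and the "at most one connecting rule" step in part (b) should be routine once the conventions of $\textbf{E}_\textbf{S}^0(4)$ and the $\textbf{RL}$-analogues of the primitive-machine lemmas are in hand.
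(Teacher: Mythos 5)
Your proposal is correct and follows essentially the same route as the paper: part (a) comes from the fact that the parallel connecting rule of $\textbf{E}_\textbf{S}(4)$ locks every sector except the $P_iQ_{i,\ell}$-sectors, so the only unreduced two-letter subwords available are $P_iP_i^{-1}$ and $Q_{i,\ell}^{-1}Q_{i,\ell}$ (your forced-tracing step, using strong defectiveness to rule out a subword $P_{i-1}Q_{i-1,\ell}Q_{i-1,r}R_{i-1}$, is exactly the detail the paper's brief assertion relies on), and part (b) comes from splitting the history at the connecting rule, where the $Q_{i,\ell}Q_{i,r}$-sector is empty, and applying \Cref{multiply one letter} to the restriction to that sector on each side. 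The only minor divergence is how the structure of the history is pinned down: you prove there is exactly one connecting rule via the $\textbf{RL}$-analogue of \Cref{primitive unreduced} on $B'$, whereas the paper observes that after (and, symmetrically, before) the connecting rule the $Q_{i,\ell}Q_{i,r}$-sector becomes nonempty, so the connecting-rule-free subcomputations on either side must extend to the ends of $\pazocal{C}$; both variants yield the same bound $t\leq|W_0'|_a+|W_t'|_a+1$.
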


\begin{proof}

By hypothesis, there exists $0\leq x<y<z-1<t$ such that:

\begin{itemize}

\item The histories of the subcomputation $W_x\to\dots\to W_y$ and $W_{y+1}\to\dots\to W_z$ contain no connecting rules.

\item The transition $W_y\to W_{y+1}$ is given by a connecting rule.

\end{itemize}

Since $B$ is defective and the connecting rules of $\textbf{E}_\textbf{S}(4)$ lock all sectors of the standard base except for those of the form $P_iQ_{i,\ell}$, the base $B$ must contain a two-letter subword of the form $Q_{i,\ell}^{-1}Q_{i,\ell}$ (and one of the form $P_iP_i^{-1}$).  Part (a) of the statement then follows.  

Let $\pazocal{C}':W_0'\to\dots\to W_t'$ be the restriction of a cyclic permutation of $\pazocal{C}$ as in part (b) of the statement.  Then the restriction of the subcomputation $W_{y+1}'\to\dots\to W_z'$ to the subword $Q_{i,\ell}Q_{i,r}$ of $B'$ satisfies the hypotheses of \Cref{multiply one letter}, so that the tape word of $W_z'$ must be nonempty.  So, $W_z$ is not admissible for a connecting rule, and so $z=t$.  Moreover, $t-y-1\leq|W_t'|_a$.  

A similar argument implies $x=0$ and $y\leq|W_0'|_a$.

\end{proof}

\begin{lemma} \label{strongly defective connecting (2)}

Let $\pazocal{C}:W_0\to\dots\to W_t$ be a minimal computation with strongly defective base $B$ and step history $(2)$.  Suppose the history of $\pazocal{C}$ contains a connecting rule that is neither the first nor the last letter.  Then:

\begin{enumerate}[label=(\alph*)]

\item There exists a subword $B'$ of a cyclic permutation of $B$ of the form $Q_{i,\ell}Q_{i,r}Q_{i,r}^{-1}Q_{i,\ell}^{-1}$

\item Letting $\pazocal{C}':W_0'\to\dots\to W_t'$ be the restriction of a cyclic permutation of $\pazocal{C}$ to the subword $B'$, then $t\leq2\max(|W_0'|_a,|W_t'|_a)+1$.

\end{enumerate}

\end{lemma}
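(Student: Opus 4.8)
The plan is to mirror the proof of \Cref{strongly defective connecting (4)}, interchanging the roles of the parallel copies of $\textbf{RL}$ and $\textbf{LR}$ and of the sectors $P_iQ_{i,\ell}$ and $Q_{i,r}R_i$. By hypothesis the history of $\pazocal{C}$ contains a connecting rule of $\textbf{E}_\textbf{S}(2)$ which is neither the first nor the last letter; passing to the inverse computation if necessary, we may assume this occurrence is a positive connecting rule $\zeta$, occurring at a step $W_y\to W_{y+1}$. Since $\pazocal{C}$ is reduced and $\zeta$ carries the start states of the parallel copy of $\textbf{LR}$ to its end states, the rule immediately preceding $W_y$ (if any) is a $\tau_1^{\pm1}$-type rule and the one immediately following $W_{y+1}$ (if any) is a $\tau_2^{\pm1}$-type rule; taking the maximal such blocks yields indices $0\le x<y<z-1<t$ with $W_x\to\dots\to W_y$ and $W_{y+1}\to\dots\to W_z$ nonempty reduced subcomputations whose histories contain no connecting rule.

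\emph{Part (a).} Since $B$ is defective it contains a cancelling two-letter subword, and since $W_y$ is admissible for $\zeta$ --- which locks every sector of the standard base except those of the form $Q_{i,r}R_i$ and the input sectors --- \Cref{locked sectors} forces this cancellation to sit inside a $Q_{i,r}R_i$-sector or an input sector. As $\pazocal{C}$ is minimal and $x<y$, the argument of \Cref{strongly defective (23)} gives $W_x\not\equiv W_y$, so some tape word is altered along $W_x\to\dots\to W_y$; since the $\tau_1^{\pm1}$-type rules of $\textbf{E}_\textbf{S}(2)$ act nontrivially only on the $Q_{i,\ell}Q_{i,r}$- and $Q_{i,r}R_i$-sectors, the base $B$ must contain a two-letter subword equal to $Q_{i,\ell}Q_{i,r}$ or $Q_{i,r}R_i$, or to the inverse of one of these. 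Using that $B$ is strongly defective --- so that no cyclic permutation of $B$ contains $(P_iQ_{i,\ell}Q_{i,r}R_i)^{\pm1}$ --- together with the admissibility constraints on base subwords and with \Cref{locked sectors} (which rules out any nested cancellation such as $Q_{i,\ell}Q_{i,\ell}^{-1}$, $Q_{i,\ell}^{-1}Q_{i,\ell}$, or $Q_{i,r}^{-1}Q_{i,r}$, all involving sectors locked by $\zeta$), one reads off the forced neighbours of this subword and concludes that, up to passing to a cyclic permutation (and, in the inverse cases, replacing $B$ by $B^{-1}$, noting that $Q_{i,\ell}Q_{i,r}Q_{i,r}^{-1}Q_{i,\ell}^{-1}$ is its own inverse), $B$ contains a subword $B'$ of the stated form $Q_{i,\ell}Q_{i,r}Q_{i,r}^{-1}Q_{i,\ell}^{-1}$.

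\emph{Part (b).} Let $\pazocal{C}':W_0'\to\dots\to W_t'$ be the restriction of the relevant cyclic permutation of $\pazocal{C}$ to $B'$. As $\zeta$ locks the $Q_{i,\ell}Q_{i,r}$-sectors, the tape word of $W_y$ and of $W_{y+1}$ in the forward $Q_{i,\ell}Q_{i,r}$-sector of $B'$ is empty. The restriction of $W_x\to\dots\to W_y$ to that sector is a reduced computation to which \Cref{multiply one letter} applies, each $\tau_1^{\pm1}$-type rule multiplying the sector by a single letter on the right and distinct rules by distinct letters; if $x>0$ then $W_{x-1}\to W_x$ is a connecting rule, so $W_x$ is admissible for $\zeta$ and its tape word in that sector is also empty, whence \Cref{multiply one letter}(a) forces this restricted computation to be empty, contradicting $x<y$; thus $x=0$, and \Cref{multiply one letter}(b) gives $y\le|W_0'|_a$. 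Symmetrically, the restriction of $W_{y+1}\to\dots\to W_z$ to the same sector satisfies the hypotheses of \Cref{multiply one letter}, and being nonempty and starting from the empty tape word, \Cref{multiply one letter}(a) shows its tape word at $W_z'$ is nonempty; hence $W_z$ is admissible for no connecting rule, so $z=t$, and \Cref{multiply one letter}(b) yields $t-y-1\le|W_t'|_a$. Therefore $t\le y+1+(t-y-1)\le |W_0'|_a+|W_t'|_a+1\le 2\max(|W_0'|_a,|W_t'|_a)+1$.

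The main obstacle is part (a): tracking the forced local structure of a strongly defective base near a cancellation, and in particular handling the input-sector cancellations that become possible here because, unlike $\textbf{E}_\textbf{S}(4)$, the submachine $\textbf{E}_\textbf{S}(2)$ does not lock the input sectors --- this is exactly where the minimality hypothesis (via the altered tape word in the block $W_x\to\dots\to W_y$) is needed. The remainder is a routine dualization of \Cref{strongly defective connecting (4)}.
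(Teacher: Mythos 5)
Your proof is correct and follows essentially the same route as the paper's (which itself defers to the arguments of \Cref{strongly defective connecting (4)} and \Cref{strongly defective (23)}): minimality forces an altered tape word in the connecting-rule-free block, the software of $\textbf{E}_\textbf{S}(2)$ together with the sectors locked by the connecting rule and strong defectiveness force the subword $Q_{i,\ell}Q_{i,r}Q_{i,r}^{-1}Q_{i,\ell}^{-1}$, and part (b) is the same \Cref{multiply one letter} bookkeeping on the locked $Q_{i,\ell}Q_{i,r}$-sector. The only cosmetic difference is the order of steps in (a): you extract a subword $Q_{i,\ell}Q_{i,r}$ or $Q_{i,r}R_i$ first and then kill the latter via strong defectiveness, while the paper first excludes $(Q_{i,r}R_i)^{\pm1}$ and then finds $Q_{i,r}Q_{i,r}^{-1}$ --- in either order the key exclusion of $Q_{i,r}^{-1}Q_{i,r}$ by \Cref{locked sectors}, which you do invoke, is what makes the neighbour-reading work.
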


\begin{proof}

By hypothesis, there exists $0\leq x<y<z-1<t$ just as in the proof of \Cref{strongly defective connecting (4)}.

Note that the connecting rules of $\textbf{E}_\textbf{S}(2)$ lock every sector of the standard base other than the input sectors and those of the form $Q_{i,r}R_i$.  So, since $B$ is strongly defective, it cannot contain any subword of the form $(Q_{i,r}R_i)^{\pm1}$.

As in the proof of \Cref{strongly defective (23)}, the subcomputation $W_x\to\dots\to W_y$ must have a tape word altered, and so $B$ must contain a subword of the form $Q_{i,r}Q_{i,r}^{-1}$.  

Part (a) of the statement then follows from the definition of defective, while part (b) follows in just the same way as in the proof of \Cref{strongly defective connecting (4)}.

\end{proof}

\begin{lemma} \label{strongly defective (5)}

Let $\pazocal{C}:W_0\to\dots\to W_t$ be a nonempty minimal computation with strongly defective base $B$ and step history $(5)$.  Then:

\begin{enumerate}[label=(\alph*)]

\item There exists a subword $B'$ of a cyclic permutation of $B$ of the form $P_iQ_{i,\ell}Q_{i,\ell}^{-1}P_i^{-1}$

\item Letting $\pazocal{C}':W_0'\to\dots\to W_t'$ be the restriction of a cyclic permutation of $\pazocal{C}$ to the subword $B'$, then $t\leq12n^2+2n$ for $n=\max(|W_0'|_a,|W_t'|_a)$.

\end{enumerate}

\end{lemma}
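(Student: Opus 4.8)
The plan is to reduce to the proof of \Cref{unreduced base quadratic}, after first extracting what a strongly defective base forces in a step-history-$(5)$ computation. Write $\textbf{E}_\textbf{S}(5)$ for the submachine supporting the step-history letter $(5)$; since $\pazocal{C}$ has step history $(5)$, every rule of $\pazocal{C}$ is a rule of $\textbf{E}_\textbf{S}(5)$, and such a rule locks every sector of the standard base except those of the form $Q_{i,\ell}Q_{i,r}$, on each of which it multiplies on the left by a single letter of the associated right historical alphabet, with distinct positive rules carrying distinct letters. As $\pazocal{C}$ is nonempty, \Cref{locked sectors} shows $B$ has no two-letter subword of the form $XX^{-1}$ or $X^{-1}X$ other than $Q_{i,\ell}Q_{i,\ell}^{-1}$ or $Q_{i,r}^{-1}Q_{i,r}$; being defective, $B$ has at least one such. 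For part (a): if $B$ had no subword $Q_{i,\ell}Q_{i,\ell}^{-1}$, I would start from an occurrence of $Q_{i,r}^{-1}Q_{i,r}$ and use \Cref{locked sectors} repeatedly to pin down forced neighbours ($Q_{i,r}$ is followed by $R_i$, then $P_{i+1}$, then $Q_{i+1,\ell}$; and since $Q_{i+1,\ell}Q_{i+1,\ell}^{-1}$ is excluded, $Q_{i+1,\ell}$ is followed by $Q_{i+1,r}$, then $R_{i+1}$), producing a subword $P_{i+1}Q_{i+1,\ell}Q_{i+1,r}R_{i+1}$ of $B$ (after passing to a cyclic permutation so the chain does not overrun the ends), contradicting strong defectiveness. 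Hence $B$ contains some $Q_{i,\ell}Q_{i,\ell}^{-1}$; since neither $Q_{i,\ell}^{-1}Q_{i,\ell}$ nor $P_iP_i^{-1}$ can occur, its flanking letters are $P_i$ and $P_i^{-1}$, so a cyclic permutation of $B$ contains $P_iQ_{i,\ell}Q_{i,\ell}^{-1}P_i^{-1}$. In the argument below I will in fact exhibit a subword $B'$ of this form which moreover satisfies the bound in (b), thereby establishing both parts.

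For part (b) I would first record the relevant structural facts: in a strongly defective base, every $Q_{i,\ell}Q_{i,r}$-sector occurs in the form $qwq^{-1}$ (type $2$) or $q^{-1}wq$ (type $3$) — the form $qwq'$ is impossible, since the forced-neighbour argument would then place $P_iQ_{i,\ell}Q_{i,r}R_i$ in $B$ — every rule of $\textbf{E}_\textbf{S}(5)$ fixes a type-$3$ occurrence and acts on a type-$2$ occurrence by conjugation by a single letter, and hence two admissible words occurring in $\pazocal{C}$ coincide iff they coincide on every type-$2$ occurrence (the other sectors being either inert or perpetually empty). Next, fixing a two-letter subword $B^{(1)}=Q_{i_0,\ell}Q_{i_0,\ell}^{-1}$ of a cyclic permutation of $B$ and letting $\pazocal{C}^{(1)}:W_0^{(1)}\to\dots\to W_t^{(1)}$ be the restriction of $\pazocal{C}$ to it, I would apply \Cref{unreduced base} to get the factorization $H\equiv H_1H_2^\ell\bar H_2H_3$ of the history, with $h_j:=\|H_j\|$, $\bar h_2:=\|\bar H_2\|<h_2$, and $|W_0^{(1)}|_a+|W_t^{(1)}|_a=2(h_1+h_2+h_3)$. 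If $\ell\le1$, then exactly as in \Cref{unreduced base quadratic} one gets $t\le 2\max(|W_0^{(1)}|_a,|W_t^{(1)}|_a)$, and taking $B'=P_{i_0}Q_{i_0,\ell}Q_{i_0,\ell}^{-1}P_{i_0}^{-1}$ (whose added, perpetually locked, $P_{i_0}$-sectors contribute nothing to $a$-length, so $|W_m'|_a=|W_m^{(1)}|_a$) settles the lemma.

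If $\ell\ge2$, then $W_{h_1}^{(1)}\equiv W_{h_1+\ell h_2}^{(1)}$ by the remark following \Cref{unreduced base}, and (iterating admissibility for the first rule of $H_2$) $W_{h_1},\dots,W_{h_1+\ell h_2}$ share the same state letters. Were $W_{h_1}$ and $W_{h_1+\ell h_2}$ to coincide on every type-$2$ occurrence, they would be equal, so $H_1\bar H_2H_3$ would be the history of a reduced computation between $W_0$ and $W_t$ of length $t-\ell h_2<t$, contradicting minimality. Hence $W_{h_1}$ and $W_{h_1+\ell h_2}$ differ on some type-$2$ occurrence $B^{(2)}=Q_{j,\ell}Q_{j,\ell}^{-1}$, which then satisfies condition (1) of the lemma; there the one-period history $H_2$ conjugates the tape word by a word $u$ which, since $H_2$ (indeed $H_2H_2$) is reduced and distinct rules carry distinct letters, is cyclically reduced of length exactly $h_2\ge1$, and $u$ does not commute with the tape word $w$ of $W_{h_1}^{(2)}$. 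From here the estimates of \Cref{unreduced base quadratic} go through verbatim: by Lemma 8.1 of \cite{OSconj} the tape word $w'$ of $W_{h_1+\ell h_2}^{(2)}$ satisfies $\|w\|+\|w'\|\ge\ell-2h_2$, and each rule alters the $a$-length of a two-letter restriction by at most $2$, so $|W_0^{(2)}|_a\ge\|w\|-2h_1$ and $|W_t^{(2)}|_a\ge\|w'\|-2(\bar h_2+h_3)$. Splitting into the cases $\ell\le12h_2$, then $12h_2\le\ell\le12(h_1+h_3)$, then $\ell\ge12(h_1+h_3)$ exactly as there, one obtains $t\le12n^2+2n$ with $n=\max(|W_0^{(1)}|_a,|W_t^{(1)}|_a)$ and $B'=P_{i_0}Q_{i_0,\ell}Q_{i_0,\ell}^{-1}P_{i_0}^{-1}$ in the first two cases, and $t\le6n^2\le12n^2+2n$ with $n=\max(|W_0^{(2)}|_a,|W_t^{(2)}|_a)$ and $B'=P_jQ_{j,\ell}Q_{j,\ell}^{-1}P_j^{-1}$ in the last; since the $P$-sectors are empty, in every case $n=\max(|W_0'|_a,|W_t'|_a)$ as required.

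The main point needing care — and essentially the only thing not already contained in \Cref{unreduced base quadratic} — is the passage from the one-sided multiplication there to the conjugation action on a type-$2$ sector. This causes no harm: conjugation by a length-$h_2$ word still changes $a$-length by at most $2$ per rule, the word $u$ is cyclically reduced of full length $h_2$ precisely because consecutive rules of $H_2$ are non-inverse and distinct rules carry distinct letters, and the conjugate-length estimate of \cite{OSconj} is of exactly the needed shape. The remaining delicate point is the combinatorial case in part (a) — excluding a strongly defective base all of whose reflections have the form $Q_{i,r}^{-1}Q_{i,r}$ — which the forced-neighbour chain handles after a harmless cyclic permutation.
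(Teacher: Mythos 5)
Your proof is correct and follows essentially the same route as the paper: use strong defectiveness together with \Cref{locked sectors} to rule out $(Q_{i,\ell}Q_{i,r})^{\pm1}$-sectors, locate a subword $Q_{i,\ell}Q_{i,\ell}^{-1}$ (hence $P_iQ_{i,\ell}Q_{i,\ell}^{-1}P_i^{-1}$ in a cyclic permutation), and obtain the bound from the factorization of \Cref{unreduced base}, minimality, and Lemma 8.1 of \cite{OSconj}. The only difference is cosmetic: the paper simply invokes \Cref{unreduced base quadratic}, whose hypotheses and proof already cover the conjugation action on $Q_{i,\ell}Q_{i,\ell}^{-1}$-sectors that you take care to re-verify, whereas you unfold that lemma's argument inline (and use a forced-neighbour chain in place of the paper's shorter observation that a nonempty minimal computation must alter some sector).
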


\begin{proof}

First, rules of $\textbf{E}_\textbf{S}(5)$ only multiply the sectors $Q_{i,\ell}Q_{i,\ell}^{-1}$ and $(Q_{i,\ell}Q_{i,r})^{\pm1}$ by tape letters.  However, since every rule of $\textbf{E}_\textbf{S}(5)$ locks the $P_iQ_{i,\ell}$- and $Q_{i,r}R_i$-sectors, that $B$ is strongly defective implies it contains no two-letter subword of the form $(Q_{i,\ell}Q_{i,r})^{\pm1}$.

Hence, since $\pazocal{C}$ is nonempty and minimal, $B$ must contain a two-letter subword of the form $Q_{i,\ell}Q_{i,\ell}^{-1}$. $\pazocal{C}$ then satisfies the hypotheses of \Cref{unreduced base quadratic}, so that there exists a two-letter subword $B''$ of $B$ such that for $\pazocal{C}'':W_0''\to\dots\to W_t''$ of $\pazocal{C}$ to $B''$:

\begin{itemize}

\item There exists a rule in the history of $\pazocal{C}$ which multiplies the $B''$-sector by a letter on the left or on the right

\item $t\leq12\max(|W_0''|_a,|W_t''|_a)^2+2\max(|W_0''|_a,|W_t''|_a)$.

\end{itemize}

As above, the first point implies $B''$ is of the form $Q_{i,\ell}Q_{i,\ell}^{-1}$.  But \Cref{locked sectors} and the definition of defective implies there exists a subword $B'$ of a cyclic permutation of $B$ of the form $P_iQ_{i,\ell}Q_{i,\ell}^{-1}P_i^{-1}$ containing this copy of $Q_{i,\ell}Q_{i,\ell}^{-1}$, so that the statement follows.

\end{proof}

\begin{lemma} \label{strongly defective (1)}

Let $\pazocal{C}:W_0\to\dots\to W_t$ be a nonempty minimal computation with strongly defective base $B$ and step history $(1)$.  Then:

\begin{enumerate}[label=(\alph*)]

\item There exists a subword $B'$ of a cyclic permutation of $B$ of the form $R_i^{-1}Q_{i,r}^{-1}Q_{i,r}R_i$

\item Letting $\pazocal{C}':W_0'\to\dots\to W_t'$ be the restriction of a cyclic permutation of $\pazocal{C}$ to the subword $B'$, then $t\leq12n^2+2n$ for $n=\max(|W_0'|_a,|W_t'|_a)$.

\end{enumerate}

\end{lemma}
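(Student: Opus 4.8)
The plan is to repeat the proof of \Cref{strongly defective (5)} with the roles of ``left'' and ``right'' (and of $P_i$ and $R_i$) interchanged throughout, invoking the ``respectively'' branch of \Cref{unreduced base quadratic} in place of the main one. The first step is to record which sectors a rule of $\textbf{E}_\textbf{S}(1)$ can affect. Each such rule locks the $P_iQ_{i,\ell}$- and $Q_{i,r}R_i$-sectors and the non-input working sectors, acts trivially on the input sectors, and multiplies each $Q_{i,\ell}Q_{i,r}$-sector on the right by a single letter of the corresponding left historical alphabet. A direct check of the induced action on each sector type then shows that the only sectors whose tape word a rule of $\textbf{E}_\textbf{S}(1)$ can change are those of the form $(Q_{i,\ell}Q_{i,r})^{\pm1}$ and those of the form $Q_{i,r}^{-1}Q_{i,r}$ (the latter being multiplied on both sides, $w\mapsto a^{-1}wa$ for a single left historical letter $a$); in particular the $Q_{i,\ell}Q_{i,\ell}^{-1}$-sectors are left unchanged. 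This is the mirror image of the analogous observation for $\textbf{E}_\textbf{S}(5)$.

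Next I would exclude subwords of the form $(Q_{i,\ell}Q_{i,r})^{\pm1}$ from $B$, exactly as in the proof of \Cref{strongly defective (5)}: an occurrence of $Q_{i,\ell}Q_{i,r}$ in $B$ cannot be preceded by $Q_{i,\ell}^{-1}$ nor followed by $Q_{i,r}^{-1}$, since \Cref{locked sectors} applied to the locks on the $P_iQ_{i,\ell}$- and $Q_{i,r}R_i$-sectors forbids the subwords $Q_{i,\ell}^{-1}Q_{i,\ell}$ and $Q_{i,r}Q_{i,r}^{-1}$ in $B$; hence such an occurrence would be preceded by $P_i$ and followed by $R_i$, producing the subword $P_iQ_{i,\ell}Q_{i,r}R_i$ in some cyclic permutation of $B$ and contradicting strong defectiveness. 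Since each part of the state letters of $\textbf{E}_\textbf{S}(1)$ is a singleton, no state letter changes along $\pazocal{C}$; as $\pazocal{C}$ is nonempty and minimal, $W_0\neq W_t$, so some tape word changes, and by the first paragraph $B$ must therefore contain a two-letter subword of the form $Q_{i,r}^{-1}Q_{i,r}$ whose sector is genuinely altered by $\pazocal{C}$.

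I would then apply \Cref{unreduced base quadratic}, whose hypotheses $\pazocal{C}$ satisfies in its ``respectively'' form: $\textbf{E}_\textbf{S}(1)$ obeys \Cref{simplify rules}, its rules multiply every $Q_{i,\ell}Q_{i,r}$-sector on the right by pairwise distinct single letters, $\pazocal{C}$ is minimal among computations between its endpoints, and $B$ contains a subword $Q_{i,r}^{-1}Q_{i,r}$. This produces a two-letter subword $B''$ of $B$ whose sector some rule of $\pazocal{C}$ multiplies on the left or the right and such that $t\leq 12m^2+2m$ for $m=\max(|W_0''|_a,|W_t''|_a)$, where $\pazocal{C}'':W_0''\to\dots\to W_t''$ is the restriction of $\pazocal{C}$ to $B''$. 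By the first two paragraphs $B''$ can only be of the form $Q_{i,r}^{-1}Q_{i,r}$; and since \Cref{locked sectors} forbids $Q_{i,r}Q_{i,r}^{-1}$ in $B$, this copy of $Q_{i,r}^{-1}Q_{i,r}$ must sit inside a subword $B'=R_i^{-1}Q_{i,r}^{-1}Q_{i,r}R_i$ of a cyclic permutation of $B$, which proves part (a). Finally, as $B''$ is contained in $B'$ we have $m\leq n=\max(|W_0'|_a,|W_t'|_a)$, whence $t\leq 12m^2+2m\leq 12n^2+2n$, proving part (b).

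I do not expect a genuine obstacle here, since the argument runs parallel to that of \Cref{strongly defective (5)}. The only point demanding care is the bookkeeping of the first paragraph: one must verify, via the rule-application conventions, that right-multiplication of the $Q_{i,\ell}Q_{i,r}$-sectors induces a two-sided multiplication on the $Q_{i,r}^{-1}Q_{i,r}$-sectors while leaving the $Q_{i,\ell}Q_{i,\ell}^{-1}$-sectors fixed, so that it is the $R_i$-side of $B$ rather than the $P_i$-side that is relevant, and correspondingly that it is the $Q_i^{-1}Q_i$ branch of \Cref{unreduced base quadratic} that must be invoked.
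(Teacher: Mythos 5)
Your proposal is correct and follows essentially the same route as the paper's proof, which likewise observes that rules of $\textbf{E}_\textbf{S}(1)$ lock the $P_iQ_{i,\ell}$- and $Q_{i,r}R_i$-sectors (excluding $(Q_{i,\ell}Q_{i,r})^{\pm1}$ from a strongly defective base) and alter only the $(Q_{i,\ell}Q_{i,r})^{\pm1}$- and $Q_{i,r}^{-1}Q_{i,r}$-sectors, forcing a subword $Q_{i,r}^{-1}Q_{i,r}$, and then mirrors the proof of \Cref{strongly defective (5)} via the $Q_i^{-1}Q_i$ branch of \Cref{unreduced base quadratic}. Your write-up merely makes explicit the bookkeeping (trivial action on $Q_{i,\ell}Q_{i,\ell}^{-1}$-sectors, the neighbor argument yielding $R_i^{-1}Q_{i,r}^{-1}Q_{i,r}R_i$, and $m\leq n$) that the paper leaves implicit in the phrase ``as in the proof of \Cref{strongly defective (5)}.''
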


\begin{proof}

Again, every rule of $\textbf{E}_\textbf{S}(1)$ locks the $P_iQ_{i,\ell}$- and $Q_{i,r}R_i$-sectors, so that $B$ cannot contain a two-letter subword of the form $(Q_{i,\ell}Q_{i,r})^{\pm1}$.  But the rules of $\textbf{E}_\textbf{S}(5)$ only multiplies the tape words of sectors of the form $(Q_{i,\ell}Q_{i,r})^{\pm1}$ and $Q_{i,r}^{-1}Q_{i,r}$, so that $B$ must contain a two-letter subword of the form $Q_{i,r}^{-1}Q_{i,r}$.  The statement then follows as in the proof of \Cref{strongly defective (5)}.

\end{proof}

\begin{lemma} \label{strongly defective (45)}

Let $\pazocal{C}:W_0\to\dots\to W_t$ be a minimal computation with strongly defective base $B$.  
Suppose the history of $\pazocal{C}$ contains a letter of the form $\sigma(45)^{\pm1}$ which is neither the first nor the last letter.  Then:

\begin{enumerate}[label=(\alph*)]

\item There exists a subword $B'$ of a cyclic permutation of $B$ of the form $P_iQ_{i,\ell}Q_{i,\ell}^{-1}P_i^{-1}$

\item Letting $\pazocal{C}':W_0'\to\dots\to W_t'$ be the restriction of a cyclic permutation of $\pazocal{C}$ to the subword $B'$, then $t\leq12n^2+4n+2$ for $n=\max(|W_0'|_a,|W_t'|_a)$.

\end{enumerate}

\end{lemma}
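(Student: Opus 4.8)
The plan is to decompose $\pazocal{C}$ around the prescribed occurrence of $\sigma(45)^{\pm1}$ into a block of rules of $\textbf{E}_\textbf{S}(4)$, the transition $\sigma(45)$, a block of rules of $\textbf{E}_\textbf{S}(5)$, and possibly a further $\sigma(54)$ and a final $\textbf{E}_\textbf{S}(4)$-block, then to bound each piece. Replacing $\pazocal{C}$ by its inverse if necessary, I would assume the history contains $\sigma(45)$ as a transition $W_y\to W_{y+1}$ with $1\leq y\leq t-2$. Since $W_y$ carries the end state letters of $\textbf{E}_\textbf{S}(4)$ and $W_{y+1}$ the (unique) state letters of $\textbf{E}_\textbf{S}(5)$, the rule before $\sigma(45)$ must be a rule of $\textbf{E}_\textbf{S}(4)$ and the rule after it a rule of $\textbf{E}_\textbf{S}(5)$; moreover that later rule cannot be $\sigma(54)=\sigma(45)^{-1}$ without violating reducedness. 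Hence there is a maximal subcomputation $W_x\to\dots\to W_y$ with step history $(4)$ and a \emph{nonempty} maximal subcomputation $W_{y+1}\to\dots\to W_z$ with step history $(5)$.

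The subcomputation $W_{y+1}\to\dots\to W_z$ is a nonempty minimal computation (any subcomputation of a minimal computation is minimal) with strongly defective base $B$ and step history $(5)$, so \Cref{strongly defective (5)} applies directly. It produces a subword $B'$ of a cyclic permutation of $B$ of the form $P_iQ_{i,\ell}Q_{i,\ell}^{-1}P_i^{-1}$ — this is part (a) — and, after passing to that cyclic permutation of $\pazocal{C}$, bounds the length of the restriction $W_{y+1}'\to\dots\to W_z'$ to $B'$ by $12m^2+2m$, where $m=\max(|W_{y+1}'|_a,|W_z'|_a)$. It remains to bound the length of the rest of $\pazocal{C}$ restricted to $B'$ and to show $m\leq n:=\max(|W_0'|_a,|W_t'|_a)$.

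For the $\textbf{E}_\textbf{S}(4)$-blocks I would argue as follows. The connecting rule of $\textbf{E}_\textbf{S}(4)$ locks every $Q_{i,\ell}Q_{i,r}$-sector, so by \Cref{locked sectors} it cannot occur in $\pazocal{C}$, whose base contains $Q_{i,\ell}Q_{i,\ell}^{-1}$; hence every $\textbf{E}_\textbf{S}(4)$-block of $\pazocal{C}$ consists of rules of a single index, and in particular $W_x\to\dots\to W_y$ is entirely of end-index type. Because $\sigma(45)^{\pm1}$ locks the $P_iQ_{i,\ell}$-sectors, the $P_iQ_{i,\ell}$-tape word of $W_y'$ is empty, so applying the $\textbf{RL}$-analogue of \Cref{primitive unreduced} to the reversed block (which starts from $W_y'$ in the form $p_jq_j\,u\,q_j^{-1}p_j^{-1}$) yields $|W_y'|_a\leq|W_x'|_a$ and forces $W_x'$ to have nonempty $P_iQ_{i,\ell}$-sectors as the block is nonempty. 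The rule before $W_x$ would otherwise be $\sigma(34)$ — impossible by \Cref{Defective (34)(4)(45)}, since it would create step history $(34)(4)(45)$ with $B$ defective — or $\sigma(54)$ — impossible, since that locks $P_iQ_{i,\ell}$ and would make $W_x'$'s $P_iQ_{i,\ell}$-sectors empty; so $x=0$. Then \Cref{multiply one letter} on the $P_iQ_{i,\ell}$-sector bounds the block length by $\|u_0\|\leq|W_0'|_a$ (the terminal $P_iQ_{i,\ell}$-word being empty), and $|W_y'|_a=|W_{y+1}'|_a\leq|W_0'|_a$. The tail after $W_z$ is handled symmetrically: if $z<t$ the only reduced continuation is $\sigma(54)$ followed by a nonempty $\textbf{E}_\textbf{S}(4)$-block $W_{z+1}\to\dots\to W_{z'}$ of end-index; the rule after $W_{z'}$ cannot be $\sigma(43)$ (index mismatch) nor $\sigma(45)$ (that would force $W_{z'}$, like $W_{z+1}$, to have empty $P_iQ_{i,\ell}$-sectors, whence the $\textbf{RL}$-analogue of \Cref{primitive unreduced} makes the block empty, a contradiction), so $z'=t$; the same analogue gives $|W_z'|_a=|W_{z+1}'|_a\leq|W_t'|_a$ and \Cref{multiply one letter} bounds this block by $|W_t'|_a$. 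In all cases $m\leq n$, and summing — $\leq n$ for the first $\textbf{E}_\textbf{S}(4)$-block, $1$ for $\sigma(45)$, $\leq 12n^2+2n$ for the $\textbf{E}_\textbf{S}(5)$-block, and $\leq 1+n$ for $\sigma(54)$ plus the tail — gives $t\leq 12n^2+4n+2$.

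The main obstacle is pinning down the step history, i.e.\ ruling out that $\pazocal{C}$ oscillates repeatedly between $\textbf{E}_\textbf{S}(4)$ and $\textbf{E}_\textbf{S}(5)$: this is exactly where the combination of \Cref{locked sectors} (eliminating the connecting rule of $\textbf{E}_\textbf{S}(4)$ on a base containing $Q_{i,\ell}Q_{i,\ell}^{-1}$), the $\textbf{RL}$-analogue of \Cref{primitive unreduced} (forcing an $\textbf{E}_\textbf{S}(4)$-block with empty $P_iQ_{i,\ell}$-sectors at both ends to be trivial, using minimality), and \Cref{Defective (34)(4)(45)} must be orchestrated; everything else is bookkeeping with \Cref{multiply one letter} and \Cref{strongly defective (5)}.
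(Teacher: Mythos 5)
Your proof is correct and follows essentially the same route as the paper's: decompose around the given occurrence of $\sigma(45)^{\pm1}$ into a maximal $(4)$-block, the transition, the nonempty $(5)$-block handled by \Cref{strongly defective (5)}, and a possible $\sigma(54)$-plus-$(4)$-tail, then bound the $(4)$-blocks via the $\textbf{RL}$-analogue of \Cref{primitive unreduced} together with \Cref{multiply one letter}, summing to $12n^2+4n+2$. The only cosmetic difference is that you exclude a preceding $\sigma(34)$ via \Cref{Defective (34)(4)(45)} and explicitly rule out the connecting rule of $\textbf{E}_\textbf{S}(4)$ by \Cref{locked sectors}, whereas the paper obtains the same conclusions directly from the state-index part of \Cref{primitive unreduced}.
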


\begin{proof}

As in the proofs of previous statements, perhaps passing to the inverse computation we may assume there exist $0\leq x<y<z-1< t$ such that:

\begin{itemize}

\item $W_x\to\dots\to W_y$ is a maximal subcomputation with step history $(4)$,

\item the transition $W_y\to W_{y+1}$ is given by $\sigma(45)$, and

\item $W_{y+1}\to\dots\to W_z$ is a maximal subcomputation with step history $(5)$.

\end{itemize}

Applying \Cref{strongly defective (5)} to the subcomputation $W_{y+1}\to\dots\to W_z$ yields a $B'$ satisfying part (a) of the statement.  Moreover, letting $\pazocal{C}':W_0'\to\dots\to W_t'$ be the restriction to $B'$, we have $z-y-1\leq12m^2+2m$ for $m=\max(|W_y'|_a,|W_z'|_a)$.

The inverse subcomputation $W_y'\to\dots\to W_x'$ can then be identified with a reduced computation of a primitive machine satisfying the hypotheses of \Cref{primitive unreduced}.  As such, $W_x$ is not $\sigma(45)$- or $\sigma(43)$-admissible, so that $x=0$.  Moreover, $|W_y'|_a\leq|W_0'|_a$ and the restriction to the $P_iQ_{i,\ell}$-sector satisfies the hypotheses of \Cref{multiply one letter}, so that $y\leq|W_0'|_a$.

A similar argument may then be applied to any subcomputation $W_{z+1}'\to\dots\to W_t'$, so that $|W_z'|_a\leq|W_t'|_a$ and $t-z\leq|W_t'|_a+1$.  Thus, $t\leq12m^2+2m+2n+2\leq12n^2+4n+2$.

\end{proof}

The next statement then follows in exactly the same way:

\begin{lemma} \label{strongly defective (12)}

Let $\pazocal{C}:W_0\to\dots\to W_t$ be a minimal computation with strongly defective base $B$.  
Suppose the history of $\pazocal{C}$ contains a letter of the form $\sigma(12)^{\pm1}$ which is neither the first nor the last letter.  Then:

\begin{enumerate}[label=(\alph*)]

\item There exists a subword $B'$ of a cyclic permutation of $B$ of the form $R_i^{-1}Q_{i,r}^{-1}Q_{i,r}R_i$

\item Letting $\pazocal{C}':W_0'\to\dots\to W_t'$ be the restriction of a cyclic permutation of $\pazocal{C}$ to the subword $B'$, then $t\leq12n^2+4n+2$ for $n=\max(|W_0'|_a,|W_t'|_a)$.

\end{enumerate}

\end{lemma}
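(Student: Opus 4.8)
The plan is to run the argument of \Cref{strongly defective (45)} essentially verbatim, with the roles of the submachines $\textbf{E}_\textbf{S}(4)$, $\textbf{E}_\textbf{S}(5)$ and the transition $\sigma(45)$ played instead by $\textbf{E}_\textbf{S}(2)$, $\textbf{E}_\textbf{S}(1)$ and $\sigma(21)$, and with the appeal to \Cref{strongly defective (5)} replaced by one to \Cref{strongly defective (1)}.

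First I would pin down the step-history configuration around the given occurrence of $\sigma(12)^{\pm1}$. Since it is neither the first nor the last letter of the history and $\pazocal{C}$ is reduced, the rule immediately on one side of it is a rule of $\textbf{E}_\textbf{S}(1)$ and the rule immediately on the other side is a rule of $\textbf{E}_\textbf{S}(2)$: no other transition rule can sit next to $\sigma(12)^{\pm1}$ in a reduced computation, by the constraints on the state letters a word must carry to be admissible for the relevant rules (in particular $\sigma(32)$ followed by $\sigma(21)$ is ruled out because the end and start state letters of $\textbf{E}_\textbf{S}(2)$ differ). Passing to the inverse computation if necessary, I may therefore assume there are indices $0\leq x<y<z-1<t$ such that $W_x\to\dots\to W_y$ is a maximal subcomputation with step history $(2)$, the transition $W_y\to W_{y+1}$ is given by $\sigma(21)$, and $W_{y+1}\to\dots\to W_z$ is a maximal subcomputation with step history $(1)$; by the previous observation this $(1)$-block is nonempty, and being a subcomputation of the minimal computation $\pazocal{C}$ it is itself a minimal computation, with the same strongly defective base $B$.

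Next I would apply \Cref{strongly defective (1)} to $W_{y+1}\to\dots\to W_z$. This yields a subword $B'$ of a cyclic permutation of $B$ of the form $R_i^{-1}Q_{i,r}^{-1}Q_{i,r}R_i$, which gives part (a); and, writing $\pazocal{C}':W_0'\to\dots\to W_t'$ for the restriction of the corresponding cyclic permutation of $\pazocal{C}$ to $B'$, it gives $z-y-1\leq 12m^2+2m$ for $m=\max(|W_y'|_a,|W_z'|_a)$. It then remains to bound the pieces of $\pazocal{C}$ flanking this block, exactly as in \Cref{strongly defective (45)}. Under the identification of $\textbf{E}_\textbf{S}(2)$ with $\textbf{LR}(\Phi^+)$ acting in parallel on the blocks $Q_{i,\ell}Q_{i,r}R_i$, the subword $B'$ is a base of the form $R^{-1}Q^{-1}QR$, so the inverse restriction $W_y'\to\dots\to W_x'$ is a reduced computation of $\textbf{LR}$ meeting the hypotheses of \Cref{primitive unreduced} --- its initial word $W_y'$ lies in the start state letters of $\textbf{E}_\textbf{S}(2)$ and has empty tape words in the two flanking $(Q_{i,r}R_i)^{\pm1}$-sectors, since $W_y$ is $\sigma(21)$-admissible. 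As in \Cref{strongly defective (45)} this forces $x=0$ and $|W_y'|_a\leq|W_0'|_a$, while restricting $W_0'\to\dots\to W_y'$ to the $Q_{i,r}R_i$-sector of $B'$ and invoking \Cref{multiply one letter} gives $y\leq|W_0'|_a$. A symmetric argument applied to any trailing subcomputation $W_{z+1}'\to\dots\to W_t'$ gives $|W_z'|_a\leq|W_t'|_a$ and $t-z\leq|W_t'|_a+1$. Combining these, and using $m\leq n$ and $|W_0'|_a+|W_t'|_a\leq 2n$ for $n=\max(|W_0'|_a,|W_t'|_a)$, one obtains $t\leq y+1+(z-y-1)+(t-z)\leq 12n^2+4n+2$.

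Since \Cref{strongly defective (45)} already executes the fully parallel argument, I do not anticipate any genuine obstacle here. The only point requiring a moment's care is confirming that the input/output lemmas for the primitive machine stated for $\textbf{LR}$ (\Cref{primitive computations} and \Cref{primitive unreduced}) apply on a base of the form $R^{-1}Q^{-1}QR$ in precisely the way their $\textbf{RL}$-analogues apply on $PQQ^{-1}P^{-1}$, and that the sector shape $R_i^{-1}Q_{i,r}^{-1}Q_{i,r}R_i$ handed back by \Cref{strongly defective (1)} is exactly the one on which those lemmas are effective; both are immediate from the definitions of $\textbf{LR}$ and of the transition rules.
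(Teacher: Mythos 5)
Your proposal is correct and matches the paper's intent exactly: the paper proves this lemma simply by declaring it "follows in exactly the same way" as \Cref{strongly defective (45)}, and you carry out precisely that mirrored argument, replacing $\textbf{E}_\textbf{S}(4)$, $\textbf{E}_\textbf{S}(5)$, $\sigma(45)$ and \Cref{strongly defective (5)} by $\textbf{E}_\textbf{S}(2)$, $\textbf{E}_\textbf{S}(1)$, $\sigma(21)$ and \Cref{strongly defective (1)}, with $\textbf{LR}$ in place of $\textbf{RL}$. The bookkeeping (forcing $x=0$ via \Cref{primitive unreduced}, the $\leq|W_0'|_a$ and $\leq|W_t'|_a+1$ bounds via \Cref{multiply one letter}, and the final $12n^2+4n+2$ estimate) is exactly the paper's.
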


Thus, we arrive at the desired bound on the length of minimal computations:

\begin{lemma} \label{universal complexity}

Suppose $(W_1,W_2)\in\REACH_{\textbf{E}_\textbf{S}}^{uni}$.  Then there exists a subword $B'$ of a cyclic permutation of the base $B$ of $W_i$ with $\|B'\|\leq N+1$ such that the length of a minimal computation between $W_1$ and $W_2$ is at most $14\TM_\textbf{S}(12n+12N)+12n^2+28n+24$
for $n=\max(|W_1'|_a,|W_2'|_a)$, where $W_i'$ is the admissible subword of a cyclic permutation of $W_i$ with base $B'$. 

\end{lemma}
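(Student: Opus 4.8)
The plan is to split into three cases according to the base $B$ shared by $W_1$ and $W_2$, building on the results of this subsection and the previous ones. If $B$ is reduced then, after passing to a suitable cyclic permutation of the pair, $B$ is a cyclic permutation of $B_k^{\pm1}$ and \Cref{E reduced circular} already supplies a revolving subword $B'$ with $\|B'\|\le N+1$ (of the form $B_{std}$, $B_{std}P_0$, or a revolving subword thereof) together with a minimal computation of length at most $14\TM_\textbf{S}(12n+12N)+28n+24$, well within the asserted estimate. If $B$ is defective but some cyclic permutation of it contains a four-letter subword of the form $(P_iQ_{i,\ell}Q_{i,r}R_i)^{\pm1}$, I would pass to the corresponding cyclic permutation of a minimal computation $\pazocal{C}$ between $W_1$ and $W_2$ and apply \Cref{Defective PQQR} to its restriction, bounding the length of $\pazocal{C}$ by $14n+12$ and taking $B'$ to be that four-letter subword. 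Hence the substantive case is that $B$ is strongly defective, and here I would fix a minimal computation $\pazocal{C}\colon W_1\to\dots\to W_t$, assuming $W_1\ne W_2$ so that $\pazocal{C}$ is nonempty.

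For a strongly defective base I would run a case analysis on the history of $\pazocal{C}$, organized around the transition and connecting rules it contains. First I would record that, by \Cref{Defective (34)(4)(45)} and \Cref{Defective (23)(3)(34)} together with the $\textbf{E}_\textbf{S}$-analogues of \Cref{E primitive step history} and \Cref{E run step history}, the step history of $\pazocal{C}$ is constrained enough to license the step-history hypotheses of the strongly-defective lemmas. The main branch: if the history of $\pazocal{C}$ has, in a position other than its first or last, an occurrence of one of $\sigma(34)^{\pm1}$, $\sigma(23)^{\pm1}$, $\sigma(45)^{\pm1}$, $\sigma(12)^{\pm1}$, a connecting rule of $\textbf{E}_\textbf{S}(2)$, or a connecting rule of $\textbf{E}_\textbf{S}(4)$, then respectively \Cref{strongly defective (34)}, \Cref{strongly defective (23)}, \Cref{strongly defective (45)}, \Cref{strongly defective (12)}, \Cref{strongly defective connecting (2)}, or \Cref{strongly defective connecting (4)} applies --- in the connecting-rule cases after first stripping any extremal transition rules so that the relevant submachine block is the whole computation and its step history is $(2)$ or $(4)$ as those statements require --- and yields a four-letter subword $B'$ of a cyclic permutation of $B$ together with a bound of at most $12n^2+4n+2$ on $t$.

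Otherwise every transition rule occurring in the history of $\pazocal{C}$ is its first or last letter, so deleting the at most two extremal transition rules leaves a subcomputation $\pazocal{C}_{\mathrm{mid}}$ whose step history is a single block $(j)$; I would check that $\pazocal{C}_{\mathrm{mid}}$ is again minimal (re-appending the deleted rules to any shorter computation between its endpoints and reducing would contradict minimality of $\pazocal{C}$) and nonempty (otherwise $t\le 2$). For $j=1$ and $j=5$ I would quote \Cref{strongly defective (1)} and \Cref{strongly defective (5)}. For $j\in\{2,4\}$, no connecting rule occurs in $\pazocal{C}_{\mathrm{mid}}$ (one would be interior to the history of $\pazocal{C}$, contrary to assumption --- after, if needed, deleting one further extremal connecting rule), and minimality rules out the degenerate possibility that every sector of $B$ is locked or inert under $\textbf{E}_\textbf{S}(j)$; thus $B$ either carries a two-letter defect moved injectively by a single letter by the primitive rules --- handled by \Cref{unreduced base quadratic} --- or a two-letter sector genuinely acted upon, handled by \Cref{multiply one letter}. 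For $j=3$, \Cref{locked sectors} forces $B$ to contain a historical-sector defect $Q_{i,\ell}Q_{i,\ell}^{-1}$ or $Q_{i,r}^{-1}Q_{i,r}$, to which \Cref{unreduced base quadratic} applies (the rules of $\textbf{E}_\textbf{S}(3)$ move the neighbouring $Q_{i,\ell}Q_{i,r}$-sector injectively by one letter from the appropriate side), or else all defects of $B$ lie in working sectors, in which case the reduced part of $B$ still contains a $(Q_{i,\ell}Q_{i,r})^{\pm1}$ subword and \Cref{multiply two letters} bounds $t$ linearly.

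Collecting the cases, and enlarging the two-letter subwords to four-letter ones of the forms $P_iQ_{i,\ell}Q_{i,\ell}^{-1}P_i^{-1}$ or $R_i^{-1}Q_{i,r}^{-1}Q_{i,r}R_i$ via \Cref{locked sectors}, every bound obtained is at most $14\TM_\textbf{S}(12n+12N)+12n^2+28n+24$ with an accompanying subword $B'$ of a cyclic permutation of $B$ of length at most $N+1$, giving the statement. The hard part is closing this last case analysis: one must verify that for a \emph{minimal} computation with strongly defective base the step history really does force one of the listed interior pivotal rules or else reduces, after stripping extremal transitions, to a single-submachine computation of one of the five tame types above, and that the two-letter subword demanded by whichever lemma is invoked is actually present in $B$. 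This is where the interplay of \Cref{locked sectors}, the step-history restrictions, and minimality does the real work --- and it is also where the unavoidable quadratic term in the conclusion is produced, via \Cref{unreduced base quadratic}.
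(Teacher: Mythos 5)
Your proposal is correct and follows essentially the same route as the paper's proof: reduce via \Cref{E reduced circular} and \Cref{Defective PQQR} to the strongly defective case, dispose of interior transition/connecting rules with Lemmas \ref{strongly defective (34)}--\ref{strongly defective connecting (2)} and \ref{strongly defective (45)}--\ref{strongly defective (12)}, then strip extremal such rules and handle the remaining single-block step histories $(1),\dots,(5)$ with Lemmas \ref{strongly defective (1)}, \ref{strongly defective (5)} and \Cref{unreduced base quadratic} (the paper pins down the forced defect $Q_{i,\ell}Q_{i,\ell}^{-1}$ or $Q_{i,r}^{-1}Q_{i,r}$ directly rather than keeping your linear fallback branches via \Cref{multiply one letter} and \Cref{multiply two letters}, but those branches are harmless). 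Your explicit check that the stripped subcomputation remains minimal, and the (unnecessary but innocuous) enlargement of two-letter subwords, are the only points where you elaborate beyond the paper's argument.
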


\begin{proof}

By Lemmas \ref{E reduced circular} and \ref{Defective PQQR} it suffices to assume the base $B$ is strongly defective.  Let $\pazocal{C}:W_1\equiv V_0\to\dots\to V_t\equiv W_2$ be a minimal computation between $W_1$ and $W_2$.  Of course, it suffices to assume $\pazocal{C}$ is nonempty, so that $W_1$ and $W_2$ are not equal.

By Lemmas \ref{strongly defective (34)}-\ref{strongly defective connecting (2)} and \ref{strongly defective (45)}-\ref{strongly defective (12)}, it suffices to assume that any occurrence of a transition or connecting rule in the history $H$ of $\pazocal{C}$ is as the first or last letter.  Hence, as such rules do not alter tape words, it suffices to assume that $H$ contains no such rule and show the bound holds for $14\TM_\textbf{S}(12n+12N)+12n^2+28n+22$.  The step history of $\pazocal{C}$ is thus assumed to be $(j)$ for some $j\in\{1,\dots,5\}$.  Moreover, Lemmas \ref{strongly defective (5)} and \ref{strongly defective (1)} imply it suffices to assume $j\in\{2,3,4\}$.

Suppose $j=2$.  Since $H$ contains no connecting rule, the state letters of $W_1$ and $W_2$ are equal.  As such, there exists a sector whose tape word is different in $W_1$ and $W_2$.  By the definition of the rules of $\textbf{E}_\textbf{S}(2)$, it then follows that $B$ contains a letter of the form $Q_{i,\ell}^{\pm1}$.  The definition of strongly defective then implies $B$ contains a two-letter subword $UV$ of the form $Q_{i,\ell}^{-1}Q_{i,\ell}$ or $Q_{i,\ell}Q_{i,\ell}^{-1}$.  Considering just the rules of $\textbf{E}_\textbf{S}(2)$ making up $H$, we may then identify $\pazocal{C}$ with a reduced computation of a machine satisfying the hypotheses of \Cref{unreduced base quadratic}.  Thus, taking $B'$ to be the two-letter subword given by that lemma, the bound follows.

An identical argument applies if $j=4$, and thus it suffices to assume $j=3$.  By the definition of strongly defective and makeup of the standard base, $B$ must then contain a two-letter subword of the form $Q_{i,\ell}Q_{i,\ell}^{-1}$ or $Q_{i,r}^{-1}Q_{i,r}$.  In either case, the computation $\pazocal{C}$ of $\textbf{E}_\textbf{S}(3)$ can be seen to satisfy the hypotheses of \Cref{unreduced base quadratic}, again yielding the desired bound.

\end{proof}

\begin{remark} \label{rmk-quadratic}

Note that the quadratic term in the statement of \Cref{universal complexity} accounts for the quadratic term in the upper bound of \Cref{main-theorem}.  The difficulty in removing this term lies in the following example:

Suppose there exists a reduced computation $\pazocal{C}:W_0\to\dots\to W_t$ of $\textbf{S}$ with base $Q_i^{-1}Q_i$ such that:

\begin{itemize}

\item The state letters of $W_0$ and $W_t$ are the same

\item No rule of the history $H$ of $\pazocal{C}$ locks the $Q_{i-1}Q_i$-sector

\item The tape words of $W_0$ and $W_t$ are not the same.

\end{itemize}

Consider then the admissible word $V_0$ of $\textbf{E}_\textbf{S}(3)$ with circular base $P_i^{-1}P_iQ_{i,\ell}Q_{i,\ell}^{-1}P_i^{-1}$ such that:

\begin{itemize}

\item The state letters correspond to those of $W_0$

\item The tape word in the $P_i^{-1}P_i$-sector is the same as the tape word of $W_0$

\item The tape word $H_\ell$ in the $Q_{i,\ell}Q_{i,\ell}^{-1}$ sector is the copy of $H$ over the left historical alphabet

\item All other tape words are empty

\end{itemize}

Identifying $H$ with the rules in $\textbf{E}_\textbf{S}(3)$, it then follows that $V_0$ is $H^k$-admissible for all $k$.  The tape word of $V_0\cdot H^k$ in the $Q_{i,\ell}Q_{i,\ell}^{-1}$ is again $H_\ell$, and the tape words $u$ and $u'$ of $V_0$ and $V_0\cdot H^k$, respectively, in the $P_i^{-1}P_i$-sector are not the same.

Without knowing what the operation of $\pazocal{C}$ is, though, we may only bound $\max(\|u\|,\|u'\|)$ from below by $k$.  As such, we may bound $\max(|V_0|_a,|V_0\cdot H^k|_a)$ from below by $k+\|H\|$.  But assuming $H$ is cyclically reduced, $\|H^k\|=k\|H\|$.  Taking $k=\|H\|$ then exhibits the difficulty.

Clearly, some additional control over the computational structure of $\textbf{S}$ is necessary for this purpose.

\end{remark}

\bigskip


\section{Groups Associated to an \texorpdfstring{$S$}--machine and their Diagrams} \label{sec-groups}

\subsection{The groups} \label{sec-associated-groups} \

As in previous literature (for example \cite{O18}, \cite{OS19}, \cite{WCubic}, \cite{WEmb}, \cite{W}), we now associate a finitely presented group to any cyclic $S$-machine $\textbf{S}$. This group, denoted $M(\textbf{S})$, `simulates' the work of $\textbf{S}$ in the precise sense described in \Cref{sec-trapezia}.

Let $\textbf{S}$ be an arbitrary cyclic $S$-machine with hardware $(Y,Q)$, where $Q=\sqcup_{i=0}^s Q_i$ and $Y=\sqcup_{i=1}^{s+1} Y_i$, and software $\Theta=\Theta^+\sqcup\Theta^-$. For notational purposes, set $Q_0=Q_{s+1}$.

For $\theta\in\Theta^+$, Lemma \ref{simplify rules} allows us to assume that $\theta$ takes the form $$\theta=[q_0\to v_{s+1}q_0'u_1, \ q_1\to v_1q_1'u_2, \ \dots, \ q_{s-1}\to v_{s-1}q_{s-1}'u_s, \ q_s\to v_sq_s'u_{s+1}]$$ where $q_i,q_i'\in Q_i$, $u_i$ and $v_i$ are either empty or single letters in the domain $Y_i(\theta)^{\pm1}$, and some of the arrows may take the form $\xrightarrow{\ell}$. 

Define $\Theta^+_*=\{\theta_i: \theta\in\Theta^+,0\leq i\leq s\}$. For notational convenience, set $\theta_{s+1}=\theta_0$ for all $\theta\in\Theta^+$.

The group $M(\textbf{S})$ is then defined by taking the (finite) generating set $\pazocal{X}=Y\cup Q\cup \Theta^+_*$ and subjecting it to the (finite number of) relations:
\begin{equation}\notag\label{thetaRels}\begin{matrix}\theta_iy& = & y\theta_i &&&&\text{for all }&i\in \{1,\ldots, s\}, &y\in Y_i(\theta)\\ q_{i}\theta_{i+1}&=&\theta_{i}a_i(q_i')b_i&&&&\text{for all }&i\in \{1,\ldots, s\},  &\theta\in \Theta^+ \end{matrix}\end{equation}

As in the language of computations of $S$-machines, letters from $Q^{\pm1}$ are called \textit{$q$-letters} and those from $Y^{\pm1}$ are called \textit{$a$-letters}. Additionally, those from $(\Theta^+_*)^{\pm1}=\Theta^+_*\sqcup\Theta^-_*$ are called \textit{$\theta$-letters}. The relations of the form $q_i\theta_{i+1}=\theta_iv_iq_i'u_{i+1}$ are called \textit{$(\theta,q)$-relations}, while those of the form $\theta_ia=a\theta_i$ are called \textit{$(\theta,a)$-relations}.

Note that the number of $a$-letters in any part of $\theta$, and so in any defining relation of $M(\textbf{S})$, is at most two.  Further, note that if $\theta$ locks the $i$-th sector, then $Y_i(\theta)=\emptyset$ so that each $\theta_j$ has no relation with the elements of $Y_i$.


\medskip


\subsection{Bands and annuli} \label{sec-bands-annuli} \

The arguments presented in the forthcoming sections rely heavily on van Kampen (circular) and Schupp (annular) diagrams over (the presentation of) the group $M(\textbf{S})$ introduced in \Cref{sec-associated-groups}.  It is assumed that the reader is well acquainted with this notion; for reference, see \cite{Lyndon-Schupp} and \cite{O}.

To present these arguments efficiently, we first differentiate between the types of edges and cells that arise in such diagrams.  For simplicity, we adopt the convention that the contour of any diagram, subdiagram, or cell is traced in the counterclockwise direction.

If the label $\lab(\textbf{e})$ of an edge $\textbf{e}$ in such a diagram is a $q$-letter, then $\textbf{e}$ is called a \textit{$q$-edge}. Similarly, an edge labelled by an $a$-letter is called an \textit{$a$-edge} and one labelled by a $\theta$-letter is a \textit{$\theta$-edge}. 

For a path \textbf{p} in $\Delta$, the (combinatorial) length of $\textbf{p}$ is denoted $\|\textbf{p}\|$. Further, the path's \textit{$a$-length} $|\textbf{p}|_a$ is the number of $a$-edges in the path. The path's \textit{$\theta$-length} and \textit{$q$-length}, denoted $|\textbf{p}|_{\theta}$ and $|\textbf{p}|_q$, respectively, are defined similarly.  A cell whose contour label corresponds to a $(\theta,q)$-relation {\frenchspacing(resp. a $(\theta,a)$-relation)} is called a \textit{$(\theta,q)$-cell} {\frenchspacing(resp. a \textit{$(\theta,a)$-cell})}.

In the general setting of a reduced diagram $\Delta$ over an arbitrary presentation $\pazocal{P}=\gen{X\mid\pazocal{R}}$, let $\pazocal{Z}\subseteq X$.  An edge of $\Delta$ is called a $\pazocal{Z}$-edge if its label is in $\pazocal{Z}^{\pm1}$.  For $m\geq1$, a sequence of (distinct) cells $\pazocal{B}=(\Pi_1,\dots,\Pi_m)$ in $\Delta$ is called a \textit{$\pazocal{Z}$-band} of length $m$ if:

\begin{itemize}

\item every two consecutive cells $\Pi_i$ and $\Pi_{i+1}$ have a common boundary $\pazocal{Z}$-edge $\textbf{e}_i$, 

\item $\textbf{e}_{i-1}^{-1}$ and $\textbf{e}_i$ are the only two $\pazocal{Z}$-edges of $\partial\pi_i$, and

\item $\text{Lab}(\textbf{e}_i)$ are either all positive or all negative.

\end{itemize}

A $\pazocal{Z}$-band is \textit{maximal} if it is not a subsequence of another such band. Extending the definition so that a $\pazocal{Z}$-edge is a $\pazocal{Z}$-band of length $0$, it follows that that every $\pazocal{Z}$-edge in $\Delta$ is contained in a maximal $\pazocal{Z}$-band.

In a $\pazocal{Z}$-band $\pazocal{B}$ of length $m\geq1$ made up of the cells $(\Pi_1,\dots,\Pi_m)$, using only edges from the contours of $\Pi_1,\dots,\Pi_m$, there exists a closed path $\textbf{e}_0^{-1}\textbf{q}_1\textbf{e}_m\textbf{q}_2^{-1}$ such that $\textbf{q}_1$ and $\textbf{q}_2$ are simple (perhaps closed) paths. In this case, $\textbf{q}_1$ is called the \textit{bottom} of $\pazocal{B}$, denoted $\textbf{bot}(\pazocal{B})$, while $\textbf{q}_2$ is called the \textit{top} of $\pazocal{B}$ and denoted $\textbf{top}(\pazocal{B})$. When $\textbf{q}_1$ and $\textbf{q}_2$ need not be distinguished, they are called the \textit{sides} of the band.

\begin{figure}[H]
\centering
\begin{subfigure}[b]{0.48\textwidth}
\centering
\raisebox{0.5in}{\includegraphics[scale=1.25]{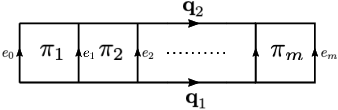}}
\caption{Non-annular $\pazocal{Z}$-band of length $m$}
\end{subfigure}\hfill
\begin{subfigure}[b]{0.48\textwidth}
\centering
\includegraphics[scale=1.25]{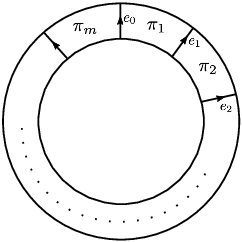}
\caption{Annular $\pazocal{Z}$-band of length $m$}
\end{subfigure}
\caption{ \ }
\end{figure}

If $\textbf{e}_0=\textbf{e}_m$ in a $\pazocal{Z}$-band $\pazocal{B}$ of length $m\geq1$, then $\pazocal{B}$ is called a \textit{$\pazocal{Z}$-annulus}.  Further, if either side bounds a contractible loop in $\Delta$, then $\pazocal{B}$ is called a \textit{contractible} $\pazocal{Z}$-annulus.

If $\pazocal{B}$ is a non-annular $\pazocal{Z}$-band of length $m\geq1$, then $\textbf{e}_0^{-1}\textbf{q}_1\textbf{e}_m\textbf{q}_2^{-1}$ is called the \textit{standard factorization} of the contour of $\pazocal{B}$. If either $(\textbf{e}_0^{-1}\textbf{q}_1\textbf{e}_m)^{\pm1}$ or $(\textbf{e}_m\textbf{q}_2^{-1}\textbf{e}_0^{-1})^{\pm1}$ is a subpath of $\partial\Delta$, then $\pazocal{B}$ is called a \textit{rim $\pazocal{Z}$-band}.

A $\pazocal{Z}_1$-band and a $\pazocal{Z}_2$-band \textit{cross} if they have a common cell and $\pazocal{Z}_1\cap\pazocal{Z}_2=\emptyset$.

In particular, in a reduced diagram over $M(\textbf{S})$, there exist \textit{$q$-bands} corresponding to bands arising from $\pazocal{Z}=Q_i$ for some $i$, where every cell is a $(\theta,q)$-cell. Similarly, there exist \textit{$\theta$-bands} for $\theta\in\Theta^+$ and \textit{$a$-bands} for $a\in Y$. Note that an $a$-band consists entirely of $(\theta,a)$-cells.

Note that by definition, distinct maximal $q$-bands cannot intersect.  In just the same way, distinct maximal $\theta$-bands cannot intersect, nor can distinct maximal $a$-bands.

Given an $a$-band $\pazocal{B}$, the makeup of the group relations dictates that the defining $a$-edges $\textbf{e}_0,\dots,\textbf{e}_m$ are labelled identically. Similarly, the $\theta$-edges of a $\theta$-band correspond to the same rule; however, the index of two such $\theta$-edges may differ.

If a maximal $a$-band contains a cell with an $a$-edge that is also on the contour of a $(\theta,q)$-cell, then the $a$-band is said to \textit{end} (or \textit{start}) on that $(\theta,q)$-cell and the corresponding $a$-edge is said to be the \textit{end} (or \textit{start}) of the band.  In the analogous way, a maximal $a$-band (or maximal $q$-band or maximal $\theta$-band) may end on the contour of the diagram.  

Note that a maximal band that ends in one part of the diagram must also end in another part.  In an annular diagram, a band that has one end on each boundary component is called \textit{radial}.

The natural projection of the label of the top (or bottom) of a $q$-band onto $F(\Theta^+)$ is called the \textit{history} of the band. The natural projection (without reduction) of the top (or bottom) of a $\theta$-band onto the alphabet $\{Q_0,\dots,Q_s\}$ is called the \textit{base} of the band.

Suppose the sequence of cells $(\pi_0,\pi_1,\dots,\pi_m)$ comprises a $\theta$-band $\pazocal{T}$ and $(\gamma_0,\gamma_1,\dots,\gamma_\ell)$ a $q$-band $\pazocal{Q}$ such that $\pi_0=\gamma_0$, $\pi_m=\gamma_\ell$, and no other cells are shared. Suppose further that $\partial\pi_0$ and $\partial\pi_m$ both contain $\theta$-edges on the outer countour of the annulus bounded by the two bands. Then the union of these two bands is called a \textit{$(\theta,q)$-annulus} (see \Cref{fig-annular-band}). 

A \textit{$(\theta,a)$-annulus} is defined similarly.

\begin{figure}[H] 
\centering
\includegraphics[scale=1.1]{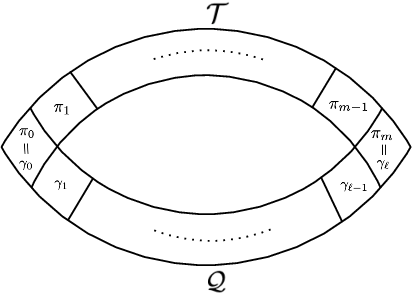} 
\caption{$(\theta,q)$-annulus with defining $\theta$-band $\pazocal{T}$ and $q$-band $\pazocal{Q}$} \label{fig-annular-band}
\end{figure}

The following statement is proved in a more general setting in \cite{O97}:

\begin{lemma}[Lemma 6.1 of \cite{O97}] \label{M(S) annuli}

A reduced circular diagram over $M(\textbf{S})$ contains no:

\begin{enumerate}[label=({\arabic*})]

\item $(\theta,q)$-annuli

\item $(\theta,a)$-annuli

\item $a$-annuli

\item $q$-annuli

\item $\theta$-annuli

\end{enumerate}

\end{lemma}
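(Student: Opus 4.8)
The standard strategy for ruling out all five kinds of annuli is a single counting/minimality argument: take a reduced circular diagram $\Delta$ over $M(\textbf{S})$ and suppose it contains at least one of the listed annuli; among all such annuli in $\Delta$ (over all reduced diagrams of the same boundary label) choose an innermost one, i.e. one bounding a subdiagram $\Gamma$ of minimal area. The bulk of the argument then consists of analyzing, for each type of annulus, the bands of a \emph{different} type that must cross it: every $\theta$-edge on the inner contour of a $q$-annulus starts a $\theta$-band which cannot close up inside $\Gamma$ (that would contradict minimality) and cannot exit through the inner $q$-edge, so it must cross the $q$-annulus again, producing a $(\theta,q)$-annulus inside $\Gamma$ of strictly smaller area — contradicting minimality. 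Running this reduction one layer at a time collapses all five cases to base cases about $(\theta,q)$- and $(\theta,a)$-annuli, which are themselves handled by examining the two bands forming them.

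The order I would carry this out is: (1) first dispose of $(\theta,q)$-annuli. The defining $\theta$-band $\pazocal{T}$ and $q$-band $\pazocal{Q}$ of such an annulus share their two end cells; read the history of $\pazocal{T}$ along its two sides. Because the $q$-band between the two shared cells has some history $H$, and a $(\theta,q)$-cell records a single rule, tracing around the annulus forces an equation of the form $H\theta = \theta' H$ (up to the base/index bookkeeping in the relations $q_i\theta_{i+1} = \theta_i a_i(q_i')b_i$) together with the constraint coming from the innermost cell; comparing the $q$-letters on the inner contour forces $\pazocal{T}$ to have length $0$ or forces a cancellation, so either $\Delta$ was not reduced or the annulus was trivial. (2) Then $(\theta,a)$-annuli: the same analysis, now using that an $a$-band's defining edges are \emph{identically} labelled and that a $(\theta,a)$-cell is just a commutation relation $\theta_i a = a\theta_i$, so the annulus would force two mirror-image cells that cancel, again contradicting reducedness. (3) Then, using that $(\theta,q)$- and $(\theta,a)$-annuli are impossible, rule out $a$-annuli, $q$-annuli, and $\theta$-annuli by the innermost-annulus/crossing-band reduction sketched above: an $a$-annulus is crossed by $\theta$-bands which (being unable to close up inside or escape) must form a $(\theta,a)$-annulus; a $q$-annulus is crossed by $\theta$-bands forming a $(\theta,q)$-annulus; a $\theta$-annulus has constant history along its two sides, and each cell in it is a $(\theta,q)$- or $(\theta,a)$-cell, so an innermost $\theta$-annulus is crossed by the maximal $q$- and $a$-bands emanating from its cells, which must either close up (minimality) or re-cross, again yielding a forbidden $(\theta,q)$- or $(\theta,a)$-annulus.

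\textbf{Main obstacle.} The delicate point is the index bookkeeping: in $M(\textbf{S})$ the $\theta$-letters come indexed ($\theta_0,\dots,\theta_s$ with $\theta_{s+1}=\theta_0$), the relations $q_i\theta_{i+1} = \theta_i v_i q_i' u_{i+1}$ mix indices $i$ and $i+1$, and the base of a $\theta$-band is a possibly \emph{unreduced} word over $\{Q_0,\dots,Q_s\}$. So when I claim ``the $\theta$-band $\pazocal{T}$ must re-cross the $q$-annulus $\pazocal{Q}$'', I have to be careful that the re-crossing cell is genuinely a $(\theta,q)$-cell with a $Q_i$-edge and not, say, the band terminating on the boundary (ruled out here since $\Delta$ is circular and we took an innermost annulus) or merging with $\pazocal{Q}$ in a degenerate way. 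Getting the ``innermost'' bookkeeping exactly right — so that the newly produced $(\theta,q)$- or $(\theta,a)$-annulus genuinely lies in the interior of the region bounded by the annulus we started with, hence has strictly smaller area — is where the care is needed; once that is set up, each individual case is short. Since the excerpt explicitly says ``The following statement is proved in a more general setting in \cite{O97}'', the cleanest route is to verify that $M(\textbf{S})$ satisfies the hypotheses of Lemma 6.1 of \cite{O97} (essentially: the presentation is of the ``$S$-machine'' type with the band structure described) and invoke it, reserving the hands-on crossing argument above as the justification for why those hypotheses hold. I would present the reduction argument for cases (1)--(2) in full and then indicate that (3)--(5) follow from (1)--(2) by the innermost-annulus crossing argument, citing \cite{O97} for the general statement.
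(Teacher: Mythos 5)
The paper does not actually prove this lemma; it is quoted verbatim as Lemma 6.1 of \cite{O97} (``proved in a more general setting''), so your primary route --- checking that $M(\textbf{S})$ is a presentation of the $S$-machine type with the band structure described and invoking that lemma --- is exactly what the paper does. Your fallback sketch is the standard minimal-counterexample/crossing-band argument from that literature and is sound in outline (with the index and ``innermost'' bookkeeping you already flag), though the $(\theta,q)$-annulus case is usually closed not by the equation $H\theta=\theta'H$ but by pushing innermost $q$- and $a$-bands inside the annulus until the two shared end cells become adjacent mirror cells, contradicting reducedness.
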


As a result, in a reduced circular diagram $\Delta$ over $M(\textbf{S})$, if a maximal $\theta$-band and a maximal $q$-band (respectively $a$-band) cross, then their intersection is exactly one $(\theta,q)$-cell (respectively $(\theta,a)$-cell). Further, every maximal $\theta$-band and maximal $q$-band ends on $\partial\Delta$ in two places.

\begin{lemma}\label{Dehn bound}
For any cyclic recognizing S-machine $\mathbf{S}$, $\text{Dehn}_{M(\mathbf{S})}(n)\preceq n^3$.
\end{lemma}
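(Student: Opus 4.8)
The plan is to bound the area of a reduced van Kampen diagram $\Delta$ over $M(\mathbf{S})$ with perimeter $n$, using the band structure established above together with \Cref{M(S) annuli}. First I would observe that the only cells in $\Delta$ are $(\theta,q)$-cells and $(\theta,a)$-cells, and that each $(\theta,a)$-cell lies on a unique maximal $\theta$-band and a unique maximal $a$-band, while each $(\theta,q)$-cell lies on a unique maximal $\theta$-band and a unique maximal $q$-band. So the area of $\Delta$ equals the total number of cells on all maximal $\theta$-bands. Hence it suffices to bound (i) the number of maximal $\theta$-bands, and (ii) the length (number of cells) of each such band; the area is at most the product.

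For (i): every maximal $\theta$-band and every maximal $q$-band ends on $\partial\Delta$ in two places (the last sentence of the excerpt, which uses \Cref{M(S) annuli}), and the ends of distinct maximal $\theta$-bands are distinct $\theta$-edges of $\partial\Delta$, so the number of maximal $\theta$-bands is at most $|\partial\Delta|_\theta/2 \le n/2$; similarly the number of maximal $q$-bands is at most $n/2$. For (ii): fix a maximal $\theta$-band $\pazocal{T}$. Its length is the number of $(\theta,q)$- plus $(\theta,a)$-cells in it. The $(\theta,q)$-cells of $\pazocal{T}$ are exactly its crossings with maximal $q$-bands, of which there are at most $n/2$ by the previous count (each maximal $q$-band crosses $\pazocal{T}$ at most once, again by \Cref{M(S) annuli}), so $\pazocal{T}$ contains at most $O(n)$ $(\theta,q)$-cells. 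The $(\theta,a)$-cells are the harder part: I would bound the $a$-length of $\mathbf{top}(\pazocal{T})$ (equivalently $\mathbf{bot}(\pazocal{T})$), since each $(\theta,a)$-cell of $\pazocal{T}$ contributes $a$-edges to the band's sides. The standard argument is that $\mathbf{top}(\pazocal{T})$ and $\mathbf{bot}(\pazocal{T})$ are each read as tape words of admissible words of $\mathbf{S}$ in each sector, related by applying the single rule $\theta$; using \Cref{simplify rules} (each $\alpha_{\theta,i},\omega_{\theta,i}$ has $a$-length $\le 1$), applying $\theta$ changes the $a$-length of each sector by a bounded amount relative to the number of $q$-edges. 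Since between consecutive $(\theta,q)$-cells of $\pazocal{T}$ the side of the band runs along a subword of the contour of some subdiagram cut out by the two crossing $q$-bands, an induction on area (or on the number of $q$-bands crossing $\pazocal{T}$) gives $|\mathbf{top}(\pazocal{T})|_a = O(n)$: roughly, the top of $\pazocal{T}$ is obtained from the bottom by inserting at most $O(1)$ $a$-edges at each of the $O(n)$ $q$-edges, and the bottom is bounded by the perimeter plus the contributions of $\theta$-bands below it. Thus each maximal $\theta$-band has length $O(n)$.

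Combining, $\mathrm{Area}(\Delta) \le (\text{number of } \theta\text{-bands})\cdot(\text{max length}) = O(n)\cdot O(n) = O(n^2)$ — which would actually give a quadratic bound — so to be safe and to match the claimed $n^3$ I would instead argue more crudely: bound the length of each maximal $\theta$-band by $O(n^2)$, which is immediate since its side is a path in $\Delta$ and $|\mathbf{top}(\pazocal{T})|_a$ could a priori only be bounded using the perimeter of the region it bounds (which is itself bounded by the number of cells). The cleanest route avoiding circularity is: do induction on $\mathrm{Area}(\Delta)$, peel off a rim $\theta$-band $\pazocal{T}$ (one exists by \Cref{M(S) annuli}, since there are no $\theta$-annuli), bound its length by $O(n)$ using the rim structure (its top lies on $\partial\Delta$, so $|\mathbf{top}(\pazocal{T})|_a \le n$, and then the number of $(\theta,q)$-cells in $\pazocal{T}$ is at most $|\mathbf{top}(\pazocal{T})|_q \le n$, hence $\|\pazocal{T}\| \le 2n$), remove it, and note the resulting diagram $\Delta'$ has perimeter at most $n + \|\pazocal{T}\| \cdot O(1) \le 3n$; iterating at most $n/2$ times (one per maximal $\theta$-band) each time over a diagram of perimeter $O(n)$ yields total area $\le (n/2)\cdot O(n) = O(n^2)$, and in particular $\preceq n^3$.

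The main obstacle is the bookkeeping in bounding the length of a single $\theta$-band: one must control both the number of $(\theta,q)$-cells (crossings with $q$-bands) and the number of $(\theta,a)$-cells (controlled via the $a$-length of the band's sides), and the latter requires knowing that applying a rule of $\mathbf{S}$ does not blow up $a$-length except through the $q$-edges, which is exactly where \Cref{simplify rules} and the structure of the relations of $M(\mathbf{S})$ (at most two $a$-letters per relation) are used. I expect the cleanest writeup to peel rim $\theta$-bands whose tops lie on the boundary, so that the $a$-length bound on the top comes for free from the perimeter, and to track how the perimeter grows (by a constant factor at most) under each peeling; the subtlety is ensuring the perimeter stays $O(n)$ throughout all $O(n)$ peelings rather than growing geometrically, which holds because each peeled band replaces a boundary arc of length $\ell$ with the band's other side of length $O(\ell)$.
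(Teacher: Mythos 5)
There is a genuine gap: the step on which your quadratic claim rests -- that each maximal $\theta$-band has length $O(n)$, or equivalently that the perimeter stays $O(n)$ throughout the peeling process -- is not justified and is in fact the precise point where the cubic (rather than quadratic) bound comes from. When you peel a rim $\theta$-band $\pazocal{T}$, its bottom can carry up to two more $a$-edges than its top for \emph{every} $(\theta,q)$-cell of $\pazocal{T}$, and the number of such cells is bounded only by the $q$-length of the current boundary, which is preserved under peeling and can be as large as $n$ at every stage. So each peeling can add on the order of $n$ new $a$-edges to the boundary; your own heuristic ``each peeled band replaces a boundary arc of length $\ell$ with the band's other side of length $O(\ell)$'' is exactly the multiplicative growth you claim to avoid, and even the correct additive accounting gives perimeter of order $n+kn$ after $k$ peelings, i.e.\ up to $O(n^2)$, not $O(n)$. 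Equivalently, in your first (non-peeling) version, a maximal $\theta$-band deep inside $\Delta$ can be crossed by on the order of $n^2$ maximal $a$-bands, since $a$-bands need not end on $\partial\Delta$: they may start and end on the up to $\tfrac14 n^2$ $(\theta,q)$-cells, each of which offers up to two $a$-edges. Hence individual $\theta$-bands can have length $\Theta(n^2)$, the claimed $O(n^2)$ area bound does not follow, and what your peeling scheme actually yields (with the corrected bookkeeping $\sum_{k\le n/2} O(n+kn)$) is $O(n^3)$ -- which is all the lemma asserts, so the statement survives but your stronger claim does not.

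For comparison, the paper's proof avoids peeling and induction entirely and is a direct global count using \Cref{M(S) annuli}: there are at most $\tfrac12\|\partial\Delta\|$ maximal $\theta$-bands and at most $\tfrac12\|\partial\Delta\|$ maximal $q$-bands, any two crossing at most once, giving at most $\tfrac14\|\partial\Delta\|^2$ $(\theta,q)$-cells; every maximal $a$-band ends either on $\partial\Delta$ or on a $(\theta,q)$-cell, and each $(\theta,q)$-relation contains at most two $a$-letters, so there are $O(\|\partial\Delta\|^2)$ maximal $a$-bands; finally each $a$-band crosses each $\theta$-band at most once, giving $O(\|\partial\Delta\|^3)$ $(\theta,a)$-cells and hence the cubic bound. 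If you want to keep your peeling framework, replace the ``perimeter stays $O(n)$'' assertion by the correct statement that the boundary $\theta$- and $q$-lengths never increase while the $a$-length grows by at most $2\cdot|\partial\Delta|_q\le 2n$ per peeled band, and sum.
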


\begin{proof}

By van Kampen's Lemma \cite{v-K}, a word $w\in(\pazocal{X}\cup\pazocal{X}^{-1})^\ast$ represents the identity in $M(\textbf{S})$ if and only if there exists a reduced circular diagram $\Delta$ over $M(\textbf{S})$ with $\lab(\partial\Delta)\equiv w$.  It thus suffices to show that $\text{Area}(\Delta)$ is bounded above by some cubic function of $\|w\|=\|\partial\Delta\|$.

By \Cref{M(S) annuli}, $\Delta$ contains at most $\frac{1}{2}\|\partial\Delta\|$ maximal $\theta$-bands and at most $\frac{1}{2}\|\partial\Delta\|$ maximal $q$-bands, with any pair of these bands intersecting at most once.  As such, the number of $(\theta,q)$-cells in $\Delta$ is at most $\frac{1}{4}\|\partial\Delta\|^2$.

Moreover, \Cref{M(S) annuli} implies any maximal $a$-band in $\Delta$ must end in two places, with these ends on a $(\theta,q)$-cell or on $\partial\Delta$.  As any $(\theta,q)$-relation contains at most two $a$-letters, there are at most $\frac{1}{2}\|\partial\Delta\|^2+\|\partial\Delta\|$ places for such an $a$-band to end, and so there are at most $\frac{1}{4}\|\partial\Delta\|^2+\frac{1}{2}\|\partial\Delta\|$ maximal $a$-bands in $\Delta$.

But any maximal $a$-band and maximal $\theta$-band intersect at most once, so that the number of $(\theta,a)$-cells in $\Delta$ is at most $\frac{1}{8}\|\partial\Delta\|^3+\frac{1}{4}\|\partial\Delta\|^2$.  Hence, $\text{Area}(\Delta)\leq\frac{1}{8}\|\partial\Delta\|^3+\frac{1}{2}\|\partial\Delta\|^2$.

\end{proof}


The next statement follows immediately from the ``sewing and detaching procedure" described in \cite{Lyndon-Schupp} (see pp. 150-151) or the concept of ``$0$-cells" introduced in \cite{O}:

\begin{lemma}\label{exciseTwoPaths}
Let $\gamma_1$ and $\gamma_2$ be two disjoint simple loops in a circular {\frenchspacing(resp. annular)} diagram $\Delta$ over some group presentation $\pazocal{P}$.  Suppose $\gamma_1$ and $\gamma_2$ have freely conjugate labels and bound an annulus $\Gamma$ that contains at least one (positive) cell.  Then there exists a circular {\frenchspacing(resp. annular)} diagram $\Delta'$ over $\pazocal{P}$ with the same boundary label(s) as $\Delta$ and strictly smaller area.
    
\end{lemma}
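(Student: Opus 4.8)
The plan is to perform ``surgery'' on $\Delta$: excise the annular subdiagram $\Gamma$ and splice the two loops $\gamma_1,\gamma_2$ directly together through a cell-free annular diagram realizing the free conjugacy of their labels. Write $w_1\equiv\lab(\gamma_1)$ and $w_2\equiv\lab(\gamma_2)$, each read from a chosen basepoint with the orientation induced on $\partial\Gamma$. First I would reduce to the case where $\gamma_1$ and $\gamma_2$ lie in the interior of $\Delta$: if one of them shares edges with a boundary component, a preliminary insertion of a strip of $0$-cells pushes it inward, changing neither $\text{Area}(\Delta)$ nor $\lab(\partial\Delta)$. Since $\gamma_1$ and $\gamma_2$ are disjoint and simple, $\Gamma$ is then an embedded annular subdiagram; cutting $\Delta$ open along $\gamma_1$ and along $\gamma_2$ and discarding $\Gamma$ leaves a circular (resp. annular) diagram $\Delta_0$ over $\pazocal{P}$ that bears, besides the original boundary components, two new boundary loops labelled $w_1$ and $w_2$.

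Next I would build the replacement collar $\Sigma$. By hypothesis $w_1$ and $w_2$ are conjugate in the free group $F(X)$, say $w_1\,t\,w_2^{-1}\,t^{-1}=1$ in $F(X)$ for a word $t$ over $X^{\pm1}$; applying van Kampen's Lemma over the free presentation (equivalently, over $\pazocal{P}$ using only $0$-cells) gives a cell-free diagram $D$ over $\pazocal{P}$ whose contour reads $w_1\,t\,w_2^{-1}\,t^{-1}$, and cutting $D$ open along its $t$-labelled subpath produces an annular diagram $\Sigma$ over $\pazocal{P}$ containing no positive cells, with contours labelled $w_1$ and $w_2$ and carrying basepoints and orientations matching those of $\gamma_1$ and $\gamma_2$. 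Gluing the two contours of $\Sigma$ into the two holes of $\Delta_0$ yields a circular (resp. annular) diagram $\Delta'$ over $\pazocal{P}$. Since $\Delta'$ agrees with $\Delta$ away from $\Sigma$ (resp. $\Gamma$), and neither meets $\partial\Delta$ after the first reduction, $\lab(\partial\Delta')\equiv\lab(\partial\Delta)$ (componentwise in the annular case). As area counts only positive cells, $\text{Area}(\Delta')=\text{Area}(\Delta)-\text{Area}(\Gamma)<\text{Area}(\Delta)$, the strict inequality coming from the hypothesis that $\Gamma$ contains at least one positive cell.

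The step I expect to be the main (if only mildly so) obstacle is the compatibility bookkeeping: arranging basepoints and orientations so that the contours of $\Sigma$ literally match the labels of $\gamma_1$ and $\gamma_2$ and can be identified with the holes of $\Delta_0$ to form an honest van Kampen/Schupp diagram, together with checking that the preliminary $0$-cell adjustment leaves the boundary labels untouched. This is precisely the content of the ``sewing and detaching procedure'' of \cite{Lyndon-Schupp} and the $0$-cell formalism of \cite{O}, so in the final write-up I would carry out these identifications only schematically and cite those sources for the routine details.
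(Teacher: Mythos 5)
Your proposal is correct and follows essentially the same route as the paper: excise $\Gamma$ and replace it with an annular diagram consisting only of $0$-cells (produced via van Kampen's Lemma from the free conjugacy of the boundary labels), which preserves the boundary label(s) and strictly decreases the area since $\Gamma$ contained a positive cell. The extra detail you supply — cutting a disk diagram for $w_1\,t\,w_2^{-1}\,t^{-1}$ along $t$ and the basepoint/orientation bookkeeping — is exactly the routine content the paper delegates to the sewing/detaching procedure of \cite{Lyndon-Schupp} and the $0$-cell formalism of \cite{O}.
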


\begin{proof}

Using the language of $0$-cells, van Kampen's Lemma produces an annular diagram $\Gamma'$ with the same boundary labels as $\Gamma$ and consisting entirely of $0$-cells.  As such, the statement follows by letting $\Delta'$ be the diagram obtained from $\Delta$ by excising $\Gamma$ and pasting $\Gamma'$ in its place.

\end{proof}

\medskip


\subsection{Trapezia} \label{sec-trapezia} \

Let $\Delta$ be a reduced circular diagram over $M(\textbf{S})$ whose contour is of the form $\textbf{p}_1^{-1}\textbf{q}_1\textbf{p}_2\textbf{q}_2^{-1}$, where $\textbf{p}_1$ and $\textbf{p}_2$ are sides of $q$-bands and $\textbf{q}_1$ and $\textbf{q}_2$ are maximal parts of the sides of $\theta$-bands whose labels start and end with $q$-letters. Then $\Delta$ is called a \textit{trapezium}.

In this case, $\textbf{q}_1$ and $\textbf{q}_2$ are called the \textit{bottom} and \textit{top} of the trapezium, respectively, while $\textbf{p}_1$ and $\textbf{p}_2$ are the \textit{left} and \textit{right} sides. 


The \textit{history} of the trapezium is the history of the rim $q$-band with side $\textbf{p}_2$ and the length of this history is the trapezium's \textit{height}. The base of $\text{Lab}(\textbf{q}_1)$ is called the \textit{base} of the trapezium.

It's easy to see from this definition that a $\theta$-band $\pazocal{T}$ whose first and last cells are $(\theta,q)$-cells can be viewed as a trapezium of height 1 as long as its top and bottom start and end with $q$-edges. We extend this to all such $\theta$-bands by merely disregarding any $a$-edges of the top and bottom that precede the first $q$-edge or follow the final $q$-edge. The paths formed by disregarding these edges are called the \textit{trimmed} top and bottom of the band and are denoted $\textbf{ttop}(\pazocal{T})$ and $\textbf{tbot}(\pazocal{T})$.

\begin{figure}[H]
\centering
\includegraphics[scale=1.8]{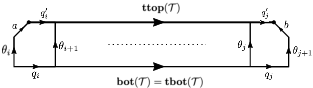}
\caption{$\theta$-band $\pazocal{T}$ with trimmed top}
\end{figure}

By Lemma \ref{M(S) annuli}, any trapezium $\Delta$ of height $h\geq1$ can be decomposed into $\theta$-bands $\pazocal{T}_1,\dots,\pazocal{T}_h$ connecting the left and right sides of the trapezium, with $\textbf{bot}(\pazocal{T}_1)$ and $\textbf{top}(\pazocal{T}_h)$ making up the bottom and top of $\Delta$, respectively.  Moreover, the first and last cells of each $\pazocal{T}_i$ are $(\theta,q)$-cells, so that $\textbf{ttop}(\pazocal{T}_i)=\textbf{tbot}(\pazocal{T}_{i+1})$ for all $1\leq i\leq h-1$.  In this case, the bands $\pazocal{T}_1,\dots,\pazocal{T}_h$ are said to be \textit{enumerated from bottom to top}.

The following two statements are proved in more generality in \cite{WMal} and exemplify how the group $M(\textbf{S})$ simulates the work of the $S$-machine $\textbf{S}$:

\begin{lemma} \label{trapezia are computations}

Let $\Delta$ be a trapezium with history $H\equiv\theta_1\dots\theta_h$ for $h\geq1$ and maximal $\theta$-bands $\pazocal{T}_1,\dots,\pazocal{T}_h$ enumerated from bottom to top. Letting $W_{j-1}\equiv\lab(\textbf{tbot}(\pazocal{T}_j))$ for $1\leq j\leq h$ and letting $W_h\equiv\lab(\textbf{ttop}(\pazocal{T}_h))$, then there exists a reduced computation $W_0\to\dots\to W_h$ of $\textbf{S}$ with history $H$.

\end{lemma}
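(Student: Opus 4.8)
The plan is to read off the computation directly from the trapezium's band structure, using the decomposition into $\theta$-bands $\pazocal{T}_1,\dots,\pazocal{T}_h$ enumerated from bottom to top that is guaranteed by \Cref{M(S) annuli}. First I would establish the base-case claim that a single $\theta$-band $\pazocal{T}$ whose first and last cells are $(\theta,q)$-cells realizes a single step of the $S$-machine: reading along $\textbf{tbot}(\pazocal{T})$ and $\textbf{ttop}(\pazocal{T})$, each $(\theta,q)$-cell of $\pazocal{T}$ contributes a $(\theta,q)$-relation of the form $q_i\theta_{i+1}=\theta_iv_iq_i'u_{i+1}$, which exactly encodes how the rule $\theta$ rewrites the $q$-letter $q_i$ and modifies the adjacent sectors; each $(\theta,a)$-cell of $\pazocal{T}$ contributes a $(\theta,a)$-relation $\theta_i a = a\theta_i$, which records that the letter $a$ lies in the domain $Y_i(\theta)$ and is simply transported across. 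Concatenating these contributions along the band and performing the free reductions that occur where a $v_i$ cancels an $a$-edge of the incoming word shows that $\lab(\textbf{ttop}(\pazocal{T}))$ is precisely $\lab(\textbf{tbot}(\pazocal{T}))\cdot\theta$ as admissible words (the trimming of leading/trailing $a$-edges corresponds exactly to the clause in the definition of $W\cdot\theta$ that removes a prefix or suffix of tape letters). One also needs that $\lab(\textbf{tbot}(\pazocal{T}))$ is genuinely $\theta$-admissible, i.e. the base word and the fact that each $a$-edge crossed by $\pazocal{T}$ lies in $Y_i(\theta)$; this follows because every cell of $\pazocal{T}$ carries the rule $\theta$ and the $(\theta,a)$-relations only exist for letters in the domain.

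Next I would assemble the general statement by induction on $h$. For $h=1$ the claim is the base case just described. For $h\geq 2$, the decomposition gives $\textbf{ttop}(\pazocal{T}_j)=\textbf{tbot}(\pazocal{T}_{j+1})$ for $1\le j\le h-1$, so setting $W_{j-1}\equiv\lab(\textbf{tbot}(\pazocal{T}_j))$ for $1\le j\le h$ and $W_h\equiv\lab(\textbf{ttop}(\pazocal{T}_h))$, the base case applied to each $\pazocal{T}_j$ with rule $\theta_j$ gives $W_{j-1}\cdot\theta_j\equiv W_j$. Hence $W_0\to\cdots\to W_h$ is a computation of $\textbf{S}$ with history $H\equiv\theta_1\cdots\theta_h$. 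Finally, to see the computation is reduced, observe that if $\theta_{j+1}=\theta_j^{-1}$ for some $j$ then the bands $\pazocal{T}_j$ and $\pazocal{T}_{j+1}$ would form a cancellable pair of cells along their shared trimmed top/bottom; more precisely one gets mirror-image $(\theta,q)$-cells sharing a $q$-edge with mutually inverse labels, contradicting that $\Delta$ is reduced. (Alternatively, a reduction in the history would force either a $\theta$-annulus or a pair of cancelling cells, both excluded by \Cref{M(S) annuli} and reducedness.)

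The main obstacle I anticipate is the careful bookkeeping in the base case, specifically the interaction between the free reductions inside $\Delta$ and the free reductions built into the definition of $W\cdot\theta$. One must check that a cell of $\pazocal{T}$ is a $(\theta,q)$-cell exactly at the positions of $q$-edges and a $(\theta,a)$-cell exactly at positions of $a$-edges crossed by the band, that no $a$-edge of $\textbf{tbot}(\pazocal{T})$ is ``skipped'' (which is where one uses that $\Delta$ is reduced and has no $a$-annuli or $(\theta,a)$-annuli, so an $a$-band either crosses $\pazocal{T}$ in a single cell or not at all), and that the trimming convention matches the ``remove a prefix/suffix of tape letters'' clause. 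Since the cited references (\cite{WMal}, and the analogous treatments in \cite{O18}, \cite{SBR}) carry out exactly this verification, I would present the band-by-band argument in the base case at a moderate level of detail and then invoke those sources for the fully routine reduction-tracking, rather than reproduce it in full.
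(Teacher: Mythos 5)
Your proposal is correct and follows exactly the standard band-by-band argument: decompose the trapezium into $\theta$-bands via \Cref{M(S) annuli}, verify the single-band case against the $(\theta,q)$- and $(\theta,a)$-relations, concatenate using $\textbf{ttop}(\pazocal{T}_j)=\textbf{tbot}(\pazocal{T}_{j+1})$, and derive reducedness of the history from reducedness of $\Delta$ via the cancellable mirror pair of $(\theta,q)$-cells. The paper gives no proof of its own, deferring to \cite{WMal} (with analogues in \cite{O18} and \cite{SBR}), and that is precisely the argument you have outlined, so your approach coincides with the one the paper relies on.
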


\begin{lemma} \label{computations are trapezia}

For any non-empty reduced computation $W_0\to\dots\to W_t$ of $\textbf{S}$ with history $H$, there exists a trapezium $\Delta$ such that: 

\begin{enumerate} [label=(\alph*)]

\item $\lab(\textbf{tbot}(\Delta))\equiv W_0$

\item $\lab(\textbf{ttop}(\Delta))\equiv W_t$

\item The history of $\Delta$ is  $H$


\end{enumerate}

\end{lemma}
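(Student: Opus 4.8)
The goal is to prove \Cref{computations are trapezia}: given a nonempty reduced computation $W_0\to\dots\to W_t$ of $\textbf{S}$ with history $H\equiv\theta_1\cdots\theta_t$, build a trapezium $\Delta$ whose trimmed bottom reads $W_0$, trimmed top reads $W_t$, and history is $H$.

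My plan is to proceed by induction on the length $t$ of the computation, the base case being $t=1$, where the trapezium is a single $\theta_1$-band. For the base case: since $W_0$ is $\theta_1$-admissible, write $W_0\equiv q_0^{\eps_0}w_1 q_1^{\eps_1}\cdots w_s q_s^{\eps_s}$ (indices into the base) and $W_1\equiv W_0\cdot\theta_1$. For each state letter $q_i$ occurring in $W_0$, the defining $(\theta,q)$-relation $q_i\theta_{i+1}=\theta_i v_i q_i' u_{i+1}$ of $M(\textbf{S})$ (or its variant for locked sectors) provides a single $(\theta,q)$-cell; for each $a$-letter $y$ appearing in a sector on which $\theta_1$ acts, the $(\theta,a)$-relation $\theta_i y = y\theta_i$ provides a $(\theta,a)$-cell. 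Glueing these cells along shared $\theta$- and $a$-edges in the order dictated by the word $W_0$ produces a $\theta$-band $\pazocal{T}_1$ whose trimmed bottom is labelled $W_0$ and whose trimmed top is labelled by the reduced admissible word $W_0\cdot\theta_1 = W_1$; the step of reducing and trimming corresponds exactly to the free reductions and deletion of boundary $a$-letters in the definition of $W\cdot\theta$. One must check that the resulting diagram is reduced, which follows because $H$ is reduced — in the length-one case there is nothing to check beyond the band being a genuine $\theta$-band — and that disc orientations are consistent, which is routine from the convention that contours are read counterclockwise.

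For the inductive step, assume the statement for computations of length $t-1$. Applying the hypothesis to $W_0\to\dots\to W_{t-1}$ (with history $\theta_1\cdots\theta_{t-1}$) gives a trapezium $\Delta'$ with $\lab(\textbf{ttop}(\Delta'))\equiv W_{t-1}$; applying the base case to $W_{t-1}\to W_t$ gives a $\theta_t$-band $\pazocal{T}_t$ (viewed as a trapezium of height one) with $\lab(\textbf{tbot}(\pazocal{T}_t))\equiv W_{t-1}\equiv\lab(\textbf{ttop}(\Delta'))$. Glue $\pazocal{T}_t$ on top of $\Delta'$ along this common path. The result $\Delta$ has the prescribed trimmed bottom $W_0$, trimmed top $W_t$, and history $H$; its contour has the required form $\textbf{p}_1^{-1}\textbf{q}_1\textbf{p}_2\textbf{q}_2^{-1}$ because the left and right sides concatenate into sides of $q$-bands and the top and bottom are the trimmed bottom of $\pazocal{T}_1$ and trimmed top of $\pazocal{T}_t$.

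The one genuine obstacle is verifying that $\Delta$ is \emph{reduced}, i.e.\ that gluing does not create a pair of mirror-image cells sharing an edge. A reducible pair would have to straddle the glued interface $\textbf{ttop}(\Delta') = \textbf{tbot}(\pazocal{T}_t)$: a $(\theta,q)$- or $(\theta,a)$-cell from the top band $\pazocal{T}_{t-1}$ of $\Delta'$ cancelling against one from $\pazocal{T}_t$ across a $q$- or $a$-edge on that path. Such a cancellation would force $\theta_{t-1}\equiv\theta_t^{-1}$ in $F(\Theta^+)$ (up to the index bookkeeping on $\theta$-edges), contradicting that $H$ is reduced. Since all cells of $M(\textbf{S})$ are $(\theta,q)$- or $(\theta,a)$-cells, and within a single $\theta$-band consecutive cells already fail to cancel (this is what makes it a band), no reducible pair can arise; one spells this out using \Cref{M(S) annuli} if needed to rule out the configurations, though in this gluing situation the direct argument on the interface suffices. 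I would also remark that the construction matches the one given in \cite{WMal}, so the details may be cited rather than reproduced in full.
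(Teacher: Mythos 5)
Your construction is correct and is exactly the standard one: the paper gives no proof of this lemma, deferring to \cite{WMal}, where the argument is precisely this induction on the length of the history, stacking one $\theta$-band per rule and using reducedness of $H$ to rule out cancellable pairs across the glued interface. The only point you gloss over is that when $W_{i-1}\cdot\theta_i$ involves free reductions or trimming, the top of the new band reads the unreduced word, and one must fold the adjacent mutually inverse $a$-edges (a $0$-refinement step) so that the path literally reads the reduced admissible word before the next band is attached; this is routine and does not affect correctness.
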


\begin{lemma}\label{history as powers}
Let $\Delta$ be a trapezium with base $B$, history $H\equiv H_0^k$ for $k\geq1$, and maximal $\theta$-bands $\pazocal{T}_1,\dots,\pazocal{T}_h$ enumerated from bottom to top.  Then for $L=\|H_0\|$, either:

\begin{enumerate}

\item $\lab(\textbf{tbot}(\pazocal{T}_i))\equiv \lab(\textbf{tbot}(\pazocal{T}_{i+L}))$ for all $i$, or 

\item There exists a two-letter subword $UV$ of $B$ such that for $w_0,w_h$ the tape words of $\lab(\textbf{tbot}(\Delta)),\lab(\textbf{ttop}(\Delta))$, respectively, in the $UV$-sector, $k\leq \|w_0\|+\|w_h\|+2L$.

\end{enumerate}

\end{lemma}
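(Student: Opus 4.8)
The plan is to pass from the trapezium $\Delta$ to a reduced computation via \Cref{trapezia are computations}, and then run the argument from the proof of \Cref{unreduced base quadratic}, with the periodicity of the history $H\equiv H_0^k$ playing the role that minimality of the computation played there.

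First I would invoke \Cref{trapezia are computations} to obtain a reduced computation $W_0\to\dots\to W_h$ of $\textbf{S}$ with history $H\equiv H_0^k$, so that $h=kL$, $W_{j-1}\equiv\lab(\textbf{tbot}(\pazocal{T}_j))$ for $1\le j\le h$, $W_h\equiv\lab(\textbf{ttop}(\pazocal{T}_h))\equiv\lab(\textbf{ttop}(\Delta))$, and $W_0\equiv\lab(\textbf{tbot}(\Delta))$. If $k=1$, then there is no index $i$ with $1\le i$ and $i+L\le h=L$, so conclusion (1) holds vacuously; hence I may assume $k\ge2$. The crux is then the equivalence: \emph{conclusion (1) holds if and only if $W_0\equiv W_L$}. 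The forward implication is immediate (take $i=1$). For the converse, since $W_L\equiv W_0\cdot H_0$, the equality $W_0\equiv W_L$ propagates, by applying $H_0$ repeatedly, to $W_{jL}\equiv W_0$ for all $0\le j\le k$, and therefore to $W_{jL+m}\equiv W_m$ for all $0\le m\le L$ and $0\le j\le k-1$; this is exactly conclusion (1). So it remains to show that $W_0\not\equiv W_L$ forces conclusion (2).

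So assume $W_0\not\equiv W_L$. Both $W_0$ and $W_L$ are $\theta_1$-admissible, where $\theta_1$ is the first rule of $H_0$: indeed $W_0\cdot\theta_1\equiv W_1$, while $W_L\cdot\theta_1\equiv W_{L+1}$, the latter using $k\ge2$ to guarantee $L+1\le h$. Hence $W_0$ and $W_L$ have identical $q$-letters, so they disagree in the tape word of some sector; that is, there is a two-letter subword $UV$ of $B$ such that, writing $w_j$ for the tape word of $W_j$ in the $UV$-sector, $w_0\not\equiv w_L$. By \Cref{simplify rules}, applying $H_0$ multiplies the $UV$-sector on the left by a fixed word $u$ and on the right by a fixed word $v$ with $\|u\|,\|v\|\le L$ (these words depending only on $H_0$ and on $UV$), so iterating shows $w_{jL}$ is freely equal to $u^jw_0v^j$; in particular $w_h$, the tape word of $W_{kL}$, is freely equal to $u^kw_0v^k$. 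Since $w_L\not\equiv w_0$, the reduced form of $uw_0v$ is not $w_0$, and an elementary free-group computation---if $u^kw_0v^k=w_0$ then $u$ is conjugate to $v^{-1}$, whence already $uw_0v=w_0$---shows $w_h\not\equiv w_0$ as well. Then Lemma 8.1 of \cite{OSconj}, applied exactly as in the proof of \Cref{unreduced base quadratic} with $\ell=k$, yields $\|w_h\|\ge k-\|w_0\|-2L$, i.e.\ $k\le\|w_0\|+\|w_h\|+2L$, which is conclusion (2).

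The step I expect to be the main obstacle is securing the nondegeneracy hypothesis needed to invoke Lemma 8.1 of \cite{OSconj}: in \Cref{unreduced base quadratic} this came from minimality of the computation, whereas here it must be produced from the periodicity $H\equiv H_0^k$ together with $W_0\not\equiv W_L$---which is precisely what the $q$-letter versus tape-word split and the short free-group observation accomplish. The only other delicate point is the bookkeeping for the left and right multipliers of the $UV$-sector under iteration of $H_0$, which is routine given \Cref{simplify rules}.
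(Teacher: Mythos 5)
Your proof is correct and follows essentially the same route as the paper's: reduce to the computation given by \Cref{trapezia are computations}, split on whether $W_0\cdot H_0\equiv W_0$, use \Cref{simplify rules} to write the change in some sector as left/right multiplication by words of length at most $L$, and invoke Lemma 8.1 of \cite{OSconj} to get the linear lower bound on $\|w_h\|$. The extra touches you add (the vacuous $k=1$ case, matching $q$-letters via $\theta_1$-admissibility, and the unique-roots argument showing $u^kw_0v^k=w_0$ forces $uw_0v=w_0$) are careful refinements of steps the paper leaves implicit, not a different method.
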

\begin{proof} For $W=\lab(\textbf{tbot}(\Delta))$, $\lab(\textbf{tbot}(\pazocal{T}_i))\equiv \lab(\textbf{tbot}(\pazocal{T}_{i+L}))$ if and only if $W\cdot H_0 \equiv W$. In particular, if (1) does not hold, then there must exist a subword $UV$ of $B$ such that such that the corresponding tape word $w_0$ in $W$ does not equal the corresponding tape word $w_0'$ in $W\cdot H_0$. By \Cref{simplify rules}, there exist tape words $v,u$ with $\|v\|,\|u\|\leq \|H_0\|$ such that $w_0'=vw_0u\neq w_0$. 
But then, the corresponding tape word $w_h$ in $W\cdot H_0^k\equiv\lab(\textbf{ttop}(\Delta))$ equals $v^k w_0 u^k$. Hence \cite[Lemma 8.1]{OSconj} implies $\|w_h\|\geq k-(||v||+||u||+||w_0||)$.

\end{proof}

\medskip


\subsection{Modified length function}\label{subsec-modified-length} \

To assist with the main argument, we now modify the length function on words over the generators of $M(\textbf{S})$. This is done in the same way as in \cite{O18}, \cite{OS19}, \cite{WEmb}, and \cite{W}. 

The standard length of a word/path will henceforth be referred to as its \textit{combinatorial length} and the modified length simply as its \textit{length}.

Define a word consisting of no $q$-letters, one $\theta$-letter, and at most one $a$-letter as a \textit{$(\theta,a)$-syllable}. Then, define the length of:

\begin{itemize}

\item any $q$-letter as 1

\item any $(\theta,a)$-syllable as 1

\item any $a$-letter as $\delta\equiv\frac{1}{2N+2}$

\end{itemize}


For a word $w$ over the generating set $\pazocal{X}$ of $M(\textbf{S})$, define a \textit{decomposition} of $w$ as a factorization of $w$ into a product of letters and $(\theta,a)$-syllables. The length of a decomposition of $w$ is then defined to be the sum of the lengths of the factors.  Finally, the length of $w$, denoted $|w|$, is defined to be the minimum of the lengths of its decompositions.  

The length of a path in a diagram over $M(\textbf{S})$ is defined to be the length of its label.  Similarly, given an element $g\in M(\textbf{S})$, the \textit{length} $|g|$ is defined to be the minimal length of any word representing $g$. 

The following statement gives some basic properties of the length function: 

\begin{lemma}[Lemma 6.2 of \cite{OS19}] \label{lengths}

Let \textbf{s} be a path in a diagram $\Delta$ over $M(\textbf{S})$ consisting of $c$ $\theta$-edges and $d$ $a$-edges. Then:

\begin{enumerate}[label=({\alph*})]

\item $|\textbf{s}|\geq\max(c,c+(d-c)\delta)$

\item $|\textbf{s}|=c$ if $\textbf{s}$ is the top or a bottom of a $q$-band

\item For any product $\textbf{s}=\textbf{s}_1\textbf{s}_2$ of two paths in a diagram,
$$|\textbf{s}_1|+|\textbf{s}_2|-\delta\leq|\textbf{s}|\leq|\textbf{s}_1|+|\textbf{s}_2|$$

\item Let $\pazocal{T}$ be a $\theta$-band with base of length $l_b$. If $\textbf{top}(\pazocal{T})$ (or $\textbf{bot}(\pazocal{T})$) has $l_a$ $a$-edges, then the number of cells in $\pazocal{T}$ is between $l_a-l_b$ and $l_a+3l_b$.

\end{enumerate}

\end{lemma}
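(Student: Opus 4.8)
The plan is to take part (a) as the backbone and deduce the remaining parts from it together with the explicit shape of the defining relations of $M(\textbf{S})$ (recall that every $(\theta,q)$-relation is, cyclically, $q\,\theta'\,b^{-1}(q')^{-1}a^{-1}\theta^{-1}$ with $\|a\|,\|b\|\le1$, and every $(\theta,a)$-relation is $\theta y\theta^{-1}y^{-1}$). For (a), fix an arbitrary decomposition of $\lab(\textbf{s})$ into letters and $(\theta,a)$-syllables. Each factor contains at most one $\theta$-letter, and any factor carrying a $\theta$-letter (a lone $\theta$-letter, or a $(\theta,a)$-syllable) has length exactly $1$; hence at least $c$ factors have length $\ge1$, so the decomposition has length $\ge c$. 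For the refined bound one notes that only $(\theta,a)$-syllables can absorb an $a$-letter together with a $\theta$-letter, and there are at most $c$ of them, so at least $d-c$ of the $a$-letters occur as isolated factors of length $\delta$; together with the $c$ unit-length $\theta$-bearing factors this gives length $\ge c+(d-c)\delta$ (which only improves on $c$ when $d>c$). Minimizing over decompositions proves (a).

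For (b), if $\textbf{s}$ is the top or bottom of a $q$-band $(\Pi_1,\dots,\Pi_m)$ then each $\Pi_j$ is a $(\theta,q)$-cell, and deleting from $\partial\Pi_j$ the two $q$-edges that $\pazocal{B}$ shares along consecutive cells leaves, on each side, a subpath labelled by a $(\theta,a)$-syllable; concatenating these over $j=1,\dots,m$ exhibits a decomposition of $\lab(\textbf{s})$ of length $m$, while each cell contributes exactly one $\theta$-edge to $\textbf{s}$, so $c=m$ and hence $|\textbf{s}|\le m=c$, which with (a) gives $|\textbf{s}|=c$. For (d), partition $\pazocal{T}$ into its $(\theta,q)$-cells and $(\theta,a)$-cells. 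A $\theta$-band uses $\theta$-edges along its core, so every $(\theta,q)$-cell places exactly one of its two $q$-edges on $\textbf{top}(\pazocal{T})$ (and one on $\textbf{bot}(\pazocal{T})$), while every $(\theta,a)$-cell places no $q$-edge there; since ``base of length $l_b$'' says precisely that a side of $\pazocal{T}$ carries $l_b$ $q$-edges, there are exactly $l_b$ $(\theta,q)$-cells. Similarly each $(\theta,a)$-cell contributes exactly one $a$-edge to $\textbf{top}(\pazocal{T})$ and each $(\theta,q)$-cell at most two, so with $n$ the number of $(\theta,a)$-cells we get $n\le l_a$ and $l_a\le n+2l_b$; thus the total number of cells $l_b+n$ lies between $l_a-l_b$ and $l_a+l_b\le l_a+3l_b$.

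For (c) the upper bound is immediate: concatenating optimal decompositions of $\lab(\textbf{s}_1)$ and $\lab(\textbf{s}_2)$ is a decomposition of $\lab(\textbf{s})$ of length $|\textbf{s}_1|+|\textbf{s}_2|$. For the lower bound take an optimal decomposition of $\lab(\textbf{s})$ and examine the vertex $v$ separating $\textbf{s}_1$ from $\textbf{s}_2$: if $v$ lies at a boundary between two factors the decomposition splits into decompositions of the two halves whose lengths sum to $|\textbf{s}|$, so $|\textbf{s}_1|+|\textbf{s}_2|\le|\textbf{s}|$; otherwise $v$ lies strictly inside a factor, which is then necessarily a two-edge $(\theta,a)$-syllable $\theta a$ or $a\theta$ of length $1$, and splitting it into a $\theta$-letter (length $1$) and an $a$-letter (length $\delta$) costs at most $\delta$, so $|\textbf{s}_1|+|\textbf{s}_2|\le|\textbf{s}|+\delta$. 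I expect the only genuinely fiddly point to be the bookkeeping in (d) — correctly matching the $q$-edges and $a$-edges on a side of $\pazocal{T}$ to the two cell types using the shape of the relations — while (a)–(c) are essentially formal once a decomposition is fixed.
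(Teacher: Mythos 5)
The paper gives no proof of this lemma at all — it is imported verbatim as Lemma 6.2 of \cite{OS19} — so there is no in-paper argument to compare against. Your self-contained verification is correct and is essentially the standard bookkeeping proof: in (a) each $\theta$-letter forces a unit-length factor and at most $c$ $a$-letters can be absorbed into $(\theta,a)$-syllables; in (b) the sides of a $q$-band decompose cell-by-cell into $(\theta,a)$-syllables because each $(\theta,q)$-relation contributes $\theta u$ (resp.\ $\theta v$) to each side after the two $q$-edges are removed; in (c) only a two-letter syllable can straddle the splitting vertex, costing at most $\delta$; and in (d) the counts $l_b=\#(\theta,q)$-cells and $n\le l_a\le n+2l_b$ follow from the shape of the relations, giving the stated (in fact slightly better, $l_a+l_b$) upper bound.
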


\begin{lemma}\label{minBoundaryLengths} 

Let $\Delta$ be a reduced diagram over $M(\textbf{S})$.  Suppose $\Delta$ contains a rim $q$-band with two ends on the component $\textbf{p}$ of $\partial\Delta$.  Then there exists a diagram $\Delta'$ over $M(\textbf{S})$ with a corresponding boundary component $\textbf{p}'$ such that:

\begin{enumerate}[label=(\alph*)]

\item $\lab(\textbf{p}')$ and $\lab(\textbf{p})$ represent the same element of $M(\textbf{S})$

\item $|\textbf{p}|-|\textbf{p}'|\geq2$

\item For any other component of $\partial\Delta$, there exists a corresponding component of $\partial\Delta'$ with identical label.

\end{enumerate}

\end{lemma}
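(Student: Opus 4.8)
The plan is to detach the rim $q$-band from $\Delta$ and compare the lengths of the old and new boundary components.

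First I would fix notation. Write the rim $q$-band as $\pazocal{Q}=(\Pi_1,\dots,\Pi_m)$ with $m\geq1$, and let $\textbf{e}_0^{-1}\textbf{q}_1\textbf{e}_m\textbf{q}_2^{-1}$ be the standard factorization of its contour. Since $\pazocal{Q}$ is a rim band both of whose ends lie on $\textbf{p}$, after possibly passing to the inverse band and relabelling its two sides I may assume that the arc $\textbf{u}:=\textbf{e}_0^{-1}\textbf{q}_1\textbf{e}_m$ is a subpath of $\textbf{p}$; write $\textbf{p}=\textbf{u}\textbf{v}$, where $\textbf{v}$ is the complementary arc. Now apply the sewing-and-detaching procedure of \cite{Lyndon-Schupp} (equivalently, the $0$-cell formalism of \cite{O}) to delete the cells $\Pi_1,\dots,\Pi_m$ from $\Delta$: this yields a diagram $\Delta'$ over $M(\textbf{S})$ whose boundary is obtained from $\partial\Delta$ by replacing the arc $\textbf{u}$ with the opposite side $\textbf{q}_2$ of $\pazocal{Q}$, leaving every other boundary component untouched. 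Setting $\textbf{p}':=\textbf{q}_2\textbf{v}$, this gives (c) at once.

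Statement (a) is then essentially formal: the closed path $\textbf{e}_0^{-1}\textbf{q}_1\textbf{e}_m\textbf{q}_2^{-1}$ bounds the subdiagram of $\Delta$ consisting of $\Pi_1,\dots,\Pi_m$, so its label is a product of conjugates of defining relators of $M(\textbf{S})$; hence $\lab(\textbf{u})$ and $\lab(\textbf{q}_2)$ represent the same element of $M(\textbf{S})$, and therefore so do $\lab(\textbf{p})=\lab(\textbf{u})\lab(\textbf{v})$ and $\lab(\textbf{p}')=\lab(\textbf{q}_2)\lab(\textbf{v})$.

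For (b) I would use the modified length function. The paths $\textbf{q}_1$ and $\textbf{q}_2$ are a bottom and a top of the $q$-band $\pazocal{Q}$, and since each $(\theta,q)$-cell has exactly two $q$-edges — both of which serve as rungs of $\pazocal{Q}$ — while contributing exactly one $\theta$-edge to each side of the band, neither $\textbf{q}_1$ nor $\textbf{q}_2$ contains a $q$-edge and each has exactly $m$ $\theta$-edges; thus \Cref{lengths}(b) gives $|\textbf{q}_1|=|\textbf{q}_2|=m$. Now $\textbf{e}_0$ and $\textbf{e}_m$ are $q$-edges, so in any decomposition of $\textbf{p}=\textbf{e}_0^{-1}\textbf{q}_1\textbf{e}_m\textbf{v}$ each of them must be a singleton factor, as a $(\theta,a)$-syllable contains no $q$-letter; splitting a minimal decomposition of $\textbf{p}$ at these two $q$-edges therefore exhibits it as a concatenation of decompositions of $\textbf{e}_0^{-1}$, of $\textbf{q}_1$, of $\textbf{e}_m$, and of $\textbf{v}$, whence $|\textbf{p}|\geq 1+|\textbf{q}_1|+1+|\textbf{v}|=m+2+|\textbf{v}|$. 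On the other hand, concatenating minimal decompositions of $\textbf{q}_2$ and $\textbf{v}$ shows $|\textbf{p}'|\leq|\textbf{q}_2|+|\textbf{v}|=m+|\textbf{v}|$ by \Cref{lengths}(c). Subtracting gives $|\textbf{p}|-|\textbf{p}'|\geq2$.

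The one genuinely delicate point is the bookkeeping in the detaching step: one must check that the side of $\pazocal{Q}$ exposed in $\partial\Delta'$ really is $\textbf{q}_2$, oriented so that $\lab(\textbf{p}')$ represents the same element as $\lab(\textbf{p})$, and that the complementary arc $\textbf{v}$ is carried over verbatim. Once the surgery is arranged correctly, everything else reduces to the short computations above using only \Cref{lengths}.
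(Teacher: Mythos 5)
Your proposal is correct and follows essentially the same route as the paper: cut the rim $q$-band away along its side (with $0$-refinement/sewing-and-detaching giving (a) and (c)), and then use \Cref{lengths} — the two removed $q$-edges each contribute $1$ while the exposed side of the band, being a top/bottom of a $q$-band, costs no more than the side removed — to get $|\textbf{p}|-|\textbf{p}'|\geq2$. Your write-up just makes explicit the bookkeeping (singleton $q$-edge factors in a minimal decomposition and $|\textbf{q}_1|=|\textbf{q}_2|=m$) that the paper's proof leaves implicit.
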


\begin{proof}

Form $\Delta'$ by simply removing the rim $q$-band, cutting along its side.  Using $0$-refinement, (a) and (c) immediately hold.

Then $\textbf{p}'$ is obtained from $\textbf{p}$ by removing two $q$-edges and replacing one side of the $q$-band with the other.  Since the removed subpath begins and ends with $q$-edges, \Cref{lengths} implies $|\textbf{p}|-|\textbf{p}'|\geq2$.

\end{proof}

\begin{lemma}\label{minBoundaryLengths-theta}

Let $\Delta$ be a reduced diagram over $M(\textbf{S})$.  Suppose $\Delta$ contains a rim $\theta$-band $\pazocal{T}$ with two ends on the component $\textbf{p}$ of $\partial\Delta$.  If the base of $\pazocal{T}$ is at most $N$, then there exists a diagram $\Delta'$ over $M(\textbf{S})$ with a corresponding boundary component $\textbf{p}'$ such that:

\begin{enumerate}[label=(\alph*)]

\item $\lab(\textbf{p}')$ and $\lab(\textbf{p})$ represent the same element of $M(\textbf{S})$

\item $|\textbf{p}|-|\textbf{p}'|\geq1$

\item For any other component of $\partial\Delta$, there exists a corresponding component of $\partial\Delta'$ with identical label.

\end{enumerate}

\end{lemma}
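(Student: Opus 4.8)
The plan is to imitate the proof of \Cref{minBoundaryLengths}, interchanging the roles of $q$-edges and $\theta$-edges; the one new point is that the hypothesis on the base of $\pazocal{T}$ is exactly calibrated to control the change in $a$-length incurred when one side of the $\theta$-band is deleted. First I would form $\Delta'$ by removing the rim $\theta$-band $\pazocal{T}$: writing the standard factorization of its contour as $\textbf{e}_0^{-1}\textbf{q}_1\textbf{e}_m\textbf{q}_2^{-1}$ with $\textbf{e}_0,\textbf{e}_m$ the two end $\theta$-edges, and (after possibly replacing $\textbf{p}$ by $\textbf{p}^{-1}$ or swapping the names $\textbf{q}_1,\textbf{q}_2$) arranging that $\textbf{e}_0^{-1}\textbf{q}_1\textbf{e}_m$ is a subpath of $\textbf{p}$, I would cut $\Delta$ along $\textbf{q}_2$ and delete the cells of $\pazocal{T}$. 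Via $0$-refinement this yields a diagram $\Delta'$ over $M(\textbf{S})$ whose corresponding boundary component $\textbf{p}'$ is obtained from $\textbf{p}$ by replacing the subpath $\textbf{e}_0^{-1}\textbf{q}_1\textbf{e}_m$ by $\textbf{q}_2$; since the cells of $\pazocal{T}$ exhibit these two paths as having equal labels in $M(\textbf{S})$, conclusions (a) and (c) are immediate.

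For (b), write $\textbf{p}=\textbf{s}_1(\textbf{e}_0^{-1}\textbf{q}_1\textbf{e}_m)\textbf{s}_2$ and $\textbf{p}'=\textbf{s}_1\textbf{q}_2\textbf{s}_2$ and combine two estimates. The easy one is $|\textbf{p}'|\leq|\textbf{s}_1|+|\textbf{q}_2|+|\textbf{s}_2|$, by subadditivity (\Cref{lengths}(c)). For the other, I claim $|\textbf{p}|\geq|\textbf{s}_1|+|\textbf{q}_1|+|\textbf{s}_2|+2-2\delta$: starting from a minimal decomposition of $\textbf{p}$, each of $\textbf{e}_0,\textbf{e}_m$ lies in a $(\theta,a)$-syllable containing at most one $a$-edge, and detaching that $a$-edge as a separate length-$\delta$ factor costs at most $\delta$ per $\theta$-edge while leaving $\textbf{e}_0,\textbf{e}_m$ as standalone length-$1$ factors; since no syllable contains two $\theta$-letters, these factors act as hard cuts and the decomposition then restricts to decompositions of $\textbf{s}_1$, $\textbf{q}_1$, $\textbf{s}_2$, which gives the claim.

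It remains to bound $|\textbf{q}_1|-|\textbf{q}_2|$ from below. Neither side of the $\theta$-band contains a $\theta$-edge, and the two sides contain equally many $q$-edges (one per $(\theta,q)$-cell of $\pazocal{T}$), so $|\textbf{q}_i|=|\textbf{q}_i|_q+\delta|\textbf{q}_i|_a$ and hence $|\textbf{q}_1|-|\textbf{q}_2|=\delta(|\textbf{q}_1|_a-|\textbf{q}_2|_a)$. Passing from one side of a single cell of $\pazocal{T}$ to the other changes the number of $a$-edges by $0$ at a $(\theta,a)$-cell and by at most $2$ at a $(\theta,q)$-cell (a $(\theta,q)$-relation carries at most two $a$-letters), and the number of $(\theta,q)$-cells of $\pazocal{T}$ equals the length of its base, which is at most $N$; hence $\big||\textbf{q}_1|_a-|\textbf{q}_2|_a\big|\leq 2N$. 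Putting the pieces together,
$$|\textbf{p}|-|\textbf{p}'|\ \geq\ 2-2\delta+\delta\big(|\textbf{q}_1|_a-|\textbf{q}_2|_a\big)\ \geq\ 2-(2N+2)\delta\ =\ 1,$$
since $\delta=\tfrac{1}{2N+2}$, which is (b). The only real obstacle here is the length bookkeeping in the middle step — making sure that deleting the two $\theta$-edges cannot covertly inflate the length through interactions of $(\theta,a)$-syllables with $\textbf{s}_1,\textbf{s}_2$, which is what the "$-2\delta$" slack absorbs — together with the observation that the base bound is precisely what makes the accumulated loss $2\delta+2N\delta$ equal to $1$ rather than something larger.
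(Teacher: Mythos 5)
Your proposal is correct and follows essentially the same route as the paper: excise the rim $\theta$-band by cutting along its inner side, observe that this trades two $\theta$-edges for at most $2N$ new $a$-edges (two per $(\theta,q)$-cell, none per $(\theta,a)$-cell), and conclude $|\textbf{p}|-|\textbf{p}'|\geq 2-(2N+2)\delta=1$. The only difference is that you spell out the syllable bookkeeping (the $2-2\delta$ slack from detaching at most one $a$-letter from each of the two $(\theta,a)$-syllables containing $\textbf{e}_0,\textbf{e}_m$) which the paper compresses into a terse appeal to \Cref{lengths}; this is a faithful, and indeed careful, expansion of the same argument.
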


\begin{proof}

As in the proof of \Cref{minBoundaryLengths}, we form $\Delta'$ by cutting along the side of the band in question.  This time, $\textbf{p}'$ is obtained from $\textbf{p}$ by removing two $\theta$-edges and replacing one side of $\pazocal{T}$ with the other.  By the definition of the relations, this introduces at most two new $a$-edges for each $(\theta,q)$-cell in $\pazocal{T}$, and so at most $2N$ $a$-edges.  As such, \Cref{lengths} implies $|\textbf{p}|-|\textbf{p}'|\geq2-(2N+2)\delta=1$.

\end{proof}



{

\section{Proof of Theorem \ref{main-theorem}}

Our goal in this section is to show that for a fixed recognizing $S$-machine $\textbf{S}$, the finitely presented group $M(\textbf{E}_\textbf{S})$ is sufficient for the proof of \Cref{main-theorem}.  For ease of notation, given an arbitrary cyclic $S$-machine $\textbf{M}$, we denote the function $c_{M(\textbf{M}),\pazocal{X}}$ (see \Cref{def-CL}) simply by $c_\textbf{M}$.

The next two statements are satisfied for the groups over any such machine $\textbf{M}$.

\begin{lemma} \label{CL-no-q-cells}

Let $\Delta$ be an annular diagram over $M(\textbf{M})$ and denote the boundary labels of $\Delta$ by $u$ and $v$.  If $\Delta$ contains no $q$-bands, then $c_\textbf{M}(u,v)\leq(2N+2)(|u|+|v|)$.

\end{lemma}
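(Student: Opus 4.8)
The plan is to use the annular diagram $\Delta$, which has no $q$-bands, to directly read off a conjugator of length $O(|u|+|v|)$. First I would observe that since $\Delta$ has no $q$-bands and hence no $(\theta,q)$-cells (these would be contained in $q$-bands), and since an $a$-band consists entirely of $(\theta,a)$-cells, the only cells of $\Delta$ are $(\theta,a)$-cells. More importantly, $\Delta$ contains no $\theta$-bands that are radial: a $\theta$-band has both ends on the boundary, and any $(\theta,a)$-cell lies on a unique maximal $\theta$-band, so every maximal $\theta$-band in $\Delta$ has both its ends either on one boundary component or split between the two. Actually the cleaner route: since there are no $q$-bands, there are no $q$-edges \emph{at all} consumed inside cells, but $u$ and $v$ may still contain $q$-letters on the boundary — however, a $q$-edge on $\partial\Delta$ cannot be an edge of any cell (it would have to lie on a $q$-band), so every $q$-edge of $\partial\Delta$ lies on a maximal $q$-band of length $0$, which is just that edge, with both "ends" being the same boundary component. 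Consequently no $q$-letter is cancelled across $\Delta$, and reading a path through $\Delta$ from the $u$-contour to the $v$-contour, the conjugating word $\gamma$ with $\gamma u \gamma^{-1} = v$ in $M(\textbf{M})$ is the label of a simple path connecting a vertex of the outer contour to a vertex of the inner contour.

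Next I would bound the length of such a connecting path. Pick any simple path $\textbf{t}$ in $\Delta$ from the basepoint of the $u$-contour to the basepoint of the $v$-contour; then $\lab(\textbf{t})$ is a conjugator realizing the conjugacy (up to cyclic permutation of $u,v$, which changes $\gamma$ by bounded-length modifications absorbed into the constant). To control $\|\textbf{t}\|$, the key point is that since $\Delta$ has no $q$-bands, one can cut $\Delta$ along all its maximal $\theta$-bands and maximal $a$-bands, which decomposes it into pieces; alternatively, and more simply, use that without $q$-bands every $\theta$-band of $\Delta$ runs from one boundary component to the other or back to the same one, and slice $\Delta$ open along one such band (or along an $a$-band) to turn it into a circular diagram, after which a connecting path through the circular diagram can be chosen along band sides. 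I would choose the path $\textbf{t}$ to follow sides of $\theta$-bands and $a$-bands, so that its combinatorial length is controlled by the number of cells crossed, hence by $\|\partial\Delta\|$. Using \Cref{lengths}(a), the modified length of $\textbf{t}$ is at most its combinatorial length, which is $O(\|u\|+\|v\|)$; converting from combinatorial length to modified length $|u|+|v|$ introduces at worst the factor $2N+2$ coming from the definition $\delta = \frac{1}{2N+2}$, giving $c_\textbf{M}(u,v) \le (2N+2)(|u|+|v|)$.

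The main obstacle I anticipate is making precise \emph{why} a short connecting path exists — i.e., bounding $\|\textbf{t}\|$ by $O(|u|+|v|)$ rather than by the (potentially quadratic) area of $\Delta$. The resolution is exactly the absence of $q$-bands: normally a connecting path must traverse nested $\theta$-bands and pick up length proportional to the diagram's depth, but here one can first reduce $\Delta$ so that it has no rim $\theta$-band with both ends on the same component whose base is short — invoking \Cref{minBoundaryLengths-theta} repeatedly to strip such bands, decreasing $|u|$ or $|v|$ each time — until $\Delta$ is "thin," at which point $\Delta$ consists of $\theta$-bands and $a$-bands that all run essentially radially, and a connecting path of length $O(|u|+|v|)$ along their sides is immediate. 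One must check that this stripping process terminates and preserves the conjugacy being witnessed; that is routine given \Cref{minBoundaryLengths-theta}(a),(c). I expect the write-up to proceed by induction on $|u|+|v|$: if $\Delta$ has a strippable rim $\theta$-band, strip it and apply induction (adding a bounded conjugator correction); otherwise $\Delta$ is thin and the direct path-length estimate finishes the proof.
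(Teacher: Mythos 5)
Your plan has a genuine gap at its central step: the claim that, after stripping rim $\theta$-bands via \Cref{minBoundaryLengths-theta}, the diagram is ``thin'' and admits a connecting path of length $O(|u|+|v|)$ along band sides. The stripping only removes $\theta$-bands with both ends on $\partial\Delta$; it does nothing to \emph{closed} $\theta$-annuli (or $a$-annuli), which \Cref{M(S) annuli} forbids only in circular diagrams and which can occur in annular ones. Concretely, take $u\equiv v\equiv a^m$ for an $a$-letter $a$ not commuting trivially away, and build a reduced annular diagram consisting of $k$ concentric $\theta$-annuli, each a ring of $m$ cells of the form $\theta a=a\theta$: it has no $q$-bands, no rim $\theta$-bands to strip, yet every path from one boundary component to the other crosses all $k$ annuli and so has $\theta$-length at least $k$, with $k$ unbounded in terms of $|u|+|v|$. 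So reading the conjugator off the given diagram cannot work without first normalizing the diagram (e.g.\ taking minimal area and excising annular subdiagrams bounded by identically labelled disjoint loops via \Cref{exciseTwoPaths}); and even after that, bounding the radial path by $O(|u|+|v|)$ rather than by the area is essentially re-proving linear conjugator length for the subgroup generated by the $\theta$- and $a$-letters, which is a theorem, not an ``immediate'' observation. A side remark: under the paper's convention a $q$-edge is a $q$-band of length $0$, so ``no $q$-bands'' already forces $u$ and $v$ to contain no $q$-letters; your discussion of boundary $q$-edges is moot.

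For contrast, the paper's proof sidesteps the diagram entirely: since $u$ and $v$ have no $q$-letters, they lie in the subgroup generated by the $\theta$- and $a$-letters, which by \Cref{M(S) annuli} is a right-angled Artin group; Servatius's solution of the conjugacy problem for RAAGs gives a conjugator of combinatorial length at most $\|u\|+\|v\|$, and converting combinatorial to modified length via $\delta=\tfrac{1}{2N+2}$ yields the stated bound $(2N+2)(|u|+|v|)$. If you want a purely diagrammatic proof, you would need to import (or reprove) that RAAG conjugacy result in geometric form; as written, your argument does not close this gap.
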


\begin{proof}

It follows quickly from \Cref{M(S) annuli} that the $\theta$-letters and $a$-letters generate a subgroup of $M(\textbf{M})$ which is isomorphic to a right-angled Artin group (RAAG).  But then a result of Servatius \cite{Servatius} shows $c_\textbf{M}(u,v)\leq\|u\|+\|v\|$, so that the statement follows from the definition of the length.

\end{proof}

\begin{lemma} \label{CL-no-boundary-q}

Let $u$ and $v$ be words over the generators of $M(\textbf{M})$ such that $|u|_q=|v|_q=0$.  Then for $s=s(\textbf{M})$ the size of the standard base of $\textbf{M}$, $c_{\textbf{M}}(u,v)\leq(4N+s+4)(|u|+|v|)+s$.

\end{lemma}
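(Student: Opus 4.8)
We may assume $u$ and $v$ are conjugate in $M(\textbf{M})$, since otherwise $c_\textbf{M}(u,v)=-\infty$ and there is nothing to prove. The plan is to realize the conjugacy by an annular diagram and decompose it along its $q$-bands. So let $\Delta$ be a reduced annular diagram over $M(\textbf{M})$ with boundary components $\textbf{p}_u,\textbf{p}_v$ labelled $u,v$, chosen of minimal area among all such diagrams. Since $|u|_q=|v|_q=0$, neither boundary component carries a $q$-edge, so no maximal $q$-band of $\Delta$ can terminate on $\partial\Delta$; by \Cref{M(S) annuli} every maximal $q$-band is therefore a $q$-annulus, and since the disk subdiagram bounded by a side of a contractible $q$-annulus would be a reduced circular diagram containing that annulus, \Cref{M(S) annuli} also forbids contractible ones. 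Hence the $q$-annuli of $\Delta$ are non-contractible, pairwise disjoint, and so nested, each separating $\textbf{p}_u$ from $\textbf{p}_v$.

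Next I would analyze how $\theta$-bands meet these annuli. A $\theta$-band through a cell of a non-contractible $q$-annulus $\pazocal{Q}$ crosses $\pazocal{Q}$ exactly once (\Cref{M(S) annuli}); all of its $(\theta,q)$-cells lie on $q$-annuli (a $q$-band cannot reach the $q$-free boundary), and a parity/separation argument shows it cannot have both ends on the same boundary component, so it runs from $\textbf{p}_u$ to $\textbf{p}_v$, crossing every $q$-annulus exactly once. Consequently all $q$-annuli have a common length $m$, namely the number of such ``radial'' $\theta$-bands; since these bands contribute $m$ distinct $\theta$-edges to $\textbf{p}_u$, \Cref{lengths}(a) gives $m\le|u|_\theta\le|u|$, and likewise $m\le|v|$. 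Each side of a $q$-annulus then has exactly $m$ $\theta$-edges and at most $m$ $a$-edges, hence decomposes into $m$ $(\theta,a)$-syllables, so both of its side-labels $w$ satisfy $|w|\le m$. Finally, a single $q$-annulus conjugates its two side-labels by the one $q$-letter labelling the edge $\textbf{e}_0=\textbf{e}_m$, so $c_\textbf{M}$ of these two labels is at most $1$.

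The crux, and the step I expect to be the main obstacle, is to bound the number $k$ of $q$-annuli by an absolute constant (conjecturally $k\le 1$). Consecutive $q$-annuli lie along parts $Q_j$ and $Q_{j\pm1}$ of the standard base (the two base letters between them form a sector), so a chain of nested $q$-annuli is tightly constrained, and minimality of $\Delta$ should forbid a long chain: two nested $q$-annuli bound a $q$-band-free annular subdiagram whose two boundary loops, together with the flanking $q$-annuli, ought to admit an area-reducing surgery in the spirit of \Cref{exciseTwoPaths}, or one could argue via a rim-band reduction as in \Cref{minBoundaryLengths}. Whatever bound on $k$ is obtained, one will also want it coupled with the size estimate $|w|\le m$ from the previous paragraph; the slack $s$-terms in the statement presumably accommodate a conclusion of the form $k\le s$ rather than $k\le 1$ together with a correspondingly sharper bookkeeping.

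To conclude, write $\pazocal{Q}_1,\dots,\pazocal{Q}_k$ for the $q$-annuli, with side-labels $w_i,w_i'$, and let $\Delta_0,\dots,\Delta_k$ be the annular subdiagrams into which they cut $\Delta$, with $\textbf{p}_u\subset\partial\Delta_0$, $\textbf{p}_v\subset\partial\Delta_k$, and no $\Delta_j$ containing a $q$-band. Applying \Cref{CL-no-q-cells} to each $\Delta_j$, the length-$1$ bound to each $\pazocal{Q}_i$, and the inequality $c_\textbf{M}(a,c)\le c_\textbf{M}(a,b)+c_\textbf{M}(b,c)$ (a product of conjugators is again a conjugator), one obtains
$$c_\textbf{M}(u,v)\le(2N+2)\Bigl(|u|+|v|+\sum_{i=1}^{k}\bigl(|w_i|+|w_i'|\bigr)\Bigr)+k\le(2N+2)\bigl(|u|+|v|+2mk\bigr)+k.$$
Since $m\le\min(|u|,|v|)$, the case $k\le 1$ already gives $c_\textbf{M}(u,v)\le(4N+4)(|u|+|v|)+1\le(4N+s+4)(|u|+|v|)+s$, and a bound $k\le s$ (with the size estimate above and slightly more careful accounting) likewise lands inside $(4N+s+4)(|u|+|v|)+s$.
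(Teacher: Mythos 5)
Your setup and structural analysis (minimal-area annular diagram, every maximal $q$-band a non-contractible $q$-annulus, radial $\theta$-bands crossing each $q$-annulus exactly once, sides of $q$-annuli of length $m\le\min(|u|,|v|)$) match the paper's, but the step you yourself flag as the crux --- bounding the number $k$ of $q$-annuli --- is genuinely missing, and your sketch of how to supply it would not work as stated. \Cref{exciseTwoPaths} only applies once you have exhibited two disjoint simple loops with \emph{freely conjugate labels}; two nested $q$-annuli bounding a $q$-band-free annular region give you no such pair by themselves, and your conjecture $k\le 1$ is false in general. The paper's argument at this point is the key idea you are missing: cut $\Delta$ along a side of one radial $\theta$-band, so that every $q$-annulus becomes a $q$-band (a trapezium side) whose history is the common sequence of rules carried by the radial $\theta$-bands; the label of a side of a $q$-band is then determined by that history together with the part $Q_i$ of the base its $q$-letters belong to. Hence if two $q$-annuli correspond to the same part of the state letters, their sides are labelled identically, they are disjoint since $\Delta$ is reduced, and \Cref{exciseTwoPaths} contradicts minimality of area. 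Pigeonhole over the $s$ parts then gives $k\le s$, and no better bound is available in general.

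A secondary point: even granting $k\le s$, your concluding bookkeeping (chaining conjugators through the $q$-band-free annular pieces via \Cref{CL-no-q-cells} and adding one $q$-letter per annulus) yields a constant of order $(2N+2)(k+1)$, which overshoots the stated bound $(4N+s+4)(|u|+|v|)+s$ once $k\ge 3$; ``slightly more careful accounting'' does not obviously rescue it. The paper instead takes the conjugator to be (essentially) the bottom of a single radial $\theta$-band, whose $q$-length is at most $s$ by the above, and bounds its $a$-length by counting maximal $a$-bands: there are no $a$-annuli (again by \Cref{exciseTwoPaths} and minimality, since the two sides of an $a$-band are identically labelled), so by \Cref{M(S) annuli} every maximal $a$-band ends on $\partial\Delta$ or on one of the at most $s(|u|+|v|)$ many $(\theta,q)$-cells, giving $|\textbf{bot}(\pazocal{T})|_a\le(\delta^{-1}+s)(|u|+|v|)$ and hence the stated constant. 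This is a constant-factor issue and would not matter for the asymptotic use of the lemma, but the missing bound on the number of $q$-annuli is a real gap in the proof.
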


\begin{proof}

We may clearly assume $u$ and $v$ represent conjugate elements of $M(\textbf{M})$.  As such, van Kampen's Lemma implies the existence of a reduced annular diagram $\Delta$ over $M(\textbf{M})$ with boundary labels $u$ and $v$.  Assume $\Delta$ has minimal area among all such reduced annular diagrams.

By \Cref{CL-no-q-cells}, it suffices to assume there exists a maximal $q$-band $\pazocal{Q}$ in $\Delta$.  \Cref{M(S) annuli}(4) then implies $\pazocal{Q}$ and indeed all maximal $q$-bands in $\Delta$ are non-contractible $q$-annuli.  Any (positive) cell of such a $q$-annulus marks the intersection of the annulus with a maximal $\theta$-band, which by  \Cref{M(S) annuli}(1) must be a radial $\theta$-band intersecting each $q$-annulus exactly once.

Let $\pazocal{T}$ be such a radial $\theta$-band.  Then cutting along a side of $\pazocal{T}$, each $q$-annulus can be viewed as a trapezium with some fixed history.  As such, if two $q$-annuli correspond to the same part of the state letters, then they have sides which are labelled identically.  As $\Delta$ is reduced, these sides must be disjoint.  But then \Cref{exciseTwoPaths} produces an annular diagram contradicting the minimality of $\text{Area}(\Delta)$.   Hence, the number of $q$-annuli in $\Delta$ is at most $s$.  

As a result, $c_\textbf{M}(u,v)\leq\|\textbf{bot}(\pazocal{T})\|+\|u\|+\|v\|\leq s+|\textbf{bot}(\pazocal{T})|_a+\delta^{-1}(|u|+|v|)$.  

Note that the makeup of the relations dictate that the sides of an $a$-band are labelled identically.  So, the existence of a non-contractible annular $a$-band would contradict the minimality of $\text{Area}(\Delta)$ through \Cref{exciseTwoPaths} in just the same way as above.  \Cref{M(S) annuli}(3) then implies every maximal $a$-band ends on $\partial\Delta$ or on a $(\theta,q)$-cell.

As every $(\theta,q)$-cell marks the intersection of a $q$-annulus with a radial $\theta$-band, there are at most $s(|u|+|v|)$ $(\theta,q)$-cells in $\Delta$.  So, there are at most $(\delta^{-1}+s)(|u|+|v|)$ maximal $a$-bands in $\Delta$.

But each $a$-edge of $\textbf{bot}(\pazocal{T})$ belongs to a distinct $a$-band, so that $|\textbf{bot}(\pazocal{T})|_a\leq(\delta^{-1}+s)(|u|+|v|)$.

\end{proof}

\begin{lemma} \label{CL upper bound}

$\CL_{\textbf{E}_\textbf{S}}(n)\preceq\TM_\textbf{S}(n)+n^2$

\end{lemma}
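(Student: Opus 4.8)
The plan is to bound $c_{\textbf{E}_\textbf{S}}(u,v)$ whenever $u,v$ are words over the generators of $M(\textbf{E}_\textbf{S})$ with $|u|+|v|\le n$ that represent conjugate elements. The key organizing principle is to reduce to the situation governed by Lemma \ref{universal complexity}, using the dictionary between annular (Schupp) diagrams and computations of $\textbf{E}_\textbf{S}$ provided by Lemmas \ref{trapezia are computations} and \ref{computations are trapezia}. Concretely: start with a reduced annular diagram $\Delta$ over $M(\textbf{E}_\textbf{S})$ with boundary labels $u$ and $v$ of minimal area. If $\Delta$ has no $q$-bands, Lemma \ref{CL-no-q-cells} already gives a linear bound $c_{\textbf{E}_\textbf{S}}(u,v)\le(2N+2)(|u|+|v|)$. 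So assume $\Delta$ contains a maximal $q$-band; by Lemma \ref{M(S) annuli}(4) every maximal $q$-band is a non-contractible $q$-annulus, and each such annulus is crossed exactly once by each radial $\theta$-band (Lemma \ref{M(S) annuli}(1)). As in the proof of Lemma \ref{CL-no-boundary-q}, the minimality of $\operatorname{Area}(\Delta)$ together with Lemma \ref{exciseTwoPaths} forces the number of $q$-annuli to be at most the length of the base, hence at most $N$, and limits the number of radial $\theta$-bands.

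Next I would use the rim-band reduction lemmas \ref{minBoundaryLengths} and \ref{minBoundaryLengths-theta} to peel rim $q$-bands and short-base rim $\theta$-bands off the boundary components, each time strictly decreasing $|\textbf{p}|+|\textbf{p}'|$; this lets me assume that the two boundary components $u$, $v$ are reduced and have no accessible rim bands, so that reading around a radial $\theta$-band gives, via Lemma \ref{trapezia are computations}, a genuine reduced computation of $\textbf{E}_\textbf{S}$ between two admissible words $W_1$, $W_2$ with circular base — in fact $(W_1,W_2)\in\REACH^{uni}_{\textbf{E}_\textbf{S}}$. Here the $a$-length of the relevant revolving subwords $W_i'$ is controlled linearly by $|u|+|v|$, since each $a$-edge on a side of the cutting $\theta$-band lies on a distinct maximal $a$-band whose other end is on $\partial\Delta$ or on one of the $O(N(|u|+|v|))$ many $(\theta,q)$-cells (exactly as in the counting in Lemma \ref{CL-no-boundary-q}).

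Now Lemma \ref{universal complexity} supplies a computation between $W_1$ and $W_2$ of length at most $14\TM_\textbf{S}(12n'+12N)+12(n')^2+28n'+24$, where $n'=\max(|W_1'|_a,|W_2'|_a)=O(|u|+|v|)$; by Lemma \ref{computations are trapezia} this computation can be realized as a trapezium, which can be glued to the existing short-base pieces to rebuild a diagram whose `conjugating path' — the label of the bottom of the cutting $\theta$-band together with the peeled-off boundary corrections — has length $O(\TM_\textbf{S}(Cn+C)+n^2)$. Tracking through the constants (the factor $\delta^{-1}=2N+2$ converting modified length to combinatorial length, and the $\le N$ bound on base length and on the number of $q$-annuli) yields $\CL_{\textbf{E}_\textbf{S}}(n)\preceq\TM_\textbf{S}(n)+n^2$.

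The main obstacle I expect is the case analysis needed to guarantee that, after the rim-band reductions, the diagram genuinely contains a radial $\theta$-band whose side reads a computation in $\REACH^{uni}_{\textbf{E}_\textbf{S}}$ — i.e. handling degenerate configurations where the boundary components are `parallel' through a thin band of $(\theta,a)$-cells only, or where no $(\theta,q)$-cell lies between the two boundary components. In those cases one must fall back on Lemma \ref{CL-no-q-cells} or a direct RAAG-type argument. A secondary technical point is bookkeeping the base: a priori the base of the cutting $\theta$-band is some circular base of unbounded length $kN$, and one must invoke the cyclic-permutation and revolving-subword machinery preceding Lemma \ref{universal complexity} to replace it by a revolving subword of length $\le N+1$ before the quadratic bound becomes usable. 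The quadratic term is genuinely unavoidable here for the reason spelled out in Remark \ref{rmk-quadratic}.
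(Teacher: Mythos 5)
Your overall plan (pass from a minimal-area annular diagram to a computation and invoke \Cref{universal complexity}) is the same as the paper's strategy for one of its cases, but two steps fail as written. First, the claim that every maximal $q$-band of $\Delta$ is a non-contractible $q$-annulus, crossed exactly once by each radial $\theta$-band, is only valid when $|u|_q=|v|_q=0$, i.e.\ in the situation of \Cref{CL-no-boundary-q}; in the main case the boundary words contain $q$-letters, maximal $q$-bands end on $\partial\Delta$, and \Cref{M(S) annuli}(4) (a statement about circular diagrams) gives nothing of the sort. Second, and more seriously, ``reading around a radial $\theta$-band'' does not produce a pair in $\REACH^{uni}_{\textbf{E}_\textbf{S}}$: a $\theta$-band encodes a single rule application, and the boundary words $u,v$ are in general not admissible words at all, so no circular-base admissible words $W_1,W_2$ have been identified. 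In the paper such a pair is extracted only when $\Delta$ contains a $\theta$-annulus: the $\theta$-annuli bound an annular subdiagram $\Delta_2$ whose boundary labels are sides of $\theta$-bands, and cutting $\Delta_2$ along a radial $q$-band gives a trapezium with circular base whose trimmed bottom and top are the admissible words; the conjugator is then controlled through the length (history) of that $q$-band, via $c_{\textbf{E}_\textbf{S}}(u,v)\leq 2\ell(\pazocal{Q}')+\delta^{-1}n$, not through the bottom of a $\theta$-band as you propose.

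The proposal is also silent on the complementary case, which is the genuinely hard one and is not among the ``degenerate configurations'' you flag: when $\Delta$ has no $\theta$-annulus, every maximal $\theta$-band is radial, but a radial $\theta$-band may spiral and cross a fixed radial $q$-band in $k\geq 3$ cells, so that the $q$-band contains a subband with history $H_0^{\,k-2}$ with $\|H_0\|\leq n$; here \Cref{universal complexity} is of no use, and your assertion that each $\theta$-band meets each $q$-band once is precisely what breaks down. The paper bounds $k-2$ by $(\delta^{-1}+4)n$ using \Cref{history as powers} together with the excision argument of \Cref{exciseTwoPaths} (if $W_0\cdot H_0\equiv W_0$ one contradicts minimality of area), plus an $a$-band count against the two $q$-bands bounding the offending sector, and this spiral case is where the quadratic term $(\delta^{-1}+4)n^2$ in the conjugator bound actually arises, in addition to the quadratic term already present in \Cref{universal complexity}. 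Without an argument covering spirals, the proof of the upper bound is incomplete.
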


\begin{proof}

Let $u$ and $v$ be words the generators of $M(\textbf{E}_\textbf{S})$ and set $n=|u|+|v|$.  

Since $\delta\|\cdot\|\leq|\cdot|\leq\|\cdot\|$, it suffices to find $C>0$ independent of the choice of $u,v$ such that 
\begin{equation} \label{suffices-1}
c_{\textbf{E}_\textbf{S}}(u,v)\leq C\TM_\textbf{S}(Cn+C)+Cn^2+Cn+C
\end{equation}

For this purpose, it suffices to assume $|u|$ and $|v|$ are minimal for all words representing the respective elements of $M(\textbf{E}_\textbf{S})$.

Further, as in the proof of \Cref{CL-no-boundary-q}, we may assume there exists a reduced annular diagram $\Delta$ with boundary labels $u$ and $v$, and moreover that $\Delta$ is taken to have minimal area with respect to this condition.  Given the minimality of $|u|$ and $|v|$, Lemma \ref{minBoundaryLengths} implies $\Delta$ contains no rim $q$-bands.

If $|u|_q=|v|_q=0$, then (\ref{suffices-1}) follows from \Cref{CL-no-boundary-q} by taking $C=2\delta^{-1}+N$.  Hence, we may assume there exists a maximal $q$-band $\pazocal{Q}$ which has two ends on $\partial\Delta$.  \Cref{minBoundaryLengths-theta} then implies that $\pazocal{Q}$, and indeed every maximal $q$-band of $\Delta$, must be a radial $q$-band.

Let $\ell(\pazocal{Q}')$ be the length of some maximal $q$-band $\pazocal{Q}'$ in $\Delta$.  \Cref{lengths}(b) and the makeup of the $(\theta,q)$-relations then imply $\ell(\pazocal{Q}')=|\textbf{bot}(\pazocal{Q}')|\geq\frac{1}{3}\|\textbf{bot}(\pazocal{Q}')\|$, so that $$c_{\textbf{E}_\textbf{S}}(u,v)\leq\|\textbf{bot}(\pazocal{Q}')\|+\|u\|+\|v\|\leq2\ell(\pazocal{Q}')+\delta^{-1}n$$
Thus, in place of (\ref{suffices-1}) it suffices to find  $C_\ell>0$ independent of the choice of $u,v$ such that
\begin{equation} \label{suffices-2}
\ell(\pazocal{Q}')\leq C_\ell\TM_\textbf{S}(C_\ell n+C_\ell)+C_\ell n^2+C_\ell n+C_\ell
\end{equation}
for some $q$-band $\pazocal{Q}'$ in $\Delta$.  We now proceed in two cases:

\textbf{1.} Suppose $\Delta$ contains a $\theta$-annulus.

By \Cref{M(S) annuli}(5), the $\theta$-annuli of $\Delta$ are non-contractible, bounding an annular subdiagram $\Delta_2$.  Using $0$-refinement, we may then assume $\Delta\setminus\Delta_2$ consists of two components, $\Delta_1$ and $\Delta_3$, each of which is a reduced annular diagram sharing one boundary component with $\Delta_2$ and one with $\Delta$ (see \Cref{fig-theta-annulus}).  Note that by construction, every maximal $\theta$-band of $\Delta_1$ and $\Delta_3$ is a maximal $\theta$-band of $\Delta$ which ends twice on the same component of $\partial\Delta$.

\begin{figure}[H]
\centering
\includegraphics[scale=1.4]{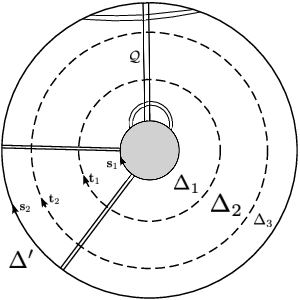}
\caption{} \label{fig-theta-annulus}
\end{figure}

There then exists a decomposition of $\pazocal{Q}=\pazocal{Q}_1\pazocal{Q}_2\pazocal{Q}_3$ such that each $\pazocal{Q}_i$ is a radial $q$-band in $\Delta_i$.  So, letting $\ell_i$ be the length of $\pazocal{Q}_i$, $$\ell(\pazocal{Q})=\ell_1+\ell_2+\ell_3\leq\ell_2+|u|_\theta+|v|_\theta\leq\ell_2+n$$
Let $\Gamma_2$ be the circular diagram obtained from $\Delta_2$ by cutting along the side of $\pazocal{Q}_2$ and pasting another copy of $\pazocal{Q}_2$ to the side from which $\pazocal{Q}_2$ is removed.  Then $\Gamma_2$ is a trapezium with circular base $B$ and height $\ell_2$.

By \Cref{trapezia are computations}, there then exists a reduced computation $\pazocal{C}_2$ of $\textbf{E}_\textbf{S}$ between $W_b\equiv\lab(\textbf{tbot}(\Gamma_2))$ and $W_t\equiv\lab(\textbf{ttop}(\Gamma_2))$, so that $(W_b,W_t)\in\REACH^{uni}_{\textbf{E}_\textbf{S}}$.  

There thus exists a reduced computation $\bar{\pazocal{C}}_2$ between $W_b$ and $W_t$ which satisfies the length bound given by \Cref{universal complexity}. Applying \Cref{computations are trapezia} then produces a trapezium $\bar{\Gamma}_2$ corresponding to $\bar{\pazocal{C}}_2$.  Identifying the side $q$-bands of $\bar{\Gamma}_2$ yields a reduced annular diagram $\bar{\Delta}_2$ with boundary labels equivalent to those of $\Delta_2$.  But then we may excise $\Delta_2$ from $\Delta$ and replace it with $\bar{\Delta}_2$.

Hence, it suffices to assume $\pazocal{C}_2$ satisfies the length bound given by \Cref{universal complexity}.  In particular, there exists a subword $B'$ of $B$ with $\|B'\|\leq N+1$ such that for $W_b',W_t'$ the admissible subwords of $W_b,W_t$ with base $B'$, we have
$$\ell_2\leq14\TM_\textbf{S}(12m+12N)+12m^2+28m+24$$
for $m=\max(|W_b'|_a,|W_t'|_a)$.

Let $\Delta'$ be the subdiagram of $\Delta$ bounded by the (radial) $q$-bands corresponding to $B'$ and let $\Delta_i'$ be the maximal (perhaps annular) subdiagram of $\Delta'$ contained in $\Delta_i$.  Then for $i=1,3$, there exist (perhaps closed) maximal subpaths $\textbf{s}_i$ and $\textbf{t}_i$ of $\partial\Delta_i'$ that are also boundary paths of $\Delta$ and $\Delta_2$, respectively.  As such, \Cref{M(S) annuli}(1) implies the total combined number of $(\theta,q)$-cells in $\Delta_1'$ and $\Delta_3'$ is at most $(N+1)(|u|_\theta+|v|_\theta)\leq(N+1)n$.  

Note that every $a$-letter of $W_b$ and of $W_t$ corresponds to an edge of $\textbf{t}_i$ for some $i\in\{1,3\}$, and the maximal $a$-band of $\Delta_i'$ that start at such an edge must end on either $\textbf{s}_i$ or on a $(\theta,q)$-cell.  Hence, $|\textbf{s}_1|_a+|\textbf{s}_3|_a\geq|\textbf{t}_1|_a+|\textbf{t}_3|_a-2(N+1)n$, and so:
$$n\geq\delta(|u|_a+|v|_a)\geq\delta(|\textbf{s}_1|_a+|\textbf{s}_3|_a)\geq\delta m-2(N+1)n$$
Thus, (\ref{suffices-2}) follows by taking $C_\ell$ sufficiently large with respect to $\delta^{-1}$ and $N$.

\textbf{2.} Otherwise, suppose every maximal $\theta$-band is radial.

If every maximal $\theta$-band crosses $\pazocal{Q}$ at most twice, then $\ell(\pazocal{Q})\leq|u|_\theta+|v|_\theta\leq n$.  Hence, we may assume there exists a maximal $\theta$-band $\pazocal{T}$ which crosses $\pazocal{Q}$ in $k\geq3$ cells.  Note that this is possible if $\pazocal{T}$ forms a `spiral' in $\Delta$ (see \Cref{fig-spiral}); such structures are studied in a similar context by Ol'shanskii and Sapir in \cite{OSconj}, and in another context by Arenas and Wise in \cite{ArenasWise}.

\begin{figure}[H]
\centering
\includegraphics[scale=1.4]{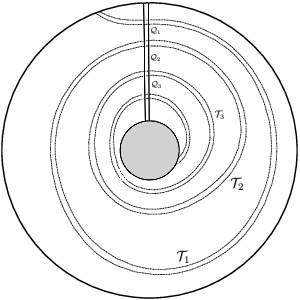}
\caption{Spiral with $k=4$} \label{fig-spiral}
\end{figure}

Let $\pazocal{T}_1$ be the subband of $\pazocal{T}$ between the first and second crossings of $\pazocal{T}$ and $\pazocal{Q}$.  In the same way, define the subbands $\pazocal{T}_2,\dots,\pazocal{T}_{k-1}$.  By construction each $\pazocal{T}_i$ crosses $\pazocal{Q}$ exactly twice: In its first cell $\pi_i$ and its last cell $\pi_{i+1}$.

Similarly, for $i=1,\dots,k-1$ let $\pazocal{Q}_i$ be the subband of $\pazocal{Q}$ between $\pi_i$ and $\pi_{i+1}$, not including $\pi_{i+1}$.  As $\theta$-bands cannot cross, it follows that there exist a sequence of `spirals' which cross each $\pazocal{Q}_i$ exactly once.  As such, each $\pazocal{Q}_i$ has the same history, denoted $H_0$, which satisfies $\|H_0\|\leq n$.

As in the previous case, let $\Gamma$ be the circular diagram formed from $\Delta$ by cutting along the side of $\pazocal{Q}$ and pasting a copy of $\pazocal{Q}$ to the side from which $\pazocal{Q}$ is removed.  Then the sides of $\pazocal{T}_1$ and $\pazocal{T}_{k-1}$ bound a subdiagram $\Gamma_0$ in $\Gamma$ which is a trapezium with history $H_0^{k-2}$.  Let $\pazocal{C}_0:W_0\to\dots\to W_h$ be the reduced computation corresponding to $\Gamma_0$ by \Cref{trapezia are computations}.

Suppose $W_0\cdot H_0\equiv W_0$, so that the sides of $\pazocal{T}_1$ and $\pazocal{T}_2$ are identically labelled.  Since the sides of $\pazocal{Q}_1$ and $\pazocal{Q}_2$ are also identically labelled, these paths may be combined to form loops $\gamma_1$ and $\gamma_2$ in $\Delta$ which are identically labelled.  Using $0$-refinement these loops may be assumed to be disjoint.  But then they bound an annular diagram containing $\pazocal{T}_1$, so that \Cref{exciseTwoPaths} contradicts the minimality of $\text{Area}(\Delta)$.

Hence, $W_0\cdot H_0\not\equiv W_0$, and so \Cref{history as powers} implies there exists a two-letter subword $UV$ of the base of $\pazocal{C}_0$ such that $k-2\leq\|w_0\|+\|w_h\|+2n$ for $w_0,w_h$ the tape words of $W_0,W_h$ in the $UV$-sector.

Let $\pazocal{Q}'$, $\pazocal{Q}''$ be the maximal (radial) $q$-bands in $\Delta$ corresponding to the $UV$-sector.  There then exists a decomposition $\pazocal{Q}'=\pazocal{Q}_1'\pazocal{Q}_2'\pazocal{Q}_3'$ such that the history of $\pazocal{Q}_2'$ is $H_0^{k-2}$.  Since distinct $\theta$-bands cannot cross, at most two cells of $\pazocal{Q}_1'$ (or of $\pazocal{Q}_3')$ correspond to the same maximal $\theta$-band.  As such, $\ell(\pazocal{Q}_1')+\ell(\pazocal{Q}_3')\leq n$.  

Decomposing $\pazocal{Q}''=\pazocal{Q}_1''\pazocal{Q}_2''\pazocal{Q}_3''$ in the analogous way, we similarly see that the history of $\pazocal{Q}_2''$ is $H_0^{k-2}$ and $\ell(\pazocal{Q}_1'')+\ell(\pazocal{Q}_3'')\leq n$.

Now, let $\textbf{p}_1$ be the subpath of a side of $\pazocal{T}_1$ corresponding to the tape word of the $UV$-sector.  Then $\textbf{p}_1$, sides of $\pazocal{Q}_1'$, $\pazocal{Q}_1''$, and a subpath $\textbf{q}_1$ of a boundary component of $\Delta$ bound a circular subdiagram $\Psi_1$ of $\Delta$ which contains no $(\theta,q)$-cells.  Note then that any maximal $a$-band of $\Psi_1$ which ends on $\textbf{p}_1$ also ends on a side of $\pazocal{Q}_1'$, on a side of $\pazocal{Q}_1''$, or on $\textbf{q}_1$.  As such, $|\textbf{q}_1|_a\geq|\textbf{p}_1|_a-\ell(\pazocal{Q}_1')-\ell(\pazocal{Q}_1'')$.

Similarly, letting $\textbf{p}_3$ be the subpath of a side of $\pazocal{T}_{k-1}$ corresponding to the tape word of the $UV$-sector, we find a subpath $\textbf{q}_3$ of a boundary component of $\Delta$ such that $|\textbf{q}_3|_a\geq|\textbf{p}_3|_a-\ell(\pazocal{Q}_3')-\ell(\pazocal{Q}_3'')$.  But $\textbf{q}_1$ and $\textbf{q}_3$ are subpaths of opposite boundary components, so that:
$$\delta^{-1}n\geq|\textbf{q}_1|_a+|\textbf{q}_3|_a\geq|\textbf{p}_1|_a+|\textbf{p}_3|_a-(\ell(\pazocal{Q}_1')+\ell(\pazocal{Q}_3'))-(\ell(\pazocal{Q}_1'')+\ell(\pazocal{Q}_3''))\geq\|w_1\|+\|w_h\|-2n$$
As a result, $k-2\leq\|w_0\|+\|w_h\|+2n\leq(\delta^{-1}+4)n$, so that $\|H_0^{k-2}\|\leq(\delta^{-1}+4)n^2$.

Thus, $\ell(\pazocal{Q}')\leq(\delta^{-1}+4)n^2+n$, so that (\ref{suffices-2}) follows by taking $C_\ell=\delta^{-1}+4$.

\end{proof}


\begin{proof}[Proof of \Cref{main-theorem}] \

In light of \Cref{CL upper bound}, it suffices to show that $\CL_{M(\textbf{E}_\textbf{S})}\succeq\TM_\textbf{S}$.

Let $\a$ be an accepted input of $\textbf{S}$ and let $W_\a$ be the input configuration of $\textbf{E}_\textbf{S}$ corresponding to $\a$.  Since the $R_sP_0$-sector has empty tape alphabet, any accepting computation corresponds through \Cref{computations are trapezia} to a trapezium $\Delta_\a$ with bottom label $W_\a$, top label $ W_{ac}$, and identically labelled sides.  Hence, $W_\a$ and $W_{ac}$ are conjugate in $M(\textbf{E}_\textbf{S})$.

Now, by van Kampen's Lemma we may form a reduced annular diagram $\Gamma_\a$ with boundary components $W_\a$ and $W_{ac}$ such that there exists a path $\gamma$ between the boundary components with $\|\gamma\|=c_{\textbf{E}_\textbf{S}}(W_\a,W_{ac})$.  \Cref{M(S) annuli}(5) then implies any of the $k$ maximal $\theta$-bands in $\Gamma_\a$ must be a non-contractible $\theta$-annulus.  Then $\gamma$ must contain at least one $\theta$-edge from each such annulus, so that $c_{\textbf{E}_\textbf{S}}(W_\a,W_{ac})\geq k$.

As each $q$-letter of a configuration belongs to a distinct part of the state letters, every maximal $q$-band in $\Gamma_\a$ must be radial.  Cutting along the side of the $q$-band corresponding to the first letter of $W_\a$ then produces a trapezium $\Gamma_\a'$ with bottom label $W_\a$, top label $W_{ac}$, and history of length $k$.  So, \Cref{trapezia are computations} produces a reduced computation of $\textbf{E}_\textbf{S}$ of length between $W_\a$ and $W_{ac}$.  But then \Cref{E0 language} implies $c_{\textbf{E}_\textbf{S}}(W_\a,W_{ac})\geq\TM_\textbf{S}(|W_\a|_a)$, so that $$\CL_{M(\textbf{E}_\textbf{S})}(2N+\|\a\|)\geq\TM_\textbf{S}(\|\a\|)$$

Thus $\CL_{M(\textbf{E}_\textbf{S})}(n+2N)\geq\TM_\textbf{S}(n)$ for all $n$, completing the proof.

\end{proof}

}

\section{Proof of Theorem \ref{Dehn-to-CL}}

As our goal in this section is to prove \Cref{Dehn-to-CL}, we begin by fixing a finite presentation $\gen{X\mid\pazocal{R}}$ for the group $G$ and look to construct the finitely presented group $H$ with $\CL_H\sim\text{Dehn}_G$.  Without loss of generality, we assume $\pazocal{R}$ consists entirely of non-trivial cyclically reduced words and is closed under inverses and cyclic permutations {\frenchspacing(i.e. it is symmetrized)}.

As Servatius shows that RAAG's have linear conjugator length function \cite{Servatius}, it suffices by the isoperimetric gap \cite{Gromov,Bowditch,O91} to assume $\text{Dehn}_H(n)\succeq n^2$.  Consequently, it suffices to construct a cyclic $S$-machine $\textbf{S}$ with $\TM_\textbf{S}\sim\text{Dehn}_G$, as then \Cref{main-theorem} implies the finitely presented group $M(\textbf{E}_\textbf{S})$ has the desired conjugator length function.

A similar task is faced in Theorem 1.1 of \cite{SBR}, where a non-deterministic Turing machine $\pazocal{T}$ is constructed to have time function equivalent to a given Dehn function.  We mimic this construction for $S$-machines, using the known complexity of $\pazocal{T}$ to our advantage in simplifying the proof:

\medskip

Using a `standard trick' (see Lemma 12.17 and Exercise 12.12 of \cite{Rotman}), we construct a new finite presentation $\tilde{\pazocal{P}}=\gen{Y\mid\tilde{\pazocal{R}}\cup\tilde{\pazocal{S}}}$ for $G$ as follows:

Let $\bar{X}$ be a copy of $X$ and set $Y=X\sqcup\bar{X}$.  For each $x\in X$, denote $\bar{x}$ to be the corresponding letter of $\bar{X}$.  Extend this to define $\bar{y}$ for any $y\in Y$ by setting $\bar{y}=x$ if $y=\bar{x}$.

For any word $w\in(Y\cup Y^{-1})^\ast$, define $p(w)$ to be the positive word in $Y$ obtained by replacing any occurrence of a negative letter $y^{-1}$ with the letter $\bar{y}$.  Note that $\|p(w)\|=\|w\|$ and $p(w_1w_2)\equiv p(w_1)p(w_2)$.  We then define $\tilde{\pazocal{R}}=\{p(r)\mid r\in\pazocal{R}\}$ and $\tilde{\pazocal{S}}=\{y\bar{y}\mid y\in Y\}$.

Note then that $\tilde{\pazocal{R}}\cup\tilde{\pazocal{S}}$ is a set of positive words in $Y$ which is closed under cyclic permutation.  Moreover, for any $w\in(Y\cup Y^{-1})^*$, it can be derived from the relators $\tilde{\pazocal{S}}$ that $w$ and $p(w)$ represent the same element of $G$.

The standard base of $\textbf{S}$ is then $Q_0Q_1Q_2$, with the $Q_0Q_1$-sector taken to be the input sector.  Each part $Q_i$ consists of a start letter $q_{i,s}$, an end letter $q_{i,f}$, and several other `intermediate states' described by the software of the machine.  The input tape alphabet is identified with $Y$, while the tape alphabet of the $Q_1Q_2$-sector is taken to be a copy $Y'$ of $Y$; for simplicity, denote the element of $Y'$ corresponding to the element $y\in Y$ by $y'$.

The positive rules of $\textbf{S}$ come in the following forms:

\begin{itemize}

\item For any $y\in Y$, there is a corresponding positive rule $$\sigma_y=[q_{0,s}\to q_{0,s}, \ \ q_{1,s}\to yq_{1,s}, \ \ q_{2,s}\to y'q_{2,s}]$$

\medskip

\item For any $r\in\pazocal{R}$, fix $y_1,\dots,y_k\in Y$ such that $p(r)\equiv y_1\dots y_k$.  For $i=0,1,2$ we introduce intermediate states $q_{i,r,1},\dots,q_{i,r,k-1}\in Q_i$ and, for completeness, denote $q_{i,r,0}=q_{i,r,k}=q_{i,s}$.  We then define positive rules $\rho_{r,1},\dots,\rho_{r,k}$ given by:
$$\rho_{r,j}=[q_{0,r,j-1}\to q_{0,r,j}, \ \ q_{1,r,j-1}\to y_jq_{1,r,j}, \ \ q_{2,r,j-1}\to q_{2,r,j}]$$

\medskip

\item For any $y\in Y$ and any $i=0,1,2$, there is an intermediate state $q_{i,y}\in Q_i$ and positive rules $\tau_{1,y},\tau_{2,y}$ given by:
\begin{align*}
\tau_{1,y}&=[q_{0,s}\to q_{0,y}, \ \ q_{1,s}\to yq_{1,y}, \ \ q_{2,s}\to q_{2,y}] \\
\tau_{2,y}&=[q_{0,y}\to q_{0,s}, \ \ q_{1,y}\to \bar{y}q_{1,s}, \ \ q_{2,y} \to q_{2,s}]
\end{align*}

\medskip

\item There is a positive rule $\omega$ which transitions to the end state, given by
$$\omega=[q_{0,s}\xrightarrow{\ell}q_{0,f}, \ \ q_{1,s}\xrightarrow{\ell}q_{1,f}, \ \ q_{2,s}\to q_{2,f}]$$

\end{itemize}

\medskip

Define the word $\tau_y=\tau_{1,y}\tau_{2,y}$ for any $y\in Y$.  Similarly, for any relator $r\in\pazocal{R}$, define the word $\rho_r=\rho_{r,1}\dots\rho_{r,k}$ for $k=\|r\|$.

A reduced computation $\pazocal{C}:W_0\to\dots\to W_t$ of $\textbf{S}$ is called a \textit{start computation} if both $W_0$ and $W_t$ are start configurations.  Note that since an end configuration is only admissible for the rule $\omega^{-1}$, the history of a start computation cannot contain any letter of the form $\omega^{\pm1}$.  As there are exactly two rules (one positive and one negative) for which any configuration in an intermediate state is admissible, the next statement then follows immediately:

\begin{lemma} \label{lem-factorization}

For any start computation $\pazocal{C}$, there exists a factorization $H\equiv H_1\dots H_\ell$ of the history of $\pazocal{C}$ such that for each $1\leq i\leq \ell$, either:

\begin{itemize}

\item $H_i$ is a (reduced) word in the $\sigma_y^{\pm1}$,

\item $H_i\equiv\tau_y^{\pm1}$ for some $y\in Y$, or

\item $H_i\equiv\rho_r^{\pm1}$ for some $r\in\pazocal{R}$

\end{itemize}

\end{lemma}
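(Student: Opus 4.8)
The plan is to read off the factorization directly from the structure of the step-by-step evolution of the state letters during the start computation $\pazocal{C}$.

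First I would observe that at every stage $W_j$ of a start computation, the state letters all share a common ``mode'' determined by their common index: either all of $W_j$'s state letters are start letters $q_{i,s}$, or they all belong to one of the intermediate families (all $q_{i,y}$ for a fixed $y\in Y$, or all $q_{i,r,j}$ for a fixed $r\in\pazocal{R}$ and fixed $j$). This is clear by inspection of the four families of positive rules: each rule, read on any one sector, either fixes the mode ($\sigma_y$ keeps all letters at the start letters; $\rho_{r,j}$ moves from $q_{i,r,j-1}$ to $q_{i,r,j}$, both in the $r$-family once we agree $q_{i,r,0}=q_{i,s}$), or moves between the start mode and an intermediate mode ($\tau_{1,y}$ from start to $y$-mode, $\tau_{2,y}$ from $y$-mode back to start). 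The rule $\omega^{\pm1}$ would take us out of the start mode into the end mode, but by the remark preceding the lemma, no such rule appears in the history of a start computation.

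Next I would partition the index set $\{1,\dots,t\}$ of rules of $\pazocal{C}$ into maximal blocks according to the mode of the configurations they act between. Precisely: a maximal run of consecutive rules all of whose intermediate configurations are in the start mode gives a factor $H_i$ which is a word in the $\sigma_y^{\pm1}$ — indeed, the only rules admissible to a start configuration and leading to a start configuration are the $\sigma_y^{\pm1}$. Whenever $\pazocal{C}$ leaves the start mode, it does so either via $\tau_{1,y}^{\pm1}$ or via $\rho_{r,1}^{\pm1}$ (these are the only rules whose domain includes a start configuration and whose range is intermediate), and it can only return to the start mode via the appropriate inverse transition at the other end. The key point, which I expect to be the main (though still routine) obstacle, is to verify that once in an intermediate mode — say the $y$-mode — a configuration $W_j$ is admissible only for $\tau_{1,y}^{-1}$ and $\tau_{2,y}$ (the ``two rules, one positive and one negative'' observation already flagged in the excerpt), so the subcomputation starting at the moment $\pazocal{C}$ enters the $y$-mode is forced, letter by letter, to be $\tau_{1,y}^{\pm 1}$ followed by $\tau_{2,y}^{\pm1}$; since $\pazocal{C}$ is reduced, this subword cannot be $\tau_{1,y}\tau_{1,y}^{-1}$ or $\tau_{2,y}^{-1}\tau_{2,y}$, so it is exactly $\tau_y^{\pm1}=(\tau_{1,y}\tau_{2,y})^{\pm1}$. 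Analogously, entering the $r$-family at $q_{i,r,0}$ forces the computation to march through $q_{i,r,1},q_{i,r,2},\dots$ (each $q_{i,r,j}$ with $0<j<k$ being admissible only for $\rho_{r,j}^{-1}$ and $\rho_{r,j+1}$), and reducedness rules out any backtracking, so the block is exactly $\rho_r^{\pm1}=(\rho_{r,1}\cdots\rho_{r,k})^{\pm1}$ where $k=\|r\|$. Note here that the computation cannot terminate in an intermediate mode, since $W_t$ is a start configuration by hypothesis, which is why each excursion into an intermediate family completes to a full $\tau_y^{\pm1}$ or $\rho_r^{\pm1}$.

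Finally, I would assemble the blocks: listing the maximal start-mode runs and the forced intermediate excursions in the order they occur along $\pazocal{C}$ yields a factorization $H\equiv H_1\cdots H_\ell$ in which each $H_i$ is of one of the three stated forms, completing the proof. The only thing to double-check is that the factorization is into reduced factors compatible with $H$ being reduced — but since $\pazocal{C}$ is reduced by hypothesis and the factorization merely records where the mode changes, no cancellation occurs across or within blocks, so $H_1\cdots H_\ell$ is literally $H$.
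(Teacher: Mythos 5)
Your proposal is correct and is essentially the paper's argument: the paper derives the lemma "immediately" from the two observations you spell out, namely that $\omega^{\pm1}$ cannot occur in a start computation and that any configuration in an intermediate state is admissible for exactly two rules (one positive, one negative), so that reducedness forces each excursion out of the start mode to complete to a full $\tau_y^{\pm1}$ or $\rho_r^{\pm1}$. Your write-up just makes this block decomposition explicit, so it matches the intended proof.
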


Given a start configuration $W\equiv q_{0,s}~w~q_{1,s}~v'~q_{2,s}$, let $v\in(Y\cup Y^{-1})^\ast$ be the natural copy of $v'\in F(Y')$ and define the word $h(W)=wv^{-1}\in(Y\cup Y^{-1})^\ast$.

\begin{lemma} \label{lem-closed}

For any start computation $\pazocal{C}:W_0\to\dots\to W_t$, $h(W_0)$ and $h(W_t)$ represent the same element of $G$.

\end{lemma}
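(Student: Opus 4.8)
The plan is to prove the statement by induction on the length $t$ of the start computation $\pazocal{C}$, reducing to the case of a single factor in the factorization provided by \Cref{lem-factorization}. Since $h(W)$ is defined for every start configuration, and every configuration occurring in $\pazocal{C}$ that is a start configuration can be taken as an intermediate base point, it suffices to check the claim for start computations whose history is one of the three types listed in \Cref{lem-factorization}: a reduced word in the $\sigma_y^{\pm1}$, a single $\tau_y^{\pm1}$, or a single $\rho_r^{\pm1}$. (Strictly, to invoke \Cref{lem-factorization} I first note that the endpoints of each factor $H_i$, being configurations in a start state, are start configurations, so $h$ is defined there and the general statement follows by composing the equalities for the factors.)

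First I would handle the $\sigma_y$ factors. The rule $\sigma_y$ prepends $y$ to the $Q_0Q_1$-sector tape word and prepends $y'$ to the $Q_1Q_2$-sector tape word. So if $W$ has tape words $w$ and $v'$, then $W\cdot\sigma_y$ has tape words $yw$ and $y'v'$, hence $h(W\cdot\sigma_y) = (yw)(yv)^{-1} = ywv^{-1}y^{-1}$, which is a conjugate of $h(W)=wv^{-1}$ — in particular it represents the same element of $G$ as $h(W)$ is a relator if and only if $h(W\cdot\sigma_y)$ is, but more to the point, conjugate elements that we then carry forward; actually since the claim is only that $h(W_0)$ and $h(W_t)$ represent the same element, and conjugation does not preserve the element, I should be more careful: the composite effect of a reduced word $H_i$ in the $\sigma_y^{\pm1}$ applied to a start configuration and back to a start configuration must have trivial net effect on the tape words. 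Indeed, a reduced word in the $\sigma_y^{\pm1}$ that begins and ends at a start configuration has a history which, by the structure of these rules (they act freely on the two sectors by left multiplication, analogously to \Cref{multiply one letter}), produces tape words $u w$ and $u' v'$ where $u$ is determined by the history; for the endpoint to again be a \emph{start} configuration no state change is needed, but the point is that $h$ changes by conjugation by $u$, so $h(W_0)$ and $h(W_t)$ are conjugate in the free group $F(Y)$, hence represent conjugate — not necessarily equal — elements of $G$. So the cleaner formulation is: along any start computation, $h(W_0)$ and $h(W_t)$ represent conjugate elements of $G$, and when $h(W_0)$ is trivial so is $h(W_t)$; but the lemma as stated asserts they represent the \emph{same} element. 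I would reconcile this by observing that for the $\tau$- and $\rho$-factors below the change is multiplication by a relator of $\tilde{\pazocal R}\cup\tilde{\pazocal S}$ on the appropriate side, while for the $\sigma$-factors the net change is conjugation — and conjugation by a word $u$ can be absorbed since we may additionally note the base point configurations can be normalized; alternatively, re-reading, $h(W)=wv^{-1}$ and a $\sigma_y$ step sends this to $ywv^{-1}y^{-1}$, and since in $\pazocal{C}$ the $\sigma$'s at the two ends cancel in the right way when the whole computation returns to a start configuration, one gets genuine equality. I will present the argument tracking conjugacy and then note that the specific uses of \Cref{lem-closed} later only need conjugacy, or else argue equality directly via the cancellation.

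For the $\tau_y=\tau_{1,y}\tau_{2,y}$ factor: $\tau_{1,y}$ prepends $y$ to the $Q_0Q_1$-sector and changes state, then $\tau_{2,y}$ prepends $\bar y$ to the $Q_0Q_1$-sector and returns to the start state; neither touches the $Q_1Q_2$-sector. So the net effect on the first tape word is $w \mapsto \bar y y w$, and $h$ changes from $wv^{-1}$ to $\bar y y w v^{-1}$, i.e. left-multiplication by the relator $\bar y y \in \tilde{\pazocal S}$ (up to the convention of which letter is barred), which represents $1$ in $G$. For the $\rho_r = \rho_{r,1}\cdots\rho_{r,k}$ factor with $p(r)\equiv y_1\cdots y_k$: each $\rho_{r,j}$ prepends $y_j$ to the $Q_0Q_1$-sector and leaves the $Q_1Q_2$-sector alone, so the composite prepends $y_1\cdots y_k = p(r)$, changing $h$ from $wv^{-1}$ to $p(r)\, w v^{-1}$, i.e. left-multiplication by $p(r)\in\tilde{\pazocal R}$, again trivial in $G$. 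Combining the factors (each contributing either a conjugation, or left-multiplication by an element trivial in $G$) and using that $G$ is presented by $\tilde{\pazocal P}=\gen{Y\mid \tilde{\pazocal R}\cup\tilde{\pazocal S}}$, I conclude $h(W_0)$ and $h(W_t)$ represent the same element of $G$ (tracking the conjugating word through and observing it is trivial because the computation returns to a genuine start configuration, which forces the accumulated conjugator from $\sigma$-steps to act trivially on the equality being asserted). The main obstacle I anticipate is precisely this bookkeeping with the $\sigma_y$ conjugations — making sure the statement "represent the same element" (not merely "conjugate") is exactly what the computation yields, which hinges on the fact that the net history of a start-to-start computation, after the $\tau$/$\rho$ insertions are accounted for, leaves the tape words literally unchanged up to the relations, with no leftover conjugator; I would verify this by a careful projection/cancellation argument on the history word as in \Cref{lem-factorization}.
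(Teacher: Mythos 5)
Your reduction via Lemma \ref{lem-factorization} to single factors is the same skeleton as the paper's proof, but your treatment of the $\sigma_y$-factors rests on a misreading of how the rules act, and the conjugation problem you then wrestle with---and never resolve---is an artifact of that misreading. With the paper's convention, a part $q_1\to\omega_1 q_1'\alpha_2$ turns the sector $q_0\,w\,q_1$ into $q_0'\,\alpha_1 w\,\omega_1\,q_1'$, so $\sigma_y$, whose parts are $q_{1,s}\to yq_{1,s}$ and $q_{2,s}\to y'q_{2,s}$, multiplies the $Q_0Q_1$-tape word on the \emph{right} by $y$ and the $Q_1Q_2$-tape word on the \emph{right} by $y'$: it appends, it does not prepend. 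Hence $h(W\cdot\sigma_y)=(wy)(vy)^{-1}=wyy^{-1}v^{-1}$, which is freely equal to $h(W)$; the two new letters cancel at the junction of $w$ and $v^{-1}$, and no conjugation arises at all. This is exactly the paper's observation (``a cancellable pair of letters in case $\sigma_y$''). Under your reading ($w\mapsto yw$, $v'\mapsto y'v'$) you only get $h(W\cdot\sigma_y)=y\,h(W)\,y^{-1}$, and your attempts to repair this---weakening the conclusion to conjugacy, or asserting that the accumulated conjugator ``acts trivially'' because the computation returns to a start configuration---are not proofs: returning to start state letters places no constraint whatsoever on the tape words, so nothing forces the conjugator to vanish, and you explicitly defer the decisive step to an unspecified ``careful projection/cancellation argument.'' As written, the proposal does not establish the equality asserted in the lemma (nor, cleanly, even the weaker conjugacy statement, since it is built on the wrong rule action).

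The same directional slip occurs in your $\tau_y$ and $\rho_r$ cases: the actual effect is $w\mapsto wy\bar y$ resp.\ $w\mapsto w\,p(r)$, so $h$ becomes $w(y\bar y)v^{-1}$ resp.\ $w\,p(r)\,v^{-1}$, an insertion between $w$ and $v^{-1}$ rather than a left factor; there the conclusion survives, since inserting a word of $\tilde{\pazocal{R}}\cup\tilde{\pazocal{S}}$ anywhere does not change the element of $G$. Once the rule action is read correctly, the lemma follows exactly along the lines you set up: each factor from Lemma \ref{lem-factorization} changes $h$ either not at all up to free reduction (the $\sigma$-factors) or by insertion or deletion of a single relator of $\tilde{\pazocal{P}}$, which is the paper's proof.
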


\begin{proof}

By \Cref{lem-factorization}, it suffices to assume the history $H$ of $\pazocal{C}$ is of the form $\sigma_y$, $\tau_y$, or $\rho_r$.

But in each case, $h(W_t)$ is obtained from $h(W_0)$ by inserting a relation of $H$: A cancellable pair of letters in case $\sigma_y$, an element of $\tilde{\pazocal{S}}$ in case $\tau_y$, and an element of $\tilde{\pazocal{R}}$ in case $\rho_r$.

\end{proof}

\begin{lemma} \label{lem-positive}

Fix a start configuration $W\equiv q_{0,s}~w~q_{1,s}~v'~q_{2,s}$.  Suppose $w$ is freely equal to $w_1w_2$ and set $w'=w_1p(w_2)$.  Then there exists a start computation of length at most $4\|w_2\|$ between $W$ and $W'\equiv q_{0,s}~w'~q_{1,s}~v'~q_{2,s}$.

\end{lemma}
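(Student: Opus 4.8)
The idea is that the rules of $\textbf{S}$ can only alter the $Q_0Q_1$-sector at its right-hand end (and never alter the $Q_1Q_2$-sector except through the $\sigma$-rules), so one cannot simply ``flip'' a negative letter lying in the interior of $w$; instead we use the $Q_1Q_2$-sector, whose alphabet $Y'$ is a disjoint copy of $Y$, as a scratch register. Concretely, write $w_2\equiv z_1\cdots z_m$ with $m=\|w_2\|$ and each $z_i\in Y\cup Y^{-1}$, let $\iota\colon F(Y')\to F(Y)$ be the copying isomorphism $y'\mapsto y$ and write $z_i'=\iota^{-1}(z_i)\in F(Y')$, and recall that, viewed as an operation on the pair $(u_1,u_2)\in F(Y)\times F(Y')$ of tape words of the two sectors (followed, as always, by free reduction), the rule $\sigma_y$ right-multiplies by $(y,y')$, the rule $\tau_{1,y}$ by $(y,1)$, and the rule $\tau_{2,y}$ by $(\bar y,1)$. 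Moreover $\sigma_y$ and $\tau_{1,y}$ are applicable to any configuration in a start state, $\tau_{2,y}$ to any configuration in the intermediate state of index $y$, and $\tau_{1,y}$ leads from the former to the latter.

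The plan is a two-phase computation. In \textbf{Phase 1} we run through $i=m,m-1,\dots,1$, applying $\sigma_{y_i}^{-1}$ if $z_i=y_i\in Y$ and $\sigma_{y_i}$ if $z_i=y_i^{-1}$; in either case this right-multiplies the pair by $(z_i^{-1},(z_i')^{-1})$, so after these $m$ rules the pair $(w,v')$ has become $(w\,w_2^{-1},\,v'\,\iota^{-1}(w_2^{-1}))=(w_1,\,v'\,\iota^{-1}(w_2^{-1}))$, the last equality because $w$ is freely equal to $w_1w_2$. In \textbf{Phase 2} we run through $i=1,2,\dots,m$: if $z_i=y_i\in Y$ we apply $\sigma_{y_i}$ (one rule), right-multiplying the pair by $(z_i,z_i')=(p(z_i),z_i')$; if $z_i=y_i^{-1}$ we apply the block $\sigma_{y_i}^{-1}\tau_{1,y_i}\tau_{2,y_i}$ (three rules), right-multiplying the pair by $(y_i^{-1}\,y_i\,\bar y_i,\,y_i'^{-1})=(\bar y_i,z_i')=(p(z_i),z_i')$. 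Hence Phase 2 uses at most $3m$ rules and right-multiplies the pair by $(p(z_1)\cdots p(z_m),\,z_1'\cdots z_m')=(p(w_2),\,\iota^{-1}(w_2))$, so the terminal pair is $\bigl(w_1p(w_2),\,v'\,\iota^{-1}(w_2^{-1})\iota^{-1}(w_2)\bigr)=(w',v')$ and the terminal configuration is $W'$ (tape words being understood throughout as freely reduced words). Concatenating the two phases yields a \emph{start} computation, since every rule used is a $\sigma$- or $\tau$-rule (never $\omega$) and both endpoints $W,W'$ are start configurations; its length is at most $m+3m=4m=4\|w_2\|$, in fact exactly $2m$ plus twice the number of negative letters of $w_2$.

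The one point requiring care — and the genuine obstacle to a shorter argument — is that one must \emph{not} try to flip negative letters one at a time by pulling the suffix to the right of the current letter over into the scratch sector and then back, since that costs $\Theta(m^2)$; it is essential to unload \emph{all of} $w_2$ once and reload $p(w_2)$ once. Beyond that, the only thing left to check is that the intervening free reductions do not corrupt the bookkeeping above, and this is automatic: the effect of a rule on the tape words is by definition a right-multiplication followed by free reduction, so after any initial segment of the computation the two tape words are precisely the freely reduced forms of the products indicated, no matter how much cancellation occurs en route (in particular $w$ need not literally end in $w_2$, and the scratch word may shrink during Phase 1 and regrow during Phase 2). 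Applicability of each rule then follows from the state bookkeeping noted above — before each $\sigma$-rule or each $\tau_{1,y}$ the configuration is a start configuration, and before each $\tau_{2,y}$ it is in the intermediate state of index $y$ created by the preceding $\tau_{1,y}$ — and the length count completes the proof.
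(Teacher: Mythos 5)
Your proof is correct and takes essentially the same route as the paper's: unload $w_2$ from the input sector into the $Q_1Q_2$-sector using $\sigma$-rules and then reload the positive word $p(w_2)$, using a $\tau_{1,y}\tau_{2,y}$ pair to trade each $y^{-1}$ for $\bar{y}$ (you insert these pairs during the reload phase, the paper during the unload phase), for a total of at most $4\|w_2\|$ rules. The only small point is that a start computation is by definition a reduced computation, and the history you describe contains a cancellable pair $\sigma_{y_1}^{\mp1}\sigma_{y_1}^{\pm1}$ at the junction of the two phases; deleting it (equivalently, passing to the freely reduced history, as the paper does) fixes this without changing the endpoints or increasing the length.
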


\begin{proof}

Fix $w_2\equiv y_1^{\eps_1}\dots y_k^{\eps_k}$ for $y_i\in Y$ and $\eps_i\in\{\pm1\}$.

If $\eps_i=1$, set $z_i=y_i$ and $H_i=\sigma_{z_i}^{-1}$; if instead $\eps_i=-1$, set $z_i=\bar{y}_i$ and $H_i=\tau_{y_i}\sigma_{z_i}^{-1}$.  Note then that $p(w)\equiv z_1\dots z_k$.

Then for $H$ the reduced word freely equal to $H_k\dots H_1\sigma_{z_1}\dots\sigma_{z_k}$, we have $W\cdot H\equiv W'$.  Moreover, since $\|H_i\|\leq3$ for all $i$, we have $\|H\|\leq4k$.

\end{proof}

Note that the Turing machine $\pazocal{T}$ consists of two tapes on which (positive) words in $Y$ are written.  In the original setting of \cite{SBR}, though, $\bar{X}$ is simply referred to as $X^{-1}$; however, the `negative letters' there are simply formal inverses akin to our $\bar{X}$ rather than `actual' inverses.

As such, for each configuration $V$ of $\pazocal{T}$ in the start (or end) state, there exists a start (or end) configuration $V^{\textbf{S}}$ of $\textbf{S}$ whose tape words correspond to the words written on the tapes in $V$.

It should be noted, however, that this does not define a correspondence between the start (or end) configurations of the two machines, as the tape word of a configuration of $\textbf{S}$ need not be positive.

\medskip

\begin{lemma} \label{lem-emulation}

Let $\pazocal{C}$ be a computation of $\pazocal{T}$ of length $k$ between start or end configurations $V_1$ and $V_2$.  Then there exists a reduced computation $\pazocal{C}^\textbf{S}$ of $\textbf{S}$ of length $\leq5k$ between $V_1^\textbf{S}$ and $V_2^\textbf{S}$.

\end{lemma}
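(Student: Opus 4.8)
\textbf{Proof plan for Lemma \ref{lem-emulation}.}

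The plan is to emulate the Turing machine $\pazocal{T}$ one step at a time, translating each single transition of $\pazocal{T}$ into a short block of rules of $\textbf{S}$, and then concatenate these blocks. By induction on the length $k$ of $\pazocal{C}$, it suffices to treat the case $k=1$: given a single transition $V_1\to V_2$ of $\pazocal{T}$ (where now $V_1,V_2$ need not themselves be in the start/end states, since intermediate configurations of $\pazocal{T}$ may have arbitrary state), produce a reduced computation of $\textbf{S}$ of length at most $5$ between the corresponding configurations $V_1^\textbf{S}$ and $V_2^\textbf{S}$ of $\textbf{S}$. Actually, since intermediate configurations of $\pazocal{T}$ are not claimed to have a counterpart in $\textbf{S}$, the cleaner route is to first observe that the whole transcript $\pazocal{C}:V_1 = U_0 \to U_1 \to \dots \to U_k = V_2$ passes through configurations $U_i$ of $\pazocal{T}$, and to directly describe, for each command $\delta$ of $\pazocal{T}$ that can be applied, a block of at most $5$ rules of $\textbf{S}$ realizing the same effect on the tape words; here one uses that $\pazocal{T}$ is the machine built in \cite{SBR}, whose commands are elementary (insert/delete a letter at the end of a tape, or a state change), each of which is visibly realized by one of the rule families $\sigma_y^{\pm1}$, $\tau_{1,y}^{\pm1}$, $\tau_{2,y}^{\pm1}$, $\rho_{r,j}^{\pm1}$, or $\omega^{\pm1}$ of $\textbf{S}$, possibly composed with a short return to the start states. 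Concatenating the $k$ blocks gives a computation of length at most $5k$; a final pass deleting any mutually inverse consecutive pair of rules makes it reduced without changing the endpoints (as noted after the definition of reduced computation), and without increasing the length.

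The key steps, in order, are: (1) recall the precise command set of $\pazocal{T}$ from \cite{SBR} — it is a two-tape nondeterministic machine whose tape contents are positive words over $Y$ and whose basic moves are: append a generator to a tape, erase a generator from the end of a tape, check/change state; (2) for each such command, exhibit explicitly the block of $\textbf{S}$-rules emulating it — appending $y$ to the input tape is $\sigma_y^{-1}$ (read right to left, $\sigma_y$ as written \emph{removes} $y$, so its inverse appends; one picks the correct sign), appending $y'$ to the $Q_1Q_2$-sector tape uses the $\sigma_y$ part acting on $q_{2,s}$, erasing uses the inverse rule, the ``copy with formal inverse'' moves correspond to the $\tau_{1,y},\tau_{2,y}$ pair, scanning a relator $r$ corresponds to $\rho_r = \rho_{r,1}\cdots\rho_{r,k}$, and the transition to the end state corresponds to $\omega$ — in each case verifying the block has length $\le 5$; (3) verify that applying the block to $U_{i-1}^\textbf{S}$ yields exactly $U_i^\textbf{S}$ (a direct check on tape words, using that $V^\textbf{S}$ is defined by copying tape contents and using $Y'$ as the copy alphabet for the second sector); (4) concatenate over $i=1,\dots,k$, obtaining a computation $\pazocal{C}^\textbf{S}$ of length $\le 5k$ from $V_1^\textbf{S}$ to $V_2^\textbf{S}$; (5) reduce the history by cancelling consecutive inverse pairs, which only shortens the computation and fixes its endpoints, giving the reduced computation required.

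The main obstacle is step (2)–(3): pinning down the exact command set of the Turing machine $\pazocal{T}$ of \cite{SBR} and checking, command by command, that the rule families of $\textbf{S}$ were designed to track them with the constant $5$. This is where the particular shape of the rules $\sigma_y,\tau_{1,y},\tau_{2,y},\rho_{r,j},\omega$ — and in particular why $\rho_r$ spells out a relator letter-by-letter on the $Q_0Q_1$-sector while leaving the $Q_1Q_2$-sector fixed — must be matched precisely against how $\pazocal{T}$ verifies a relation; the bookkeeping is routine but must be done carefully to justify the uniform bound $5k$ rather than some larger constant times $k$. Everything else (the induction, the final reduction step, the definition of $V^\textbf{S}$) is bookkeeping.
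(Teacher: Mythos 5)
Your high-level idea---emulate $\pazocal{T}$ command-by-command by short blocks of rules of $\textbf{S}$ and concatenate---is close in spirit to the paper, but as written there is a genuine gap at exactly the point you flag as the ``main obstacle''. The paper does not emulate $\pazocal{T}$ one step at a time; it decomposes $\pazocal{C}$ into the four operations of the machine of \cite{SBR} (move, substitution, reduction, accept), each of which begins and ends at start/end configurations, and emulates each operation as a whole. This matters because the substitution operation takes $k=\|r\|$ steps of $\pazocal{T}$, its intermediate configurations are not start configurations (so $U_i^{\textbf{S}}$ is not even defined---you notice this yourself, yet your step (3) still asks to check that each block carries $U_{i-1}^{\textbf{S}}$ to $U_i^{\textbf{S}}$), and no per-step correspondence is available: the paper emulates the replacement of the suffix $p(r_1)$ by $p(r_2^{-1})$ in two phases, first applying $\rho_s^{-1}$ for the symmetrized relator $s\equiv r_2r_1$ (length $k$), which after free reduction leaves the input tape word $wp(r_2)^{-1}$ containing genuinely negative letters that never occur on $\pazocal{T}$'s tapes, and then invoking \Cref{lem-positive} (length $\le 4k$) to rewrite $p(r_2)^{-1}$ as the positive word $p(r_2^{-1})$ over the formal-inverse alphabet. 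The constant $5$ is exactly this $1+4$, amortized over the whole operation rather than achieved per step; your plan never invokes \Cref{lem-positive} nor addresses the conversion from actual inverses to the copy alphabet $\bar{X}$, which is the crux of the bound, so the claimed per-command blocks of length $\le 5$ cannot be produced for substitution.

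A smaller but real error in your step (2): $\sigma_y$ acts on both sectors simultaneously (it appends $y$ to the input tape and $y'$ to the $Q_1Q_2$-sector tape), so neither $\sigma_y^{\pm1}$ nor ``the part of $\sigma_y$ acting on $q_{2,s}$'' can append or erase a letter on one tape alone; single-tape edits must pass through the intermediate states via the $\tau$- or $\rho$-rules (for instance, the move operation is emulated by the length-$3$ history $\tau_{\bar y}^{-1}\sigma_{\bar y}$). The easy cases (move, reduction, accept) do go through essentially as you describe, and your final reduction/cancellation step is fine, but without the operation-level treatment of substitution the uniform bound $5k$ is not justified.
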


\begin{proof}

It suffices to show that any computation of $\pazocal{T}$ of length $k$ corresponding to one of the four main `operations' can be emulated by a reduced computation of $\textbf{S}$ whose length is at most $5k$.

\begin{enumerate}

\item (move) In this case, $k=1$ and both $V_1$ and $V_2$ are configurations in the start state.  Assuming $y\in Y$ is the rightmost letter of the word written on the input tape in $V_1$, this operation erases this letter and writes $\bar{y}$ on the right of the second tape.

In $\textbf{S}$, the analogue is achieved by a reduced computation with history $\tau_{\bar{y}}^{-1}\sigma_{\bar{y}}$.


\medskip

\item (substitution) In this case, there exists $r\equiv r_1r_2\in\pazocal{R}$ such that $k=\|r\|$, both $V_1$ and $V_2$ are configurations in the start state, and the word written on the input tape $V_1$ is of the form $wp(r_1)$.  This operation then erases the suffix $p(r_1)$ and replaces it with $p(r_2^{-1})$.  

As $\pazocal{R}$ is symmetrized, we have $s\equiv r_2r_1\in\pazocal{R}$, so that $p(s)\equiv p(r_2)p(r_1)\in\tilde{\pazocal{R}}$.  Then $\tau_s^{-1}$ is the history of a start computation of length $\|s\|=k$ between $V_1^\textbf{S}$ and the start configuration $W$ with the same tape word in the $Q_1Q_2$-sector and input tape word freely equal to $wp(r_2)^{-1}$.  But since $p(p(r_2)^{-1})=p(r_2^{-1})$, \Cref{lem-positive} then yields a start computation of length $\leq4k$ between $W$ and $V_2^\textbf{S}$.  

\medskip

\item (reduction) As in (move), $k=1$ and $V_1$ and $V_2$ are configurations in the start state.  In this case, $y\in Y$ is the rightmost letter of the word written on each tape of $V_1$, and this operation erases both of these letters.

The analogue in $\textbf{S}$ is then achieved by a reduced computation with history $\sigma_y^{-1}$.

\medskip

\item (accept) Finally, in this case $k=1$ and $V_1$ and $V_2$ are the start and end configurations, respectively, whose tapes are empty.

The analogue is thus achieved by a reduced computation with history $\omega$.

\end{enumerate}

\end{proof}

\begin{proof}[Proof of \Cref{Dehn-to-CL}] \

Define $\pazocal{L}=\{w\in(Y\cup Y^{-1})^\ast\mid w\text{ is freely reduced, }w=_G1\}$.  

For any reduced word $w$ over $Y\cup Y^{-1}$, let $I(w)$ be the input configuration of $\textbf{S}$ with input $w$.  Note then that $h(I(w))=w$.

If $w$ is an accepted input, then for any history $H$ of an computation accepting $I(w)$, the word $H\omega^{-1}$ is freely equal to the history of a start computation between $I(w)$ and $I(1)$.  \Cref{lem-closed} thus implies $w=_G1$, so that $w\in\pazocal{L}$.

Conversely, fix $w\in\pazocal{L}$ with $\|w\|\leq n$.  \Cref{lem-positive} then produces a reduced computation $\pazocal{C}_1$ of length at most $4n$ between $I(w)$ and $I(p(w))$.  By the proof of Theorem 1.1 in \cite{SBR}, $p(w)$ is an accepted input of $\pazocal{T}$, so that there exists a computation of $\pazocal{T}$ accepting $p(w)$ with length at most $\TM_\pazocal{T}(n)$.  \Cref{lem-emulation} thus produces a reduced computation $\pazocal{C}_2$ of $\textbf{S}$ accepting $I(p(w))$ with length at most $5\TM_\pazocal{T}(n)$.  Thus, concatenating $\pazocal{C}_1$ and $\pazocal{C}_2$ yields an accepting computation for $I(w)$ of length at most $5\TM_\pazocal{T}(n)+4n$.

It therefore follows that $\pazocal{L}$ is the language of accepted inputs of $\textbf{S}$ and that $\TM_\textbf{S}\preceq\TM_\pazocal{T}$.

Now suppose $\pazocal{C}$ is an arbitrary reduced computation accepting $I(w)$.  \Cref{lem-factorization} then implies there exists a factorization $H\equiv H_1H_1'H_2H_2'\dots H_kH_k'H_{k+1}\omega$ of the history $H$ of $\pazocal{C}$ such that:

\begin{itemize}

\item Each $H_i$ is a reduced (perhaps empty) word over the letters $\sigma_y^{\pm1}$

\item Each $H_i'$ is a word of the form $\tau_y^{\pm1}$ or $\rho_r^{\pm1}$.

\end{itemize}

Let $\b_i$ be the natural copy of $H_i$ over $Y$.  Similarly, let $r_i=p(r)\in\tilde{\pazocal{R}}$ if $H_i'=\rho_r^{\pm1}$ and let $r_i=y\bar{y}\in\tilde{S}$ if $H_i'=\tau_y^{\pm1}$.

Let $V\equiv I(w)\cdot H_1H_1'H_2H_2'\dots H_kH_k'H_{k+1}$ and let $v_i$ be its tape word in the $Q_{i-1}Q_i$-sector.  

By the operation of the rules, $v_1=_{F(Y)}w\b_1r_1^{\pm1}\b_2r_2^{\pm1}\dots\b_kr_k^{\pm1}\b_{k+1}$.

For $j=1,\dots,k$, let $f_j$ be the reduced word freely equal to the product $\b_1\dots\b_j$.  Then we may `collect conjugates' so that $v_1$ is freely equal to:
$$w\left(\prod_{i=1}^k f_ir_i^{\pm1}f_i^{-1}\right)f_{k+1}$$
Similarly, the operation of the rules dictates that $v_2$ is freely equal to $\b_1\dots\b_{k+1}=f_{k+1}$.

But $V$ is $\omega$-admissible, so that each of these tape words must be trivial.  As such, $w$ is freely equal to a product of $k$ conjugates of relators of $\tilde{\pazocal{P}}$.  Hence, since each $H_i'$ must be non-trivial, we have $\|H\|\geq k\geq\text{Area}_{\tilde{\pazocal{P}}}(w)$, and so $\tm(I(w))\geq\text{Area}_{\tilde{\pazocal{P}}}(w)$.  Assuming $\tm(I(w))$ is maximal amongst all accepted input configurations with $a$-length at most $n$, we thus have $\TM_\textbf{S}(n)\geq\text{Dehn}_{\tilde{\pazocal{P}}}(n)$

Combining these relations with the main result of Theorem 1.1 in \cite{SBR} therefore implies $$\text{Dehn}_G\preceq\TM_\textbf{S}\preceq\TM_\pazocal{T}\sim\text{Dehn}_G$$ completing the proof.

\end{proof}

\bigskip

\section{Proof of Theorem \ref{noCLBound}}
Let $H=\langle a,b,c\mid [a,b]=c, ~[a,c]=[b,c]=1\rangle$ be the 3-dimensional integral Heisenberg group and define $H_\textbf{S}$ to be the direct product $M(\textbf{E}_\textbf{S})\times H$.  A combination theorem of Brick \cite{Brick} then implies $$\text{Dehn}_{M(\textbf{E}_\textbf{S})}(n)+\text{Dehn}_H(n)\preceq\text{Dehn}_{H_\textbf{S}}(n)\preceq n^2+\text{Dehn}_{M(\textbf{E}_\textbf{S})}(n)+\text{Dehn}_H(n)$$ 
But it is shown in \cite{epstein1992word} that $\text{Dehn}_{H}(n)\sim n^3$, so that \Cref{Dehn bound} implies $\text{Dehn}_{H_\textbf{S}}(n)\sim n^3$.

A similar straightforward argument produces a combination theorem for conjugator length functions, implying $\CL_{H_\textbf{S}}\sim\CL_{M(\textbf{E}_\textbf{S})}+\CL_H$.  But it is shown in \cite{BRS} that $\CL_H(n)\sim n^2$, so that \Cref{main-theorem} implies $\CL_{H_\textbf{S}}\sim \TM_\textbf{S}$.

\bigskip

\section{Proof of Theorem \ref{annular-Dehn}}

It follows quickly from the definition of the annular Dehn function \cite[Proposition 1.3]{BrickCorson} that $\text{Ann}_G(n)\preceq\text{Dehn}_G(n+\CL_G(n))$ for any finitely presented group $G$.  Hence, \Cref{Dehn bound} implies $\text{Ann}_{M(E_\textbf{S})}\preceq\TM_\textbf{S}^3$.

Conversely, given an accepted input $\a$ of $\textbf{S}$, we saw in the proof of \Cref{main-theorem} that any reduced annular diagram $\Gamma_\a$ over $M(\textbf{E}_\textbf{S})$ with boundary labels $W_\a$ and $W_{ac}$ corresponds to a trapezium $\Gamma_\a'$ with bottom label $W_\a$ and top label $W_{ac}$.  By \Cref{trapezia are computations} there then exists a corresponding reduced computation $\pazocal{C}$ of $\textbf{E}_\textbf{S}$ accepting $W_\a$.

Lemmas \ref{E primitive step history} and \ref{E run step history} then imply that the step history of $\pazocal{C}$ must have prefix $(1)(2)(3)(4)(5)$.  Let $\pazocal{C}_3:W'\to\dots\to W''$ be the maximal subcomputation with step history $(3)$ in this prefix.

As in the proof of \Cref{E0 language}, $W'$ has $\a$ written in its input sector, the copy of the same word $H\in F(\Phi^+)$ written in the left historical alphabet of each $Q_{i,\ell}Q_{i,r}$-sector, and all other sectors empty.  As in that setting, letting $I_\textbf{S}(\a)$ be the input configuration of $\textbf{S}$ corresponding to the input $\a$, we have $\|H\|\geq\tm(I_\textbf{S}(\a))$.

Let $\pazocal{C}_3'$ be the restriction of $\pazocal{C}_3$ to any $Q_{i,\ell}Q_{i,r}$-sector.  Then there exists a subdiagram $\Gamma_3'$ of $\Gamma_\a'$ which is a trapezium corresponding to $\pazocal{C}_3'$ in the sense of \Cref{computations are trapezia}.

For $H\equiv h_1\dots h_k$, let $H_{\ell}\equiv h_{1,\ell}\dots h_{k,\ell}$ be the tape word of the initial admissible subword of $\pazocal{C}_3'$.  Then the $a$-edge of $\textbf{tbot}(\Gamma_3')$ corresponding to the letter $h_{j,\ell}$ marks the start of a maximal $a$-band of $\Gamma_3'$ that crosses the bottom $j-1$ maximal $\theta$-bands before ending on a $(\theta,q)$-cell.  A similar analysis applies to the $a$-bands which end on $\textbf{ttop}(\Gamma_3')$, so that $\text{Area}(\Gamma_3')\geq\sum_{j=1}^k 2j\geq k^2$.

But then $\text{Area}(\Gamma_\a)=\text{Area}(\Gamma_\a')\geq\|H\|^2\geq\tm(I_\textbf{S}(\a))^2$.

Thus, fixing $n$ and taking $\a$ be the accepted input with $\|\a\|\leq n$ realizing $\tm(I_\textbf{S}(\a))=\TM_\textbf{S}(n)$, $$\text{Ann}_{M(\textbf{E}_\textbf{S})}(2N+n)\geq\TM_\textbf{S}(n)^2$$
completing the proof.

\bigskip

\bibliographystyle{plain}
\bibliography{biblio}

\end{document}